\newtheorem{proposition}{Proposition}[section]
\newtheorem{theorem}[proposition]{Theorem}
\newtheorem{cor}[proposition]{Corollary}
\newtheorem{lemma}[proposition]{Lemma}
\theoremstyle{definition}
\newtheorem{defi}[proposition]{Definition}
\newtheorem{ej}[proposition]{Example} 
\newtheorem{obs}[proposition]{Remark}
\newtheorem{notation}[proposition]{Notation}
\newcommand{\nat}{\mathbb N} 
\newcommand{\re}{\mathbb R} 
\newcommand{\comp}{{\mathbb C}}
\newcommand{\Cob}{\mathrm{Cob}}
\newcommand{\tsf}[1]{\textsf{#1}}
\newcommand{\scr}{\mathscr}
\newcommand{\opnm}[1]{\operatorname{#1}}
\newcommand{\mr}[1]{\mathscr{#1}}
\newcommand{\tm}{\widetilde{M}}
\newcommand{\pr}{\operatorname{pr}}
\title{2-vector bundles, D-branes and Frobenius Manifolds}
\author{Anibal Amoreo}
\address{Math. Dept. Facultad de Ingenier\'\i a, UBA, Paseo Col\'on
  850, Buenos Aires, Republica
  Argentina}
\email{anibal.amoreo@gmail.com}
\author{Jorge A. Devoto}
\address{Math. Dept. FCEN, UBA, Ciudad
  Universitaria \\ pabellon 1, 1428, Buenos Aires, Republica
  Argentina}
\email{jdevoto@dm.uba.ar}
\date{February 18, 2013}
\keywords{2-vector spaces, Frobenius manifolds}
\subjclass{55R65, 53C05}
\begin{document}

\begin{abstract}
  We show that if $M$ is a Frobenius manifold of dimension $n$ such
  that $T_{x} M$ is semisimple for every $x \in M$, then there exists
  a canonical 2-vector bundle $\mathcal{B}$ over $M$ of rank $n$. This
  2-vector bundle encodes the information about the maximal category
  of $D$-branes associated to the open closed topological field
  theories defined by the Frobenius algebras $T_{x} M$. In particular
  this construction answers a conjecture of Graeme Segal
  in~\cite{segal07:_what_is_ellip_objec}. We also explain the relation
  of the labels of the $D$-branes to Azumaya algebras and twisted
  vector bundles on the spectral cover $S$ of $M$.
\end{abstract}

\maketitle

\section{Introduction}
\label{sec:introduction}

The aim of the present work is to give a positive answer to a remark
of G. Segal, in~\cite{segal07:_what_is_ellip_objec}, about a possible
relation between 2-vector bundles and the moduli space of topological
field theories.
 
The geometric objects involved in this remark -- 2-vector bundles --
are a topological generalisation of the algebraic notion of 2-vector
spaces. The notion of 2-vector spaces was introduced by M. Kapranov
and V. Voedvodski in~\cite{kn:kv}. This notion is a categorification
of the concept of vector space. The idea of 2-vector bundles was
proposed as a geometric model for elliptic cohomology. Constructions
and definitions of 2-vector bundles were proposed by
J.L. Brylinsky~\cite{brylinski:_catvb} and
N. Baas, B.  Dundas and J Rognes~\cite{baas04:_two}.

The ideas behind these constructions are related to physics, in
particular string theory. At the beginning of the 90's it was
suggested, see for
example~\cite{freed94:_higher_algeb_struc_and_quant}, that some form
of 2-vector spaces should be attached to the endpoints of an open
string. In \cite{segal07:_what_is_ellip_objec} Graeme Segal suggested
that there might be a relation between 2-vector bundles and the moduli
space of topological field theories. 

Two dimensional topological field theories can be algebraically
described in terms of \emph{commutative Frobenius algebras}. In the
case of commutative semisimple Frobenius algebras G. Moore and
G. Segal~\cite{moore_segal1, al.]09:_diric} found a geometric
description of these algebras as the algebras of functions on finite
sets equipped with a measure. These finite sets play the role of
spacetimes in the theory. It is then natural to think that a smoothly
varying family of 2d-topological field theories is a pair $(\pi: S \to
M, f)$ formed by a smooth manifold $M$ with a fixed finite sheeted
covering space $S \to M$ and a function $f: S \to \re$. The points $x$
of $M$ parametrise the topological field theories of the family
defined by the fibres $\pi^{-1} (x)$ with the measure induced by
$f$. This type of structure appeared in the work of Saito about
unfoldings of singularities. The structure reappeared in the notion of
a \emph{Frobenius manifold} defined by
B. Dubrovin~\cite{dubrovin:_2dtft}.  A Frobenius manifold is basically
a manifold $M$ with the property that the tangent spaces $T_{x} M$
have a structure of a Frobenius algebra $\forall x \in M$.  Frobenius
manifolds define a geometric model for the solutions of WDVV
equations. These equations capture the deformations of topological
conformal field theories. When $T_{x} M$ is semisimple for every $x
\in M$, then $M$ has a canonically associated covering space $S \to M$
called the \emph{spectral cover}. This covering space has a natural
function on it which provides the measure. This fact provides the connection
between the viewpoints of Moore and Segal and the definition given by
Dubrovin.

The plan of this paper is the following.  In Section
\ref{sec:frob-manif-d} we recall some basic facts and definitions
about open and closed 2d-topological field theories, Frobenius
algebras, Calabi-Yau categories and Frobenius manifolds, we also
define Cardy categories. In the next section we describe 2-vector
spaces, 2-vector bundles and twisted vector bundles. We shall give a
summary of Moore and Segal description of the maximal category of
$D$-branes in an open and closed topological field theory. In section
\ref{sec:calabi-yau-fibr} we introduce the notion of Cardy fibrations
over Frobenius manifolds.. Then in Section \ref{sec:algebr-prop-maxim}
we give a local characterisation of maximal Cardy fibrations and we
show that maximal Cardy fibrations define two vector bundles. In the
next section, Section \ref{sec:maxim-categ-bran}, we show that some
sectors of a maximal Cardy fibration are related to twisted vector
bundles and Azumaya algebras over the spectral cover of a Frobenius
manifold.

\medskip

\section{Topological D-branes and Frobenius manifolds}
\label{sec:frob-manif-d}

\subsection{Open and closed $2d$ topological field theories}
\label{sec:topol-field-theor}

Let us roughly describe the basic aspects of $2d$ topological field
theories (TFT). Precise definitions for closed theories can be found,
for example, in the article of
L. Abrams~\cite{abrams96:_two_dimen_topol_quant_field}; the references
for the relevant definitions for open and closed topological
field theories are the article of G. Moore and
G. Segal~\cite{moore_segal1} and chapter 2 of \cite{al.]09:_diric}.

The description of a closed TFT is as follows: Let $\Cob$
denote the (pseudo) category whose objects are closed, oriented,
1-dimensional manifolds. We shall consider the empty set as an object
of $\Cob$. If $N$ and $M$ are objects of $\Cob$, then
a morphism $N \to M$ is an ordered pair $(\Sigma, \, \phi)$ formed by
a compact two dimensional oriented manifold $\Sigma $ and an
orientation preserving diffemorphism $\phi: \partial \Sigma \to N
\sqcup -M$, where $-M$ denotes the manifold $M$ with the opposite
orientation. A convenient way to describe a cobordism is to use the
following diagram
$
  N \rightarrow \Sigma \leftarrow -M.
$
Two cobordisms $(\Sigma, \phi)$ and $(\Sigma_{1}, \psi)$ are to be
identified if there is an orientation preserving diffeomorphism
$\alpha: \Sigma \to \Sigma_{1}$ such that the diagram
\begin{displaymath}
  \xymatrix{
                       & & \Sigma \ar[dd]_{\alpha} & &           \\
  N \ar[rru] \ar[rrd]  & &                        & & -M \ar[llu] \ar[lld] \\
                       & & \Sigma_{1}              & &
                     }
\end{displaymath}
commutes. Any object of $\Cob$ is diffeomorphic to a disjoint union of
copies of the standard circle $S^{1}$ and the empty set. If $C_{1}$
and $C_{2}$ are objects of $\Cob$ and $\Sigma$ is a cobordism between
them, the circles in $C_{1}$ are called \emph{ingoing} and the circles
in $C_{2}$ are called \emph{outgoing}.  The morphisms in $\Cob$ are
generated by the following figures, where the outgoing circles are
written to the right

\begin{figure}[http]
  \centering
\includegraphics[width=5cm]{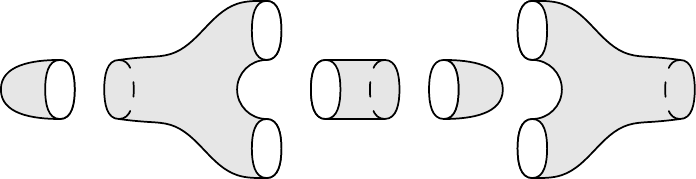}
  \caption{Generators of morphisms in $\Cob$.}
  \label{fig:1}
\end{figure}
The category $\Cob$ has a monoidal structure induced by the disjoint
union of manifolds.

\begin{defi}
  \label{def:1}
  Let $\mathrm{Vect}$ denote the category of vector spaces over $\comp$. A
  \emph{2d-closed topological field theory} (or TFT) is a functor $F:
  \Cob \to \mathrm{Vect}$ such that
  \begin{equation}
    \label{eq:1}
    F(C_{1} \sqcup C_{2}) = F(C_{1}) \otimes F(C_{2}).
  \end{equation}
\end{defi}
A closed topological field theory is determined by two vector spaces:
The vector space $A = F(S^{1})$ and the vector space
$F(\emptyset)$. The multiplicative condition, given by
equation~\eqref{eq:1}, implies that $F(\emptyset)$ is the ground field
$\comp$. On the other hand the generators of the morphisms in
$\Cob$ induce on $A$ the following structure
\begin{figure}[http]
  \centering
\includegraphics[width=5cm]{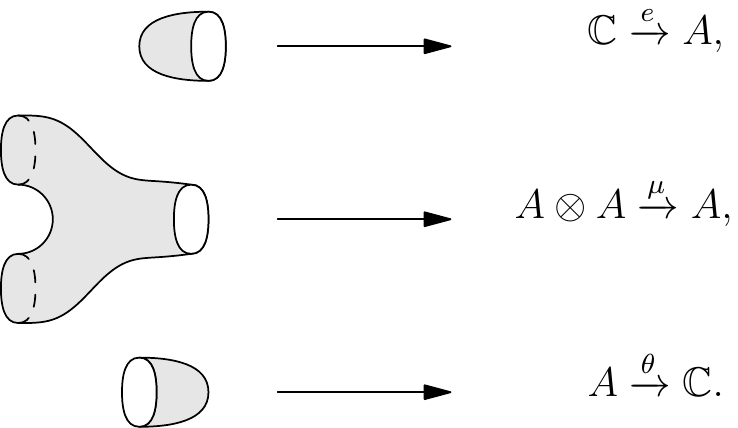}
  \caption{Structure of $A$.}
  \label{fig:2}
\end{figure}

We shall assume that the homomorphism associated to the cylinder is
the identity.  The morphisms in diagram~\ref{fig:2} must satisfy
compatibility conditions.  The algebraic structure induced on $A$ is
the structure of a \emph{Frobenius algebra}-- the precise definition
is in Section~\ref{sec:frobenius-algebras}.

Categories of branes are obtained when one considers a bigger
cobordism category, namely open and closed cobordism. The (pseudo)
category $\mathrm{Ocob}$ is the category where the objects are one
dimensional, compact, oriented manifolds with (possibly empty) boundary. If the
boundary is non-empty we shall suppose that each connected component
of the boundary is labelled by an element of a fixed set
$\mathcal{B}$. The set $\mathcal{B}$ is called the \emph{set of
  boundary conditions}. Any element of $\mathrm{Ocob}$ is
diffeomorphic to a disjoint union of elements of the form given in
figure~\ref{fig:3}.

\begin{figure}[http]
\centering
\includegraphics[width=5cm]{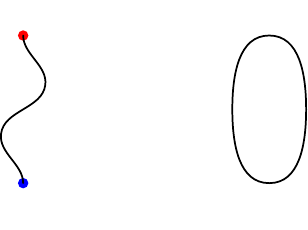}
\caption{Basic elements of $\mathrm{Ocob}$.}
\label{fig:3}
\end{figure}

A cobordism $\Sigma$ between two objects $C_{0}$ and $C_{1}$ of Ocob
is an oriented 
surface whose boundary  consists of three parts $\partial \Sigma =
C_{0} \cup C_{1} \cup C_{cstr}$. The part $C_{cstr}$ is called the
\emph{constrained boundary} and is a cobordism from  $\partial C_{0}$
to $\partial C_{1}$. The components of $C_{cstr}$ are labelled in a
way compatible with the labelling of $\partial C_{0}$ and $\partial
C_{1}$. See figure~\ref{fig:4}.
\begin{figure}[http]
  \centering
\includegraphics[width=3cm]{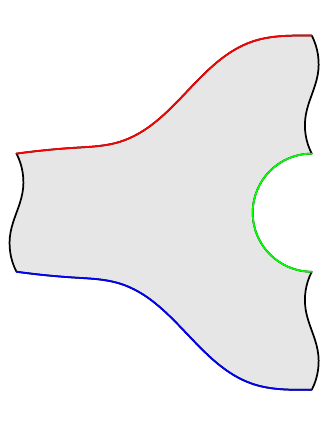}
  \caption{Open cobordism}
  \label{fig:4}
\end{figure}

\begin{defi}
  \label{def:2}
  An \emph{open and closed topological field theory} is a functor $F:
  \mathrm{Ocob}  \to \mathrm{Vect}$ satisfying the multiplicative
  axiom~\eqref{eq:1}. 
\end{defi}
An open and closed topological field theory is algebraically described by
a certain class of ``self-dual'' categories which we shall call 
\emph{Cardy categories}, see Section~\ref{sec:calabi-yau-categ}. We
shall write $E_{ab}$ for the image of the interval with labels $a$ and
$b$. See figure~\ref{fig:3}.

\medskip


\subsection{Frobenius algebras}
\label{sec:frobenius-algebras}

We will recall here some basic facts about Frobenius algebras.  A
general reference about Frobenius algebras over a field
is~\cite{abrams96:_two_dimen_topol_quant_field}. We shall need also
some basic definitions about Frobenius algebras over rings --
see for example~\cite{nakayama4}. Let $R$ denote a
commutative ring. If $A$ is an $R$-module we shall write $A^{*} =
\mathrm{Hom}_{R} (A, R)$ for the dual module. If $A$ is a $R$-algebra,
then $A^{*}$ has a natural structure of a left $A$-module given by
\begin{equation}
  \label{eq:2}
  (a \varphi) (b) = \varphi (ab),
\end{equation}
for $a, b $ in $A$ and $\varphi$ in $A^{*}$. We shall write $\langle
\, , \, \rangle$ for the evaluation map $A^{*} \times A \to R$. 
\begin{defi}
  \label{def:3}
  Let $R$ denote a commutative ring. A $R$-\emph{Frobenius algebra}  is a
  quadruple $(A, \, e, \, \mu, \, \Phi)$ consisting of a finitely
  generated, projective, associative $R$-algebra $A$ with unit $e$,
  multiplication map $\mu: A \times A \to A$ and a left $A$-module
  isomorphism
  \begin{equation}
    \label{eq:3}
    \Phi : A \to A^{*}.
  \end{equation}
\end{defi}
In the case when $R = \comp$ we obtain the following equivalent definition.
\begin{defi}
  \label{def:4}
  A \emph{Frobenius algebra} is a quadruple $(A, \, e, \, \mu, \,
  \theta )$ consisting of a finitely generated, associative
  $\comp$-algebra $A$ with unit $e$,  multiplication map $\mu: A
  \times A \to A$ and a linear form
  \begin{equation}
    \label{eq:3.1.05}
    \theta :A \rightarrow \comp,
  \end{equation}
  called \emph{the trace}, such that the bilinear form $g:A\times
  A\rightarrow \comp$ given by:
  \begin{equation}
    \label{eq:3.1.10}
    g(x,y)=\theta (\mu (x, y))
  \end{equation}
  is non-degenerate.
\end{defi}
The trace $\theta$ is related to $\Phi$ via the identity
$
  \theta(a) = \langle \Phi(a), e \rangle.
$
\begin{obs}\label{g_and_theta}
  The form $g$ will be called the \emph{metric}. The trace $\theta$
  can be recovered from $g$ and the unit element $e$ by
  $
  \theta(x) = g(x, e).
  $
\end{obs}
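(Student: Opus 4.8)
The content of Remark~\ref{g_and_theta} splits into a naming convention and a genuine assertion. The first clause, that $g$ be called the metric, requires no argument: non-degeneracy of $g$ is built into Definition~\ref{def:4}, so the terminology is justified by fiat. The substantive claim is the recovery formula $\theta(x) = g(x,e)$, and the plan is to obtain it by specializing the defining relation~\eqref{eq:3.1.10} of $g$ to the second argument $y = e$ and then invoking the unit axiom for $\mu$.

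Carrying this out, I would simply compute
\begin{equation*}
  g(x,e) = \theta(\mu(x,e)) = \theta(x),
\end{equation*}
where the first equality is the definition~\eqref{eq:3.1.10} and the second uses that $e$ is the unit of $A$, so that $\mu(x,e) = x e = x$. The only point worth flagging is a bookkeeping one, namely which slot of $g$ the unit is inserted into: since $e$ is a two-sided unit we have both $\mu(x,e)=x$ and $\mu(e,x)=x$, so the identity holds equally in the form $g(e,x) = \theta(x)$, and associativity of $A$ suffices throughout — commutativity is not needed here. There is no real obstacle; the verification is a one-line substitution, and the remark is included precisely to record that, conversely to the passage from $\theta$ to $g$ via~\eqref{eq:3.1.10}, the trace is recoverable from the pair $(g,e)$.
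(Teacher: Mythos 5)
Your proposal is correct and is exactly the argument the paper intends (the remark is stated without proof precisely because it reduces to the one-line substitution $g(x,e)=\theta(\mu(x,e))=\theta(x)$ via~\eqref{eq:3.1.10} and the unit axiom). Your side remark that only the two-sided unit property, not commutativity, is needed is also accurate.
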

We shall usually write $(x,y) \to xy $ for the multiplication map
$\mu(x,y)$. An important piece of information associated to a
Frobenius algebras is the trilinear map
\begin{equation}
  \label{eq:3.1.15}
  c: V\times V\times V \to \comp, \; \text{given by} \;
  c(x,y,z)=\theta (xyz).
\end{equation}
\begin{defi}
  A $\comp$-Frobenius algebra $A$ is \emph{semisimple} if it has no
  nilpotents.
\end{defi}
An important result in the theory of semisimple Frobenius algebras
over $\comp$ is given by the following proposition, see~\cite[Prop
2.2]{hitchin97:_froben_manif} for a proof.
\begin{proposition}\label{id-basis}
  If $A$ is a semisimple commutative $\comp$-Frobenius algebra of
  dimension $n$, then there exists a basis $e_{1}, \dots, e_{n}$ of
  $A$ such that:
  \begin{enumerate}
  \item $e_{i}^{2} = e_{i}, \; i = 1, \dots , n$
  \item $e_{i} e_{j} = 0, \; i,j = 1, \dots, n, \; i \ne j$
  \item $\dim_{\comp} \, e_{i} A = 1,   \; i = 1, \dots , n$.
  \end{enumerate}
  The basis is unique up to permutations of the elements.
\end{proposition}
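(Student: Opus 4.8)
The plan is to show first that $A$ is isomorphic, as a $\comp$-algebra, to the product $\comp^{n}$ with componentwise operations; the three listed properties and the uniqueness statement then reduce to elementary facts about $\comp^{n}$. The entire content of the proposition is therefore the production of a complete system of orthogonal idempotents, and for this I would exploit the regular representation of $A$ on itself.

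For each $x \in A$ let $L_{x} \colon A \to A$ denote multiplication by $x$. Since $A$ is associative and commutative, $L_{x} L_{y} = L_{xy} = L_{y} L_{x}$, so $\{L_{x}\}_{x \in A}$ is a commuting family of $\comp$-linear endomorphisms, and since $A$ has a unit $e$ the assignment $x \mapsto L_{x}$ is injective (as $L_{x}(e) = x$). The first and decisive step is that each $L_{x}$ is diagonalizable. Let $p$ be the minimal polynomial of $L_{x}$ and $q$ its squarefree part. If $p$ were not squarefree then $\deg q < \deg p$, so $q(L_{x}) \neq 0$; but $p \mid q^{N}$ for large $N$, hence $L_{q(x)}^{N} = q(L_{x})^{N} = 0$, and by injectivity $q(x)$ is a nilpotent element of $A$. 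Semisimplicity (no nilpotents) forces $q(x) = 0$, whence $q(L_{x}) = L_{q(x)} = 0$, contradicting minimality. Thus $p$ is squarefree and $L_{x}$ is diagonalizable. This is the only place where the hypothesis is genuinely used; everything afterwards is linear algebra.

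Next I would simultaneously diagonalize. Choosing a basis $x_{1}, \dots, x_{n}$ of $A$, the commuting diagonalizable operators $L_{x_{1}}, \dots, L_{x_{n}}$ share a common eigenbasis, giving a decomposition $A = \bigoplus_{c} W_{c}$ into joint eigenspaces indexed by the characters $c \colon A \to \comp$ that occur, with $x w = c(x) w$ for $w \in W_{c}$. The projection $P_{c}$ onto $W_{c}$ is a polynomial in the $L_{x_{i}}$, hence equal to $L_{e_{c}}$ for $e_{c} := P_{c}(e)$, because $L$ is an algebra homomorphism with $\lambda\,\mathrm{id} = L_{\lambda e}$, so its image is closed under the operations needed to build $P_{c}$. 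The $e_{c}$ are then orthogonal idempotents summing to $e$, since the $P_{c}$ are orthogonal projections with $\sum_{c} P_{c} = \mathrm{id}$. Finally, for $w, w' \in W_{c}$ one computes $w w' = c(w)\, w'$ and, symmetrically, $w w' = c(w')\, w$; taking $w' = e_{c}$ (the unit of $W_{c}$, on which $c$ takes the value $1$) gives $w = c(w)\, e_{c}$, so $\dim_{\comp} W_{c} = 1$. Since $A = \bigoplus_{c} W_{c}$ is thus a direct sum of lines, there are exactly $n$ indices $c$; relabelling the $e_{c}$ as $e_{1}, \dots, e_{n}$ and noting $e_{i} A = W_{i}$ yields properties (1)--(3).

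For uniqueness, any idempotent $f$ with $\dim_{\comp} fA = 1$ has, in the coordinates furnished by $e_{1}, \dots, e_{n}$, entries $\lambda_{i}$ satisfying $\lambda_{i}^{2} = \lambda_{i}$, hence $\lambda_{i} \in \{0,1\}$; so $f = \sum_{i \in S} e_{i}$ and $\dim_{\comp} fA = |S|$, forcing $|S| = 1$ and $f = e_{i}$ for a single index. A second basis of the required type therefore consists of $n$ distinct elements of $\{e_{1}, \dots, e_{n}\}$, i.e. a permutation of it. The one point I expect to require care is the passage from ``$P_{c}$ is a polynomial in the $L_{x_{i}}$'' to ``$P_{c} = L_{e_{c}}$''; if one prefers to avoid it, the whole isomorphism $A \cong \comp^{n}$ can instead be obtained from the structure theorem for commutative Artinian rings, which writes $A$ as a product of local factors whose (nilpotent) maximal ideals vanish by semisimplicity, so that each factor is a finite field extension of $\comp$, hence $\comp$ itself.
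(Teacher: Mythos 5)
Your proof is correct. Note that the paper does not prove this proposition at all --- it simply cites Hitchin's \emph{Frobenius manifolds}, Prop.\ 2.2 --- so there is no in-paper argument to compare against; your write-up is essentially a self-contained version of the standard proof. The key step (absence of nilpotents forces the minimal polynomial of each multiplication operator $L_{x}$ to be squarefree, hence $L_{x}$ is diagonalizable, and simultaneous diagonalization of the commuting family then splits $A$ into one-dimensional joint eigenspaces) is sound, and the passage from ``$P_{c}$ is a polynomial in the $L_{x_{i}}$'' to ``$P_{c}=L_{e_{c}}$'' that you flag is indeed fine, since the image of $x\mapsto L_{x}$ is a unital subalgebra of $\operatorname{End}(A)$ and so contains every such polynomial. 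Two small observations: your argument never uses the Frobenius form $\theta$, which is as it should be --- the statement is really about finite-dimensional commutative unital $\comp$-algebras without nilpotents --- and your uniqueness argument only needs properties (1) and (3) of the competing basis, which makes it slightly stronger than required.
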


\medskip

\subsection{Calabi-Yau and Cardy categories}
\label{sec:calabi-yau-categ}

Let $R$ be a commutative ring. 
\begin{defi}\label{def:cy0}
  A Calabi-Yau category over $R$ is a category $\scr{C}$
  satisfying:
\begin{enumerate}
\item For any pair of objects $a, b$ of $\scr{C}$ the space of
  homomorphisms 
  \begin{displaymath}
    E_{ab} := \mathrm{Hom}_{\scr{C}} (a, b)
  \end{displaymath}
  is a finitely generated, projective $R$ module.
\item The composition
  \begin{displaymath}
    E_{ab} \times E_{bc} \to E_{ac}
  \end{displaymath}
  is an $R$-bilinear map.
\item For each $a \in \mathrm{Obj} (\scr{C})$ there exists a
  homomorphism of $R$-modules
  \begin{equation}
    \label{eq:12.1}
    \theta_{a} : E_{a a} \to R, 
  \end{equation}
  that induces a left $E_{aa}$-modules isomorphism
  \begin{equation}
    \label{eq:7}
    E_{aa} \to E_{aa}^{*},
  \end{equation}
  where $E_{aa}^{*}$ denotes the dual $R$-module. This condition
  implies that $E_{aa}$ is a Frobenius algebra over $R$. 
\item The pairings
    \begin{eqnarray}
      \label{eq:12.2}
      E_{ab} \otimes E_{ba} \to E_{aa} \xrightarrow{\theta_{a}} R \\
      E_{ba} \otimes E_{ab} \to E_{bb} \xrightarrow{\theta_{b}} R
    \end{eqnarray}
    induce isomorphisms $E_{ab} \simeq E_{ba}^{*}$.  If $\varphi \in E_{ab}$, and
    $\psi \in E_{ba}$, then
    \begin{equation}
      \label{eq:12.3}
      \theta_{a} (\varphi \cdot \psi) = \theta_{b} (\psi \cdot \varphi)
    \end{equation}
\end{enumerate}
\end{defi}
\begin{obs}
  This definition is an extension of the usual definition of
  Calabi-Yau
  category~\cite{costello:07}. 
  The main change is that we replace
  $\comp$-vector spaces by finitely generated, projective $R$
  modules. This is essentially the same change from the definition of
  Frobenius algebra over a field to Frobenius algebras over a ring. 
\end{obs}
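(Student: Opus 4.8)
The plan is to verify that Definition~\ref{def:cy0} specialises, when $R = \comp$, to the usual notion of a Calabi-Yau category as in~\cite{costello:07}, and to isolate the single structural change responsible for the generalisation. First I would recall that over a field every finitely generated projective module is free, and hence a finite-dimensional vector space; thus for $R = \comp$ condition (1) asserts precisely that each $E_{ab}$ is a finite-dimensional $\comp$-vector space, matching the hom-space requirement in the classical definition. Condition (2) then becomes mere $\comp$-bilinearity of composition, which is the defining property of a $\comp$-linear category.

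Next I would run through conditions (3) and (4) with $R = \comp$ and check that each reduces to its classical counterpart. For condition (3) I would invoke the comparison between Definition~\ref{def:3} and Definition~\ref{def:4}: when $R = \comp$, the datum of a left $E_{aa}$-module isomorphism $E_{aa} \to E_{aa}^{*}$ is equivalent, via the identity $\theta_{a}(\varphi) = \langle \Phi(\varphi), e\rangle$ recorded after Definition~\ref{def:4}, to the datum of a trace $\theta_{a}$ whose associated bilinear form is non-degenerate; this is exactly the Frobenius structure on the endomorphism algebra $E_{aa}$. Condition (4) — perfectness of the pairings $E_{ab} \otimes E_{ba} \to \comp$ together with the cyclicity identity~\eqref{eq:12.3} — is then the standard non-degeneracy-and-symmetry axiom of a Calabi-Yau category.

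Having matched all four axioms, I would conclude by identifying the generalisation precisely: the only modification is the replacement of finite-dimensional $\comp$-vector spaces by finitely generated projective $R$-modules, every other clause being transported verbatim. This is literally the same passage as from Definition~\ref{def:4} to Definition~\ref{def:3}, where a $\comp$-algebra equipped with a non-degenerate trace form is replaced by a finitely generated projective $R$-algebra equipped with a left-module isomorphism $A \to A^{*}$; exhibiting this parallel substantiates the final sentence of the remark.

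The point requiring care — more a subtlety than a genuine obstacle — is condition (3): over a general ring the Frobenius property \emph{must} be phrased as the existence of a left-module isomorphism $E_{aa} \to E_{aa}^{*}$, as in Definition~\ref{def:3}, rather than via non-degeneracy of a trace form as in Definition~\ref{def:4}, since the equivalence of these two formulations genuinely uses that $\comp$ is a field. I would therefore verify explicitly that the specialisation at $R = \comp$ recovers the trace-form description and that no hidden flatness or further finiteness hypothesis beyond ``finitely generated projective'' is needed for the pairings in condition (4) to be well behaved.
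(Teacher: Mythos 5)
This statement is a remark that the paper leaves without proof, so there is nothing to compare against on the paper's side; your verification --- specialising Definition~\ref{def:cy0} at $R=\comp$ and checking that each of the four axioms reduces to its classical counterpart in~\cite{costello:07}, then matching the generalisation to the passage from Definition~\ref{def:4} to Definition~\ref{def:3} --- is correct and is the natural way to substantiate the claim. One small refinement to your final paragraph: the ring-level definition does not abandon the trace formulation, since condition (3) of Definition~\ref{def:cy0} still posits a linear form $\theta_{a}\colon E_{aa}\to R$; the real change is that ``non-degenerate'' is strengthened to ``$\theta_{a}$ induces a left-module isomorphism $E_{aa}\to E_{aa}^{*}$'', which over a field is equivalent by finite-dimensionality but over a general ring is strictly stronger --- exactly the subtlety you flag, just located in the strength of the condition rather than in the choice between trace and isomorphism data.
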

\begin{defi}\label{def:cy1}
  Let $A$ be a commutative Frobenius algebra over $R$.  A \emph{Calabi-Yau
  category} over $A$ is a category $\scr{C}$ satisfying:
\begin{enumerate}
\item For each object $a$ there exists a pair of  $R$-linear morphisms 
  \begin{equation}
    \label{eq:12.4}
    \imath_{a}:A \to E_{aa}, \,  \, \text{and} \, \, \imath^{a}: E_{aa} \to
    A. 
  \end{equation}
  such that
  \begin{enumerate}
  \item  $\imath_{a}$ is a homomorphism of $R$-algebras.
  \item For $r \in A$ and $\psi \in E_{ab}$ it holds that
    \begin{equation}
      \label{eq:12.6}
      \imath_{a}(r) \psi = \psi \imath_{b}(r).
    \end{equation}
  \item The morphisms $\imath_{a}$ and $\imath^{a}$ are adjoints in
    the sense that
    \begin{displaymath}
      \theta (\imath^{a} (\psi) \phi) = \theta_{a}(\psi
      \imath_{a}(\phi)), 
    \end{displaymath}
    for all $\psi \in E_{aa}$, $\phi \in A$.
  \end{enumerate}
\end{enumerate}
\end{defi}
Since $E_{ab}$ and $E_{ba}$ are in duality, if $E_{ab}$ is a free
$R$-module and $\psi_{\nu}$ is a basis of $E_{ab}$ let $\psi^{\nu}$ be
the dual basis of $E_{ba}$. Define $\pi_{b}^{a}: E_{aa} \to E_{bb}$ by
\begin{equation}
  \label{eq:4}
  \pi_{b}^{a} (\psi) := \sum_{\nu} \, \psi_{\nu} \psi \psi^{\nu}.
\end{equation}
\begin{proposition}
  \label{prop:1}
  If $E_{ab}$ (and $E_{ba}$) are free $R$-modules, then the
  homomorphism $\pi_{b}^{a}$ is independent of the choice of basis
  $\psi_{\nu}$ and $\psi^{\nu}$.
\end{proposition}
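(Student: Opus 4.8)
The plan is to reduce the statement to the standard fact that, for a nondegenerate pairing of finite free modules, the ``copairing'' $\sum_{\nu}\psi_{\nu}\otimes\psi^{\nu}$ is independent of the chosen basis. First I would record the precise sense in which $\psi_{\nu}$ and $\psi^{\nu}$ are dual: by item (4) of Definition~\ref{def:cy0} the pairing $E_{ab}\otimes E_{ba}\to R$, $\varphi\otimes\chi\mapsto \theta_{a}(\varphi\chi)$, is nondegenerate, so it identifies $E_{ab}$ with $E_{ba}^{*}$; the assertion that $\psi^{\nu}$ is the dual basis of $\psi_{\nu}$ then means exactly that $\theta_{a}(\psi_{\mu}\psi^{\nu})=\delta_{\mu}^{\nu}$. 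Because $E_{ab}$ is free of some finite rank $n$ by hypothesis, both $\{\psi_{\nu}\}$ and its dual $\{\psi^{\nu}\}$ are genuine $R$-bases, and any two choices of basis differ by a matrix in $GL_{n}(R)$.

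Next I would carry out the change-of-basis computation. Let $\{\tilde{\psi}_{\alpha}\}$ be a second basis of $E_{ab}$, written $\tilde{\psi}_{\alpha}=\sum_{\nu}P^{\nu}_{\alpha}\psi_{\nu}$ with $P=(P^{\nu}_{\alpha})\in GL_{n}(R)$, and let $\{\tilde{\psi}^{\alpha}\}$ be its dual basis, written $\tilde{\psi}^{\alpha}=\sum_{\mu}Q^{\alpha}_{\mu}\psi^{\mu}$. Imposing $\theta_{a}(\tilde{\psi}_{\alpha}\tilde{\psi}^{\beta})=\delta_{\alpha}^{\beta}$ and using the $R$-bilinearity of the composition together with $\theta_{a}(\psi_{\nu}\psi^{\mu})=\delta_{\nu}^{\mu}$ yields $\sum_{\nu}P^{\nu}_{\alpha}Q^{\beta}_{\nu}=\delta_{\alpha}^{\beta}$, i.e. $PQ^{T}=I$. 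For square matrices over a commutative ring a one-sided inverse is two-sided, so this forces $Q^{T}P=I$ as well, and transposing gives $P^{T}Q=I$, i.e. $\sum_{\alpha}P^{\nu}_{\alpha}Q^{\alpha}_{\mu}=\delta^{\nu}_{\mu}$.

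With this in hand the result drops out. Expanding the defining sum in the tilded basis and again invoking $R$-bilinearity of composition,
\[
\sum_{\alpha}\tilde{\psi}_{\alpha}\,\psi\,\tilde{\psi}^{\alpha}
=\sum_{\nu,\mu}\Big(\sum_{\alpha}P^{\nu}_{\alpha}Q^{\alpha}_{\mu}\Big)\psi_{\nu}\,\psi\,\psi^{\mu}
=\sum_{\nu,\mu}\delta^{\nu}_{\mu}\,\psi_{\nu}\,\psi\,\psi^{\mu}
=\sum_{\nu}\psi_{\nu}\,\psi\,\psi^{\nu},
\]
so $\pi_{b}^{a}(\psi)$ computed in the new basis agrees with the original value, for every $\psi\in E_{aa}$.

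The only point needing care, and the one I would flag as the main obstacle, is that this is linear algebra over a general commutative ring rather than a field: one must use the freeness hypothesis on $E_{ab}$ (and $E_{ba}$) to know the change-of-basis matrices lie in $GL_{n}(R)$ and that $PQ^{T}=I$ upgrades to $P^{T}Q=I$. A coordinate-free rephrasing makes this transparent and is the conceptual heart of the argument: under the identification $E_{ab}\cong E_{ba}^{*}$, the element $\sum_{\nu}\psi_{\nu}\otimes\psi^{\nu}\in E_{ab}\otimes_{R}E_{ba}$ is exactly the image of $\mathrm{id}_{E_{ba}}$ under $\mathrm{End}_{R}(E_{ba})\cong E_{ba}^{*}\otimes_{R}E_{ba}\cong E_{ab}\otimes_{R}E_{ba}$, hence manifestly basis-independent. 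Since $\pi_{b}^{a}$ is obtained by applying the fixed $R$-linear map $E_{ab}\otimes_{R}E_{ba}\to \mathrm{Hom}_{R}(E_{aa},E_{bb})$, $\varphi\otimes\chi\mapsto(\psi\mapsto\varphi\,\psi\,\chi)$, to this invariant element, independence of basis is immediate. I would present the explicit matrix computation as the proof and relegate the invariant description to a remark.
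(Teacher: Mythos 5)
Your proof is correct and follows essentially the same route as the paper's: expand the new dual pair in terms of the old one, deduce that the change-of-basis matrices are mutually inverse, and conclude by bilinearity of composition. You are in fact slightly more careful than the paper on two points it glosses over --- that the duality relation $\delta_{\nu}^{\mu}=\theta_{a}(\psi_{\nu}\psi^{\mu})$ is mediated by $\theta_{a}$ rather than being a literal identity of products in $E_{aa}$, and that passing from $PQ^{T}=I$ to $P^{T}Q=I$ uses that one-sided inverses of square matrices over a commutative ring are two-sided.
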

\begin{proof}
  Let $\varphi_{\alpha}$ and $\varphi^{\alpha}$ be another pair of
  dual basis of $E_{ab}$ and $E_{ba}$. Then
  \begin{displaymath}
    \varphi_{\alpha} = \sum_{\nu} \, a_{\alpha}^{\nu} \psi_{\nu},
    \quad \text{and} \quad \varphi^{\beta} = \sum_{\mu} \,
    b^{\beta}_{\mu} \psi^{\mu},
  \end{displaymath}
  for certain matrices $ [a_{\alpha}^{\nu}] $ and
  $[b^{\beta}_{\mu}]$. The conditions $\delta_{\alpha}^{\beta} =
  \varphi_{\alpha} \varphi^{\beta}$ and $\delta_{\nu}^{\mu} =
  \psi_{\nu} \psi^{\mu}$ imply that
  \begin{eqnarray*}
    \delta_{\alpha}^{\beta} 
    &=&  \varphi_{\alpha} \varphi^{\beta} =
    \left( \sum_{\nu} \, a_{\alpha}^{\nu} \psi_{\nu} \right) \left(\sum_{\mu} \,
    b^{\beta}_{\mu} \psi^{\mu} \right) \\
    &=& \sum_{\nu} \sum_{\mu} a_{\alpha}^{\nu} b^{\beta}_{\mu}
    \psi_{\nu} \psi^{\mu} = \sum_{\nu} \sum_{\mu} a_{\alpha}^{\nu}
    b^{\beta}_{\mu} \delta_{\nu}^{\mu} \\
    &=& \sum_{\nu} a_{\alpha}^{\nu} b^{\beta}_{\mu}. 
  \end{eqnarray*}
  Hence the matrix $[a_{\alpha}^{\nu}] $ is the inverse of the matrix
  $[b^{\beta}_{\mu}]$. Therefore if $\psi \in E_{aa}$
  \begin{eqnarray*}
    \sum_{\alpha} \varphi_{\alpha} \psi \varphi^{\alpha} &=&
    \sum_{\alpha, \nu. \mu} a_{\alpha}^{\nu} b^{\alpha}_{\mu}
    \psi_{\nu} \psi \psi^{\mu} \\
    &=&  \sum_{\nu. \mu} \delta_{\mu}^{\nu}  \psi_{\nu} \psi
    \psi^{\mu} = \sum_{\nu} \psi_{\nu} \psi
    \psi^{\nu}.
  \end{eqnarray*}
\end{proof}
We want to extend the definition of $\pi_{b}^{a}$  to finitely generated,
projective $R$-modules. Let $X = \mathrm{spec} (R)$. Then the modules
$E_{ab}$ and $E_{ba}$ define locally free sheaves $\tilde{E}_{ab}$ and
$\tilde{E}_{ba}$ over $X$. Let $U_{i}, \, i \in I$ be a covering of
$X$ such that $\tilde{E}_{ab} (U_{i})$ and $\tilde{E}_{ba} (U_{i})$
are $\mathcal{O}_{X} (U_{i})$-free modules. 
Then for each pair of
indices $i, j \in I$ also the modules $\tilde{E}_{ab} (U_{i} \cap
U_{j})$ and $\tilde{E}_{ba} (U_{i} \cap U_{j})$ are free $\mathcal{O}_{X} (U_{i}
\cap U_{j})$ modules.

Hence we have homomorphisms
\begin{eqnarray*}
  \pi_{b}^{a} (U_{i}) &:& \tilde{E}_{aa} (U_{i})  \to \tilde{E}_{bb}
  (U_{i}), \\
  \pi_{b}^{a} (U_{j}) &:& \tilde{E}_{aa} (U_{j})  \to \tilde{E}_{bb}
  (U_{j}), \\
  \pi_{b}^{a} (U_{i}\cap U_{j}) &:& \tilde{E}_{aa} (U_{i} \cap U_{j} )
  \to \tilde{E}_{bb}   (U_{i} \cap U_{j}).
\end{eqnarray*}
By Proposition~\ref{prop:1} the restriction of $\pi_{b}^{a} (U_{i})$
and $\pi_{b}^{a} (U_{j})$ to $U_{i} \cap U_{j}$ coincide with
$\pi_{b}^{a} (U_{i}\cap U_{j})$. Hence there is a globally well defined
homomorphism of sheaves
$
  \pi_{b}^{a}: \tilde{E}_{aa} (X) \to \tilde{E}_{bb} (X)
$
which is the same as a module homomorphism
\begin{displaymath}
  \pi_{b}^{a}: E_{aa} \to E_{bb}.
\end{displaymath}
\begin{defi}
  \label{def:5}
  A \emph{Cardy} category is a Calabi-Yau category over a
  $R$-Frobenius algebra $A$ that satisfy the following condition,
  called the \emph{Cardy condition},
  \begin{equation}
    \label{eq:5}
    \pi_{b}^{a}  = \imath_{b} \circ \imath^{a}.
  \end{equation}
  for any pair of labels $a, b$. 
\end{defi}

\medskip

\subsection{The maximal category of branes}
\label{subsec_boundary_semisimple}

In this section we will discuss some results of G. Moore and G. Segal
\cite{moore_segal1} regarding the structure of the algebras $E_{ab}$
corresponding to the open sector of an open and closed  topological field
theory. We will only consider the case for which the Frobenius algebra
$A$ of the closed sector is semisimple which is the hypothesis used by
Moore and Segal.

Let $A$ be an associative, commutative, semisimple Frobenius algebra
over $\comp$, and supppose $\dim_\comp A=n$. We then have a system of
orthogonal idempotents $e_1,\dots ,e_n$ which determine the simple
components; i.e.
$
A\cong \bigoplus_i\comp e_i,
$
and each summand $\comp e_i$ is isomorphic to $\comp$. The prime ideals
of $A$ can be identified with the set $X = \{e_{1}, \dots, e_{n}
\}$. This set plays the role of space-time. The algebra $A$ is the
algebra of observables. The Frobenius structure on $A$ induces a
measure $\mu$ on $X$ by $\mu(e_{i}) = \theta(e_{i})$.  

\begin{theorem}[\cite{moore_segal1}, Theorem 2]\label{theorem_2}
For each object $a\in \scr{B}$, the algebra $E_{aa}$ is semisimple.
\end{theorem}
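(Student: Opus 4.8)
The plan is to reduce the semisimplicity of $E_{aa}$ to the invertibility of its handle (Casimir) element, and then to extract that invertibility from the Cardy condition together with the semisimplicity of $A$. First I would record two structural facts. By~\eqref{eq:12.3} with $a=b$ the trace satisfies $\theta_{a}(\varphi\psi)=\theta_{a}(\psi\varphi)$, so $E_{aa}$ is a \emph{symmetric} Frobenius algebra over $\comp$. For such an algebra the Casimir element $H:=\sum_{\nu}\psi_{\nu}\psi^{\nu}$, where $\psi_{\nu}$ is a basis of $E_{aa}$ and $\psi^{\nu}$ the $\theta_{a}$-dual basis, is central, and a standard criterion states that $E_{aa}$ is semisimple if and only if $H$ is invertible. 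Note that $H=\pi_{a}^{a}(1)$ in the notation of~\eqref{eq:4}, so the entire problem becomes showing that $H$ is invertible.

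Next I would exploit the map $\imath_{a}$. By Definition~\ref{def:cy1} it is a unital homomorphism of algebras, and by~\eqref{eq:12.6} with $b=a$ its image is central in $E_{aa}$. Transporting the orthogonal idempotents $e_{1},\dots,e_{n}$ of $A$ through $\imath_{a}$ yields central orthogonal idempotents $\epsilon_{i}:=\imath_{a}(e_{i})$ of $E_{aa}$ with $\sum_{i}\epsilon_{i}=1$. Discarding the indices for which $\epsilon_{i}=0$, this produces a decomposition $E_{aa}=\bigoplus_{i}B_{i}$ with $B_{i}:=\epsilon_{i}E_{aa}$ a two-sided ideal which is again a symmetric Frobenius algebra with unit $\epsilon_{i}$ and trace $\theta_{a}|_{B_{i}}$. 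Since a finite product of algebras is semisimple exactly when each factor is, and since the blocks are $\theta_{a}$-orthogonal so that $H=\sum_{i}H_{i}$ with $H_{i}:=\epsilon_{i}H$ the handle element of $B_{i}$, it suffices to prove that each nonzero $H_{i}$ is invertible in $B_{i}$.

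The Cardy condition~\eqref{eq:5} with $b=a$ now does the essential work: it gives $H=\pi_{a}^{a}(1)=\imath_{a}(\imath^{a}(1))$, so $H$ lies in the central subalgebra $\imath_{a}(A)$. Writing $\imath^{a}(1)=\sum_{i}\omega_{i}e_{i}$ in $A$, multiplication by $\epsilon_{i}$ gives $H_{i}=\omega_{i}\epsilon_{i}$, a scalar multiple of the unit of $B_{i}$. To see that the scalar is nonzero I would compute $\theta_{a}(H_{i})$ two ways: from $H_{i}=\omega_{i}\epsilon_{i}$ one gets $\theta_{a}(H_{i})=\omega_{i}\,\theta_{a}(\epsilon_{i})$, while expanding $H_{i}=\sum_{\nu}\psi_{\nu}\psi^{\nu}$ over a basis of $B_{i}$ and using $\theta_{a}(\psi_{\mu}\psi^{\nu})=\delta_{\mu}^{\nu}$ gives $\theta_{a}(H_{i})=\dim_{\comp}B_{i}$. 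Hence $\omega_{i}\,\theta_{a}(\epsilon_{i})=\dim_{\comp}B_{i}\neq 0$, so $\omega_{i}\neq 0$ and $H_{i}=\omega_{i}\epsilon_{i}$ is invertible. Therefore each $B_{i}$ is semisimple, and so is $E_{aa}$.

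The conceptual heart of the argument, and the step I expect to be the main obstacle, is recognising that the Cardy condition forces the handle element into the central subalgebra $\imath_{a}(A)$, so that on each simple component of $A$ it becomes a \emph{scalar}; once this is known, the trace computation pins that scalar to a nonzero value. The only genuinely technical ingredient is the Frobenius-algebra lemma that invertibility of the handle element implies semisimplicity, which I would either cite or establish by verifying that $\omega_{i}^{-1}\sum_{\nu}\psi_{\nu}\otimes\psi^{\nu}$ is a separability idempotent for $B_{i}$, forcing $B_{i}$ to be separable and hence, over $\comp$, semisimple.
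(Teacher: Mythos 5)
The paper does not prove this statement at all: it is imported verbatim from Moore--Segal \cite{moore_segal1}, so there is no in-paper proof to compare against. Your argument is correct and is essentially the standard Moore--Segal argument: reduce semisimplicity of the symmetric Frobenius algebra $E_{aa}$ to invertibility of the handle element $H=\pi_{a}^{a}(1)=\sum_{\nu}\psi_{\nu}\psi^{\nu}$, use the Cardy condition to place $H=\imath_{a}(\imath^{a}(1))$ in the central subalgebra $\imath_{a}(A)$ so that it is a scalar $\omega_{i}$ on each block $\epsilon_{i}E_{aa}$ cut out by the idempotents of the semisimple closed algebra, and rule out $\omega_{i}=0$ via $\omega_{i}\theta_{a}(\epsilon_{i})=\dim_{\comp}B_{i}\neq 0$. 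All steps (centrality of $\imath_{a}(A)$ from \eqref{eq:12.6}, orthogonality of the blocks under $\theta_{a}$, and the separability-idempotent criterion) check out in the framework of Definitions \ref{def:cy0}--\ref{def:5}.
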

\begin{obs}\label{dimensions}
  By the previous result the algebra $E_{aa}$ can be regarded as a sum
  $\bigoplus_i M(a,i)$ of matrix algebras
  $M(a,i):=\opnm{M}_{d_{(a,i)}}(\comp )$. In other words, it is
  possible to find complex vector spaces $V_{a,i}$ such that
\begin{equation}\label{iso_theorem2_ms}
E_{aa}\cong \bigoplus_{i=1}^n\opnm{End}(V_{a,i}),
\end{equation}
where $\dim V_{a,i}=d(a,i)$. Moreover, the matrix algebra
$\opnm{M}(a,i)=\opnm{End}(V_{a,i})$ corresponds under the isomorphism
\eqref{iso_theorem2_ms} with the subalgebra
$\iota_a(e_i)E_{aa}$. Elements of $E_{aa}$ will be denoted by a tuple
$\sigma =(\sigma_1,\dots ,\sigma_n)$, where $\sigma_i\in M(a,i)$. If
$\varepsilon_i\in E_{aa}$ denotes the tuple consisting of the identity
matrix $1_{a,i}\in M(a,i)$ in the $i$-th coordinate and all others
equals to zero, then $\iota_a(e_i)=\varepsilon_i$ or is equal to zero.
\end{obs}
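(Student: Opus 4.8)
The plan is to combine the Artin--Wedderburn structure theorem with the algebra map $\iota_a$ supplied by the Calabi--Yau structure of Definition~\ref{def:cy1}. First I would invoke Theorem~\ref{theorem_2}, which says $E_{aa}$ is a finite-dimensional semisimple $\comp$-algebra. Since $\comp$ is algebraically closed, Artin--Wedderburn yields an isomorphism $E_{aa}\cong\bigoplus_{k=1}^{m}\opnm{End}(W_k)$ for finitely many finite-dimensional $\comp$-vector spaces $W_k$; here $m=\dim_\comp Z(E_{aa})$ and the block identities $f_1,\dots,f_m$ are exactly the primitive central idempotents. This already produces the matrix-algebra form, so the real content of the statement is that the factors are indexed by the spacetime points $e_1,\dots,e_n$ through $\iota_a$.

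Next I would locate the idempotents $\iota_a(e_i)$ inside this decomposition. By Definition~\ref{def:cy1}(1b) with $b=a$ we have $\iota_a(r)\psi=\psi\,\iota_a(r)$ for every $\psi\in E_{aa}$, so $\iota_a(A)\subseteq Z(E_{aa})$. Because $\iota_a$ is a unital homomorphism of $\comp$-algebras and $A=\bigoplus_i\comp e_i$ with the $e_i$ orthogonal idempotents summing to $e$, the elements $\varepsilon_i:=\iota_a(e_i)$ are central, orthogonal idempotents with $\sum_i\varepsilon_i=1$. Hence $E_{aa}=\bigoplus_i\varepsilon_i E_{aa}$ is a direct sum of two-sided ideals, each a semisimple algebra with unit $\varepsilon_i$. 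The whole statement then reduces to showing that every nonzero $\varepsilon_i$ is \emph{primitive}, i.e.\ coincides with a single block idempotent $f_k$; equivalently, that $\iota_a(A)=Z(E_{aa})$.

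This surjectivity onto the centre is the step I expect to be the main obstacle, and I would extract it from the Cardy condition. Specialising Definition~\ref{def:5} to $b=a$ gives $\pi_a^a=\iota_a\circ\iota^a$, so $\pi_a^a(\psi)\in\iota_a(A)$ for all $\psi$. Now $\theta_a$ is a trace by the symmetry~\eqref{eq:12.3}, hence on each block it restricts to $\theta_a(x)=\lambda_k\opnm{tr}(x)$ with $\lambda_k\neq0$ by nondegeneracy of the form $g(x,y)=\theta_a(xy)$. A direct computation with matrix units then gives $\pi_a^a(\psi)=\sum_k \lambda_k^{-1}\opnm{tr}(\psi_k)\,f_k$, where $\psi_k$ is the block-$k$ component of $\psi$. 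Taking $\psi=f_k$ yields $\pi_a^a(f_k)=\lambda_k^{-1}\dim_\comp W_k\,f_k$, a \emph{nonzero} multiple of $f_k$ since $\opnm{char}\comp=0$ and $\dim_\comp W_k\ge1$. Therefore each $f_k\in\iota_a(A)$, proving $Z(E_{aa})=\iota_a(A)$.

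Finally I would assemble the conclusion. The nonzero $\varepsilon_i$ are orthogonal central idempotents summing to $1$, and $Z(E_{aa})=\iota_a(A)=\mathrm{span}_\comp\{\varepsilon_i\}$ has dimension equal to the number $m$ of blocks; this forces the nonzero $\varepsilon_i$ to be precisely the primitive central idempotents $f_k$, so each $\iota_a(e_i)$ is either a single block identity or $0$. Setting $V_{a,i}:=\varepsilon_i W_{k(i)}$ (and $V_{a,i}:=0$ when $\varepsilon_i=0$) gives $\iota_a(e_i)E_{aa}=\varepsilon_i E_{aa}\cong\opnm{End}(V_{a,i})=\opnm{M}(a,i)$ and the isomorphism $E_{aa}\cong\bigoplus_{i=1}^{n}\opnm{End}(V_{a,i})$ with $d(a,i)=\dim_\comp V_{a,i}$. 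The one point I would be careful to check is the unitality of $\iota_a$, so that $\sum_i\varepsilon_i=1$ and no block of $E_{aa}$ is missed; this is part of the bulk-to-boundary axioms, as $\iota_a$ is an algebra homomorphism carrying the unit of $A$ to the unit of $E_{aa}$.
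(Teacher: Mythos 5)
Your argument is correct, but it is worth noting that the paper does not actually prove this statement: Remark~\ref{dimensions} is presented as an immediate consequence of the cited Theorem~2 of Moore and Segal, with the block structure and the identification of $\opnm{M}(a,i)$ with $\iota_a(e_i)E_{aa}$ imported wholesale from their proof. What you have done is reconstruct a self-contained derivation, and your organization differs from Moore--Segal's: they use the Cardy identity $\pi^a_a=\iota_a\circ\iota^a$ on each summand $\iota_a(e_i)E_{aa}$ to establish semisimplicity and the full-matrix-algebra structure in one stroke, whereas you take semisimplicity as given, apply Artin--Wedderburn first, and then use the Cardy condition only to prove the surjectivity $\iota_a(A)=Z(E_{aa})$ via the computation $\pi^a_a(f_k)=\lambda_k^{-1}(\dim_\comp W_k)\,f_k\neq 0$. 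That is exactly the right pivot: centrality of $\iota_a(A)$ alone would leave open the possibility that some $\varepsilon_i$ is a nontrivial sum of block idempotents, and your dimension count ($m$ blocks versus $m$ nonzero orthogonal $\varepsilon_i$ spanning the centre) closes that gap cleanly. Your block formula for $\pi^a_a$ is also consistent with the expressions $\pi^a_b(\sigma)=\sum_i \opnm{tr}(\sigma_i)\theta(e_i)^{-1/2}\iota_b(e_i)$ displayed just after the remark. The one convention you rightly flag --- unitality of $\iota_a$ --- is indeed intended by the authors, as the later Definition~\ref{cy_fibration} makes explicit (``preserves multiplication and unit''), so nothing is missing.
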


We can give an explicit characterization for the morphisms $\theta_a$,
$\iota^a$ and $\pi^a_b$. For $\sigma =(\sigma_1,\dots ,\sigma_n )\in
E_{aa}$, the equality $\theta_a(\sigma \tau)=\theta_a (\tau \sigma)$
implies that
$$\theta_a(\sigma )=\sum_i\lambda_i \opnm{tr}(\sigma_i)$$
for some constants $\lambda_i \in \comp$.

Fixing a square root $\lambda_i=\sqrt{\theta (e_i)}$ for each $i$, we
arrive at the following expressions
$$
\begin{aligned}
\theta_a (\sigma ) &= \sum_i\sqrt{\theta (e_i)} \opnm{tr}(\sigma_i), \\
\iota^a(\sigma )   &= \sum_i\frac{\opnm{tr}(\sigma_i)}{\sqrt{\theta (e_i)}}e_i, \\
\pi_b^a(\sigma )   &= \sum_i\frac{\opnm{tr}(\sigma_i)}{\sqrt{\theta (e_i)}}\iota_b(e_i), \\
\end{aligned}
$$
where in the last equality, the trace $\opnm{tr}$ is the one
corresponding to $E_{aa}$.

A characterization like the one provided in theorem \ref{theorem_2}
holds for the spaces $E_{ab}$.

\begin{lemma}[\cite{moore_segal1}]\label{ms_theorem2_bis}
  If $C$ is semisimple, then for each pair $a,b\in \scr{B}$ we have an
  isomorphism
\begin{equation}\label{semisimple_2bis}
E_{ab}\cong \bigoplus_{i=1}^n\operatorname{Hom}_{\comp }(V_{a,i},V_{b,i}),
\end{equation}
for some finite-dimensional complex vector spaces $V_{a,i},V_{b,i}$.
\end{lemma}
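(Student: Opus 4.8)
The plan is to decompose every morphism space $E_{ab}$ according to the idempotents coming from the closed algebra, and then to recognise each summand as a space of matrices. First I would transport the orthogonal idempotents $e_1,\dots,e_n$ of $A$ into the open sector by setting $\varepsilon_i^a:=\iota_a(e_i)\in E_{aa}$ and $\varepsilon_i^b:=\iota_b(e_i)\in E_{bb}$. Since each $\iota_a$ is a homomorphism of $\comp$-algebras, the $\varepsilon_i^a$ are orthogonal idempotents with $\sum_i\varepsilon_i^a=\iota_a(e)=1$ in $E_{aa}$; by Remark~\ref{dimensions} they are precisely the central idempotents cutting out the matrix blocks $E_{aa}\cong\bigoplus_i\operatorname{End}(V_{a,i})$. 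The key structural input is the commutation relation~\eqref{eq:12.6}, which for $r=e_i$ reads $\varepsilon_i^a\,\psi=\psi\,\varepsilon_i^b$ for every $\psi\in E_{ab}$. Consequently $\varepsilon_i^a\,\psi\,\varepsilon_j^b=\delta_{ij}\,\varepsilon_i^a\,\psi\,\varepsilon_i^b$, the off-diagonal blocks vanish, and one obtains a canonical decomposition
\begin{equation*}
E_{ab}\;=\;\bigoplus_{i=1}^n E_{ab}^{(i)},\qquad E_{ab}^{(i)}:=\varepsilon_i^a\,E_{ab}\,\varepsilon_i^b .
\end{equation*}

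Next I would identify each summand $E_{ab}^{(i)}$. It is simultaneously a left module over the matrix algebra $E_{aa}^{(i)}=\operatorname{End}(V_{a,i})$ and a right module over $E_{bb}^{(i)}=\operatorname{End}(V_{b,i})$, hence a module over the simple algebra $\operatorname{End}(V_{a,i})\otimes\operatorname{End}(V_{b,i})^{\mathrm{op}}$. Every such bimodule is a direct sum of copies of the unique simple one, so there is a multiplicity space $M_{ab,i}$ with
\begin{equation*}
E_{ab}^{(i)}\;\cong\;\operatorname{Hom}_{\comp}(V_{a,i},V_{b,i})\otimes_{\comp} M_{ab,i}
\end{equation*}
as bimodules. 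This already establishes~\eqref{semisimple_2bis} up to multiplicities and reduces the lemma to the single claim that $\dim_{\comp}M_{ab,i}=1$ for every pair $a,b$ and every $i$ (with the convention that $V_{a,i}=0$ when $\varepsilon_i^a=0$, in which case both sides vanish). By the same equivariance the composition $E_{ab}^{(i)}\otimes E_{ba}^{(i)}\to E_{aa}^{(i)}$ and the trace $\theta_a$ are forced to be the matrix composition tensored with a pairing $\beta\colon M_{ab,i}\otimes M_{ba,i}\to\comp$; nondegeneracy of the pairing~\eqref{eq:12.2} shows that $\beta$ is nondegenerate, so at this stage one only knows $\dim_{\comp}M_{ab,i}=\dim_{\comp}M_{ba,i}$.

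The crux is to pin the multiplicity to $1$, and here I would exploit the Cardy condition. Fix $i$ and $\sigma\in E_{aa}^{(i)}=\operatorname{End}(V_{a,i})$, and compute $\pi_b^a(\sigma)$ directly from the definition~\eqref{eq:4}. Taking the basis of $E_{ab}^{(i)}$ given by matrix units tensored with a basis $\{\xi_\alpha\}$ of $M_{ab,i}$, together with the $\beta$-dual basis of $E_{ba}^{(i)}$ normalised so that $\sqrt{\theta(e_i)}\,\beta(\xi_\alpha,\eta_{\alpha'})=\delta_{\alpha\alpha'}$, the matrix-unit part of the sum collapses to $\operatorname{tr}(\sigma)\,1_{V_{b,i}}$ exactly as in the standard computation, while the multiplicity part contributes the scalar $\sum_\alpha\beta(\xi_\alpha,\eta_\alpha)=\dim_{\comp}M_{ab,i}\big/\sqrt{\theta(e_i)}$. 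Thus the definition yields
\begin{equation*}
\pi_b^a(\sigma)\;=\;\frac{\dim_{\comp}M_{ab,i}}{\sqrt{\theta(e_i)}}\,\operatorname{tr}(\sigma)\,\iota_b(e_i).
\end{equation*}
On the other hand, the Cardy condition $\pi_b^a=\iota_b\circ\iota^a$ together with the explicit formula for $\iota^a$ recorded above gives $\pi_b^a(\sigma)=\tfrac{1}{\sqrt{\theta(e_i)}}\operatorname{tr}(\sigma)\,\iota_b(e_i)$. Comparing the two expressions forces $\dim_{\comp}M_{ab,i}=1$, which completes the identification~\eqref{semisimple_2bis}. I expect the main obstacle to be bookkeeping rather than conceptual: keeping the left/right module (equivalently the diagrammatic versus functional) composition conventions consistent so that the direction in $\operatorname{Hom}_{\comp}(V_{a,i},V_{b,i})$ comes out correctly, verifying that $\theta_a$ and the composition genuinely split off the multiplicity pairing $\beta$, and handling the degenerate cases $\varepsilon_i^a=0$ so that the formula remains valid when some $V_{a,i}$ vanish.
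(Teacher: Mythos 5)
The paper does not prove this lemma itself --- it is imported verbatim from \cite{moore_segal1} --- so there is no internal proof to compare against; your argument correctly reconstructs the standard one from that reference: decompose $E_{ab}$ by the transported idempotents $\iota_a(e_i)$, $\iota_b(e_i)$ using the centrality relation \eqref{eq:12.6}, identify each block as a bimodule over matrix algebras up to a multiplicity space, and then kill the multiplicity by comparing the two sides of the Cardy condition via the explicit formulas for $\theta_a$, $\iota^a$ and $\pi^a_b$. The steps check out, including the treatment of the degenerate blocks where $\iota_a(e_i)=0$ (cf.\ Remark~\ref{dimensions}), so this is correct and essentially the intended proof.
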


Note that the vector spaces in the right hand side of equation
\eqref{semisimple_2bis} are the ones appearing in the decompositions
of $E_{aa}$ and $E_{bb}$; see remark \ref{dimensions}.


\subsection{Frobenius manifolds}
\label{sec:frobenius-manifolds}

In this section we shall briefly review the definition of a Frobenius
Manifold in the sense of Manin~\cite[Definition
1.1.1]{manin99:_froben_manif_quant_cohom_and_modul_spaces}. We shall
write $M$ or $(M, \mathcal{O}_{M})$ for a manifold, where the word
manifold means a $\mr{C}^{\infty}$, real analytic or complex analytic
manifold, and $\mathcal{O}_{M}$ denotes the (complexified) structure
sheaf of $M$-- we shall assume that $\mathcal{O}_{M}$ is a sheaf of
$\comp$ algebras and that for every $x \in M$ the reduced field
$k_{x}$ is $\comp$.  We shall write $\mathscr{T}M$ for the
complexified tangent sheaf and $\mathscr{T}^{*}M$ for the complexified
cotangent
sheaf. The sections of $\mathscr{T}M$ acts as derivations on
$\mathcal{O}_{M}$.  If $(x^{1}, \dots, x^{n})$ is a system of
coordinates, then the $x^{i}$ determine vector fields $\partial_{i}$
such that
\begin{displaymath}
  df = \sum_{i=1}^{m+n} \, dx^{i} \partial_{i} f.
\end{displaymath}
The vector fields $\partial_{i}$ locally generate $\mathscr{T}M$. The
one forms $dx^{i}$ locally generate the cotangent sheaf.
\begin{defi}
  A manifold $M$ with \emph{multiplication on the tangent sheaf} is a triple
  $(\mathscr{T}M, \mu, e)$, where
  \begin{equation}
    \label{eq:3.2.0}
    \mu: \mathscr{T}M \otimes \mathscr{T}M \to \mathscr{T}M
  \end{equation}
  is an associative $\mathcal{O}_{M}$-bilinear map of sheaves and $e$
  is a global 
  vector field which is the unit for $\mu$. 
\end{defi}
If a manifold $M$ has a multiplication on the tangent sheaf, then the
deviation from a Poisson algebra structure on $\mathscr{T}M$ is given
by the following expression
\begin{equation}
  \label{eq:3.2.1}
  P_{x} (z, w) := [x, \mu(z, w)] - \mu([x, z], w) - \mu (z, [x,w]).
\end{equation}
\begin{defi}
  A manifold $M$ with multiplication on the tangent sheaf is an
  $F$-\emph{manifold}, see for
  example~\cite{hertling99:_weak_froben_manif},  if the 
  multiplication $\mu$ satisfies
  \begin{equation}
    \label{eq:3.2.2}
    P_{\mu(x,y)} (z, w) = \mu(x, P_{y} (z, w)) + \mu(y, P_{x} (z,w)).
  \end{equation}
\end{defi}
\begin{defi}
  An \emph{affine flat structure} on a manifold $M$ is a subsheaf
  $\mathscr{T}_{\mathcal{F}} M $ of the tangent sheaf $\mathscr{T}M$
  of linear spaces such that $\mathscr{T}M = \mathscr{T}_{\mathcal{F}}
  M \otimes_{k} \mathcal{O}_{M}$ and the tangent bracket of pairs of
  its sections vanish.  The elements of $\mathscr{T}_{\mathcal{F}} M $
  are called \emph{flat vector fields}.
\end{defi}
\begin{defi}
  A metric $g$ on a manifold $M$ is \emph{compatible} with an
  affine flat structure if $g(x, y)$ is constant for flat vector
  fields $x$, $y$.
\end{defi}
The next condition defines the compatibility between the metric and
the multiplication on the tangent sheaf.
\begin{defi}
  A metric on a manifold with multiplication on the tangent sheaf
  is \emph{invariant} if
  \begin{equation}
    \label{eq:3.2.3}
    g(\mu(x,y), z) = g(x, \mu(y,z)).
  \end{equation}
\end{defi}
\begin{obs}\label{gtotheta}
  An invariant metric determines a tensor $c$ given by $c(x,y,z) =
  g(\mu(x,y), z)$ and a morphism $\theta_{M} : \mathscr{T}M \to
  \mathcal{O}_{M}$ given by $\theta (x) = g(e, x)$ -- see
  \ref{g_and_theta}.
\end{obs}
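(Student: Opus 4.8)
The plan is to read Remark~\ref{gtotheta} as an assertion of well-definedness: that the two prescriptions genuinely produce an $\mathcal{O}_{M}$-multilinear tensor and an $\mathcal{O}_{M}$-linear morphism of sheaves, and that the second is the sheaf-theoretic counterpart of the algebra statement recalled in Remark~\ref{g_and_theta}. Accordingly the argument is a formal verification built entirely on the $\mathcal{O}_{M}$-bilinearity already present in the definitions of $\mu$ and $g$, together with the symmetry of the metric.

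First I would verify that $c$ is a tensor. By definition the multiplication $\mu$ is $\mathcal{O}_{M}$-bilinear, and a metric $g$ is likewise $\mathcal{O}_{M}$-bilinear; hence the composite $(x,y,z) \mapsto g(\mu(x,y),z)$ is $\mathcal{O}_{M}$-linear in each of its three arguments separately. In particular it annihilates the relations defining the threefold tensor product, so it factors through a morphism of sheaves $\mathscr{T}M \otimes_{\mathcal{O}_{M}} \mathscr{T}M \otimes_{\mathcal{O}_{M}} \mathscr{T}M \to \mathcal{O}_{M}$, which is exactly what the word \emph{tensor} requires. I would then record the symmetries of $c$: the invariance identity $g(\mu(x,y),z) = g(x,\mu(y,z))$ together with the symmetry of $g$ already forces the cyclic symmetry $c(x,y,z) = c(y,z,x)$, and when $\mu$ is commutative, as it is for a Frobenius manifold, this upgrades to total symmetry, so that $c$ may be regarded as a section of $\mathrm{Sym}^{3}(\mathscr{T}^{*}M)$. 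These are the symmetries used later to extract the WDVV structure.

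Next I would check that $\theta_{M}$ is a morphism of sheaves. Since $e$ is a fixed global vector field and $g$ is $\mathcal{O}_{M}$-bilinear, the assignment $x \mapsto g(e,x)$ is $\mathcal{O}_{M}$-linear and therefore defines a morphism $\theta_{M}\colon \mathscr{T}M \to \mathcal{O}_{M}$. Because the formula is written purely in terms of the globally defined data $g$ and $e$, it is automatically compatible with restriction to opens and so glues to a genuine morphism of sheaves, with no cocycle condition to impose. This is precisely the manifold analogue of recovering the trace from the metric and the unit element displayed in Remark~\ref{g_and_theta}, the two conventions $\theta_{M}(x) = g(e,x)$ and $\theta(x) = g(x,e)$ coinciding once $g$ is symmetric.

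I do not expect a genuine obstacle here, as the content is formal; the only points demanding a little care are the distinction between a map merely defined on the product and one that factors through the tensor product (which is what is meant by a tensor), and deducing the symmetry of $c$ from the hypotheses actually in force, namely the symmetry of $g$ and the invariance identity, rather than assuming more than the definitions supply. Everything else follows immediately from the $\mathcal{O}_{M}$-bilinearity built into $\mu$ and $g$.
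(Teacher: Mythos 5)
Your verification is correct, and it fills in exactly the formal content that the paper leaves implicit: the remark is stated without proof because it amounts to the $\mathcal{O}_{M}$-multilinearity of $\mu$ and $g$ (hence factoring through the tensor product) and the $\mathcal{O}_{M}$-linearity of $x \mapsto g(e,x)$, which is what you check. Your added observation that cyclic symmetry of $c$ follows from invariance plus symmetry of $g$, upgrading to total symmetry only when $\mu$ is commutative, is accurate and appropriately cautious about which hypotheses are in force at this point of the paper.
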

\begin{defi}
  \label{def:6}
  Let $M$ be a manifold. A \emph{pre-Frobenius} structure on $M$ is a
  triple $(\mathscr{T}_{\mathcal{F}} M, g, c)$ formed by an affine
  flat structure $\mathscr{T}_{\mathcal{F}} M$ on $M$, a compatible
  metric $g$,  and an even symmetric tensor $$c: S^{3} (\mathscr{T} M)
  \to \mathcal{O}_{M}.$$  A manifold with a pre-Frobenius structure will
  be called a \emph{pre-Frobenius manifold}.
\end{defi}
A pre-Frobenius manifold has a multiplication $\mu$ on the tangent
sheaf given by
\begin{displaymath}
  \mathscr{T} M \otimes \mathscr{T} M \xrightarrow{c} \mathscr{T}^{*}
  M \xrightarrow{g} \mathscr{T}M.
\end{displaymath}
The metric in this case is invariant under the multiplication.
A \emph{local potential} $\Phi$ for $\mathscr{T} M$
is an even function such that for any flat local tangent fields $x, y,
z$,
\begin{equation}
  \label{eq:4999}
  c(x, y, z) = (xyz) \Phi 
\end{equation}
A pre-Frobenius manifold is called \emph{potential} if $c$ admits
everywhere a local potential.
\begin{defi}\label{def:8}
  A \emph{Frobenius manifold} is an associative, potential
  pre-Frobenius manifold.
\end{defi}
\begin{defi}\label{def:7}
  A Frobenius manifold is called \emph{semisimple}, if there is
  everywhere a local isomorphism of sheaves of algebras $
  \mathscr{T}_{M} \simeq \mathcal{O}_{M}^{n}.  $
\end{defi}

\medskip

\subsubsection{The spectral cover $S_{M}$.}
\label{sec:3.3.1}

Let $M$ be an $n$-dimensional $F$-manifold in an analytic
category. Let $S_{M}$ be the relative affine spectrum of the
$\mathcal{O}_{M}$-algebra $\mathscr{T} M$. The space $S_{M}$
is a manifold in the same class that $M$ and it is endowed with
two structure maps $\widetilde{\pi}: S_{M} \to M$ and $s:
\mathscr{T}M \to \widetilde{\pi}_{*}
(\mathcal{O}_{S_{M}})$. The morphism $s$ is an isomorphism of
sheaves. If $M$ is a semisimple manifold, then $\widetilde{\pi}$ is
\'etale~\cite[section8.1]{manin99:_froben_manif_quant_cohom_and_modul_spaces}.
When $M$ is fixed we shall write $S$ for $S_{M}$.

If $M$ is a Frobenius manifold, then there is a natural trace $\theta_{\tm}:
\mathcal{O}_{S} \to \mathcal{O}_{M}$ given by the composition
\begin{displaymath}
  \mathcal{O}_{S} \to \pi_{*} \mathcal{O}_{S} \xrightarrow{s^{-1}}
\mathscr{T}M \xrightarrow{\theta_{M}} \mathcal{O}_{M},
\end{displaymath}
where $\theta_{M}$ is the trace defined in Remark~\ref{gtotheta}. With
this structure the Frobenius algebras obtained from $(\pi_{*}
(\mathcal{O}_{\widetilde{M}}), \theta_{\tm})$ are isomorphic, via $s$,
to the Frobenius algebras obtained from $(\mathscr{T} M, \theta_{M})$.

The construction of the spectral cover is part of a more general
framework, namely that of the \emph{analytic spectrum}, introduced by
C. Houzel \cite{houzel_gal2} to study finite morphism of analytic
spaces. He defines the analytic spectrum for algebras of finite
presentation over an analytic space, which include finite algebras
(those algebras which are coherent modules): let $\Gamma$ be a finite
presentation $\scr{O}_M$-algebra and $f:N\to M$ a space over $M$ (in
particular, if $E$ is a vector bundle, then its sheaf of sections is
coherent and thus of finite presentation). Define a contravariant
functor $S_\Gamma$ from spaces over $M$ to the category of sets by
\begin{displaymath}
  S_\Gamma (N,f)=\operatorname{Hom}_{\scr{O}_N\text{-}{\rm alg}}(f^*\Gamma ,\scr{O}_N)
\end{displaymath}
This functor is then representable, and we have a bijection between
$S_\Gamma (N,f)$ and holomorphic maps $N\to \operatorname{Specan}
\Gamma$, where $\operatorname{Specan}\Gamma$ is the analytic
spectrum. Even with these nice algebras, the space
$\operatorname{Specan}\Gamma$ may have singularities. For detailed
descriptions we refer the reader to \cite{houzel_gal2}; check also
\cite{fischer:_cng}. The case in which we are interested deals with a
bundle of algebras $E$ such that $E_x$ is semisimple for each $x$ (see
below). If $M=N$ and $f:M\to M$, then the construction of the analytic
spectrum provides a bijection between the subspace of the dual bundle
$(f^*E)^*$ consisting of morphisms of algebras and maps $M\to
\operatorname{Specan}\Gamma_E$. For
$f=\text{id}_M$, this is just expressing that every morphism of
algebras $\varphi :E\to \comp$ is determined by a map $M\to
\operatorname{Specan}\Gamma_E$ (for each $x$ this is just choosing the
kernel of the restriction $\varphi_x:E_x\to \comp$).

\begin{proposition}\label{isom_1}
  For a bundle of algebras $E$ over $M$ there exists an isomorphism of
  $\mathscr{O}_M$-algebras
  \begin{equation}\label{iso_2}
    \pi_*\scr{O}_{S_E}\cong \Gamma_E,
  \end{equation}
\end{proposition}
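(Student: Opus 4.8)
The plan is to produce a canonical comparison morphism $s\colon \Gamma_E \to \pi_*\scr{O}_{S_E}$ from the universal property of the analytic spectrum and then to prove it is an isomorphism by a purely local computation on $M$. This $s$ is the exact analogue of the structure map $s\colon \scr{T}M \to \widetilde{\pi}_*\scr{O}_{S_M}$ used for the spectral cover, and its construction is formal. Since $E$ is a bundle of algebras, its sheaf of sections $\Gamma_E$ is a coherent, hence finite presentation, $\scr{O}_M$-algebra, so Houzel's construction applies and $S_E=\operatorname{Specan}(\Gamma_E)$ represents the functor $S_{\Gamma_E}$. Applying the representability bijection to the space $(S_E,\pi)$ over $M$, the identity $\mathrm{id}_{S_E}$ corresponds to a canonical $\scr{O}_{S_E}$-algebra homomorphism $\mathrm{ev}\colon \pi^*\Gamma_E \to \scr{O}_{S_E}$; under the pullback--pushforward adjunction $\operatorname{Hom}_{\scr{O}_{S_E}}(\pi^*\Gamma_E,\scr{O}_{S_E})\cong \operatorname{Hom}_{\scr{O}_M}(\Gamma_E,\pi_*\scr{O}_{S_E})$ the map $\mathrm{ev}$ is the adjoint of the desired $s$.

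Because being an isomorphism of sheaves is local, I would check it on a cover of $M$ by opens $U$ trivialising $E$. Fixing a local frame $a_1,\dots,a_n$ of $E|_U$ and writing the multiplication by structure constants $a_ia_j=\sum_k c_{ij}^k a_k$ with $c_{ij}^k\in\scr{O}_M(U)$, the algebra $\Gamma_E|_U$ admits the presentation $\scr{O}_U[T_1,\dots,T_n]/I$, where $I$ is generated by the quadratic relations $T_iT_j-\sum_k c_{ij}^k T_k$ together with the relation identifying the unit section. With this presentation $\operatorname{Specan}(\Gamma_E|_U)$ is realised as the closed analytic subspace $Z\subseteq U\times\comp^n$ cut out by $I$, with $\pi|_U$ the restriction of the first projection, and $s|_U$ becomes the tautological inclusion of $\Gamma_E|_U=\scr{O}_U[T]/I$ into $\pi_*\scr{O}_Z$. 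The claim thus reduces to the equality $\pi_*\scr{O}_Z=\scr{O}_U[T]/I$.

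The main obstacle is precisely this last identification, since in the analytic category a direct image cannot be computed as naively as in the algebraic setting. The point to exploit is that $E$ has finite rank, so $\Gamma_E$ is finite over $\scr{O}_M$ and $\pi$ is a \emph{finite} morphism, hence proper with finite fibres; by Houzel's coherence and finiteness results for such morphisms~\cite{houzel_gal2} the direct image $\pi_*\scr{O}_{S_E}$ is a coherent $\scr{O}_M$-module, and $s$ can be checked on stalks. One verifies the stalk statement by base change to a point $x\in M$: the fibre of $Z$ over $x$ is $\operatorname{Specan}$ of the finite $\comp$-algebra $E_x$, whose algebra of global functions is $E_x$ itself, and the local freeness of $\Gamma_E$ over $\scr{O}_M$ lets one lift this pointwise isomorphism to an isomorphism of stalks by Nakayama.

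In the semisimple case of actual interest the computation degenerates pleasantly: each $E_x\cong\comp^n$ carries exactly $n$ algebra maps to $\comp$ (the idempotent projections), which vary holomorphically, so $\pi$ is \'etale and $S_E|_U$ is a disjoint union of $n$ copies of $U$; then $\pi_*\scr{O}_{S_E}|_U\cong\scr{O}_U^n$ agrees with $\Gamma_E|_U$ on the nose, and no appeal to Nakayama is needed. I therefore expect the genuine work to be the coherence of the direct image for a finite analytic morphism, which I would simply quote from Houzel, the remaining verifications being bookkeeping in the universal property.
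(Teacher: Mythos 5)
Your proof is correct, but it is organised quite differently from the paper's. The paper does not invoke the universal property or the adjunction at all: it builds the comparison map completely by hand, viewing $S_E$ as the subspace of the dual bundle $E^*$ consisting of algebra homomorphisms, sending a section $X$ to the tautological function $\widetilde{X}(x,\varphi)=\varphi(X(x))$ on $E^*$ and restricting to $S_E$; the inverse is then written down explicitly on a semisimple neighbourhood by $\widetilde{f}\mapsto \sum_i \widetilde{f}(x,\varphi_i)e_i(x)$, where $\{e_i\}$ is a local frame of orthogonal idempotents and $\varphi_i$ is the character with $\varphi_i(e_i(x))=1$. Your closing paragraph --- $\pi$ \'etale, $S_E|_U$ a disjoint union of $n$ sheets, $\pi_*\scr{O}_{S_E}|_U\cong\scr{O}_U^n\cong\Gamma_E|_U$ via the idempotent frame --- is exactly this inverse in different clothing, so in the semisimple case the two arguments coincide. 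What your route buys is generality and robustness: the presentation by structure constants, the finiteness of $\pi$, coherence of the direct image, and the Nakayama step give a statement valid for any bundle of algebras provided one uses the (possibly non-reduced) $\operatorname{Specan}$ structure on $S_E$, whereas the paper's evaluation-into-$E^*$ picture only sees the reduced points and its proof quietly restricts to semisimple neighbourhoods (which is all that is ever used later). The cost is that you must import Houzel's finiteness and coherence results where the paper needs only the existence of an idempotent frame. Neither argument has a gap for the case the paper actually uses; just be aware that as literally stated, for a non-semisimple fibre the reduced subspace of $E^*$ would not recover $\Gamma_E$, so your non-reduced formulation is the one under which the general statement survives.
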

\begin{proof}
consider the sequence of maps
\begin{displaymath}
  \Gamma_E\longrightarrow p_*\mathscr{O}_{E^*}\longrightarrow \pi_*\mathscr{O}_{S_E},
\end{displaymath}
\begin{displaymath}
  X\longmapsto \widetilde{X}\longmapsto \widetilde{X}|_S
\end{displaymath}
where $p:E^*\rightarrow M$ is the canonical projection (we are
considering $S_E$ as a subspace of $E^*$; then $\pi$ is just the
restriction of $p$ to $S_E$), and
$\widetilde{X}:p^{-1}(U)=E^*|_U\rightarrow \comp$ is the map given by
$$\widetilde{X}(x,\varphi )=\varphi (X(x)).$$
The composite map
\begin{equation}\label{iso}
\Gamma_E\longrightarrow \pi_*\scr{O}_{S_E}
\end{equation}
is then easily seen to be an isomorphism of $\mathscr{O}_M$-algebras
(recall that $(x,\varphi )\in S_E$ if and only if $\varphi$ is an
algebra homomorphism).

The inverse can be described easily: Given a map
$\widetilde{f}:\pi^{-1}(U)\rightarrow \comp$, let
$X_{\widetilde{f}}\in \Gamma_E(U)$ be the local section defined as
follows: pick an $x\in U$ an assume that $U$ is semisimple (if it is
not, we can choose a smaller open neighborhood around $x$); let
$\{e_i\}$ be a local frame of idempotent sections for $E|_U$. Then
\begin{displaymath}
  X_{\widetilde{f}}(x)=\sum_i\widetilde{f}(x,\varphi_i)e_i(x),
\end{displaymath}
where $\varphi_i:E_x\rightarrow \comp$ is the algebra homomorphism
which verifies $\varphi_i(e_i(x))\neq 0$ (in fact, $\varphi_i
(e_i(x))=1$ as $\varphi_i(1)=1$). The assignment $\widetilde{f}\mapsto
X_{\widetilde{f}}$ is then the inverse of \eqref{iso}.
\end{proof}

Moreover, each summand $\scr{O}_{S,y}\otimes_{\scr{O}_{x_0}}\comp$ is
invariant under ths action of any multiplication operator, and thus it
is the space of generalized eigenvectors.

It holds the following result, which is in fact Housel's definition of
the spectral cover.

\begin{proposition}
Let $E\to M$ be a bundle of associative and commutative algebras. Then
\begin{enumerate}
\item The analytic spectrum $S_E$ represents the functor (which we
  denote with the same symbol) $S_E
  (N,f)=\operatorname{Hom}_{\scr{O}_N-\text{alg}}(f^*E,\comp)$ from
  spaces over $M$ to the category of sets (here $\comp$ means the
  trivial line bundle $N\times \comp$).
\item If $E_x$ is semisimple for each $x$, then $\pi :S_E\to M$ is a
  covering space.
\end{enumerate}
\end{proposition}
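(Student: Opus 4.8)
The plan is to deduce part (1) directly from Houzel's representability theorem recalled above, and to establish part (2) by producing an explicit local trivialisation of $\pi$ over semisimple open sets by means of local frames of idempotent sections.

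For (1) I would first note that since $E \to M$ is a finite rank vector bundle, its sheaf of sections $\Gamma_E$ is a coherent $\scr{O}_M$-module, hence an $\scr{O}_M$-algebra of finite presentation; Houzel's construction therefore applies and $\operatorname{Specan}\Gamma_E =: S_E$ represents the functor $S_{\Gamma_E}(N,f) = \operatorname{Hom}_{\scr{O}_N\text{-alg}}(f^*\Gamma_E, \scr{O}_N)$. It then remains only to identify this functor with the one in the statement. The pulled-back bundle $f^*E$ has sheaf of sections $f^*\Gamma_E$, and the trivial line bundle $N\times\comp$ has sheaf of sections $\scr{O}_N$; under these identifications a bundle map $f^*E \to N\times\comp$ over $N$ that is fibrewise a $\comp$-algebra homomorphism is the same datum as an $\scr{O}_N$-algebra homomorphism $f^*\Gamma_E \to \scr{O}_N$. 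Hence $S_E(N,f) = \operatorname{Hom}_{\scr{O}_N\text{-alg}}(f^*E, \comp)$ is naturally isomorphic to Houzel's functor, and $S_E$ represents it.

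For (2) the fibre over $x\in M$ is $\pi^{-1}(x) = \operatorname{Hom}_{\comp\text{-alg}}(E_x,\comp)$, the set of characters of $E_x$. When $E_x$ is semisimple the structure theorem for semisimple commutative $\comp$-algebras (as in Proposition~\ref{id-basis}) gives $E_x \cong \bigoplus_{i=1}^n \comp\,e_i$ with primitive orthogonal idempotents $e_1,\dots,e_n$; since any character $\varphi$ satisfies $\varphi(e_i)\in\{0,1\}$, $\varphi(e_i)\varphi(e_j)=0$ for $i\neq j$, and $\sum_i\varphi(e_i)=1$, the characters are exactly the $n$ projections $\varphi_i$ determined by $\varphi_i(e_j)=\delta_{ij}$. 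Thus $|\pi^{-1}(x)| = n = \operatorname{rank} E$, which is locally constant. To obtain local triviality I would fix $x_0$, choose a neighbourhood $U$ on which $E$ is semisimple together with a local frame $e_1,\dots,e_n$ of idempotent sections of $E|_U$ (such a frame exists, exactly as already used in the proof of Proposition~\ref{isom_1}), and write any section $a$ of $E|_U$ uniquely as $a=\sum_i c_i(a)\,e_i$ with $c_i(a)\in\scr{O}_U$. Defining $\sigma_i:U\to S_E$ by $\sigma_i(x)(a)=c_i(a)(x)$ gives $n$ holomorphic sections of $\pi$ with pairwise disjoint images, and by the fibrewise count they exhaust $\pi^{-1}(U)$. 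The resulting map $U\times\{1,\dots,n\}\to\pi^{-1}(U)$, $(x,i)\mapsto\sigma_i(x)$, is then a homeomorphism over $U$, exhibiting $\pi$ as a covering space.

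The step I expect to be the main obstacle is the existence of the holomorphic local frame of idempotent sections, which is the point where semisimplicity is genuinely used. If one does not wish to invoke it as in Proposition~\ref{isom_1}, the cleanest way to produce it is by holomorphic functional calculus: choose a section $a$ of $E|_U$ with $a(x_0)$ regular, i.e. with distinct eigenvalues $\lambda_1,\dots,\lambda_n$, shrink $U$ so that these eigenvalues remain distinct and enclosed by disjoint loops $\gamma_i$, and set $e_i = \frac{1}{2\pi i}\oint_{\gamma_i}(z-a)^{-1}\,dz$. These Riesz projections are holomorphic in $x$, idempotent, orthogonal, and sum to the unit, so they furnish the required frame; everything else then reduces to the elementary bookkeeping above.
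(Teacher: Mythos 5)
Your argument is correct, but it is worth noting that the paper itself offers no proof of this proposition at all: the text simply asserts that the statement ``is in fact Houzel's definition of the spectral cover'' and defers to \cite{houzel_gal2}. Your part (1) is essentially an unwinding of the representability statement already quoted in the paper (coherence of the sheaf of sections of a finite-rank bundle gives finite presentation, so Houzel applies, and the identification of fibrewise algebra maps $f^*E\to N\times\comp$ with $\scr{O}_N$-algebra maps on sheaves of sections is routine for locally free sheaves), so there you and the paper are on the same track. Part (2) is where you genuinely add content. The fibre count via characters of a semisimple commutative algebra (as in Proposition~\ref{id-basis}) and the local trivialisation by the sections $\sigma_i$ dual to an idempotent frame are correct; the one point to make fully explicit is why the bijection $U\times\{1,\dots,n\}\to\pi^{-1}(U)$ is open, which follows because the continuous function $\varphi\mapsto(\varphi(e_1(x)),\dots,\varphi(e_n(x)))$ separates the sheets into disjoint open sets. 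Your construction of the holomorphic idempotent frame by Riesz projections is the most valuable addition: the paper uses such frames repeatedly (e.g.\ in the proof of Proposition~\ref{isom_1}) but never establishes their existence, and the functional-calculus argument you sketch --- choosing a section whose value at $x_0$ acts with $n$ distinct eigenvalues and integrating the resolvent around disjoint contours --- is the standard and correct way to supply that missing step.
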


\medskip

\section{2-Vector spaces and 2-vector bundles}
\label{sec:2-vector-spaces}

\subsection{2-Vector Spaces}
\label{sec:2-vector-spaces-1}

We will now give an overview of the categorical analogues of vector
spaces and vector bundles. There are several definitions of 2-vector
space in the literature due, among others, to Kapranov-Voevodsky
\cite{kn:kv}, Baez-Crans \cite{baezcrans:2vector} and Elgueta
\cite{elgueta:2vector}. We will adopt the definition of 2-vector
spaces of Kapranov and Voevodsky. The references for our treatment of
monoidal categories are~\cite{maclane:_catwm, kelly:_enriched} 
 \begin{defi}
   \label{defi:1}
   A \emph{rig category} is a category ${\bf R}$ with two symmetric
   monoidal structures $({\bf R},\oplus ,{\bf 0})$ and $({\bf
     R},\otimes ,{\bf 1})$ together with distributivity natural
   isomorphisms
   $
     X\otimes (Y\oplus Z)\longrightarrow (X\otimes Y)\oplus (X\otimes
     Z) 
    $  
   and
   $
     (X\oplus Y)\otimes Z\longrightarrow (X\otimes Z)\oplus (Y\otimes Z)
   $
   verifying some coherence axioms which are detailed in
   \cite{laplaza:_coh1, kelly:_coh2}.
\end{defi}

An important example, which will be extensively used in what follows,
is the category $\tsf{Vect}$ of finite dimensional vector spaces over
$\comp$. The operations are given by direct sum (with ${\bf 0}=\{0\}$,
the trivial vector space) and tensor product (with ${\bf 1}\cong
\comp$). 
\begin{defi}
  \label{defi:2}
  Let ${\bf R}$ be a rig category. A \emph{left module category} over
  ${\bf R}$ is a monoidal category $({\bf M}, \oplus , {\bf 0})$
  together with an action (bifunctor)
  $$
  \otimes :{\bf R}\times {\bf M}\longrightarrow {\bf M}
  $$
  and natural isomorphisms
  \begin{displaymath}
    \begin{aligned}
    A\otimes (B\otimes X) &\longrightarrow (A\otimes B)\otimes
    X \\
    (A\oplus B)\otimes X &\longrightarrow (A\otimes X)\oplus
    (B\otimes X) \\
    A\otimes (X\oplus Y) &\longrightarrow (A\otimes X)\oplus
    (A\otimes Y) \\
  \end{aligned}
  \end{displaymath}
  \begin{displaymath}
    \tau_X = \tau :{\bf 1}\otimes X\longrightarrow X 
    \qquad \rho_A=\rho:A\otimes {\bf 0}
    \longrightarrow {\bf 0} \qquad \lambda_X=\lambda
    :{\bf 0}\otimes X\longrightarrow {\bf 0}
  \end{displaymath}
for any given objects
$A,B\in {\bf R}$ and $X,Y\in {\bf M}$, which are required to satisfy
coherence conditions analogous to the ones for a rig
category. \emph{Right module categories} are defined analogously.
\end{defi}

An ${\bf R}$-module functor between ${\bf R}$-modules ${\bf M}$ and
${\bf N}$ is a functor $F:{\bf M}\to {\bf N}$ such that 
\begin{eqnarray*}
  F(X\oplus Y)&\cong& F(X)\oplus F(Y) \\
  F(A\otimes X)&\cong& A\otimes F(X).
\end{eqnarray*}
The isomorphisms should be natural in $X$ and $Y$ in the first case
and natural in $A$ and $X$ in the second case.

Given $n\in \nat$, consider now the product category $\tsf{Vect}^n$;
its objects and morphisms are $n$-tuples of vector spaces and linear
transformations respectively. The $\tsf{Vect}$ module structure is
provided by the operations
\begin{displaymath}
  \begin{aligned}
  (V_1,\dots ,V_n)\oplus (W_1,\dots ,W_n) &= (V_1\oplus W_1,\dots
  ,V_n\oplus W_n), \\ 
  V\otimes (V_1,\dots ,V_n) &= (V\otimes V_1,\dots ,V\otimes V_n).\\
\end{aligned}
\end{displaymath}
Any object $(V_1,\dots ,V_n)$ can be decomposed, just like vectors in
euclidean $n$-space, in the following way
$$
(V_1,\dots ,V_n)=(V_1\otimes \comp_1)\oplus 
\cdots \oplus (V_n\otimes \comp_n),
$$
where $\comp_i$ is the tuple whose $i$-th entry is equal to $\comp$
and all others equal to the trivial vector space. Using this
decomposition any $\tsf{Vect}$-module functor can be determined on
objects by its values in each $\comp_i$,
\begin{equation}\label{module_functor}
  F(V_1,\dots ,V_n)\cong (V_1\otimes F(\comp_1))\oplus \cdots \oplus (V_n\otimes F(\comp_n)).
\end{equation}

We can define some extra structure in the category of
$\mathbf{R}$-modules by introducing morphisms between morphisms or
2-arrows. Given two ${\bf R}$-modules ${\bf M}$ and ${\bf N}$ and
module functors $F,G:{\bf M}\to {\bf N}$, we define a 2-morphism
$\theta :F\to G$ as a natural transformation. This provides the
category of ${\bf R}$-modules with a structure of 2-category.

\begin{defi}
  \label{2-vs-defi}
  A $\tsf{Vect}$-module category ${\bf V}$ is called a \emph{2-vector
    space} if it is $\tsf{Vect}$-module equivalent to the
  product $\tsf{Vect}^n$ for some natural number $n$. In other words,
  ${\bf V}$ is a 2-vector space if and only if there exists a natural
  number $n$ and a $\tsf{Vect}$-module functor ${\bf V}\to
  \tsf{Vect}^n$ which is also an equivalence of categories.
\end{defi}

The proof of the following theorem can be found in \cite{kn:kv}.
\begin{theorem}\label{prop-10}
  If $F: \tsf{Vect}^n \rightarrow \tsf{Vect}^m$ is an equivalence,
  then $n =m$.
\end{theorem}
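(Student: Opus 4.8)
The plan is to extract from each 2-vector space $\tsf{Vect}^n$ a categorical invariant equal to $n$ and preserved under equivalence, namely the number of isomorphism classes of indecomposable objects. First I would classify the indecomposable objects of $\tsf{Vect}^n$. An object $(V_1, \dots, V_n)$ is the $\oplus$-biproduct of $\sum_i \dim V_i$ copies drawn from the list $\comp_1, \dots, \comp_n$, so it is indecomposable (not isomorphic to $A \oplus B$ with both summands nonzero) if and only if exactly one $V_i$ is one-dimensional and the rest vanish; equivalently, if and only if it is isomorphic to one of $\comp_1, \dots, \comp_n$. A direct computation in the product category shows $\mathrm{Hom}(\comp_i, \comp_j) \cong \comp$ when $i = j$ and $0$ otherwise, so these $n$ objects are pairwise non-isomorphic and each has endomorphism algebra $\comp$. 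Thus $\tsf{Vect}^n$ has exactly $n$ isomorphism classes of indecomposable objects.

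Next I would observe that this count is an invariant of the underlying category. The monoidal structure $\oplus$ is the categorical biproduct (simultaneously product and coproduct) in $\tsf{Vect}^n$, and any equivalence of categories preserves finite products and coproducts, hence biproducts; consequently it carries indecomposable objects to indecomposable objects and preserves the zero object. Since an equivalence is in particular essentially surjective and fully faithful, it induces a bijection between isomorphism classes of objects, and this restricts to a bijection between the classes of indecomposables. Applying this to $F \colon \tsf{Vect}^n \to \tsf{Vect}^m$ immediately yields $n = m$.

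If one prefers to use the $\tsf{Vect}$-module structure directly, I would instead argue as follows. By the decomposition \eqref{module_functor}, $F$ is determined on objects by the images $F(\comp_i)$, each of which, being the image of an indecomposable object with endomorphism algebra $\comp$ under an equivalence, is isomorphic to some $\comp_{\sigma(i)}$ in $\tsf{Vect}^m$. Full faithfulness gives $\mathrm{Hom}(\comp_{\sigma(i)}, \comp_{\sigma(j)}) \cong \mathrm{Hom}(\comp_i, \comp_j)$, which forces $\sigma$ to be injective and so $n \le m$; applying the same reasoning to a quasi-inverse of $F$ gives $m \le n$, whence $n = m$.

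The only genuinely substantive step is the classification of indecomposables in $\tsf{Vect}^n$ together with the verification that the relevant structure --- the biproduct $\oplus$ and the hom-spaces between the $\comp_i$ --- is intrinsic to the category and therefore transported by the equivalence. Everything else is the formal observation that equivalences induce bijections on isomorphism classes of objects. The main thing to be careful about is the convention that the zero object is excluded from the indecomposables, so that the count is exactly $n$ rather than $n+1$.
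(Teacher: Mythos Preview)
The paper does not actually supply a proof of this theorem; it simply refers the reader to \cite{kn:kv}. So there is no in-paper argument to compare against, and your task reduces to giving a self-contained proof.

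Your argument is correct. The key observation---that $\tsf{Vect}^n$ has exactly $n$ isomorphism classes of nonzero indecomposable objects, namely the $\comp_i$, and that any equivalence of categories preserves biproducts (hence indecomposability) and induces a bijection on isomorphism classes---is sound and entirely standard. The alternative route via full faithfulness on the $\comp_i$ is equally valid and perhaps slightly more in the spirit of the surrounding text, since it connects directly to the matrix description of $\tsf{Vect}$-module functors that the paper develops immediately afterward. Either way, note that your first argument uses only the underlying additive-category structure, so it proves something marginally stronger than what is strictly needed: the rank is invariant even under a plain equivalence of categories, not just a $\tsf{Vect}$-module equivalence. Your caveat about excluding the zero object from the indecomposables is the right thing to flag; with that convention in place the count is exactly $n$ and the argument goes through without further comment.
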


By the previous result, the number $n$ in definition~\ref{2-vs-defi}
is well defined and it is called the \emph{rank} of the 2-vector space
${\bf V}$. The 2-vector space $\tsf{Vect}^n$ plays, in this
categorical setting, the same role that the space $\comp^n$ plays in
linear algebra.  We will denote by $2\tsf{Vect}$ the (2-)category of
2-vector spaces of finite rank. Morphisms between 2-vector spaces can
be characterised in a similar way as linear maps between vector
spaces. To see this, consider first an $m\times n$ matrix
\begin{displaymath}
  A=\begin{pmatrix}
  V_{11} & \cdots & V_{1n} \\
  \vdots & \ddots & \vdots \\
  V_{m1} & \cdots & V_{mn} \\
\end{pmatrix}.
\end{displaymath}
where the entries $V_{ij}$ are $\comp$ vector spaces of finite
dimension.  If $V:=(V_1,\dots ,V_n) \in
\tsf{Vect}^n$, then the product
\begin{displaymath}
  AV=\left (\sum_jV_{1j}\otimes V_j,\dots ,\sum_jV_{mj}\otimes V_j\right )
\end{displaymath}
is a well defined object of the category $\tsf{Vect}^m$; given now a
map $f:=(f_1,\dots ,f_n):V\to W$, where $W:=(W_1,\dots ,W_n)$, there
exists an induced map $Af:AV\to AW$ given by
\begin{displaymath}
  Af=\left (\sum_j\opnm{id}_{1j}\otimes 
    f_j,\dots ,\sum_j\opnm{id}_{mj}\otimes f_j\right ),
\end{displaymath}
where $\opnm{id}_{ij}:V_{ij}\to V_{ij}$ is the identity map. Moreover,
the correspondence
\begin{displaymath}
  \begin{aligned}
  V &\mapsto AV \\
  f &\mapsto Af \\
\end{aligned}
\end{displaymath}
is a $\tsf{Vect}$-module functor $\tsf{Vect}^n\to \tsf{Vect}^m$.
Composition of such morphisms is given by usual multiplication of
matrices, and two matrices $A=(V_{ij})$ and $B=(W_{ij})$ of the same
size are naturally isomorphic if and only if $V_{ij}$ is isomorphic to
$W_{ij}$ for each $i,j$.

Note that equation \eqref{module_functor} readily implies that a
morphism $F:\tsf{Vect}^n\to \tsf{Vect}^m$ is naturally isomorphic to
the $m\times n$ matrix with columns given by $F(\comp_1),\dots
,F(\comp_n)$. For a morphism $F:{\bf V}\to {\bf W}$ between 2-vector
spaces, if $u:{\bf V}\to \tsf{Vect}^n$ and $v:{\bf W}\to \tsf{Vect}^m$
are equivalences with inverses $\widetilde{u}$ and $\widetilde{v}$
respectively, then $vF\widetilde{u}$ is naturally isomorphic to a
matrix $A$, and hence $F$ can be represented as $\widetilde{v}Au$ for
some matrix $A$.

Let now $A=(V_{ij})$ be an $n\times n$ matrix which is an equivalence
$\tsf{Vect}^n\to \tsf{Vect}^n$, and let $B=(W_{ij})$ represent the
inverse, up to equivalence. As the identity morphism of $\tsf{Vect}^n$
can be represented by the ``scalar'' matrix $\comp \opnm{Id}$, we have
natural isomorphisms $AB\cong \comp \opnm{Id} \cong BA$. Taking
dimensions coordinatewise we can form the dimension matrices
$d(A):=(\dim V_{ij})$ and $d(B)=(\dim W_{ij})$. Then, as the dimension
matrices has natural entries, necessarily $\det d(A)=\pm 1$. But not
every matrix satisfying this property is in fact an equivalence, and
this is the main problem behind the short supply of equivalences
$\tsf{Vect}^n\to \tsf{Vect}^n$. For example, take $n=2$ and consider
the morphisms given by the matrices
\begin{displaymath}
  A_k=
  \begin{pmatrix}
    \comp & \comp \\
    \comp^{k-1} & \comp^k \\
  \end{pmatrix}.
\end{displaymath}
Then $d(A_k)=\left (\begin{smallmatrix} 1 & 1 \\ k-1 & k
    \\ \end{smallmatrix} \right )$ and $\det d(A_k)=1$. But, no matter
which $k\in \nat$ we choose, there is no inverse for $A_k$, and hence
it is not an equivalence of 2-vector spaces. The example below
explicitly shows the scarcity of equivalences for $n=2$.

\begin{ej}
  Let $A=(V_{ij})$ be an autoequivalence of $\tsf{Vect}^2$ and
  $B=(W_{ij})$ and inverse. Let $a_{ij}:=\dim V_{ij}$, $b_{ij}:=\dim
  W_{ij}$ and then $d(A)=(a_{ij})$ and $d(B)=(b_{ij})$. From the
  natural isomorphisms $AB\cong \comp \opnm{Id}\cong BA$ we deduce
  that the following equations must hold
  \begin{equation}\label{eq_ej_2}
    a_{i1}b_{1j}+a_{i2}b_{2j}=\delta_{ij},
  \end{equation}
  for $i,j=1,2$. In particular, the matrix $d(B)$ is the inverse of
  the matrix $d(A)$; hence
$$d(B)=\varepsilon \begin{pmatrix}
  a_{22} & -a_{12} \\
  -a_{21} & a_{11} \\
\end{pmatrix},
$$
where $\varepsilon =\pm 1$ is the determinant of $d(A)$. If
$\varepsilon =1$, then necessarily $a_{12}=a_{21}=0$; this fact
together with equation \eqref{eq_ej_2} yields
$$a_{ii}b_{ii}=1$$
for $i=1,2$, and then $a_{11}=a_{22}=1$. For $\varepsilon =-1$ we
obtain $a_{ii}=0$ for $i=1,2$ and $a_{12}=a_{21}=1$. Thus, the only
equivalences $\tsf{Vect}^2\to \tsf{Vect}^2$ (up to isomorphism) have
the form
$$\begin{pmatrix}
  \comp & 0 \\
  0 & \comp \\
\end{pmatrix}\quad , \quad
\begin{pmatrix}
  0 & \comp \\
  \comp & 0 \\
\end{pmatrix}.
$$
\end{ej}


\subsubsection{2-Vector Bundles}

The notion of 2-vector bundle (of rank 1) was introduced by Brylinski
in \cite{brylinski:_catvb} as a way of describing some cohomology
classes associated to symplectic manifolds in terms of 2-vector spaces
(as an alternative to gerbes). His definition resembles the definition
of the sheaf of sections of a vector bundle. Another notion of
2-vector bundle was proposed by Baas, Dundas and Rognes (BDR) in
\cite{bdr:_2vb}.  Their definition resembles the cocycles for a vector
bundles.

We shall give here another definition of 2-vector bundles which is a
generalisation to higher ranks of Brylinski's definition. We shall
define some geometric 2-vector bundles naturally associated to a
Frobenius manifold.  In section~\ref{sec:maxim-categ-bran} we shall
show that the 2-vector bundles constructed in this way are 2-vector
bundles in the sense of Baas, Dundas and Rognes.

If ${\bf R}\to{\bf B}\leftarrow {\bf M}$ are fibred categories or
stacks over ${\bf B}$, then an action of ${\bf R}$ on ${\bf M}$ is a
morphism of fibred categories ${\bf R}\times_{{\bf B}}{\bf
  M}\longrightarrow {\bf M}$. If ${\bf M}$, has some extra structure
we shall ask the action to preserve such structure. For instance, if
the category ${\bf M}$ is additive, then we should have a natural
distributivity isomorphism $A\cdot (X\oplus Y)\cong A\cdot X\oplus
A\cdot Y$, plus other properties involving ${\bf 1}$ and ${\bf 0}$.

We shall write $[\tsf{Vect}, M]$ for the fibred category that
associated to each open set $U \subset M$ the category of vector
bundles over $U$.  The definition of 2-vector bundle given by
Brylinski in \cite{brylinski:_catvb} reads as follows.
\begin{defi}
  \label{defi:3}
  Let $M$ be a manifold and let $\tsf{Op}(M)$ denote the category of
  open sets of $M$. A fibred category ${\bf M}\to \tsf{Op}(M)$ is said
  to be a \emph{2-vector bundle of rank $1$ over $M$} if the following
  conditions hold:
  \begin{enumerate}
  \item For each open subset $U\subset M$, the fibre ${\bf M}(U)$ is
    an additive category.
  \item There exists an action $(E,X)\mapsto E\cdot X$ of the (fibred)
    category $[\tsf{Vect},M]$ on ${\bf M}$.
  \item Given any $x\in M$, there exists an open neighbourhood $U$of $x$
    and an object $X_U\in {\bf M}(U)$ (called a \emph{local
      generator}) such that the functor $\tsf{Vect}(U)\to {\bf M}(U)$
    given by $E\mapsto E\cdot X_U$ is an equivalence of categories,
    where $\cdot$ denotes the action.
  \item ${\bf M}\to \tsf{Op}(M)$ is a stack.
  \end{enumerate}
\end{defi}

We now extend the definition to higher ranks. For some technical
reasons instead of the (fibred) category of vector bundles, we shall
consider the (fibred) category of locally-free sheaves over $M$.  We
shall write $\tsf{LF}_{\scr{O}_M}$ for the (fibred) category of
locally free $\scr{O}_M$-modules on ${\bf M}$.
\begin{defi}\label{2bundle_n}
  A fibred category ${\bf M}\to \tsf{Op}(M)$ is said to be a
  \emph{2-vector bundle of rank $n$ over $M$} if and only if the
  following conditions hold:
  \begin{enumerate}
  \item For each open subset $U\subset M$, the fibre ${\bf M}(U)$ is
    an additive category.
  \item There exists an action $(\scr{M},X)\mapsto \scr{M}\cdot X$ of
    $\tsf{LF}_{\scr{O}_M}$ on $\mathbf{M}$.
  \item Given any $x\in M$, there exists an open neighbourhood $U\ni x$
    and objects $X_1,\dots X_n$ in ${\bf M}(U)$ (called \emph{local
      generators}) such that the functor $\tsf{LF}^n_{\scr{O}_U}\to
    {\bf M}(U)$ given by
    \begin{displaymath}
      (\scr{M}_1,\dots ,\scr{M}_n)
      \longmapsto \scr{M}_1\cdot X_1
      \oplus \cdots \oplus \scr{M}_n\cdot X_n       
    \end{displaymath}
is an equivalence of categories.
\item ${\bf M}\to \tsf{Op}(M)$ is a stack.
\end{enumerate}
\end{defi}
\begin{obs}
  Note that the local equivalence of the previous definition preserves
  both the action and the additive structure; that is, if $\Phi$ is
  such an equivalence, $\scr{L}\in \tsf{LF}_{\scr{O}_U}$ and
  $\scr{M},\scr{N} \in \tsf{LF}^n_{\scr{O}_U}$, then
  \begin{displaymath}
    \Phi \left ((\scr{L}\otimes \scr{M})\oplus \scr{N}\right )
    \cong \left (\scr{L}\otimes \Phi (\scr{M})\right )
    \oplus \Phi (\scr{N}).
  \end{displaymath}
\end{obs}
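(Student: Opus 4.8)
The plan is to prove the isomorphism by an explicit chain of natural isomorphisms, since $\Phi$ is not an abstract equivalence but the concrete functor $(\scr{M}_1,\dots,\scr{M}_n)\mapsto\bigoplus_{i=1}^n\scr{M}_i\cdot X_i$ of condition~(3) in Definition~\ref{2bundle_n}. The only ingredients needed are the structural isomorphisms that the $\tsf{LF}_{\scr{O}_M}$-action on ${\bf M}$ carries, namely associativity $(\scr{A}\otimes\scr{B})\cdot X\cong\scr{A}\cdot(\scr{B}\cdot X)$, distributivity in the module variable $(\scr{A}\oplus\scr{B})\cdot X\cong\scr{A}\cdot X\oplus\scr{B}\cdot X$, and distributivity in the ${\bf M}$-variable $\scr{A}\cdot(X\oplus Y)\cong\scr{A}\cdot X\oplus\scr{A}\cdot Y$; these are exactly the constraints of a module category as in Definition~\ref{defi:2}. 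Throughout I write $\cdot$ for the action of a single sheaf, which is the operation denoted $\otimes$ on the right-hand side of the claim, and I recall that on $\tsf{LF}^n_{\scr{O}_U}$ both $\oplus$ and $\scr{L}\otimes(-)$ are computed coordinatewise.

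First I would unravel the left-hand side. Writing $\scr{M}=(\scr{M}_1,\dots,\scr{M}_n)$ and $\scr{N}=(\scr{N}_1,\dots,\scr{N}_n)$, the $i$-th entry of $(\scr{L}\otimes\scr{M})\oplus\scr{N}$ is $(\scr{L}\otimes\scr{M}_i)\oplus\scr{N}_i$, so the definition of $\Phi$ gives
\[
\Phi\big((\scr{L}\otimes\scr{M})\oplus\scr{N}\big)
=\bigoplus_{i=1}^n\big((\scr{L}\otimes\scr{M}_i)\oplus\scr{N}_i\big)\cdot X_i .
\]
Next I would apply distributivity in the module variable to split the $i$-th summand as $(\scr{L}\otimes\scr{M}_i)\cdot X_i\oplus\scr{N}_i\cdot X_i$, and then associativity to rewrite $(\scr{L}\otimes\scr{M}_i)\cdot X_i\cong\scr{L}\cdot(\scr{M}_i\cdot X_i)$. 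Reindexing the finite direct sum by commutativity and associativity of $\oplus$ then separates the terms that carry $\scr{L}$ from those that do not:
\[
\bigoplus_{i=1}^n\Big(\scr{L}\cdot(\scr{M}_i\cdot X_i)\oplus\scr{N}_i\cdot X_i\Big)
\cong\Big(\bigoplus_{i=1}^n\scr{L}\cdot(\scr{M}_i\cdot X_i)\Big)\oplus\Big(\bigoplus_{i=1}^n\scr{N}_i\cdot X_i\Big).
\]

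To finish I would identify the second factor with $\Phi(\scr{N})$ by definition, and pull $\scr{L}$ through the first factor using distributivity of $\scr{L}\cdot(-)$ over $\oplus$, obtaining $\bigoplus_i\scr{L}\cdot(\scr{M}_i\cdot X_i)\cong\scr{L}\cdot\big(\bigoplus_i\scr{M}_i\cdot X_i\big)=\scr{L}\cdot\Phi(\scr{M})$; this is precisely $(\scr{L}\otimes\Phi(\scr{M}))\oplus\Phi(\scr{N})$, the right-hand side of the claim. The computation is routine, and the only point deserving care is coherence: one must know that this composite of constraints is natural in $\scr{L},\scr{M},\scr{N}$ and independent of the order in which the distributivity and associativity isomorphisms are applied. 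This is exactly what the coherence axioms imposed on a module category in Definition~\ref{defi:2} guarantee (the analogues of Mac Lane's pentagon and triangle), so the main obstacle is not a genuine difficulty but simply the bookkeeping of invoking those coherences correctly.
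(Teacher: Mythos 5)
Your argument is correct: the paper states this remark without any proof, and your chain of structural isomorphisms (unravelling $\Phi$ coordinatewise, distributing the action over $\oplus$ in both variables, using associativity of the action to rewrite $(\scr{L}\otimes\scr{M}_i)\cdot X_i\cong\scr{L}\cdot(\scr{M}_i\cdot X_i)$, and regrouping the finite direct sum) is exactly the verification the authors leave implicit. Your closing appeal to the coherence constraints of Definition~\ref{defi:2} to guarantee naturality and independence of the order of application is the appropriate way to finish.
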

\begin{ej}
  Let $M=\{x\}$ be a one-point space. A 2-vector bundle of rank $n$
  over $M$ is then an additive category ${\bf M}$ equivalent to the
  category $\tsf{LF}^n_{\scr{O}}$. As $\scr{O}(M)\cong \comp$, then
  ${\bf M}$ is equivalent to the n-fold product of the category of
  $\comp$-modules; that is, it is a 2-vector space (of rank $n$).
\end{ej}
The following result shall be useful later.
\begin{proposition}
  Let $\Phi :\tsf{LF}^n_{\scr{O}_M}\to \tsf{LF}^m_{\scr{O}_M}$ be a
  functor which preserves the action and the additive structure. Then
  there exists an $m\times n$ matrix $A:=(\scr{M}_{ij})$ of
  $\scr{O}_M$-modules such that $\Phi$ is naturally isomorphic to
  multiplication by $A$.
\end{proposition}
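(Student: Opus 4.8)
The plan is to mimic, in the sheaf setting, the argument that produced equation~\eqref{module_functor} for $\tsf{Vect}$-module functors. For each $j = 1,\dots,n$ let $\mathbf{e}_j \in \tsf{LF}^n_{\scr{O}_M}$ denote the tuple whose $j$-th entry is the structure sheaf $\scr{O}_M$ and whose remaining entries are the zero sheaf; these are the analogues of the objects $\comp_i$ used in the vector-space case. The key observation is that any object $\scr{M} = (\scr{M}_1,\dots,\scr{M}_n)$ admits the canonical decomposition $\scr{M} \cong \bigoplus_{j=1}^n \scr{M}_j \cdot \mathbf{e}_j$, where the action of $\tsf{LF}_{\scr{O}_M}$ is the diagonal one $\scr{L}\cdot (X_1,\dots,X_n) = (\scr{L}\otimes X_1,\dots,\scr{L}\otimes X_n)$. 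I would stress that, unlike the vector-space case, one does \emph{not} need to further resolve each $\scr{M}_j$ into copies of $\scr{O}_M$ (a locally free sheaf is only locally trivial): the whole argument only uses the module action to factor $\scr{M}_j$ out of the generator $\mathbf{e}_j$, so no global triviality is required.

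First I would define the matrix $A = (\scr{M}_{ij})$ by declaring its $j$-th column to be $\Phi(\mathbf{e}_j)$; that is, writing $\Phi(\mathbf{e}_j) = (\scr{M}_{1j},\dots,\scr{M}_{mj}) \in \tsf{LF}^m_{\scr{O}_M}$, I set the $(i,j)$ entry to be $\scr{M}_{ij}$. Each entry is a genuine locally free $\scr{O}_M$-module, since $\Phi$ takes values in $\tsf{LF}^m_{\scr{O}_M}$, so $A$ is a matrix of the required type.

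Next I would apply $\Phi$ to the decomposition above and invoke the two hypotheses on $\Phi$. Because $\Phi$ preserves the additive structure and the action, one obtains isomorphisms
\begin{displaymath}
  \Phi(\scr{M}) \cong \bigoplus_{j=1}^n \Phi(\scr{M}_j \cdot \mathbf{e}_j)
  \cong \bigoplus_{j=1}^n \scr{M}_j \cdot \Phi(\mathbf{e}_j).
\end{displaymath}
Reading off the $k$-th coordinate of the right-hand side, and using the diagonal form of the action once more, yields $\bigoplus_j \scr{M}_{kj}\otimes \scr{M}_j$, which is precisely the $k$-th coordinate of the product $A\scr{M}$ in the evident sheaf analogue of the matrix multiplication defined earlier for $\tsf{Vect}$. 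Hence $\Phi(\scr{M})$ and $A\scr{M}$ agree up to isomorphism object by object.

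The remaining, and genuinely delicate, step is to upgrade this family of object-wise isomorphisms to a natural isomorphism of functors $\Phi \cong A\cdot(-)$; this is where I expect the main work to lie. The isomorphisms above are assembled from the coherence isomorphisms witnessing that $\Phi$ preserves the action and the direct sum, and by the definition of an $\scr{O}_M$-module functor (following Definition~\ref{defi:2}) these are required to be natural. Thus for any morphism $f = (f_1,\dots,f_n)\colon \scr{M}\to\scr{N}$ the relevant naturality squares commute, so that $\Phi(f)$ corresponds under our isomorphisms to $Af$. The only real obstacle is bookkeeping: one must verify that the decomposition isomorphism $\scr{M}\cong\bigoplus_j \scr{M}_j\cdot\mathbf{e}_j$ is itself natural in $\scr{M}$ (it is, being coordinate-wise the identity) and that the associativity and distributivity constraints of the module structure are mutually compatible, so that the composite isomorphism is independent of the order in which the summands are processed. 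Granting this coherence, the object-wise isomorphisms fit together into the desired natural isomorphism, which completes the proof.
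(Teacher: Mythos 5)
Your proof is correct and is essentially the argument the paper intends: the paper simply states that ``the proof is completely analogous to the one for 2-vector spaces,'' and your write-up is exactly that analogy carried out — decomposing $(\scr{M}_1,\dots,\scr{M}_n)$ over the generators $\mathbf{e}_j$ as in equation~\eqref{module_functor}, defining $A$ by the columns $\Phi(\mathbf{e}_j)$, and using preservation of the action and the direct sum. Your observation that no global trivialization of the $\scr{M}_j$ is needed is a useful clarification, but the route is the same.
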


The proof is completely analogous to the one for 2-vector
spaces. Moreover, this kind of morphisms share with 2-vector spaces
the same shortage of equivalences.

We shall now introduce Baas-Dundas-Rognes (BDR) 2-vector bundles -- 
see~\cite{bdr:_2vb} for more details.
\begin{defi}
  \label{bdr_2bundle}
  Let $A$ be a poset and let $\mathfrak{U}=\{U_{\alpha }\}_{\alpha \in
    A}$ be an ordered open cover of a topological space $M$ by open
  subsets. A \emph{Bass-Dundas-Rognes 2-vector bundle} (\emph{BDR
    2-vector bundle} for short) is a law that assigns to each pair
  $\alpha, \beta \in A$ a matrix $E^{\alpha \beta}:=(E_{ij}^{\alpha
    \beta})$ of (constant rank) vector bundles over $U_\alpha \cap
  U_\beta =U_{\alpha \beta}$ (for each $\alpha < \beta$) subject to
  the following conditions:
  \begin{enumerate}
  \item $\det \left (\opnm{rk}E_{ij}^{\alpha \beta}\right )=\pm 1$.
  \item For $\alpha <\beta <\gamma$ in $A$ and $U_{\alpha \beta \gamma
    }\neq \emptyset$, we have isomorphisms
    \begin{displaymath}
      \phi^{\alpha \beta \gamma}_{ik}:
      \bigoplus_jE_{ij}^{\alpha \beta}\otimes E_{jk}^{\beta \gamma}
      \stackrel{\cong}{\longrightarrow} E_{ik}^{\alpha \gamma}.
    \end{displaymath}
    As for morphisms of 2-vector spaces, this condition can also be
    expressed in matrix form $\phi^{\alpha \beta \gamma}:E^{\alpha
      \beta}E^{\beta \gamma }\cong E^{\alpha \gamma }$.
  \item For $\alpha <\beta <\gamma <\delta$ with $U_{\alpha \beta \gamma
      \delta}\neq \emptyset$, the following diagram of bundles over
    $U_{\alpha \beta \gamma \delta}$ should commute
    \begin{displaymath}
      \xymatrix{
        E^{\alpha \beta}\otimes (E^{\beta \gamma } \otimes E^{\gamma \delta }) 
        \ar[rr] \ar[d] & 
        & (E^{\alpha \beta } \otimes E^{\beta \gamma })\otimes E^{\gamma \delta } 
        \ar[d] \\
        E^{\alpha \beta }\otimes E^{\beta \delta } \ar[r] & E^{\alpha \delta } &
        E^{\alpha \gamma }\otimes  E^{\gamma \delta }, \ar[l] }
    \end{displaymath}
    where the top arrow is the associativity isomorphism derived from the
    associativity of the tensor product of vector bundles and the other
    arrows are defined from the isomorphisms of the previous item.
  \end{enumerate}
\end{defi}

\subsection{Azumaya algebras and twisted vector bundles}
\label{sec:twist-vect-bundl}

In this section we will introduce some basic material regarding
Azumaya algebras, as well as an introduction to twisted vector
bundles. The former are strongly related to the latter, and this
relationship will also appear later in section~\ref{sec:maxim-categ-bran}.  The
treatment of twisted bundles is mainly based on~\cite{karoubi:twisted_vector}.

\subsubsection{Azumaya Algebras}
\label{subsec_azumaya}

If $\mathbb{F}$ is a field (which we assume to have characteristic
equal to zero), a \emph{central simple algebra} over $\mathbb{F}$ is a
simple (associative) algebra with center equal to
$\mathbb{F}$. Replacing $\mathbb{F}$ with a commutative local ring $R$
leads to the notion of \emph{Azumaya algebra}; that is, an associative
$R$-algebra $A$ is an Azumaya algebra if and only if there exists some
$k\in \nat$ such that $A\cong R^k$ as $R$-modules and also the algebra
homomorphism $\varphi :A\otimes_RA^{\circ }\to \opnm{End}_R(A)\cong
\opnm{M}_k(A)$ given by $ \varphi (x\otimes y)(z)=xyz$ is an
isomorphism, where $A^{\circ}$ is the algebra with underlying set $A$
and operation given by $x\cdot y=yx$ (the right hand side is
multiplication in $A$). Auslander and Goldman
\cite{auslander_goldman} generalized this definition to include any
commutative (not necessarily local) base ring.
\begin{defi}\label{def:10}
  An \emph{Azumaya algebra} over $(M,\scr{O}_M)$ is a coherent sheaf
  of $\scr{O}_M$-algebras locally isomorphic to the sheaf
  $\operatorname{M}_k(\scr{O}_M)$.
\end{defi}
\begin{obs}
  \label{obs:az}
  By Proposition 2.1 (b) of~\cite{milne80:_etale_cohom}, see also
  section 1 of~\cite{grothendieck68:_le_group_de_brauer_i}, an Azumaya
  algebra over $(M,\scr{O}_M)$ is a locally free sheaf of algebras
  such that the fibres are $\operatorname{M}_k(\comp)$. 
\end{obs}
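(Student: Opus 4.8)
The plan is to derive the statement directly from Definition~\ref{def:10} and the cited Proposition~2.1(b) of~\cite{milne80:_etale_cohom}, using the standing hypothesis that the reduced field $k_{x}$ equals $\comp$ at every $x \in M$. The one genuinely new input is the cited proposition; everything else amounts to unwinding definitions and exploiting that $\comp$ is algebraically closed.

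First I would dispose of the easy direction. If $\mathcal{A}$ is Azumaya, then by definition there is a cover of $M$ on whose members $\mathcal{A}|_{U} \cong \operatorname{M}_{k}(\scr{O}_{U})$ as sheaves of algebras. As an $\scr{O}_{U}$-module the target is the free module $\scr{O}_{U}^{\,k^{2}}$ spanned by the matrix entries, so $\mathcal{A}$ is locally free of rank $k^{2}$. Applying the fibre functor $-\otimes_{\scr{O}_{M,x}}k_{x}$ to this local isomorphism, and using that forming $k\times k$ matrices commutes with base change in the coefficient ring, gives
\[
  \mathcal{A}(x)\;\cong\;\operatorname{M}_{k}(\scr{O}_{M,x})\otimes_{\scr{O}_{M,x}}k_{x}\;\cong\;\operatorname{M}_{k}(k_{x})\;=\;\operatorname{M}_{k}(\comp),
\]
the last step by the hypothesis $k_{x}=\comp$.

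The substance lies in the converse, which is exactly the cited result. Proposition~2.1(b) of~\cite{milne80:_etale_cohom} characterises the Azumaya algebras among coherent sheaves of $\scr{O}_{M}$-algebras as those that are locally free over $\scr{O}_{M}$ and whose fibres $\mathcal{A}(x)$ are central simple over the residue field $k_{x}$. Since $k_{x}=\comp$ and $\comp$ is algebraically closed, Artin--Wedderburn identifies the finite-dimensional central simple $\comp$-algebras with the full matrix algebras $\operatorname{M}_{k}(\comp)$ (the only finite-dimensional $\comp$-division algebra being $\comp$ itself), and conversely each $\operatorname{M}_{k}(\comp)$ is central simple; so over $\comp$ the conditions ``central simple fibres'' and ``fibres $\operatorname{M}_{k}(\comp)$'' coincide, and Milne's criterion then reads exactly as the assertion of the remark. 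The main point I would want to check is a compatibility: that reduction modulo the maximal ideal sends the local algebra isomorphism to an algebra (not merely module) isomorphism on fibres --- immediate from functoriality of the matrix construction --- and that the sense of `locally' in Definition~\ref{def:10} matches the local triviality produced by the cited proposition. I regard the latter as the only delicate bookkeeping, and in this split, algebraically closed setting I do not expect it to present a genuine obstacle.
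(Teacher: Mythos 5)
The paper gives no proof here---the remark is purely a citation of Milne's Proposition 2.1(b)---and your elaboration is the intended reading: the forward direction follows from Definition~\ref{def:10} by passing to fibres, and the converse is Milne's criterion plus Artin--Wedderburn over the algebraically closed field $\comp$. The local-triviality compatibility you flag is indeed harmless in this manifold setting, since the bundle of algebra isomorphisms with $\operatorname{M}_k(\comp)$ is a principal $\operatorname{PGL}_k(\comp)$-bundle and therefore admits local sections.
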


If $E \to M$ is a vector bundle, the sheaf of sections of 
$\mathrm{End}(E)$ is an Azumaya algebra. Not every Azumaya algebra has
this form. However the situation is different if one considers twisted
vector bundles.

\subsubsection{Twisted vector bundles}
\label{sec:twist-vect-bundl-1}

Twisted vector bundles can be thought of as a model for twisted
K-theory \cite{atiyah-segal:twisted_k}, just as vector bundles are
models of topological K-theory.
\begin{defi}
\label{def:11}
A \emph{twisted vector bundle} $\mathbb{E}$ over $M$ is a tuple
\begin{displaymath}
  \mathbb{E}=(\mathfrak{U},U_i\times V,g_{ij},\lambda_{ijk})
\end{displaymath}
consisting of the following data:
\begin{enumerate}
\item An open cover $\mathfrak{U}=\{U_i\}$ of $M$.
\item A (trivial) vector bundle $U_i\times V$ over each $U_i\in
  \mathfrak{U}$, where $V$ is a finite dimensional complex vector
  space (which shall usually be taken to be complex $n$-space).
\item Two families of maps $g_{ij}:U_{ij}\to \operatorname{GL}(V)$ and
  $\lambda_{ijk}\in \scr{O}(U_{ijk})$ such that $\lambda
  :=(\lambda_{ijk})$ is a $\check{\text{C}}$ech 2-cocycle, each map
  $\lambda_{ijk}$ takes values in $\comp^\times$ and
  \begin{displaymath}
    g_{ii}=1 \quad ,\quad g_{ji}=g_{ij}^{-1} \quad , 
    \quad g_{ij}g_{jk}=\lambda_{ijk}g_{ik} 
  \end{displaymath}
over $U_{ijk}$.
\end{enumerate}
\end{defi}

Two twisted bundles $\mathbb{E}=(\mathfrak{U},U_i\times
V,g_{ij},\lambda_{ijk})$ and $\mathbb{F}=(\mathfrak{V},V_r\times
V,f_{rs},\mu_{rst})$ will be regarded as equal if the cocycles of
$\mathbb{E}$ and $\mathbb{F}$ are equal over members of the refinement
$\mathfrak{U}\cap \mathfrak{V}$. 

Twisted vector bundles admit the
same operations as ordinary bundles.
\begin{defi}
  \label{def:9}
  Let $\mathbb{E}=(\mathfrak{U},U_i\times V,g_{ij},\lambda_{ijk})$ and
  $\mathbb{F}=(\mathfrak{U},U_i\times W,f_{ij},\mu_{ijk})$ be twisted
  vector bundles over $M$. A \emph{morphism} $\phi :\mathbb{E}\to
  \mathbb{F}$ is a family of bundle morphisms
  \begin{displaymath}
    \phi_i:U_i\times V\longrightarrow U_i\times W
  \end{displaymath}
  such that the following square
  \begin{equation}\label{diag_tvbm}
    \xymatrix{
      U_{ij}\times V \ar[r]^{\phi_j} 
      \ar[d]_{1\times g_{ij}} & U_{ij}\times W \ar[d]^{1\times f_{ij}} \\
      U_{ij}\times V \ar[r]_{\phi_i} & U_{ij}\times W}
  \end{equation}
  commutes.
\end{defi}
\begin{lemma}\label{isomorphic}
  Two twisted bundles $\mathbb{E}=(\mathfrak{U},U_i\times
  V,g_{ij},\lambda_{ijk})$ and $\mathbb{F}=(\mathfrak{U},U_i\times
  W,f_{ij},\mu_{ijk})$. are isomorphic if and only if there exists a
  family of maps $\{u_i:U_i\to \operatorname{Iso}(V,W)\}$ such that
$$f_{ij}=u_ig_{ij}u_j^{-1}.$$
\end{lemma}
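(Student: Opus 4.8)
The plan is to reduce the statement to a fibrewise linear-algebra computation by unwinding Definition~\ref{def:9}. The starting observation is that a bundle morphism $\phi_i : U_i \times V \to U_i \times W$ covering the identity on $U_i$ is exactly the same datum as a map $u_i : U_i \to \operatorname{Hom}(V,W)$, through the formula $\phi_i(x,v) = (x, u_i(x)v)$, the fibrewise linearity of $\phi_i$ guaranteeing that $u_i(x)$ is linear; moreover $\phi_i$ is an isomorphism of these trivial bundles if and only if $u_i(x)$ is invertible for every $x$, i.e.\ $u_i$ takes values in $\operatorname{Iso}(V,W)$. Thus an isomorphism $\phi : \mathbb{E} \to \mathbb{F}$ is encoded precisely by a family $\{u_i : U_i \to \operatorname{Iso}(V,W)\}$ subject to the requirement that the square~\eqref{diag_tvbm} commute.

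Next I would translate the commutativity of~\eqref{diag_tvbm} into the fibre. Chasing a point $(x,v)$ around the square, the two composites are $(x, f_{ij}(x)u_j(x)v)$ and $(x, u_i(x)g_{ij}(x)v)$, so the square commutes over $U_{ij}$ if and only if $f_{ij}u_j = u_i g_{ij}$ as maps $U_{ij} \to \operatorname{Hom}(V,W)$. Since each $u_j(x)$ is invertible, this is equivalent to $f_{ij} = u_i g_{ij} u_j^{-1}$, which is the asserted identity. For the forward implication I then extract the family $u_i$ from the components $\phi_i$ of a given isomorphism as above; invertibility of $\phi$ forces each $\phi_i$ to be invertible, hence each $u_i$ lands in $\operatorname{Iso}(V,W)$, and the commutativity built into the definition of a morphism yields $f_{ij} = u_i g_{ij} u_j^{-1}$. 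For the converse, given such a family I define $\phi_i(x,v) := (x, u_i(x)v)$; these are bundle isomorphisms with inverses $(x,w) \mapsto (x, u_i(x)^{-1}w)$, and the displayed relation is exactly the commutativity of~\eqref{diag_tvbm}, so $\phi = (\phi_i)$ is a morphism which, being fibrewise invertible, is an isomorphism.

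The only genuinely non-formal point, and the step I would treat most carefully, is consistency with the twisting cocycles. Definition~\ref{def:9} allows $\mathbb{E}$ and $\mathbb{F}$ to carry a priori different cocycles $\lambda_{ijk}$ and $\mu_{ijk}$, so I must verify that the existence of the $u_i$ is compatible with the defining relations $g_{ij}g_{jk} = \lambda_{ijk}g_{ik}$ and $f_{ij}f_{jk} = \mu_{ijk}f_{ik}$. Using $f_{ij} = u_i g_{ij} u_j^{-1}$ and the fact that each $\lambda_{ijk}$ is a central scalar in $\comp^\times$, one computes $f_{ij}f_{jk} = u_i g_{ij}g_{jk} u_k^{-1} = \lambda_{ijk}\,u_i g_{ik} u_k^{-1} = \lambda_{ijk} f_{ik}$; comparing with $f_{ij}f_{jk} = \mu_{ijk}f_{ik}$ and using invertibility of $f_{ik}$ forces $\lambda_{ijk} = \mu_{ijk}$. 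I would record this as a remark: an isomorphism can only exist between twisted bundles with equal twists, and when the twists agree the construction above is automatically well defined. With this consistency in hand, the two implications are purely a matter of reading~\eqref{diag_tvbm} in both directions, so no further obstacle remains.
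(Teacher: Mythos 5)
Your proof is correct, and it follows the one natural route: the paper itself states Lemma~\ref{isomorphic} without proof, and your argument supplies exactly the expected unwinding of Definition~\ref{def:9} --- identifying a morphism of trivial bundles $\phi_i$ with a map $u_i$ into $\operatorname{Hom}(V,W)$, reading the commutativity of the square~\eqref{diag_tvbm} fibrewise as $f_{ij}u_j=u_ig_{ij}$, and inverting. Your closing observation that the relation $f_{ij}=u_ig_{ij}u_j^{-1}$ together with the cocycle identities forces $\lambda_{ijk}=\mu_{ijk}$ is a correct and worthwhile addition that the paper leaves implicit.
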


Further properties are given in the following

\begin{lemma}\label{prop_tvb}
Let $\mathbb{E}$ and $\mathbb{F}$ be twisted bundles. Then
\begin{enumerate}
\item $\mathbb{E} \otimes \mathbb{F}\cong \mathbb{E}$ if and only if
  $\mathbb{F}$ is an ordinary line bundle.
\item The dual bundle $\mathbb{E}^{*}$ has twisting $\lambda^{-1}$.
\item If $\mathbb{E}$ has twisting $\lambda$ and $\mathbb{F}$ has
  twisting $\lambda^{-1}$, then $\mathbb{E}\otimes \mathbb{F}$ is an
  ordinary vector bundle. In particular, $\mathbb{E}^*\otimes
  \mathbb{F}$ is also a vector bundle if $\mathbb{E}$ and $\mathbb{F}$
  have the same twisting.
\item If $\mathbb{E}$ is defined over the trivial open cover
  $\mathfrak{U}=\{M\}$, then $\mathbb{E}$ is a trivial vector bundle,
  and conversely.
\end{enumerate}
\end{lemma}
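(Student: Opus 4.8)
The plan is to reduce all four parts to two elementary computations describing how the twisting cocycle behaves under the tensor product and the dual, together with the observation (implicit in Lemma~\ref{isomorphic}) that an isomorphism of twisted bundles forces their twisting cocycles and their fibre dimensions to coincide. After passing, where necessary, to a common refinement so that all the bundles in sight live over one ordered cover $\mathfrak{U}=\{U_i\}$, write $\mathbb{E}=(\mathfrak{U},U_i\times V,g_{ij},\lambda_{ijk})$ and $\mathbb{F}=(\mathfrak{U},U_i\times W,f_{ij},\mu_{ijk})$. The tensor product $\mathbb{E}\otimes\mathbb{F}$ has fibre $V\otimes W$ and transition functions $g_{ij}\otimes f_{ij}$, and computing $(g_{ij}\otimes f_{ij})(g_{jk}\otimes f_{jk})=(g_{ij}g_{jk})\otimes(f_{ij}f_{jk})=\lambda_{ijk}\mu_{ijk}\,(g_{ik}\otimes f_{ik})$ shows that its twisting is the product cocycle $\lambda\mu$. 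Dually, $\mathbb{E}^{*}$ has fibre $V^{*}$ and transition functions $h_{ij}:=(g_{ij}^{-1})^{\mathsf{T}}$ (the inverse transpose), and since each $\lambda_{ijk}\in\comp^{\times}$ is a scalar, $h_{ij}h_{jk}=(g_{ij}^{-1})^{\mathsf{T}}(g_{jk}^{-1})^{\mathsf{T}}=\big((g_{ij}g_{jk})^{-1}\big)^{\mathsf{T}}=\lambda_{ijk}^{-1}(g_{ik}^{-1})^{\mathsf{T}}=\lambda_{ijk}^{-1}h_{ik}$; this is exactly part~(2).

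Parts~(3) and~(4) then follow immediately. For (3), if $\mathbb{E}$ has twisting $\lambda$ and $\mathbb{F}$ has twisting $\lambda^{-1}$, the first computation gives $\mathbb{E}\otimes\mathbb{F}$ the twisting $\lambda\cdot\lambda^{-1}=1$; a twisted bundle whose cocycle is identically $1$ has transition functions satisfying the genuine cocycle condition $g_{ij}g_{jk}=g_{ik}$, hence is an ordinary vector bundle. The ``in particular'' clause is the special case obtained by combining this with part~(2): if $\mathbb{E}$ and $\mathbb{F}$ share the twisting $\lambda$, then $\mathbb{E}^{*}$ has twisting $\lambda^{-1}$, so $\mathbb{E}^{*}\otimes\mathbb{F}$ has trivial twisting and is an ordinary bundle. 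For (4), over the one-element cover $\{M\}$ the only transition datum reduces to a single $g$, forced by $g_{ii}=1$ to be the identity, and the cocycle relation $g\,g=\lambda\,g$ then forces $\lambda=1$; thus $\mathbb{E}$ is the single chart $M\times V$ with trivial gluing, an honest trivial bundle, and conversely any trivial bundle $M\times V$ is manifestly represented by this datum over $\{M\}$.

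For part~(1) the forward implication is the substantive one. An isomorphism of twisted bundles in the sense of Lemma~\ref{isomorphic} relates the transition functions of the two bundles by $b_{ij}=u_i\,a_{ij}\,u_j^{-1}$ for a family of fibrewise isomorphisms $u_i$; conjugating the cocycle relation $a_{ij}a_{jk}=\alpha_{ijk}a_{ik}$ gives $b_{ij}b_{jk}=\alpha_{ijk}b_{ik}$, so the two twisting cocycles coincide, while the mere existence of the $u_i\in\operatorname{Iso}$ forces the two fibre dimensions to agree. Applying this to $\mathbb{E}\otimes\mathbb{F}\cong\mathbb{E}$ yields $\lambda\mu=\lambda$, whence $\mu=1$ so that $\mathbb{F}$ is untwisted, together with $\dim(V\otimes W)=\dim V$, whence $\dim W=1$; therefore $\mathbb{F}$ is an ordinary line bundle.

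The point requiring the most care is the converse of (1). Tensoring $\mathbb{E}$ with an arbitrary ordinary line bundle $\mathbb{F}$ does produce a bundle of the same twisting $\lambda$ and the same rank as $\mathbb{E}$, but a genuine isomorphism $\mathbb{E}\otimes\mathbb{F}\cong\mathbb{E}$ additionally requires the scalar transition cocycle $f_{ij}$ to be trivialisable through the conjugation $g_{ij}\mapsto u_i g_{ij}u_j^{-1}$ — for $\mathbb{E}$ trivial this is precisely the condition that $f_{ij}$ be a coboundary, i.e.\ that $\mathbb{F}$ be the \emph{trivial} line bundle — and this fails for a general nontrivial line bundle. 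I would therefore either read the ``only if'' direction as the real content of the statement, or strengthen the converse hypothesis to ``$\mathbb{F}$ is the trivial line bundle,'' for which $\mathbb{E}\otimes\mathbb{F}\cong\mathbb{E}$ is immediate. Everything else reduces to the two cocycle computations of the first paragraph.
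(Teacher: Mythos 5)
The paper states this lemma without proof (it is imported from Karoubi's treatment of twisted bundles), so there is no argument of the authors' to compare against; judged on its own, your proof is correct and complete. The two cocycle computations --- that $\mathbb{E}\otimes\mathbb{F}$ has transition data $g_{ij}\otimes f_{ij}$ with twisting $\lambda\mu$, and that $\mathbb{E}^{*}$ has transition data $(g_{ij}^{-1})^{\mathsf{T}}$ with twisting $\lambda^{-1}$ --- do carry parts (2), (3) and (4) exactly as you say, and the forward direction of (1) follows correctly from the observation that an isomorphism in the sense of Lemma~\ref{isomorphic} conjugates transition functions and therefore preserves the twisting cocycle on the nose, while forcing the fibre dimensions to agree.

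Your objection to the ``if'' direction of (1) is well taken and is a genuine defect of the statement, not of your proof: taking $\mathbb{E}$ to be the trivial (untwisted) line bundle and $\mathbb{F}$ a nontrivial ordinary line bundle gives $\mathbb{E}\otimes\mathbb{F}\cong\mathbb{F}\not\cong\mathbb{E}$. The lemma is only used later in the paper either with $\mathbb{F}=\epsilon^{1}$ trivial (in the proof that $[\epsilon^{1}]$ is the unit of the twisted Picard group) or after passing to the quotient by the relation $\mathbb{E}\sim\mathbb{L}\otimes\mathbb{E}$ (Theorem~\ref{bij_tensor}), so your proposed repairs --- strengthen the hypothesis to ``$\mathbb{F}$ is the trivial line bundle,'' or read (1) as the ``only if'' implication --- are both consistent with how the result is actually deployed. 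The only cosmetic caveat is the degenerate case $V=0$ in the dimension count for (1), which is clearly excluded by intent.
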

Of particular interest is the twisted vector bundle
$\operatorname{Hom}(\mathbb{E},\mathbb{F})$, which is defined by
\begin{displaymath}
  \operatorname{Hom}(\mathbb{E},\mathbb{F})=
  (\mathfrak{U},U_i\times \operatorname{Hom}_\comp 
  (V,W),h_{ij},\lambda_{ijk}^{-1}\mu_{ijk}),
\end{displaymath}
where $h_{ij}:U_{ij}\to \operatorname{GL}(\operatorname{Hom}_{\comp
}(V,W))$ is given by $h_{ij}(x)(u)=f_{ij}(x)ug_{ij}(x)^{-1}$. If
$\mathbb{F}$ is also a $\lambda$-twisted bundle (i.e. $\mu =\lambda$),
then the data defining $\operatorname{Hom}(\mathbb{E},\mathbb{F})$ in
fact defines an ordinary vector bundle (there is no twisting!), which
is denoted by $\operatorname{HOM}(\mathbb{E},\mathbb{F})$. If
$\mathbb{E}=\mathbb{F}$, then
$\operatorname{HOM}(\mathbb{E},\mathbb{F})$ will be denoted
$\operatorname{END}(\mathbb{E})$.

\begin{lemma}[\cite{karoubi:twisted_vector}, Proposition 3.1]
  The vector space $\operatorname{Hom}_{\tsf{TVB}_\lambda
    (M)}(\mathbb{E},\mathbb{F})$ can be canonically identified with
  the space of sections of the bundle
  $\opnm{HOM}(\mathbb{E},\mathbb{F})$.
\end{lemma}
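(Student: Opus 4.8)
The plan is to unwind both sides of the claimed identification into the same local data and to observe that the defining equations literally coincide. Since $\mathbb{E}$ and $\mathbb{F}$ both lie in $\tsf{TVB}_\lambda(M)$, I may assume, after passing to a common refinement of their covers, that they are presented over a single cover $\mathfrak{U}=\{U_i\}$ with the \emph{same} twisting cocycle $\lambda=(\lambda_{ijk})$, so that $\mathbb{E}=(\mathfrak{U},U_i\times V,g_{ij},\lambda_{ijk})$ and $\mathbb{F}=(\mathfrak{U},U_i\times W,f_{ij},\lambda_{ijk})$. By Definition~\ref{def:9} a morphism $\phi:\mathbb{E}\to\mathbb{F}$ is a family of bundle maps $\phi_i:U_i\times V\to U_i\times W$; since each is a morphism of trivial bundles over $U_i$, it is the same datum as a map $\phi_i:U_i\to\operatorname{Hom}_\comp(V,W)$. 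The commuting square of Definition~\ref{def:9} says $(1\times f_{ij})\circ\phi_j=\phi_i\circ(1\times g_{ij})$, which fibrewise over $x\in U_{ij}$ reads
\[
  \phi_i(x)=f_{ij}(x)\,\phi_j(x)\,g_{ij}(x)^{-1}.
\]

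Next I would recall the construction of $\opnm{HOM}(\mathbb{E},\mathbb{F})$. Because $\mathbb{F}$ carries the same twisting $\lambda$ as $\mathbb{E}$, the twisting of $\operatorname{Hom}(\mathbb{E},\mathbb{F})$ is $\lambda_{ijk}^{-1}\lambda_{ijk}=1$, so the transition maps $h_{ij}(x)(u)=f_{ij}(x)\,u\,g_{ij}(x)^{-1}$ satisfy the ordinary (untwisted) cocycle identity and $\opnm{HOM}(\mathbb{E},\mathbb{F})$ is a genuine vector bundle. A section is therefore a family $s_i:U_i\to\operatorname{Hom}_\comp(V,W)$ with $s_i=h_{ij}(s_j)$ on overlaps, i.e.
\[
  s_i(x)=f_{ij}(x)\,s_j(x)\,g_{ij}(x)^{-1}.
\]

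Comparing the two displayed conditions shows they are identical. I would then define the identification by sending a morphism $\phi=(\phi_i)$ to the section $s=(s_i)$ with $s_i:=\phi_i$, read as maps into $\operatorname{Hom}_\comp(V,W)$; the previous step shows this is a well-defined bijection, with inverse given by the same formula. To finish, I would note that the assignment is $\comp$-linear, since both the vector-space structure on $\operatorname{Hom}_{\tsf{TVB}_\lambda(M)}(\mathbb{E},\mathbb{F})$ and the addition of sections are computed pointwise on the local pieces, and that it is canonical: it uses only the data $(g_{ij},f_{ij})$ already present in $\mathbb{E}$ and $\mathbb{F}$, involves no further choices, and is compatible with refinement of the cover and with the isomorphisms of Lemma~\ref{isomorphic}.

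Since the whole argument is a direct comparison of defining equations, there is no genuine obstacle; the only points requiring care are bookkeeping ones—verifying that the two cocycles really cancel so that $\opnm{HOM}(\mathbb{E},\mathbb{F})$ is untwisted, and matching the variances (which transition acts on the left and which on the right) in the two conditions so that the formulas agree on the nose rather than up to an inverse or transpose.
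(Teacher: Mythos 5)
Your proof is correct. Note that the paper itself gives no proof of this lemma at all --- it is quoted verbatim from Karoubi (Proposition 3.1 of \cite{karoubi:twisted_vector}) and used as a black box --- so there is nothing internal to compare against. Your direct verification is the expected argument: the commuting square defining a morphism of twisted bundles reads fibrewise as $\phi_i(x)=f_{ij}(x)\,\phi_j(x)\,g_{ij}(x)^{-1}$, which is exactly the gluing condition $s_i=h_{ij}(s_j)$ for a section of $\opnm{HOM}(\mathbb{E},\mathbb{F})$, and your side remarks (that the cocycle of $\operatorname{Hom}(\mathbb{E},\mathbb{F})$ is $\lambda^{-1}\mu=1$ when $\mu=\lambda$, and that the variances match on the nose) are precisely the points that need checking. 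The identification is manifestly $\comp$-linear and choice-free, so the argument is complete.
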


\begin{theorem}[\cite{karoubi:twisted_vector}, Theorem 3.2]
\label{tvb_azumaya}
Assume $A$ is an Azumaya algebra over $M$. Then, there exists a twisted bundle $\mathbb{E}$ such that
$$A\cong \operatorname{END}(\mathbb{E}).$$
\end{theorem}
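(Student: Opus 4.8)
The plan is to read the twisting data of $\mathbb{E}$ directly off a local trivialisation of $A$ and then to recognise $A$ as the bundle of endomorphism algebras glued from this data. First I would use the defining property of an Azumaya algebra (Definition~\ref{def:10}, Remark~\ref{obs:az}): choose an open cover $\mathfrak{U}=\{U_i\}$ of $M$ together with $\scr{O}_{U_i}$-algebra isomorphisms $\alpha_i\colon A|_{U_i}\xrightarrow{\;\cong\;}\operatorname{M}_k(\scr{O}_{U_i})=\operatorname{End}(U_i\times V)$, where $V=\comp^k$. The goal is to produce transition maps $g_{ij}$ and scalars $\lambda_{ijk}$ so that $\mathbb{E}=(\mathfrak{U},U_i\times V,g_{ij},\lambda_{ijk})$ is a twisted vector bundle in the sense of Definition~\ref{def:11} and $A\cong\operatorname{END}(\mathbb{E})$.

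The heart of the argument is the comparison of the $\alpha_i$ on overlaps. On $U_{ij}$ the composite $\alpha_i\circ\alpha_j^{-1}$ is an $\scr{O}_{U_{ij}}$-algebra automorphism of $\operatorname{End}(U_{ij}\times V)$. The key step -- and the one I expect to be the main obstacle -- is a sheaf-theoretic Skolem--Noether theorem: every such automorphism is inner, i.e.\ conjugation by a section $g_{ij}\colon U_{ij}\to\operatorname{GL}(V)$. Concretely the automorphism defines a section of $\operatorname{PGL}_k$, and I would refine $\mathfrak{U}$ if necessary so that this section lifts to a section $g_{ij}$ of $\operatorname{GL}_k$; the lift is unique only up to multiplication by a nowhere-vanishing $\comp^\times$-valued function, since the scalars are exactly the elements commuting with all of $\operatorname{M}_k$. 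This lifting ambiguity is precisely what will manufacture the twisting.

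Next I would extract the cocycle. On a triple overlap $U_{ijk}$ the relation $(\alpha_i\alpha_j^{-1})(\alpha_j\alpha_k^{-1})=\alpha_i\alpha_k^{-1}$ says that conjugation by $g_{ij}g_{jk}$ and by $g_{ik}$ agree, so they differ by a central scalar: $g_{ij}g_{jk}=\lambda_{ijk}g_{ik}$ with $\lambda_{ijk}\in\comp^\times$. Expanding the associativity of the $\alpha_i$ on a quadruple overlap $U_{ijkl}$ then forces the $\check{\mathrm C}$ech $2$-cocycle identity on $\lambda=(\lambda_{ijk})$, and after rescaling the $g_{ij}$ I may arrange $g_{ii}=1$ and $g_{ji}=g_{ij}^{-1}$. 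Thus $\mathbb{E}$ satisfies all the conditions of Definition~\ref{def:11}. (Choosing different $\alpha_i$ or different lifts changes $\mathbb{E}$ only within its isomorphism class by Lemma~\ref{isomorphic}, so $\mathbb{E}$ is canonical up to isomorphism, though existence is all that is required here.)

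Finally I would identify $A$ with $\operatorname{END}(\mathbb{E})$. By the construction recalled before the statement, $\operatorname{END}(\mathbb{E})$ has twisting $\lambda^{-1}\lambda=1$, so it is an honest $\scr{O}_M$-algebra bundle, and its transition functions are $h_{ij}(x)(u)=g_{ij}(x)\,u\,g_{ij}(x)^{-1}$ -- exactly the conjugations $\alpha_i\circ\alpha_j^{-1}$ by which the local algebras $\operatorname{End}(U_i\times V)\cong A|_{U_i}$ are glued. Hence the $\alpha_i$ assemble into a global isomorphism of sheaves of algebras $A\xrightarrow{\;\cong\;}\operatorname{END}(\mathbb{E})$. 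The only subtle point throughout is the Skolem--Noether/lifting step of the second paragraph: it is there that the class $[\lambda]\in H^2(M,\scr{O}_M^\times)$ -- the Brauer class obstructing $A$ from being a genuine $\operatorname{END}$ of an untwisted bundle -- is born.
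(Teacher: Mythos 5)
Your argument is correct and is, in substance, exactly Karoubi's proof of this theorem: the paper itself imports the statement from \cite{karoubi:twisted_vector} without reproving it, so there is no internal proof to diverge from. The one step you rightly flag as delicate --- Skolem--Noether giving a section of $\operatorname{PGL}_k$ whose local lifts to $\operatorname{GL}_k$ are ambiguous up to a central $\comp^\times$-valued function, whence the $2$-cocycle $\lambda$ --- is handled correctly, and your identification of the transition data of $\operatorname{END}(\mathbb{E})$ with the gluing of $A$ closes the argument.
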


\subsubsection{The Twisted Picard Group}
\label{sec:twisted-picard-group}

For the following discussion it will be useful to recall the
definition of the Picard group of a manifold $M$; consider the set of
isomorphism classes of (ordinary) line bundles over $M$. If $L,K$ are
line bundles, then $[L]\cdot [K]:=[L\otimes K]$ provides the set of
isomorphism classes of line bundles with a structure of abelian
group. This group is called the \emph{Picard group of $M$} and is
denoted by $\opnm{Pic}(M)$.

Analogously, twisted line bundles also enjoy some remarkable
properties, like line bundles do. Given a twisted bundle $\mathbb{E}$,
we shall denote by $[\mathbb{E}]$ its isomorphism class. Let us
restrict ourselves to considering isomorphism classes of twisted line
bundles over a manifold $M$. We define a product in the following way:
\begin{equation}\label{op_pic}
  [\mathbb{L}]\cdot [\mathbb{K}]:=[\mathbb{L}\otimes \mathbb{K}],
\end{equation}
extending the one for line bundles.

\begin{theorem}
  The set of isomorphism classes of twisted line bundles together with
  the operation \eqref{op_pic} is an abelian group which contains
  $\operatorname{Pic}(M)$ as a subgroup.
\end{theorem}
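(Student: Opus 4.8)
The plan is to verify the group axioms directly on the cocycle data of Definition~\ref{def:11}, specialised to the rank-one case, and then to exhibit $\operatorname{Pic}(M)$ as the collection of classes carrying the trivial twisting. Throughout, a twisted line bundle is presented by transition functions $g_{ij}\colon U_{ij}\to \comp^{\times}$ and a twisting $2$-cocycle $\lambda_{ijk}$ satisfying $g_{ij}g_{jk}=\lambda_{ijk}g_{ik}$.

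First I would make the product well defined on isomorphism classes. Given twisted line bundles $\mathbb{L}$ and $\mathbb{K}$, possibly over different covers, I pass to a common refinement (using the convention adopted after Definition~\ref{def:11}) so that both are presented over one cover $\mathfrak{U}=\{U_i\}$, with data $(g_{ij},\lambda_{ijk})$ and $(f_{ij},\mu_{ijk})$. Then $\mathbb{L}\otimes \mathbb{K}$ is presented by $g_{ij}f_{ij}$, and the computation $(g_{ij}f_{ij})(g_{jk}f_{jk})=\lambda_{ijk}\mu_{ijk}\,(g_{ik}f_{ik})$ shows it is again a twisted line bundle, now with twisting $\lambda\mu$. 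That $[\mathbb{L}\otimes \mathbb{K}]$ depends only on the two input classes follows from Lemma~\ref{isomorphic}: an isomorphism $f_{ij}=u_ig_{ij}u_j^{-1}$ of one factor induces, after tensoring with the identity on the other factor, an isomorphism of the products, and symmetrically. Associativity and commutativity are then inherited from those of the tensor product of $\comp$-lines, and the trivial line bundle $\mathbf{1}=(\mathfrak{U},U_i\times\comp,1,1)$ is visibly a two-sided identity.

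The crux is the existence of inverses, and I expect this to be the main obstacle. I would take the inverse of $[\mathbb{E}]$ to be $[\mathbb{E}^{*}]$. By Lemma~\ref{prop_tvb}(2) the dual has twisting $\lambda^{-1}$, so by Lemma~\ref{prop_tvb}(3) the product $\mathbb{E}\otimes\mathbb{E}^{*}$ is an \emph{ordinary} (untwisted) bundle, and being a tensor of two rank-one bundles it is an ordinary line bundle. The delicate point is that Lemma~\ref{prop_tvb}(3) only yields an ordinary bundle, whereas I need the \emph{trivial} line bundle in order that $[\mathbb{E}][\mathbb{E}^{*}]$ be the identity. I would close this gap by a direct transition-function computation: the dual is presented by $g_{ij}^{-1}$ (the $\operatorname{Hom}$ formula $h_{ij}(x)(u)=ug_{ij}(x)^{-1}$ of the preceding paragraph), so $\mathbb{E}\otimes\mathbb{E}^{*}$ has transition functions $g_{ij}g_{ij}^{-1}=1$ and twisting $\lambda_{ijk}\lambda_{ijk}^{-1}=1$; hence $\mathbb{E}\otimes\mathbb{E}^{*}=\mathbf{1}$, equivalently one applies Lemma~\ref{isomorphic} to the cocycle $g_{ij}g_{ij}^{-1}$.

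Finally, to realise $\operatorname{Pic}(M)$ as a subgroup, I would record that the ordinary line bundles are exactly the twisted line bundles admitting a representative with trivial twisting $\lambda\equiv 1$. The multiplicativity of twistings established above shows this collection of classes is closed under the operation \eqref{op_pic} and under the dual (the dual of an ordinary line bundle has twisting $1^{-1}=1$), that it contains $\mathbf{1}$, and that on it the product \eqref{op_pic} restricts to the usual tensor product of line bundles. Thus the inclusion $\operatorname{Pic}(M)\to\{\text{classes of twisted line bundles}\}$ is a group homomorphism; it is injective because two ordinary line bundles that are isomorphic as twisted bundles are, by Lemma~\ref{isomorphic}, isomorphic as ordinary line bundles. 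Its image is therefore a subgroup isomorphic to $\operatorname{Pic}(M)$, which completes the argument.
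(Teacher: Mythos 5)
Your proof is correct and follows the same overall scheme as the paper's: associativity and commutativity inherited from the tensor product, the trivial bundle as unit, duals for inverses, and $\opnm{Pic}(M)$ sitting inside as the classes with trivial twisting. The one place where you genuinely diverge is the inverse, which you rightly identify as the crux. The paper stops at Lemma~\ref{prop_tvb}: it only records that $L:=\mathbb{L}\otimes\mathbb{L}^{*}$ is an \emph{ordinary} line bundle, and therefore takes $[\mathbb{L}]^{-1}=[\mathbb{L}^{*}\otimes L^{*}]$, compensating for the possible nontriviality of $L$ by a further twist by $L^{*}$. You instead compute at the level of cocycles that in rank one the dual is presented by $g_{ij}^{-1}$ with twisting $\lambda^{-1}$, so $\mathbb{L}\otimes\mathbb{L}^{*}$ has transition functions $g_{ij}g_{ij}^{-1}=1$ and twisting $\lambda\lambda^{-1}=1$, hence is the trivial bundle and $[\mathbb{L}]^{-1}=[\mathbb{L}^{*}]$. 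Both arguments are valid, and your computation in fact shows that the paper's $L$ is always trivial, so the two formulas for the inverse coincide. What your version buys is a cleaner statement (the inverse is just the class of the dual) together with an explicit verification of the well-definedness of the product on isomorphism classes and of the injectivity of the inclusion $\opnm{Pic}(M)\to \opnm{TPic}(M)$, both of which the paper dismisses as ``clear from the previous discussion.''
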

\begin{proof}
Associativity and commutativity of the operation follow from the ones of the tensor product, as stated in \ref{prop_tvb}.

Let $\mathbb{L}$ be a twisted line bundle; if $\epsilon^1$ denotes the trivial line bundle over $M$, then $\mathbb{L}\otimes \epsilon^1\cong \mathbb{L}$; to see this, consider the family of maps
$$\phi_i:U_i\times (\comp \otimes \comp )\longrightarrow U_i\times \comp$$
given by $\phi_i(x,z\otimes w)=(x,zw)$. These maps define a morphism
of twisted bundles
$$\phi :\mathbb{L}\otimes \epsilon^1\longrightarrow \mathbb{L},$$
with inverse given by the family $\phi_i^{-1}(x,z)=(x,z\otimes 1)$. Hence, $[\epsilon^1]=1$, the unit of the group.

Let now $[\mathbb{L}]$ be an arbitrary class. Then, $\mathbb{L}
\otimes \mathbb{L}^*$ is an ordinary line bundle; denoting this bundle
by $L$, we have that
\begin{displaymath}
  [\mathbb{L}]^{-1}=[\mathbb{L}^*\otimes L^*].
\end{displaymath}
The inclusion of $\opnm{Pic}(M)$ as a subgroup is clear from the
previous discussion.
\end{proof}

The group introduced in the previous theorem will be called the
\emph{twisted Picard group of $M$} and denoted by $\opnm{TPic}(M)$.

Assume now that $\opnm{TVB}(M)$ and $\opnm{Vect}(M)$ are sets
consisting of twisted bundles (with arbitrary twisting) over $M$ and
vector bundles over $M$, respectively, and consider the equivalence
relations $\mathbb{E}\sim \mathbb{E}\otimes \mathbb{L}$ and $E\sim
E\otimes L$, where $\mathbb{L}$ is a twisted line bundle and $L$ is a
line bundle. In the following result, $[\mathbb{E}]$ will denote the
class of $\mathbb{E}$ according to the relation $\mathbb{E}\sim
\mathbb{L}\otimes \mathbb{E}$; the same notation will be used for
ordinary vector bundles.

\begin{theorem}\label{bij_tensor}
There exists a non-canonical biyection
\begin{displaymath}
  \Psi :\opnm{TVB}(M)/_{\mathbb{E}\sim \mathbb{L}\otimes \mathbb{E}}\stackrel{\cong}
  {\longrightarrow}
  \opnm{Vect}(M)/_{E\sim L\otimes E}.
\end{displaymath}
\end{theorem}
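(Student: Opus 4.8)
The plan is to show that the natural inclusion of ordinary vector bundles into twisted vector bundles -- an ordinary bundle being regarded as a twisted bundle with trivial twisting cocycle -- descends to a bijection between the two quotients, and to take $\Psi$ to be a chosen inverse of that inclusion. Concretely, $E\mapsto E$ induces a map $\iota:\opnm{Vect}(M)/_{E\sim L\otimes E}\to \opnm{TVB}(M)/_{\mathbb{E}\sim\mathbb{L}\otimes\mathbb{E}}$, since if $E'\cong L\otimes E$ with $L$ an ordinary line bundle then in particular $E'\sim\mathbb{L}\otimes E$ with $\mathbb{L}=L$ a (trivially) twisted line bundle, so the relation on the right is coarser. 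The content of the theorem is that $\iota$ is a bijection, and the non-canonicity of $\Psi:=\iota^{-1}$ comes precisely from the choices needed to invert it.

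First I would check that $\iota$ is injective. Suppose $E,E'$ are ordinary bundles with $\iota[E]=\iota[E']$, i.e. $E'\cong\mathbb{L}\otimes E$ for some twisted line bundle $\mathbb{L}$ of twisting $\lambda$. Comparing twistings on both sides, and using that the twisting of a tensor product is the product of the twistings (as in Lemma~\ref{prop_tvb}), the twisting of $E'$ equals $\lambda$ times the trivial twisting of $E$; since $E'$ is ordinary this forces $\lambda$ to be trivial. A twisted line bundle of trivial twisting is an ordinary line bundle (Lemma~\ref{prop_tvb}), so $E'\cong L\otimes E$ for an honest line bundle $L$, whence $[E]=[E']$ already in $\opnm{Vect}(M)/_{E\sim L\otimes E}$.

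The heart of the argument is surjectivity of $\iota$, which amounts to showing that every twisted bundle is equivalent, modulo tensoring by a twisted line bundle, to an ordinary one. Given $\mathbb{E}$ with twisting $\lambda$, I would choose a twisted line bundle $\mathbb{L}_\lambda$ of twisting $\lambda$; by Lemma~\ref{prop_tvb}(2) its dual $\mathbb{L}_\lambda^{*}$ has twisting $\lambda^{-1}$, so Lemma~\ref{prop_tvb}(3) shows that $E_{\mathbb{E}}:=\mathbb{E}\otimes\mathbb{L}_\lambda^{*}$ is an ordinary vector bundle. Since $\mathbb{E}\sim\mathbb{L}_\lambda^{*}\otimes\mathbb{E}=E_{\mathbb{E}}$, we get $[\mathbb{E}]=\iota[E_{\mathbb{E}}]$, proving surjectivity, and I would set $\Psi[\mathbb{E}]:=[E_{\mathbb{E}}]$. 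Well-definedness on equivalence classes follows because replacing $\mathbb{E}$ by $\mathbb{L}'\otimes\mathbb{E}$ and $\mathbb{L}_\lambda$ by the correspondingly chosen $\mathbb{L}_{\lambda'}$ alters $E_{\mathbb{E}}$ only by tensoring with $\mathbb{L}'\otimes\mathbb{L}_{\lambda'}^{*}\otimes\mathbb{L}_\lambda$, whose twisting is $1$; by Lemma~\ref{prop_tvb} this correction factor is an ordinary line bundle, so the class in $\opnm{Vect}(M)/_{E\sim L\otimes E}$ is unchanged.

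The main obstacle is exactly the choice of $\mathbb{L}_\lambda$ in the surjectivity step: one must guarantee that the twisting $\lambda$ of an arbitrary twisted bundle is itself realized by a twisted \emph{line} bundle, for it is this that enables the untwisting through Lemma~\ref{prop_tvb}(3). This is the step forcing a non-canonical choice -- a selection of one untwisting line bundle per occurring twisting class -- and it is the reason $\Psi$ cannot be made canonical. Once these choices are fixed, the remaining verifications, namely that $\Psi$ and $\iota$ are mutually inverse and independent (up to the stated equivalence relations) of representatives and of the chosen covers, are routine applications of Lemma~\ref{isomorphic} together with Lemma~\ref{prop_tvb}.
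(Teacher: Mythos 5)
Your proposal is correct and follows essentially the same route as the paper: fix, for each twisting class $\lambda$, a twisted line bundle realizing it, and untwist an arbitrary $\mathbb{E}$ by tensoring with a twisted line bundle of inverse twisting (your $\mathbb{L}_\lambda^*$ versus the paper's chosen $\mathbb{L}_{\lambda^{-1}}$), with the same well-definedness and injectivity/surjectivity computations via Lemma~\ref{prop_tvb}. The only cosmetic difference is that you package the argument as inverting the natural inclusion $\opnm{Vect}(M)/_{\sim}\hookrightarrow\opnm{TVB}(M)/_{\sim}$, whereas the paper constructs $\Psi$ directly in the other direction.
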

\begin{proof}
  For each twisting $\lambda$, let us fix a twisted line bundle
  $\mathbb{L}_{\lambda}$ with that twisting. Now consider the map
  \begin{displaymath}
    \Psi [\mathbb{E}]=[\mathbb{E}\otimes \mathbb{L}_{\lambda^{-1}}],
  \end{displaymath}
  where $\mathbb{E}$ has twisting $\lambda$.

  We check that this correspondence is well-defined: first note that
  the twisting of $\mathbb{E}\otimes \mathbb{L}_{\lambda^{-1}}$ is
  $\lambda \lambda^{-1}=1$, and hence it is an ordinary line
  bundle. Now suppose that $[\mathbb{E}]=[\mathbb{F}]$, where
  $\mathbb{E}$ has twisting $\lambda$ and $\mathbb{F}$ twisting $\mu$;
  this implies the existence of a twisted line bundle $\mathbb{L}$
  such that $\mathbb{F}\cong \mathbb{L}\otimes \mathbb{E}$. In
  particular, if $\mathbb{L}$ has twisting cocycle equal to
  $\epsilon$, then $\mu =\epsilon \lambda$. We now have to check that
  $[\mathbb{E}\otimes \mathbb{L}_{\lambda^{-1}}]=[\mathbb{F}\otimes
  \mathbb{L}_{\mu^{-1}}]$; in other words, we should find a line
  bundle $L$ such that $\mathbb{E}\otimes
  \mathbb{L}_{\lambda^{-1}}\cong \mathbb{L}\otimes \mathbb{E}\otimes
  \mathbb{L}_{\mu^{-1}}\otimes L$. Take now
  \begin{displaymath}
    L:=\mathbb{L}_\mu \otimes \mathbb{L}^*\otimes \mathbb{L}_{\lambda^{-1}};
  \end{displaymath}
  then $L$ is an ordinary line bundle, as the twisting of the product
  of the right hand side is precisely $\mu
  \epsilon^{-1}\lambda^{-1}=\epsilon \lambda
  \epsilon^{-1}\lambda^{-1}=1$. We then have
  \begin{displaymath}
    \mathbb{L}\otimes \mathbb{E}\otimes \mathbb{L}_{\mu^{-1}}
    \otimes L\cong \mathbb{L}\otimes \mathbb{E}\otimes \mathbb{L}_{\mu^{-1}}
    \otimes \mathbb{L}_\mu \otimes \mathbb{L}^*\otimes 
    \mathbb{L}_{\lambda^{-1}}\cong \mathbb{E}\otimes \mathbb{L}_{\lambda^{-1}},
  \end{displaymath}
as desired.

Assume now that $\mathbb{E}$ and $\mathbb{F}$ are twisted bundles with
twistings $\lambda$ and $\mu$ respectively such that there exists a
line bundle $L_0$ with $\mathbb{F}\otimes \mathbb{L}_{\mu^{-1}}\cong
L_0\otimes \mathbb{E}\otimes \mathbb{L}_{\lambda^{-1}}$. Multiplying
by $\mathbb{L}_{\mu}$ at both sides, we obtain
\begin{displaymath}
  \mathbb{F}\otimes L_1\cong L_0\otimes \mathbb{E}\otimes 
  \mathbb{L}_{\lambda^{-1}}\otimes \mathbb{L}_{\mu},
\end{displaymath}
where $L_1=\mathbb{L}_{\mu}\otimes \mathbb{L}_{\mu^{-1}}$. Multiplying
now by the dual line bundle $L_1^*$ yields
\begin{displaymath}
  \mathbb{F}\cong \mathbb{E}\otimes \mathbb{L}_{\lambda^{-1}}
  \otimes \mathbb{L}_{\mu}\otimes L_0\otimes L_1^*.
\end{displaymath}
As $\mathbb{L}_{\lambda^{-1}}\otimes \mathbb{L}_{\mu}\otimes
L_0\otimes L_1^*$ is a twisted line bundle (with twisting $\mu
\lambda^{-1}$), then $[\mathbb{F}]=[\mathbb{E}]$ and hence $\Psi$ is
injective.

Let now $E$ be an arbitrary bundle. Then $E\otimes
\mathbb{L}_{\lambda}$ is a $\lambda$-twisted vector bundle and then
\begin{displaymath}
  \Psi [E\otimes \mathbb{L}_{\lambda}]=[E\otimes
\mathbb{L}_{\lambda}\otimes \mathbb{L}_{\lambda^{-1}}]=[E].
\end{displaymath}
\end{proof}


\medskip

\section{Cardy fibrations}
\label{sec:calabi-yau-fibr}

We shall now define an extension of the notion of Calabi-Yau category.
\begin{defi}
  \label{def:13}
  Let $R$ be a commutative ring with unit. An $R$-linear category
  $\mr{C}$ is a \emph{Calabi-Yau category} if for each pair of objects
  $a, b$ in $\mr{C}$, the set of arrows
  $\operatorname{Hom}_{\mr{C}}(a,b)$ is a finitely generated
  projective $R$-module and for each element $a$ in $\mr{C}$ there
  exists a linear form
    \begin{displaymath}
      \theta_a:\operatorname{Hom}_{\mr{C}}(a,a)\longrightarrow R
    \end{displaymath}
    such that 
    \begin{enumerate}
    \item the induced pairing
    \begin{equation}\label{pair}
      \operatorname{Hom}_{\mr{C}}(a,b)\otimes_R
      \operatorname{Hom}_{\mr{C}}(b,a)\longrightarrow 
      \operatorname{Hom}_{\mr{C}}(a,a)\stackrel{\theta_a}{\longrightarrow }R
    \end{equation}
    is a perfect pairing
  \item given arbitrary arrows
    $\sigma :a\to b$ and $\tau :b\to a$, the equality $\theta_a(\tau
    \sigma )=\theta_b(\sigma \tau )$ holds. 
    \end{enumerate}    
\end{defi}

\begin{defi}
  \label{defi:5}
  Let $M$ be a semisimple manifold with multiplication $M$ a
  $CY$-category $\mr{B}$ over $M$ is a fibred category over the
  category of open sets of $M$ such that for each open set $U \subset
  M$ the category $\mr{B}(U)$ is an $\mr{O}(U)$-CY category.
\end{defi}

Let us fix a semisimple manifold with multiplication $M$, with
structure sheaf $\scr{O}=\scr{O}_M$ and let $\scr{B}$ be an
$\scr{O}$-linear CY category over $M$. For objects $a,b\in
\scr{B}(U)$, let us denote by $\Gamma_{ab}$ the presheaf
$\underline{\operatorname{Hom}}_U(a,b)$ over $U$ given by
\begin{equation}\label{sheaf_maps}
  V\longmapsto \operatorname{Hom}_{\scr{B}(V)}(a|_V,b|_V).
\end{equation}
By definition of CY category, we have that $\Gamma_{aa}$ is a
Frobenius $\scr{O}_U$-algebra for each $a\in
\scr{B}(U)$.

\begin{notation}
  Recall that if the base manifold is clear, we shall supress the
  subscript of the structure sheaf when taking local sections;
  e.g. instead of using the notation $\scr{O}_M(U)$ for $U\subset M$,
  we will only write $\scr{O}(U)$; and the restriction $\scr{O}_M|_U$
  shall be denoted $\scr{O}_U$. The same considerations are applied to
  the tangent sheaf $\scr{T}_M$ of a manifold $M$.
\end{notation}

We now turn to the relevant definitions.

\begin{defi}\label{cy_fibration}
  A \emph{Calabi-Yau (CY) fibration over a semisimple manifold $M$} is
  a pair $(\scr{B},\mathfrak{U})$ (the open cover shall be omitted
  form the notation), where $\scr{B}$ is a CY category over $M$ and
  $\mathfrak{U}=\{U_\alpha \}$ is an open cover of $M$, subject to the
  following conditions:
  \begin{enumerate}
  \item Each $U_\alpha \in \mathfrak{U}$ is semisimple.
  \item $\scr{B}$ is a stack.\footnote{In particular, the presheaf
      \eqref{sheaf_maps} is a sheaf.}
  \item Given any $U_\alpha\in \mathfrak{U}$ and objects $a,b\in
    \scr{B}(U_\alpha)$, the sheaf $\Gamma_{ab}$ is a locally-free
    locally finitely generated $\scr{O}_{U_\alpha}$-module. Objects of
    $\scr{B}(U)$ are called \emph{labels}, \emph{boundary conditions}
    or \emph{D-branes} over $U$.
  \item For each $U_\alpha \in \mathfrak{U}$ and each object $a\in
    \scr{B}(U_\alpha )$, we have transition (sheaf) homomorphisms
    \begin{displaymath}
      \iota_a:\scr{T}_U\longrightarrow \Gamma_{aa} 
      \quad , \quad \iota^a:\Gamma_{aa}\longrightarrow \scr{T}_U.     
    \end{displaymath}
The previous data is subject to the following conditions:

\begin{enumerate}
\item $\iota_a$ is a morphism of $\scr{O}_{U_\alpha}$-algebras
  (preserves multiplication and unit) and $\iota^a$ is an
  $\scr{O}_{U_\alpha}$-linear map. In particular, $\iota_a$
    provides $\Gamma_{aa}$ with a $\scr{T}_{U_\alpha}$-algebra
    structure.
\item $\iota_a$ is central: given $X\in \scr{T}(V)$ and $\sigma \in
  \Gamma_{ab}(V)$, we have
  \begin{equation}\label{centrality}
    \sigma \iota_a(X)=\iota_b(X)\sigma
  \end{equation}
  in $\Gamma_{ab}(V)$, for each $V\subset U_\alpha$.
\item There is an adjoint relation between $\iota_a$ and $\iota^a$
  given by
  \begin{equation}\label{adjoint}
    \theta (\iota^a(\sigma )X)=\theta_a(\sigma \iota_a(X)),
  \end{equation}
  for each $X\in \scr{T}_{U_\alpha}$ and $\sigma \in
  \Gamma_{aa}$.
\end{enumerate}
\end{enumerate}
\end{defi}
\begin{obs}
  For some technical considerations, we will assume that our CY
  fibrations $\scr{B}$ verify that for each open subset $U\subset M$,
  the skeleton $\opnm{sk}\scr{B}(U)$ of the category $\scr{B}(U)$ is a
  set.
\end{obs}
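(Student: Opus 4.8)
The plan is to show that $\scr{B}(U)$ is essentially small --- that its objects fall into a \emph{set} of isomorphism classes --- so that $\opnm{sk}\scr{B}(U)$ is genuinely a set; the idea is to attach to each object a discrete invariant lying in a fixed set and then to argue that it is a complete invariant. By axiom (1) of Definition~\ref{cy_fibration} I may assume $U$ is semisimple, and by the local freeness in axiom (3) the sheaves $\Gamma_{aa}$ and $\Gamma_{ab}$ are controlled by their fibres. Fixing $x\in U$, I would pass to the fibre $\Gamma_{aa}(x)$ at $x$ (the stalk of $\Gamma_{aa}$ modulo its maximal ideal); since $\scr{T}_x\cong\comp^{n}$ is a semisimple commutative Frobenius algebra, with orthogonal idempotents $e_1,\dots,e_n$ supplied by Proposition~\ref{id-basis}, the central map $\iota_a$ of Definition~\ref{cy_fibration} realises $\Gamma_{aa}(x)$ as the endomorphism algebra of a boundary condition over the semisimple closed algebra $\comp^{n}$.

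Next I would apply the Moore--Segal structure theory to this fibre. By Theorem~\ref{theorem_2} the algebra $\Gamma_{aa}(x)$ is semisimple, so by Remark~\ref{dimensions} it splits as $\bigoplus_{i=1}^{n}\operatorname{End}(V_{a,i})$, and by Lemma~\ref{ms_theorem2_bis} the Hom-fibre is $\Gamma_{ab}(x)\cong\bigoplus_{i}\operatorname{Hom}_{\comp}(V_{a,i},V_{b,i})$. The only fibrewise datum is therefore the dimension vector $\mathbf{d}(a)=(\dim V_{a,1},\dots,\dim V_{a,n})\in\nat^{n}$, which local freeness of $\Gamma_{aa}$ makes a locally constant function on $U$. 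I would then claim that $\mathbf{d}$ is a \emph{complete} invariant, so that $a\mapsto\mathbf{d}(a)$ descends to an injection of the isomorphism classes of $\scr{B}(U)$ into the set of locally constant maps $U\to\nat^{n}$; as the latter is a set, this would force $\opnm{sk}\scr{B}(U)$ to be a set.

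The main obstacle is exactly this completeness: from $\mathbf{d}(a)=\mathbf{d}(b)$ one must manufacture a genuine isomorphism $a\cong b$ in $\scr{B}(U)$, that is, a globally defined section of $\Gamma_{ab}$ that is invertible in every fibre. The natural plan is to trivialise $\Gamma_{ab}$ over each member $U_\alpha$ of the cover by means of a local generator, glue the resulting fibrewise isomorphisms using the stack condition of axiom (2), and invert them via the perfect pairing~\eqref{pair}. I expect the genuine difficulty to be twofold: the semisimplicity of $\Gamma_{aa}(x)$ invoked above rests on the Cardy condition rather than on the bare Calabi--Yau axioms, and the passage from fibrewise to global isomorphisms carries a cohomological obstruction. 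This is precisely why the statement is recorded as a harmless standing assumption rather than proved outright: for the maximal/Cardy fibrations constructed in the later sections, where objects are classified by twisted bundles over the spectral cover $S$, both points are resolved and $\opnm{sk}\scr{B}(U)$ is a set a fortiori.
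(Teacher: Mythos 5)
This item is a remark recording a standing \emph{assumption}, not a theorem: the paper offers no proof of it, and indeed the whole reason it is stated as a hypothesis is that essential smallness of $\scr{B}(U)$ does not follow from the axioms of Definition~\ref{cy_fibration}. A Calabi--Yau fibration is an arbitrary stack of $\scr{O}(U)$-linear categories subject to finiteness conditions on its Hom-sheaves only; nothing constrains the class of objects, so there is no argument to reconstruct. You correctly sense this in your final paragraph, but the body of your proposal still purports to prove the statement, and the proof as written does not go through.

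The concrete gap is your claim that the dimension vector $\mathbf{d}(a)$ is a complete invariant. It is not, even for the maximal Cardy fibrations of Section~\ref{sec:algebr-prop-maxim}: by Theorem~\ref{equivalences} the category $\scr{B}(U)$ over a semisimple $U$ is equivalent to $\tsf{LF}^n_{\scr{O}_U}$, so isomorphism classes of labels are $n$-tuples of isomorphism classes of locally free sheaves, and by Proposition~\ref{uniqueness_labels}(2) the labels $\scr{L}\otimes \xi_i$ and $\xi_i$ are non-isomorphic whenever $\scr{L}$ is a non-trivial invertible sheaf, although their dimension vectors coincide. (The skeleton is still a set in that situation, because isomorphism classes of locally free sheaves on $U$ form a set, but your injection into $\nat^n$-valued functions fails.) You also rely on Theorem~\ref{theorem_2} and Lemma~\ref{ms_theorem2_bis}, which in the paper are consequences of the full Cardy structure and of Theorem~\ref{ms_over_point}, neither of which is available for a bare CY fibration at the point where the remark is made. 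The correct reading is simply that the smallness of $\opnm{sk}\scr{B}(U)$ is imposed as a set-theoretic hypothesis so that later constructions (e.g.\ the equivalences of Theorem~\ref{equivalences} and the BDR cocycle data) are well posed.
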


\subsection{Cardy Fibrations}

For $U_\alpha \in \mathfrak{U}$ open and $a,b\in \scr{B}(U_\alpha )$,
if $\Gamma_{ab}$ restricted to $U$ is trivial 
pick a local basis $\{\sigma_i\}$ of $\Gamma_{ab}$ and let
$\{\sigma^i\}$ be a basis of $\Gamma_{ab}^*$ dual to
$\{\sigma_i\}$. Define the map $\pi^a_b:\Gamma_{aa}\to \Gamma_{bb}$ by
$$\pi^a_b(\sigma )=\sum_i\sigma_i\sigma \sigma^i.$$
Some comments are in place: the sequence of maps
\begin{equation}\label{duality}
  \Gamma_{ba}\otimes \Gamma_{ab}\longrightarrow 
  \Gamma_{bb}\stackrel{\theta_a}{\longrightarrow}\scr{O}_U
\end{equation}
induces a duality isomorphism
$\Gamma_{ba}\stackrel{\cong}{\longrightarrow}\Gamma_{ab}^*$. The dual
basis in the definition of $\pi_b^a$ is in fact the preimage of the
dual basis of $\{\sigma_i\}$ under this isomorphism. Another key
observation is stated in the following

\begin{proposition}\label{cardy_well_def}
  The map $\pi^a_b$ does not depend on the chosen (local) basis.
\end{proposition}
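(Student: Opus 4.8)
The plan is to observe that Proposition~\ref{cardy_well_def} is the sheaf-theoretic incarnation of Proposition~\ref{prop:1}: since the map $\pi^a_b$ is only defined on an open set $U$ over which $\Gamma_{ab}$ is free, the whole argument takes place inside the $\scr{O}(U)$-modules of sections, where the reasoning of Proposition~\ref{prop:1} applies essentially verbatim. So first I would set up the change-of-basis data. Let $\{\sigma_i\}$ and $\{\tau_\alpha\}$ be two local bases of $\Gamma_{ab}$ over $U$, and let $\{\sigma^i\}$ and $\{\tau^\alpha\}$ be their dual bases, regarded as sections of $\Gamma_{ba}$ after transport along the duality isomorphism $\Gamma_{ba}\cong\Gamma_{ab}^{*}$ induced by the perfect pairing~\eqref{duality} (this is exactly the identification already noted after~\eqref{duality}). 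I would then write $\tau_\alpha=\sum_i a_\alpha^i\sigma_i$ and $\tau^\beta=\sum_j b^\beta_j\sigma^j$ for matrices $(a^i_\alpha)$, $(b^\beta_j)$ of sections of $\scr{O}_U$.

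The key step is to show that $(a^i_\alpha)$ and $(b^\beta_j)$ are mutually inverse. This follows from the defining relations of dual bases together with the $\scr{O}_U$-bilinearity of the pairing: expanding both arguments of $\langle\tau^\beta,\tau_\alpha\rangle=\delta^\beta_\alpha$ in terms of the $\sigma$'s and using $\langle\sigma^j,\sigma_i\rangle=\delta^j_i$ collapses the double sum to $\sum_i b^\beta_i a^i_\alpha=\delta^\beta_\alpha$, precisely as in the proof of Proposition~\ref{prop:1}. Since these are square matrices over a commutative ring, a one-sided inverse is two-sided, so also $\sum_\alpha a^i_\alpha b^\alpha_j=\delta^i_j$. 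With this in hand, substituting the expansions of $\tau_\alpha$ and $\tau^\alpha$ into $\sum_\alpha\tau_\alpha\sigma\tau^\alpha$ and again using bilinearity of composition reorganises it as $\sum_{i,j}\bigl(\sum_\alpha a^i_\alpha b^\alpha_j\bigr)\sigma_i\sigma\sigma^j=\sum_{i,j}\delta^i_j\,\sigma_i\sigma\sigma^j=\sum_i\sigma_i\sigma\sigma^i$, which is exactly $\pi^a_b(\sigma)$ computed in the basis $\{\sigma_i\}$.

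The only point requiring genuine care — and hence the main, if modest, obstacle — is the bookkeeping of the duality isomorphism $\Gamma_{ba}\cong\Gamma_{ab}^{*}$: one must verify that the dual basis, as defined through the pairing~\eqref{duality}, transforms under a change of basis by the inverse of the change-of-basis matrix (rather than its transpose or inverse-transpose, depending on conventions), so that the two matrices genuinely cancel in the final sum. Once the convention for \eqref{duality} is fixed, this is immediate from perfectness of the pairing. Finally, I would emphasise that, because the entire statement is phrased over the single free open set $U$, no gluing or stack argument is needed here; the independence is purely local and algebraic, and the passage to the general (merely locally free) case — assembling the locally defined $\pi^a_b$ into a global sheaf map — is handled exactly as in the discussion following Proposition~\ref{prop:1}.
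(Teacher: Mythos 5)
Your proposal is correct and follows essentially the same route as the paper: expand the second basis and its dual in terms of the first, use the dual-basis relations and the perfectness of the pairing \eqref{duality} to show the two change-of-basis matrices are mutually inverse (the paper's step $AB^{t}=I\Rightarrow A^{t}B=I$ is exactly your ``one-sided inverse is two-sided'' remark), and collapse the double sum. The closing observation about gluing the locally defined maps over a cover by free open sets also matches the paper's discussion immediately after the proposition.
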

\begin{proof}
  As $\Gamma_{aa}$, $\Gamma_{bb}$ and $\Gamma_{ba}$ are locally-free,
  we can pick an open cover $\mathfrak{U}_\alpha$ of $U_\alpha$ such
  that $\Gamma_{aa}|_V\cong \scr{O}^{n_{a}}$, $\Gamma_{ba}|_V\cong
  \scr{O}^{n_{ba}}$, etc. for each $V\in \mathfrak{U}_\alpha$. Pick
  then a basis $B=\{e_1,\dots ,e_{n_{ba}}\}$ for
  $\Gamma_{ba}|_V$.\footnote{By a \emph{basis} we mean a system of
    linearly independent generators $e_1,\dots ,e_{n_{ba}}\in
    \Gamma_{ba}(V)$ such that $\{e_1|_W,\dots ,e_{n_{ba}}|_W\}$ is
    also linearly independent and generates $\Gamma_{ba}(W)$ for each
    $W\subset V$. For instance, let $u_1,\cdots ,u_{n_{ba}}\in
    \scr{O}(V)$ be units; then, if $e_i=(0,\dots ,0,1,0,\dots ,0)$,
    the sections $u_1e_1,\dots ,u_{n_{ab}}e_{n_{ba}}$ form a basis.}
  Let $B'=\{e^1,\dots ,e^{n_{ba}}\}$ be the corresponding dual basis
  for $\Gamma_{ba}^*$. Then, in terms of this basis we have $\pi_b^a
  (\sigma )=\sum_ie_i\sigma e^i$. Let $D=\{f_1,\dots ,f_{n_{ba}}\}$ be
  another basis over $V$ with dual basis $D'$. We then have
$$f_i=\sum_j\lambda_{ij}e_j \quad \text{and} \quad f^i=\sum_j\mu^{ij}e^j.$$
Replacing these linear combinations in the equality
$\delta_{ij}=f^i(f_j)$ we obtain
$$\delta_{ij}=\sum_k\mu^{ik}\lambda_{jk}.$$
If $A:=(\lambda_{ij})$ and $B:=(\mu^{ij})$ then the previous equality
implies that $AB^t=I$ or, equivalently, $A^tB=I$, which in terms of
the coefficients is expressed by
$\delta_{ij}=\sum_k\lambda_{ki}\mu^{kj}$. We now compute
$$
\begin{aligned}
  \sum_if_i\sigma f^i &= \sum_i\Bigl (\sum_j\lambda_{ij}e_j\Bigr )\sigma \Bigl (\sum_k\mu^{ik}e^k\Bigr ) 
  = \sum_{j,k}\Bigl (\sum_i\lambda_{ij}\mu^{ik}\Bigr )e_j\sigma e^k \\
  & =\sum_{j,k}\delta_{jk}e_j\sigma e^k 
  =\sum_je_j\sigma e^j, 
\end{aligned}
$$
as desired.
\end{proof}

Then, when defining $\pi_b^a$ locally on each $V$, we have that, by
the previous computation, these expressions coincide over non-empty
overlaps, and thus can be glued together to obtain a morphism over
$U_\alpha \in \mathfrak{U}$
$$\pi_b^a :\Gamma_{aa}\longrightarrow \Gamma_{bb}.$$
This final layer of structure is included in the following

\begin{defi}\label{cardy_fib}
  A Calabi-Yau fibration $\scr{B}$ is called a \emph{Cardy fibration}
  if and only if the following condition, called the \emph{Cardy
    condition}, holds for each open subset $U_\alpha \in
  \mathfrak{U}$: For $a,b\in \scr{B}(U_\alpha )$,
$$\pi^a_b=\iota_b\iota^a.$$
In other words, the following triangle
$$
\xymatrix{
  \Gamma_{aa} \ar[dr]_{\iota^a}\ar[rr]^{\pi^a_b} & & \Gamma_{bb} \\
  & \scr{T}_U \ar[ur]_{\iota_b} & }
$$
should commute.
\end{defi}

We shall deal with Cardy fibrations all along.

\begin{defi}
  A Cardy fibration $\scr{B}$ is said to be \emph{trivializable} if
  conditions (3), (4)a-c in definition \ref{cy_fibration}
  and the Cardy condition hold also for any open subset of each
  $U_\alpha \in \mathfrak{U}$.
\end{defi}

\medskip

\section{Algebraic Properties of Maximal Cardy Fibrations}
\label{sec:algebr-prop-maxim}

This section will be devoted to describing in detail the stack of
boundary conditions $\mathscr{B}$. We will first deal with morphisms
and later with the whole category.

\subsection{Local characterization of categories of branes}
\label{sec:local-char-categ}

The main idea now is to pick a point $x\in M$ and prove that all the
fibres over $x$ of the sheaves involved in this discussions define a
brane category in the sense of Moore and Segal. This approach will let
us generalize all the results of~\cite{moore_segal1} to Cardy fibrations.

Let us fix a point $x\in M$, and assume that $x\in U_\alpha$, where
$U_\alpha$ is semisimple. Given arbitrary labels $a,b\in
\mathscr{B}(U_\alpha )$, let us denote by $E_{ab}$ the fibre over $x$
for the sheaf $\Gamma_{ab}$. We need to show that the
vector spaces $T_xM$ and $E_{ab}$, together with the appropriate
morphisms, form a CY category in the sense of Moore and Segal.

Let us denote by $p_{ab}$ (or just $p$ if the labels are clear) the
sequence of proyections
\begin{equation}\label{proy}
  \Gamma_{ab}(U_\alpha )\longrightarrow \Gamma_{ab,x}\longrightarrow E_{ab},
\end{equation}
where $\Gamma_{ab,x}$ is the stalk over $x$ of the sheaf
$\Gamma_{ab}$. Let $1_a$ be the unit in $\Gamma_{aa}(U_\alpha )$; let
us identify a label $a\in \scr{B}(U_\alpha )$ with $1_a$, and denote
$p_{aa}(1_a)$ by $\overline{a}$. We now define the category of
boundary conditions $\overline{\scr{B}}_x$; its objects are given by
\begin{displaymath}
  \operatorname{Obj}\overline{\mathscr{B}}_x=
  \{\overline{a}=p_{aa}(1_a) \; | \; a\in \mathscr{B}(U_\alpha )\}.
\end{displaymath}
If $\overline{a},\overline{b}\in \overline{\mathscr{B}}_x$, consider
the corresponding units $1_a\in \Gamma_{aa}(U_\alpha )$ and $1_b\in
\Gamma_{bb}(U_\alpha )$. Then we define 
\begin{displaymath}
  \operatorname{Hom}_{\overline{\mathscr{B}}_x}(\overline{a},\overline{b})=E_{ab}.
\end{displaymath}
With this definition,
$\operatorname{Hom}_{\overline{\mathscr{B}}_x}(\overline{a},\overline{b})$
is a $\comp$-vector space, with dimension equal to the rank of
$\Gamma_{ab}$. We shall denote this vector space by
$O_{\overline{a}\overline{b}}$.

We also have the linear forms $\theta :\scr{T}_M\to \scr{O}$ and
$\theta_a:\Gamma_{aa}\to \scr{O}$ which induce linear maps on the
fibres
\begin{displaymath}
  \begin{aligned}
  \overline{\theta}_x &: T_xM\longrightarrow \comp \\
  \theta_{\overline{a}} &: O_{\overline{a}\overline{a}}\longrightarrow \comp \\
\end{aligned}
\end{displaymath}
which provide $T_xM$ and $O_{\overline{a}\overline{a}}$ with a
Frobenius $\comp$-algebra structure.

In the same fashion, the transition morphisms $\iota_a$ and $\iota^a$
induce maps
\begin{displaymath}
  T_xM\stackrel{\iota_{\overline{a}}}{\longleftarrow}
  O_{\overline{a}\overline{a}}\stackrel{\iota^{\overline{a}}}{\longrightarrow}T_xM.
\end{displaymath}

\begin{lemma}
  Let $x_0,x_1\in U_\alpha$. If $U_{\alpha}$ is connected, then the categories
  $\overline{\scr{B}}_{x_0}$ and $\overline{\scr{B}}_{x_1}$ are
  isomorphic.
\end{lemma}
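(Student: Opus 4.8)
The plan is to build an explicit isomorphism of categories by comparing the Moore--Segal descriptions of the two fibres and exploiting that the relevant numerical invariants are locally constant on the connected set $U_\alpha$. Throughout I write $E_{ab}^{x}$ for the fibre of $\Gamma_{ab}$ at a point $x$, and I use the frame of orthogonal idempotent sections $e_1,\dots ,e_n$ of $\scr{T}_{U_\alpha}$ afforded by condition~(1) of Definition~\ref{cy_fibration}. Setting $\varepsilon_i^{a}:=\iota_a(e_i)\in \Gamma_{aa}(U_\alpha)$, the fact that $\iota_a$ is an algebra homomorphism gives $\sum_i\varepsilon_i^a=1_a$ and $\varepsilon_i^a\varepsilon_j^a=\delta_{ij}\varepsilon_i^a$, so $\{\varepsilon_i^a\}$ is a complete system of orthogonal idempotents over all of $U_\alpha$.

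First I would produce a block decomposition at the level of sheaves. Using centrality \eqref{centrality}, one has $\varepsilon_i^a\sigma =\sigma \varepsilon_i^b$ for $\sigma \in \Gamma_{ab}$, so the $\scr{O}_{U_\alpha}$-linear idempotents $\sigma \mapsto \varepsilon_i^a\sigma \varepsilon_i^b$ split $\Gamma_{ab}=\bigoplus_{i=1}^n\Gamma_{ab}^{i}$ with $\Gamma_{ab}^{i}:=\varepsilon_i^a\Gamma_{ab}\varepsilon_i^b$. Each $\Gamma_{ab}^{i}$ is the image of an idempotent endomorphism of a locally free sheaf, hence is itself locally free of locally constant rank; as $U_\alpha$ is connected this rank is a constant $r_{ab}^i$. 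Since the closed algebra $T_xM$ is semisimple ($M$ being a semisimple manifold), Remark~\ref{dimensions} and Lemma~\ref{ms_theorem2_bis} applied to the fibre at any $x$ give identifications
\begin{displaymath}
  E_{aa}^{x}\cong \bigoplus_{i=1}^n\opnm{End}(V_{a,i}^{x}),
  \qquad
  E_{ab}^{x}\cong \bigoplus_{i=1}^n\Hom{V_{a,i}^{x}}{V_{b,i}^{x}}{\comp},
\end{displaymath}
where $V_{a,i}^{x}$ is the simple module of the matrix block $\varepsilon_i^a E_{aa}^x\varepsilon_i^a$. Comparing ranks gives $\dim_\comp V_{a,i}^{x}=:d(a,i)$ with $(d(a,i))^2=r_{aa}^i$, so $d(a,i)$ is independent of $x\in U_\alpha$; likewise $\dim_\comp E_{ab}^{x}=r_{ab}$ is constant.

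Next I would construct the functor $F$. On objects set $F(\overline a)=\overline a$; this is legitimate because $\operatorname{Obj}\overline{\scr{B}}_{x_0}$ and $\operatorname{Obj}\overline{\scr{B}}_{x_1}$ are both indexed by $\operatorname{Obj}\scr{B}(U_\alpha)$ via $a\mapsto \overline a=p_{aa}(1_a)$. Because $d(a,i)$ does not depend on the point, I may choose linear isomorphisms $\phi_{a,i}:V_{a,i}^{x_0}\xrightarrow{\cong}V_{a,i}^{x_1}$ for every $a$ and every $i$, and define $F$ on morphisms, block by block, by conjugation:
\begin{displaymath}
  F(\sigma )=\bigl(\phi_{b,i}\,\sigma_i\,\phi_{a,i}^{-1}\bigr)_{i=1}^n,
  \qquad \sigma =(\sigma_i)_{i=1}^n\in E_{ab}^{x_0}.
\end{displaymath}
It is bijective on objects and, on each space of morphisms, a linear bijection whose inverse conjugates by the $\phi_{a,i}^{-1}$. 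Composition in these Moore--Segal categories is the blockwise product $(\sigma_i)(\tau_i)=(\sigma_i\tau_i)$, and in $F(\sigma )F(\tau )$ the inner factors $\phi_{c,i}^{-1}\phi_{c,i}$ attached to the intermediate object $\overline c$ cancel, so $F(\sigma \tau)=F(\sigma)F(\tau)$; moreover $1_{\overline a}=\sum_i 1_{V_{a,i}^{x_0}}$ is sent to $\sum_i 1_{V_{a,i}^{x_1}}=1_{\overline a}$. Hence $F$ is an invertible functor, i.e.\ an isomorphism of categories.

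The main obstacle is the second step: showing that the block ranks, and hence the dimensions $d(a,i)$, are genuinely constant across $U_\alpha$. This is exactly where connectedness enters, and it rests on promoting the pointwise matrix-algebra decomposition of Remark~\ref{dimensions} to the sheaf-theoretic splitting $\Gamma_{ab}=\bigoplus_i\Gamma_{ab}^i$ into locally free summands; local freeness makes the ranks locally constant and connectedness makes them constant. I should also note that $F$ respects only the underlying categorical structure: the trace forms $\theta_{\overline a}(\sigma )=\sum_i\sqrt{\theta(e_i)(x)}\,\opnm{tr}(\sigma_i)$ involve the values $\theta(e_i)(x)$, which vary with $x$ and are not restored by conjugation, so the isomorphism asserted is one of categories rather than of the full Calabi--Yau structure.
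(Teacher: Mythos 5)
Your proof is correct, but it takes a genuinely different route from the paper's. The paper's argument is more economical: since $\Gamma_{aa}$ and $\Gamma_{ab}$ are locally free of constant rank on the connected set $U_\alpha$, one simply fixes linear isomorphisms $\phi_{aa}:F_{x_0}(\Gamma_{aa})\to F_{x_1}(\Gamma_{aa})$ and $\phi_{ab}:F_{x_0}(\Gamma_{ab})\to F_{x_1}(\Gamma_{ab})$ compatible with the canonical projections from $\Gamma_{\bullet\bullet}(U_\alpha)$, so that units map to units and composition (being induced from composition of global sections) is respected; the functor is then $\overline{a}_0\mapsto\phi_{aa}(\overline{a}_0)$, $\sigma\mapsto\phi_{ab}(\sigma)$. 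You instead pass through the Moore--Segal block structure: you split $\Gamma_{ab}$ by the idempotents $\iota_a(e_i)$ into locally free summands, deduce from connectedness that the block dimensions $d(a,i)$ are constant, and define the functor by blockwise conjugation. What your approach buys is an explicit, verifiable check of functoriality (the inner factors $\phi_{c,i}^{-1}\phi_{c,i}$ cancel), which the paper leaves implicit in the compatibility with global sections; the cost is that you invoke the pointwise Moore--Segal classification of the fibre categories (Theorem~\ref{theorem_2}, Lemma~\ref{ms_theorem2_bis} via Theorem~\ref{ms_over_point}) and the sheaf-level block decomposition, which the paper only establishes \emph{after} this lemma (Theorems~\ref{theorem2} and~\ref{theorem2bis}). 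This is not circular --- those results do not depend on the lemma --- but you should flag that you are front-loading structure theory that the paper develops later. Your closing observation that the isomorphism is one of plain categories and need not preserve the trace forms $\theta_{\overline{a}}$ is accurate and applies equally to the paper's construction.
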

\begin{proof}
  Let us consider two labels $a,b\in \scr{B}(U_\alpha )$; to
  distinguish between the two fibres, let $F_x(\scr{M})$ be the fibre
  over $x$ of the locally free module $\scr{M}$; likewise, let us
  denote by $p_{aa}^0$ (for $x_0$) or $p_{aa}^1$ (for $x_1$) the
  projection \eqref{proy}. By connectivity assumptions, the ranks of
  $\Gamma_{aa}$ and $\Gamma_{ab}$ are constant and we can therefore
  fix isomorphisms
  \begin{displaymath}
    \phi_{aa}:F_{x_0}(\Gamma_{aa})\cong F_{x_1}(\Gamma_{aa})
    \quad \text{and} \quad \phi_{ab} :F_{x_0}(\Gamma_{ab})\cong F_{x_1}(\Gamma_{ab})
  \end{displaymath}
  such that the diagrams
  \begin{displaymath}
  \xymatrix{
  & F_{x_0}(\Gamma_{aa}) \ar[dd]^{\phi_{aa}} \\
  \Gamma_{aa}(U_\alpha )\ar[ur] \ar[dr] & \\
  & F_{x_1}(\Gamma_{aa})} \xymatrix{
  & F_{x_0}(\Gamma_{ab}) \ar[dd]^{\phi_{ab}} \\
  \Gamma_{ab}(U_\alpha )\ar[ur] \ar[dr] & \\
  & F_{x_1}(\Gamma_{ab})}
  \end{displaymath}
  commute, where the unlabelled arrows are canonical projections. In
  particular, this commutativity implies that, for example,
  $p_{aa}^0(1_a)\in F_{x_0}(\Gamma_{aa})$ is mapped onto
  $p_{aa}^1(1_a)$.

We now define a functor $F:\overline{\scr{B}}_{x_0}\to
\overline{\scr{B}}_{x_1}$; on objects, if
$\overline{a}_0:=p_{aa}^0(1_a)$, then
$$F(\overline{a}_0)=\phi_{aa}(\overline{a}_0).$$
Let now $\sigma :\overline{a}_0\to \overline{b}_0$ be an arrow in
$\overline{\scr{B}}_{x_0}$. That is, $\sigma$ is an element of
$F_{x_0}(\Gamma_{ab})$. Then we define
$$F (\sigma )=\phi_{ab}(\sigma ).$$
The inverse of this functor is constructed in the same way, by
considering $\phi_{aa}^{-1}$ and $\phi_{ab}^{-1}$.
\end{proof}

\begin{theorem}\label{ms_over_point}
  The category $\overline{\mathscr{B}}_x$, together with the Frobenius
  algebra $T_xM$ and the structure maps $\overline{\theta}_x$,
  $\theta_{\overline{a}}$, $\iota_{\overline{a}}$ and
  $\iota^{\overline{a}}$ ($\overline{a}\in \overline{\scr{B}}_x$)
  defines a brane category in the sense of Moore and Segal.
\end{theorem}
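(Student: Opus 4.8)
The plan is to observe that every piece of structure entering a Moore--Segal brane category — the $\comp$-bilinear composition, the traces $\overline{\theta}_x$ and $\theta_{\overline a}$, the pairings exhibiting $E_{ab}$ and $E_{ba}$ in duality, the transition maps $\iota_{\overline a},\iota^{\overline a}$, and the Cardy condition — is obtained by applying the fibre functor $F_x=(-)\otimes_{\scr{O}_x}\comp$ to the corresponding $\scr{O}_{U_\alpha}$-linear data and commutative diagrams that already hold for the Cardy fibration $\scr{B}$ by Definitions~\ref{cy_fibration} and~\ref{cardy_fib}. Since each $\Gamma_{ab}$ is locally free of finite rank over $\scr{O}_{U_\alpha}$, the functor $F_x$ is base change to the residue field $\comp$: it is monoidal, $F_x(\Gamma_{ab}\otimes_{\scr{O}}\Gamma_{bc})\cong E_{ab}\otimes_{\comp}E_{bc}$, it commutes with the formation of duals, $F_x(\Gamma_{ab}^{*})\cong E_{ab}^{*}$, and it carries units to units. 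Moreover $M$ is semisimple, so $A=T_xM$ is a semisimple commutative Frobenius algebra (with trace the fibre $\overline{\theta}_x$ of $\theta$), which is exactly the hypothesis under which Moore and Segal work in Section~\ref{subsec_boundary_semisimple}.

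First I would transport the Calabi--Yau axioms. Applying $F_x$ to the sheaf composition gives a $\comp$-bilinear, associative composition $E_{ab}\times E_{bc}\to E_{ac}$ with units $p_{aa}(1_a)=\overline{a}$, so $\overline{\scr{B}}_x$ is a $\comp$-linear category. The fibre $\theta_{\overline a}$ of $\theta_a$ makes $E_{aa}$ a Frobenius $\comp$-algebra: at the sheaf level $\theta_a$ induces an isomorphism $\Gamma_{aa}\cong\Gamma_{aa}^{*}$, and since $F_x$ commutes with duals this reduces to $E_{aa}\cong E_{aa}^{*}$, i.e. non-degeneracy of the trace form. Likewise the perfect pairing~\eqref{duality} gives $\Gamma_{ba}\cong\Gamma_{ab}^{*}$, hence after applying $F_x$ a perfect pairing $E_{ab}\otimes_\comp E_{ba}\to\comp$ and an identification $E_{ba}\cong E_{ab}^{*}$ (perfectness is preserved because base change of a locally free module commutes with dualization). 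The symmetry $\theta_{\overline a}(\varphi\psi)=\theta_{\overline b}(\psi\varphi)$ is the fibre of condition~(ii) in Definition~\ref{def:13}, and the fibres $\iota_{\overline a}$, $\iota^{\overline a}$ inherit, by functoriality of $F_x$, the algebra-homomorphism property, the centrality~\eqref{centrality}, and the adjunction~\eqref{adjoint}. This establishes that $(\overline{\scr{B}}_x,T_xM)$ is a Calabi--Yau category over $A=T_xM$.

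The main obstacle is the Cardy condition, because $\pi^a_b$ is not defined pointwise but through a choice of dual bases, so I must check that $F_x(\pi^a_b)$ agrees with the intrinsic $\pi$-map of the fibres. Since $\Gamma_{ab}$ is locally free, choose on a neighbourhood $V\ni x$ a basis $\{\sigma_i\}$ of $\Gamma_{ab}$ with dual basis $\{\sigma^i\}$ of $\Gamma_{ba}\cong\Gamma_{ab}^{*}$; by Proposition~\ref{cardy_well_def} the formula $\pi^a_b(\sigma)=\sum_i\sigma_i\sigma\sigma^i$ is basis-independent. Because $F_x$ sends a basis to a basis and the fibre of the perfect pairing is the fibre pairing, the reductions $\overline{\sigma_i}$ form a basis of $E_{ab}$ and $\overline{\sigma^i}$ the corresponding dual basis of $E_{ba}\cong E_{ab}^{*}$; hence
\begin{displaymath}
  F_x(\pi^a_b)(\overline{\sigma})=\sum_i\overline{\sigma_i}\,\overline{\sigma}\,\overline{\sigma^i},
\end{displaymath}
which is precisely the Moore--Segal map $\pi^{\overline a}_{\overline b}$ on $E_{aa}$. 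Applying $F_x$ to the sheaf-level Cardy identity $\pi^a_b=\iota_b\circ\iota^a$ of Definition~\ref{cardy_fib} then gives
\begin{displaymath}
  \pi^{\overline a}_{\overline b}=F_x(\pi^a_b)=F_x(\iota_b)\circ F_x(\iota^a)=\iota_{\overline b}\circ\iota^{\overline a},
\end{displaymath}
so the Cardy condition holds on fibres. Together with the previous paragraph this shows that $\overline{\scr{B}}_x$ is a Cardy — hence a Moore--Segal brane — category over the semisimple Frobenius algebra $T_xM$; by Theorem~\ref{theorem_2} each $E_{aa}$ is then automatically semisimple, completing the identification.
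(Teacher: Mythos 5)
The paper states Theorem~\ref{ms_over_point} without proof, so there is nothing to compare against directly; your argument is correct and is exactly the verification the paper's preceding setup (the fibre maps $\overline{\theta}_x$, $\theta_{\overline a}$, $\iota_{\overline a}$, $\iota^{\overline a}$ induced on $E_{ab}$) intends the reader to carry out. You rightly isolate the only non-formal point, namely that $\pi^a_b$ is defined via a choice of dual bases and one must check $F_x(\pi^a_b)=\pi^{\overline a}_{\overline b}$; your observation that the fibre functor sends a local basis and its dual to a basis of $E_{ab}$ and its dual (so the two definitions agree, after which the sheaf-level Cardy identity of Definition~\ref{cardy_fib} descends by functoriality) closes that gap cleanly.
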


From theorem \ref{ms_over_point} we can deduce the following
\begin{theorem}\label{theorem2}
  Let $a\in \scr{B}(U_\alpha )$ with $U_{\alpha}$ connected. Then, the
  sheaf $\Gamma_{aa}$ is locally isomorphic to a sum
  $\bigoplus_i\operatorname{M}_{d(a,i)}(\mathscr{O}_U)$ of matrix
  algebras
\end{theorem}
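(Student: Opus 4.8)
The plan is to prove the statement locally and to recognise each homogeneous summand of $\Gamma_{aa}$ as an Azumaya algebra, so that Definition~\ref{def:10} delivers the local matrix form for free. The pointwise input is Theorem~\ref{ms_over_point}: for every $x\in U_\alpha$ the fibre $E_{aa}$ of $\Gamma_{aa}$ is the endomorphism algebra of a Moore--Segal brane category over $x$, so by Theorem~\ref{theorem_2} and Remark~\ref{dimensions} it is semisimple and splits as $E_{aa}\cong\bigoplus_{i=1}^n\opnm{End}(V_{a,i})$, the $i$-th block being $\iota_{\overline a}(e_i)E_{aa}\cong\opnm{M}_{d(a,i)}(\comp)$. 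The whole task is to promote this fibrewise splitting into matrix blocks to an isomorphism of sheaves of algebras, and the device that makes it possible is the central morphism $\iota_a$.

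First I would shrink $U_\alpha$ to an open set $U$ on which $\Gamma_{aa}$ (and the summands introduced below) are free and on which, $U_\alpha$ being semisimple, $\scr{T}_U\cong\scr{O}_U^{n}$ as sheaves of algebras (Definition~\ref{def:7}). The standard orthogonal idempotents of $\scr{O}_U^{n}$ give idempotent sections $e_1,\dots,e_n\in\scr{T}_U$ with $e_ie_j=\delta_{ij}e_i$ and $\sum_i e_i=1$. Since $\iota_a$ is a unital morphism of $\scr{O}_U$-algebras, the sections $\varepsilon_i:=\iota_a(e_i)\in\Gamma_{aa}$ satisfy $\varepsilon_i\varepsilon_j=\delta_{ij}\varepsilon_i$ and $\sum_i\varepsilon_i=1_a$, and by centrality~\eqref{centrality} (taken with $b=a$) each $\varepsilon_i$ is \emph{central} in $\Gamma_{aa}$. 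Hence there is a decomposition of sheaves of algebras
\begin{equation*}
  \Gamma_{aa}=\bigoplus_{i=1}^n\Gamma_{aa}^{(i)},\qquad
  \Gamma_{aa}^{(i)}:=\varepsilon_i\Gamma_{aa},
\end{equation*}
and each $\Gamma_{aa}^{(i)}$, being a direct summand of the locally free module $\Gamma_{aa}$, is itself a locally free sheaf of $\scr{O}_U$-algebras with unit $\varepsilon_i$.

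Next I would identify the fibres. The fibre of $\Gamma_{aa}^{(i)}$ over $x$ is $\iota_{\overline a}(e_i)\,E_{aa}$, which by Remark~\ref{dimensions} is precisely the block $\opnm{End}(V_{a,i})\cong\opnm{M}_{d(a,i)}(\comp)$ (and is $0$ in the degenerate case $\iota_{\overline a}(e_i)=0$). Because $U$ is connected the rank of the locally free sheaf $\Gamma_{aa}^{(i)}$ is constant, so $d(a,i)$ is constant on $U$ and the vanishing case is all-or-nothing; discarding the zero summands we may assume every $\Gamma_{aa}^{(i)}$ has fibres $\opnm{M}_{d(a,i)}(\comp)$. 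A locally free sheaf of algebras whose fibres are $\opnm{M}_{d(a,i)}(\comp)$ is an Azumaya algebra by Remark~\ref{obs:az}, and therefore, by Definition~\ref{def:10}, locally isomorphic to $\opnm{M}_{d(a,i)}(\scr{O}_U)$. Passing to a common refinement of the finitely many trivialising covers, all summands trivialise simultaneously and I obtain $\Gamma_{aa}\cong\bigoplus_i\opnm{M}_{d(a,i)}(\scr{O}_U)$ locally, as claimed.

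The main obstacle is exactly this fibrewise-to-local upgrade. It cannot be applied to $\Gamma_{aa}$ directly, since its fibre $\bigoplus_i\opnm{M}_{d(a,i)}(\comp)$ has centre $\comp^n$ and so is not central simple; the preliminary splitting by the central idempotents $\varepsilon_i$ produced by $\iota_a$ is therefore indispensable, and it is the one place where the Cardy-fibration axioms beyond those already consumed in Theorem~\ref{ms_over_point} are used. Once one is reduced to a sheaf with central-simple fibres, the genuinely nontrivial point -- that fibrewise $\opnm{M}_k(\comp)$ forces local triviality as $\opnm{M}_k(\scr{O}_U)$ -- is precisely the Azumaya characterisation recorded in Remark~\ref{obs:az}, so no further computation is required.
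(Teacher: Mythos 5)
Your proposal is correct and follows essentially the same route as the paper: split $\Gamma_{aa}$ by the idempotents $\iota_a(e_i)$ into locally free summands, identify the fibres of each summand with the Moore--Segal matrix blocks $\operatorname{M}_{d(a,i)}(\comp)$, use connectedness/local triviality to see the block size is constant, and invoke Remark~\ref{obs:az} to recognise each summand as an Azumaya algebra and hence as locally isomorphic to $\operatorname{M}_{d(a,i)}(\scr{O}_U)$. Your write-up is, if anything, slightly more explicit than the paper's about why centrality of $\iota_a$ makes the $\varepsilon_i$-splitting one of sheaves of algebras rather than merely of modules.
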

\begin{proof}
  Fix $x_0\in U_\alpha$ and let $\{e_1,\dots ,e_n\}$ be a frame of
  orthogonal, idempotent sections in $\scr{T}(U_\alpha )$. Then, for
  the category $\overline{\scr{B}}_{x_0}$, we have Moore and Segal's
  Theorem 2 (\ref{theorem_2}) at our disposal. We have that
  $O_{\overline{a}\overline{a}}=\bigoplus_i\iota_{\overline{a}}
  (e_i(x_0))O_{\overline{a}\overline{a}}$;
  by \ref{theorem_2},
  \begin{equation}\label{theorem2_ms_point}
    O_{\overline{a}\overline{a}}=\operatorname{Hom}_{\overline{\scr{B}}_{x_0}}
    (\overline{a},\overline{a})\cong 
    \bigoplus_{i=1}^n\text{M}_{d(x_0,\overline{a},i)}(\comp );
  \end{equation}
  moreover, the matrix algebra $\opnm{M}_{d(x_0,\overline{a},i)}(\comp
  )$ corresponds to the summand
  $\iota_{\overline{a}}(e_i(x_0))O_{\overline{a}\overline{a}}$. On the
  other hand, we have that, locally around $x_0$, the sheaf
  $\Gamma_{aa}$ is isomorphic to $\scr{O}^{n_a}_{U_\alpha}$ for some
  integer $n_a$. We should link this isomorphism with the pointwise
  decomposition given in equation \eqref{theorem2_ms_point}.

  It is sufficient to work with only one idempotent; we thus consider
  the algebra $\iota_a(e_i)\Gamma_{aa}$, which is a locally free
  module, being the image of the idempotent map $L_i:\Gamma_{aa}\to
  \Gamma_{aa}$ given by $L_i(\sigma )=\iota_a(e_i)\sigma$. Assume that
  $x_0\in V$, where $V$ is a neighborhood such that
  $\iota_a(e_i)\Gamma_{aa}|_V\cong \scr{O}^{n_a(i)}_V$. The fibre over
  $x_0$ of $\iota_a(e_i)\Gamma_{aa}$ is precisely
  $\iota_{\overline{a}}(e_i(x_0))O_{\overline{a}\overline{a}}$, which
  is isomorphic to $\opnm{M}_{d(x_0,\overline{a},i)}(\comp )$. If
  $x_1\in V$ is another point, then
  $\iota_{\overline{a}}(e_i(x_1))O_{\overline{a}\overline{a}}$ is
  isomorphic to $\opnm{M}_{d(x_1,\overline{a},i)}(\comp )$. But the
  local triviality of $\iota_a(e_i)\Gamma_{aa}$ implies that
  \begin{displaymath}
    d(x_1,\overline{a},i)=n_a(i)=d(x_0,\overline{a},i).
  \end{displaymath}
  Hence, by remark~\ref{obs:az}, the decomposition
  \eqref{theorem2_ms_point} extends to a neighborhood of $x_0$, as we
  wanted to prove.
\end{proof}

\begin{obs}\label{remark_summands}
  From the previous result we can also deduce that the matrix algebra
  $M_{d(a,i)}(\scr{O}_V)$ corresponds (locally) to the subalgebra
  $\iota_a(e_i)\Gamma_{aa}$.
\end{obs}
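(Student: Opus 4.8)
The plan is to read the statement off the decomposition already produced in the proof of Theorem~\ref{theorem2}, promoting the pointwise identification of summands to a local isomorphism of sheaves of algebras. First I would observe that, since $\iota_a$ is a morphism of $\scr{O}_{U_\alpha}$-algebras, it sends the frame $\{e_1,\dots,e_n\}$ of orthogonal idempotents of $\scr{T}(U_\alpha)$ to elements $\iota_a(e_i)\in\Gamma_{aa}$ satisfying $\iota_a(e_i)\iota_a(e_j)=\iota_a(e_ie_j)=\delta_{ij}\iota_a(e_i)$ and $\sum_i\iota_a(e_i)=\iota_a(1)=1_a$. By the centrality condition~\eqref{centrality} each $\iota_a(e_i)$ is central in $\Gamma_{aa}$, so each $\iota_a(e_i)\Gamma_{aa}$ is at once a two-sided ideal and a unital subalgebra with unit $\iota_a(e_i)$, and one obtains a decomposition of sheaves of $\scr{O}_V$-algebras
\begin{displaymath}
  \Gamma_{aa}|_V=\bigoplus_{i=1}^n\iota_a(e_i)\Gamma_{aa}|_V .
\end{displaymath}

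Next I would identify each summand. As in the proof of Theorem~\ref{theorem2}, the subalgebra $\iota_a(e_i)\Gamma_{aa}$ is the image of the idempotent $\scr{O}_V$-linear endomorphism $L_i(\sigma)=\iota_a(e_i)\sigma$, hence a direct summand of the locally free sheaf $\Gamma_{aa}$ and so itself locally free. Moreover its fibre over a point $x_0\in V$ is exactly $\iota_{\overline{a}}(e_i(x_0))O_{\overline{a}\overline{a}}$, which by Theorem~\ref{theorem_2} and Remark~\ref{dimensions} is the matrix algebra $\operatorname{M}_{d(a,i)}(\comp)$ as a $\comp$-algebra; connectedness of $U_\alpha$ guarantees that the integer $d(a,i)$ is independent of the point.

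Finally I would invoke Remark~\ref{obs:az}: a locally free sheaf of algebras whose fibres are the matrix algebras $\operatorname{M}_{d(a,i)}(\comp)$ is an Azumaya algebra, and hence by Definition~\ref{def:10} is locally isomorphic to $\operatorname{M}_{d(a,i)}(\scr{O}_V)$. Applying this to each $\iota_a(e_i)\Gamma_{aa}$ and assembling over $i$ recovers the decomposition of Theorem~\ref{theorem2} summand by summand, with the $i$-th matrix factor corresponding precisely to the subalgebra $\iota_a(e_i)\Gamma_{aa}$, as claimed.

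The one delicate point, and the step I would treat most carefully, is the compatibility of the fibre functor $F_{x_0}$ with the idempotent decomposition: one must check that $F_{x_0}$ carries $\operatorname{im}(L_i)$ onto $\operatorname{im}(F_{x_0}(L_i))$, the image of multiplication by $\iota_{\overline{a}}(e_i(x_0))$, so that the algebra structure on the fibre of $\iota_a(e_i)\Gamma_{aa}$ is genuinely the Moore and Segal matrix structure and not merely an abstract $\comp$-algebra of the right dimension. Since $L_i$ is an $\scr{O}_V$-linear idempotent whose image is a direct summand of a locally free sheaf, $F_{x_0}$ preserves this splitting and the matrix structure is inherited from the multiplication of $\Gamma_{aa}$; the remaining verifications are the bookkeeping already carried out for Theorem~\ref{theorem2}.
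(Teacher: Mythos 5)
Your proposal is correct and follows essentially the same route as the paper, which deduces the remark directly from the proof of Theorem~\ref{theorem2}: there $\iota_a(e_i)\Gamma_{aa}$ is exhibited as the image of the idempotent $L_i(\sigma)=\iota_a(e_i)\sigma$, hence locally free, its fibre at $x_0$ is identified with $\iota_{\overline{a}}(e_i(x_0))O_{\overline{a}\overline{a}}\cong\opnm{M}_{d(a,i)}(\comp)$ via Theorem~\ref{theorem_2}, and Remark~\ref{obs:az} promotes this to a local isomorphism with $\opnm{M}_{d(a,i)}(\scr{O}_V)$. Your additional verification that the fibre functor carries $\operatorname{im}(L_i)$ onto the image of multiplication by $\iota_{\overline{a}}(e_i(x_0))$ is a sound refinement of a step the paper leaves implicit.
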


For $a,b\in \scr{B}(U_\alpha )$, and again by the CY structure of
$\overline{\scr{B}}_x$, we have an isomorphism
\begin{displaymath}
  O_{\overline{a}\overline{b}}=\operatorname{Hom}_{\overline{\scr{B}}_x}
  (\overline{a},\overline{b})\cong \bigoplus_{i=1}^n\operatorname{Hom}_\comp 
  \left (\comp^{d(\overline{a},i)},\comp^{d(\overline{b},i)}\right ),
\end{displaymath}
and thus the following result, which is proved following the same
procedure of the previous theorem (note that in this case we have the
idempotent morphism $L_i:\Gamma_{ab}\to \Gamma_{ab}$, $L_i(\sigma
)=\iota_b(e_i)\sigma$ which, by the centrality condition
\eqref{centrality}, coincides with the morphism $\Gamma_{ab}\to
\Gamma_{ab}$ given by $\sigma \mapsto \sigma \iota_a(e_i)$).

\begin{theorem}\label{theorem2bis}
  In the situation of theorem \ref{theorem2}, for $a,b\in
  \mathscr{B}(U_\alpha )$ we have a local isomorphism between
  $\Gamma_{ab}$ and
  $\bigoplus_{i=1}^n\operatorname{Hom}_{\mathscr{O}_U}\left
    (\mathscr{O}_U^{d(a,i)},\mathscr{O}_U^{d(b,i)}\right )$.
\end{theorem}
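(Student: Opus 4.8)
The plan is to run the argument of Theorem~\ref{theorem2} essentially verbatim, replacing the diagonal idempotent endomorphism by the off-diagonal one already singled out in the paragraph preceding the statement. Recall that $\{e_1,\dots,e_n\}$ is a frame of orthogonal idempotent sections of $\scr{T}(U_\alpha)$ with $\sum_i e_i=e$. Since $\iota_b$ is a morphism of $\scr{O}_{U_\alpha}$-algebras, the sections $\iota_b(e_i)$ are orthogonal idempotents of $\Gamma_{bb}$ with $\sum_i\iota_b(e_i)=\iota_b(e)=1_b$. Hence the endomorphisms $L_i:\Gamma_{ab}\to\Gamma_{ab}$, $L_i(\sigma)=\iota_b(e_i)\sigma$, are orthogonal idempotent $\scr{O}_{U_\alpha}$-linear maps summing to the identity, and they yield a direct sum decomposition $\Gamma_{ab}\cong\bigoplus_{i=1}^n\iota_b(e_i)\Gamma_{ab}$. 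By the centrality condition~\eqref{centrality} each summand $\iota_b(e_i)\Gamma_{ab}$ is simultaneously the image of right multiplication by $\iota_a(e_i)$, so it is in fact a sub-bimodule for the $(\Gamma_{bb},\Gamma_{aa})$-action.

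Next I would identify the fibres. Each $\iota_b(e_i)\Gamma_{ab}$, being the image of an idempotent endomorphism of the locally free sheaf $\Gamma_{ab}$, is a direct summand and therefore itself locally free; moreover forming the image of an idempotent commutes with passing to the fibre at a point $x_1\in U_\alpha$, since $L_i$ reduces modulo the maximal ideal to the fibrewise idempotent $\bar\sigma\mapsto\iota_{\overline{b}}(e_i(x_1))\bar\sigma$. By Theorem~\ref{ms_over_point} the triple built from $\overline{\scr{B}}_{x_1}$ and $T_{x_1}M$ is a brane category in the sense of Moore and Segal, so Lemma~\ref{ms_theorem2_bis} applies and gives $E_{ab}=O_{\overline{a}\overline{b}}\cong\bigoplus_i\operatorname{Hom}_\comp(V_{a,i},V_{b,i})$, the $i$-th summand being exactly $\iota_{\overline{b}}(e_i(x_1))O_{\overline{a}\overline{b}}$ (here I use, as in Remark~\ref{dimensions}, that $\iota_{\overline{b}}(e_i)$ is the identity on the $i$-th block and zero elsewhere). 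This is precisely the fibre of $\iota_b(e_i)\Gamma_{ab}$ over $x_1$, so that fibre has dimension $d(x_1,\overline{a},i)\,d(x_1,\overline{b},i)$.

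It then remains to check that this rank is locally constant, which is where connectedness of $U_\alpha$ enters exactly as in Theorem~\ref{theorem2}: the diagonal case already established $d(x_1,\overline{a},i)=d(a,i)$ and $d(x_1,\overline{b},i)=d(b,i)$ independently of the point. Thus each $\iota_b(e_i)\Gamma_{ab}$ is locally free of constant rank $d(a,i)\,d(b,i)$, and any such sheaf is locally isomorphic to $\operatorname{Hom}_{\scr{O}_U}(\scr{O}_U^{d(a,i)},\scr{O}_U^{d(b,i)})$. Assembling the summands yields the claimed local isomorphism $\Gamma_{ab}\cong\bigoplus_i\operatorname{Hom}_{\scr{O}_U}(\scr{O}_U^{d(a,i)},\scr{O}_U^{d(b,i)})$.

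The main obstacle I anticipate is the promotion from the fibrewise decomposition of Lemma~\ref{ms_theorem2_bis} to a decomposition of locally free \emph{sheaves}: a priori Moore--Segal only controls each fibre separately, and one must know the summands glue into subsheaves of locally constant rank. The device that makes this work is the globally defined idempotent $L_i=\iota_b(e_i)(-)$, whose image is automatically a locally free direct summand with base-change-compatible fibres; combined with the rank computation from Theorem~\ref{theorem2}, this upgrades the pointwise statement to the sheaf-theoretic one. If one wants the isomorphism to respect the full $(\Gamma_{bb},\Gamma_{aa})$-bimodule structure, matching the standard bimodule on $\operatorname{Hom}$, one invokes Remark~\ref{obs:az} together with Remark~\ref{remark_summands} to identify the acting algebras locally with $\operatorname{M}_{d(b,i)}(\scr{O}_U)$ and $\operatorname{M}_{d(a,i)}(\scr{O}_U)$, after which the identification of $\iota_b(e_i)\Gamma_{ab}$ with $\operatorname{Hom}_{\scr{O}_U}(\scr{O}_U^{d(a,i)},\scr{O}_U^{d(b,i)})$ is the standard Morita picture.
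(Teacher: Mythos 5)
Your proposal is correct and follows exactly the route the paper intends: the paper does not write out a separate proof of Theorem~\ref{theorem2bis} but states that it "is proved following the same procedure of the previous theorem," using precisely the idempotent $L_i(\sigma)=\iota_b(e_i)\sigma$ on $\Gamma_{ab}$ (which by centrality agrees with $\sigma\mapsto\sigma\iota_a(e_i)$), together with the fibrewise Moore--Segal decomposition and the constancy of the ranks $d(a,i)$, $d(b,i)$ established in Theorem~\ref{theorem2}. Your write-up is a faithful (and somewhat more detailed) expansion of that sketch.
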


\begin{obs}\label{remark_summands_2}
  Observe that the dimensions $d(a,i)$ in theorem \ref{theorem2bis}
  are the same as the ones in \ref{theorem2}; this is deduced form the
  proof of Moore and Segal's theorem 2 in \cite{moore_segal1}. And
  also in this case, the summand
  $\operatorname{Hom}_{\mathscr{O}_V}\left
    (\mathscr{O}_V^{d(a,i)},\mathscr{O}_V^{d(b,i)}\right )$
  corresponds to the submodule
  $\iota_b(e_i)\Gamma_{ab}|_V=\Gamma_{ab}|_V\iota_a(e_i)$.
\end{obs}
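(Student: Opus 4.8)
The plan is to mimic the proof of Theorem~\ref{theorem2} almost verbatim, replacing the diagonal idempotent decomposition of $\Gamma_{aa}$ by the ``off-diagonal'' one of $\Gamma_{ab}$ supplied pointwise by Lemma~\ref{ms_theorem2_bis}. First I would fix $x_0\in U_\alpha$ and a frame $\{e_1,\dots,e_n\}$ of orthogonal idempotent sections of $\scr{T}(U_\alpha)$, exactly as before. Applying the Calabi--Yau structure of the fibre category $\overline{\scr{B}}_{x_0}$ together with Lemma~\ref{ms_theorem2_bis} gives the pointwise decomposition $O_{\overline{a}\overline{b}}\cong\bigoplus_{i=1}^n\operatorname{Hom}_\comp(\comp^{d(\overline{a},i)},\comp^{d(\overline{b},i)})$ in which, crucially, the integers $d(\overline{a},i)$ and $d(\overline{b},i)$ are the very same ones produced by Theorem~\ref{theorem2} for $\Gamma_{aa}$ and $\Gamma_{bb}$; this coherence is the content of Remark~\ref{remark_summands_2}.

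Next I would globalise the decomposition over a neighbourhood. For each $i$ the left multiplication $L_i(\sigma)=\iota_b(e_i)\sigma$ is an $\scr{O}$-linear idempotent endomorphism of $\Gamma_{ab}$, and by the centrality condition~\eqref{centrality} it coincides with right multiplication by $\iota_a(e_i)$. Hence its image $\Gamma_{ab}^{(i)}:=\iota_b(e_i)\Gamma_{ab}=\Gamma_{ab}\iota_a(e_i)$ is a direct summand of the locally free sheaf $\Gamma_{ab}$, so it is itself locally free, and $\Gamma_{ab}=\bigoplus_{i=1}^n\Gamma_{ab}^{(i)}$.

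Then I would compute the rank of each summand from its fibres. The fibre of $\Gamma_{ab}^{(i)}$ at a point $x$ is the image of the induced idempotent on $O_{\overline{a}\overline{b}}$, namely $\iota_{\overline{b}}(e_i(x))O_{\overline{a}\overline{b}}\cong\operatorname{Hom}_\comp(\comp^{d(\overline{a},i)},\comp^{d(\overline{b},i)})$, which has dimension $d(a,i)\,d(b,i)$. Since $U_\alpha$ is connected and $\Gamma_{aa}$, $\Gamma_{bb}$, $\Gamma_{ab}$ have constant rank there (as recorded in the proof of the isomorphism $\overline{\scr{B}}_{x_0}\cong\overline{\scr{B}}_{x_1}$), the fibre dimension $x\mapsto\dim_\comp(\Gamma_{ab}^{(i)})_x$ is locally constant, so on a small enough $V\ni x_0$ the summand $\Gamma_{ab}^{(i)}$ is free of rank $d(a,i)\,d(b,i)$. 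As $\operatorname{Hom}_{\scr{O}_V}(\scr{O}_V^{d(a,i)},\scr{O}_V^{d(b,i)})$ is likewise free of that rank, the two are isomorphic as $\scr{O}_V$-modules; summing over $i$ yields the desired local isomorphism $\Gamma_{ab}\cong\bigoplus_{i=1}^n\operatorname{Hom}_{\scr{O}_U}(\scr{O}_U^{d(a,i)},\scr{O}_U^{d(b,i)})$.

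The step I expect to be the main obstacle is not the abstract matching of ranks -- any free module of the right rank is isomorphic to the asserted $\operatorname{Hom}$ sheaf -- but rather the bookkeeping ensuring that the indexing of the simple summands is coherent across $\Gamma_{aa}$, $\Gamma_{bb}$ and $\Gamma_{ab}$, i.e.\ that the complex vector spaces $V_{a,i}$ and $V_{b,i}$ of Lemma~\ref{ms_theorem2_bis} are literally the summands appearing in the diagonal decompositions of Theorem~\ref{theorem2}. This is exactly what Remark~\ref{remark_summands_2} supplies by appealing to the proof of Moore and Segal's Theorem~2. If one wanted the isomorphism to respect the $(\Gamma_{bb},\Gamma_{aa})$-bimodule structure one should instead identify $\Gamma_{ab}^{(i)}$ via the actions of $\iota_b(e_i)\Gamma_{bb}\iota_b(e_i)\cong\operatorname{M}_{d(b,i)}(\scr{O}_V)$ and $\iota_a(e_i)\Gamma_{aa}\iota_a(e_i)\cong\operatorname{M}_{d(a,i)}(\scr{O}_V)$; but for the $\scr{O}_U$-module statement claimed here the rank computation is enough.
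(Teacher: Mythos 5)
Your proposal is correct and follows essentially the same route as the paper: the paper's (implicit) argument is precisely to rerun the proof of Theorem~\ref{theorem2} using the idempotent $L_i(\sigma)=\iota_b(e_i)\sigma$ on $\Gamma_{ab}$, identified via the centrality condition~\eqref{centrality} with right multiplication by $\iota_a(e_i)$, and to import the coherence of the integers $d(a,i)$, $d(b,i)$ from the proof of Moore and Segal's Theorem~2 (Lemma~\ref{ms_theorem2_bis} and Remark~\ref{dimensions}). Your fibrewise rank computation and the local-constancy argument on a connected semisimple neighbourhood match the paper's treatment of $\iota_a(e_i)\Gamma_{aa}$ in Theorem~\ref{theorem2}.
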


From these last results, and following the same procedures done in
section \ref{subsec_boundary_semisimple}, we can derive local
expressions for the morphisms $\theta_a$, $\iota^a$ and $\pi^a_b$. Let
$a,b\in \scr{B}(U_\alpha)$ and let $x\in U_\alpha$. Assume that $U\ni
x$ is a neighborhood such that $\Gamma_{aa}|_U$ is isomorphic to a sum
$\bigoplus_i\opnm{M}_{d(a,i)}(\scr{O}_U)$ (in that case an element
$\sigma \in \Gamma_{aa}|_U$ can be represented as a tuple
$(\sigma_i)$, where $\sigma_i\in \opnm{M}_{d(a,i)}(\scr{O}_U)$). If
$\{e_1,\dots ,e_n\}$ is a frame of orthogonal, idempotent sections for
$\scr{T}_M$ over $U_\alpha$, then we have the following expressions
for $\theta_a$, $\iota^a$ and $\pi^a_b$ over $U$:
\begin{equation}\label{local_expressions}
  \begin{aligned}
    \theta_a (\sigma ) &= \sum_i\sqrt{\theta (e_i)} \opnm{tr}(\sigma_i), \\
    \iota^a(\sigma )   &= \sum_i\frac{\opnm{tr}(\sigma_i)}{\sqrt{\theta (e_i)}}e_i, \\
    \pi_b^a(\sigma )   &= \sum_i\frac{\opnm{tr}(\sigma_i)}{\sqrt{\theta (e_i)}}\iota_b(e_i). \\
  \end{aligned}
\end{equation}

\subsection{Enlarging the categories of branes}
\label{sec:enlarg-categ-bran}

Let $\mr{B}$ be a Cardy fibration over a manifold $M$. 

\subsubsection{Additive structure}
\label{sec:additive-structure}

We shall show that $\mr{B}$ can be embedded in a canonical way into a
fibration of additive Cardy categories. 
Let $U\subset M$ be any open subset and $a,b,c\in \scr{B}(U)$; based
on properties of modules, we shall define a new label $a\oplus b$; we
put
\begin{displaymath}
  \begin{aligned}
    \Gamma_{(a\oplus b)c}:=\Gamma_{ac}\oplus \Gamma_{bc}, \\
    \Gamma_{c(a\oplus b)}:=\Gamma_{ca}\oplus \Gamma_{cb}.\\
  \end{aligned}
\end{displaymath}
A morphism $a\oplus b\to c$ shall be represented as a row matrix $
\left ( \begin{smallmatrix} \sigma & \tau \\ \end{smallmatrix} \right
) $ , where $\sigma :a\to c$, $\tau :b\to c$. Likewise, an arrow $c\to
a\oplus b$ is a column matrix 
$
\left ( \begin{smallmatrix} \sigma \\
    \tau \\ \end{smallmatrix} \right )$, for $\sigma :c\to a$, $\tau
:c\to b.  $ 
Thus, a map $a_1\oplus a_2\to b_1\oplus b_2$ can be
represented as a matrix $\left (\begin{smallmatrix} \sigma_{11} &
    \sigma_{21} \\ \sigma_{12} & \sigma_{22} \\ \end{smallmatrix}
\right )$, where $\sigma_{ij}:a_i\to b_j$. Composition of maps is then
given by multiplying matrices. As a consequence, we obtain thus a
structure of additive category for each $\scr{B}(U)$.
For a new object $a\oplus b$ we define $\theta_{a\oplus
  b}:\Gamma_{(a\oplus b)(a\oplus b)}\to \scr{O}_U$ by
\begin{equation}\label{linear_form_additive}
  \theta_{a\oplus b}\left (
  \begin{smallmatrix} \sigma_{11} & \sigma_{21} 
    \\ \sigma_{12} & \sigma_{22} 
  \end{smallmatrix} 
 \right )=\theta_{a}(\sigma_{11})+\theta_b(\sigma_{22}).
\end{equation}

Regarding nondegeneracy of the linear forms we have the following

\begin{proposition}\label{nondeg_additive}
The diagram
\begin{displaymath}
  \xymatrix{
    \Gamma_{(a\oplus b)c}\otimes \Gamma_{c(a\oplus b)}\ar[d]\ar[r] 
    & \Gamma_{(a\oplus b)(a\oplus b)}
    \ar[r]^-{\theta_{a\oplus b}} & \scr{O}_U\ar@{=}[d] \\
    \Gamma_{c(a\oplus b)}\otimes \Gamma_{(a\oplus b)c}\ar[r] &
  \Gamma_{cc} \ar[r]^{\theta_c} & \scr{O}_U
}
\end{displaymath}
is commutative, and the top and botton composite bilinear maps are
non-degenerate parings (the vertical arrow on the left is the twisting
map).
\end{proposition}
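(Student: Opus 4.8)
The plan is to verify both claims by a direct computation in the matrix calculus of the additive structure introduced just above. Write a general element of $\Gamma_{(a\oplus b)c}=\Gamma_{ac}\oplus\Gamma_{bc}$ as a row $\phi=\left(\begin{smallmatrix}\sigma&\tau\end{smallmatrix}\right)$ with $\sigma\in\Gamma_{ac}$, $\tau\in\Gamma_{bc}$, and a general element of $\Gamma_{c(a\oplus b)}=\Gamma_{ca}\oplus\Gamma_{cb}$ as a column $\psi=\left(\begin{smallmatrix}\sigma'\\\tau'\end{smallmatrix}\right)$ with $\sigma'\in\Gamma_{ca}$, $\tau'\in\Gamma_{cb}$. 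The top composite evaluates $\theta_{a\oplus b}(\phi\psi)$ and the bottom composite evaluates $\theta_c(\psi\phi)$, so everything reduces to multiplying these matrices in the two orders and feeding the result to the appropriate trace.

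For commutativity I would first compute the product $\phi\psi\in\Gamma_{(a\oplus b)(a\oplus b)}$, which is the matrix $\left(\begin{smallmatrix}\sigma\sigma'&\tau\sigma'\\\sigma\tau'&\tau\tau'\end{smallmatrix}\right)$ whose diagonal blocks are $\sigma\sigma'\in\Gamma_{aa}$ and $\tau\tau'\in\Gamma_{bb}$. By the definition \eqref{linear_form_additive} of $\theta_{a\oplus b}$, only these diagonal blocks survive, giving $\theta_{a\oplus b}(\phi\psi)=\theta_a(\sigma\sigma')+\theta_b(\tau\tau')$. On the other hand $\psi\phi\in\Gamma_{cc}$ is the single composite $\sigma'\sigma+\tau'\tau$, so $\theta_c(\psi\phi)=\theta_c(\sigma'\sigma)+\theta_c(\tau'\tau)$. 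The two expressions agree term by term by the cyclic symmetry of the Calabi--Yau trace (Definition~\ref{def:13}), which gives $\theta_a(\sigma\sigma')=\theta_c(\sigma'\sigma)$ for the pair $\sigma\in\Gamma_{ac}$, $\sigma'\in\Gamma_{ca}$, and likewise $\theta_b(\tau\tau')=\theta_c(\tau'\tau)$. This proves that the square commutes, once one recalls that the left vertical arrow is precisely the twist $\phi\otimes\psi\mapsto\psi\otimes\phi$.

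For non-degeneracy the key observation is already visible in the computation: because $\theta_{a\oplus b}$ only reads the diagonal blocks, the cross terms $\tau\sigma'$ and $\sigma\tau'$ (which would mix $\Gamma_{ac}$ with $\Gamma_{cb}$ and $\Gamma_{bc}$ with $\Gamma_{ca}$) never contribute, and the top pairing is the orthogonal direct sum
\begin{displaymath}
\langle(\sigma,\tau),(\sigma',\tau')\rangle=\theta_a(\sigma\sigma')+\theta_b(\tau\tau')
\end{displaymath}
of the two pairings $\Gamma_{ac}\otimes\Gamma_{ca}\to\scr{O}_U$ and $\Gamma_{bc}\otimes\Gamma_{cb}\to\scr{O}_U$. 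Each of these is perfect by the Calabi--Yau structure \eqref{pair}, i.e. it induces isomorphisms $\Gamma_{ac}\cong\Gamma_{ca}^{*}$ and $\Gamma_{bc}\cong\Gamma_{cb}^{*}$ of locally-free $\scr{O}_U$-modules. Hence the map induced by the top pairing, $\Gamma_{(a\oplus b)c}\to\Gamma_{c(a\oplus b)}^{*}=\Gamma_{ca}^{*}\oplus\Gamma_{cb}^{*}$, is the direct sum of two isomorphisms and therefore an isomorphism, so the top pairing is non-degenerate. The bottom pairing is non-degenerate either by the very commutativity just established (it is the top pairing with its arguments interchanged) or by repeating the block-diagonal argument with $\theta_c$.

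The computations are essentially bookkeeping, so I do not expect a genuine obstacle; the one point that must be handled with care is the composition and index convention, making sure that $\theta_{a\oplus b}$ genuinely selects the $(1,1)$ and $(2,2)$ blocks $\sigma\sigma'$ and $\tau\tau'$ and that these are paired via $\theta_a$ and $\theta_b$ respectively. Once the block-diagonal form of the pairing is exhibited, both the commutativity (from cyclicity of the trace) and the non-degeneracy (from stability of isomorphisms under direct sums) follow immediately.
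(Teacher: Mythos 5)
Your proposal is correct. The paper states Proposition~\ref{nondeg_additive} without any proof, and your block-matrix computation --- identifying the top pairing as the orthogonal direct sum of the two perfect pairings $\Gamma_{ac}\otimes\Gamma_{ca}\to\scr{O}_U$ and $\Gamma_{bc}\otimes\Gamma_{cb}\to\scr{O}_U$ supplied by the Calabi--Yau axiom \eqref{pair}, and deducing commutativity from the trace symmetry of Definition~\ref{def:13} --- is exactly the routine argument the authors' omission presupposes, with the matrix conventions handled consistently with \eqref{linear_form_additive}.
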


We now define the transition morphisms $\iota_{(a\oplus
  b)}:\scr{T}_{U_\alpha }\to \Gamma_{(a\oplus b)(a\oplus b)}$ and
$\iota^{(a\oplus b)}:\Gamma_{(a\oplus b)(a\oplus b)}\to
\scr{T}_{U_\alpha }$ by the equations
\begin{equation}\label{transition_additive}
\begin{aligned}
  \iota_{(a\oplus b)}(X) &= \left (
    \begin{smallmatrix} \iota_a(X) & 0 \\ 
      0 & \iota_b(X) \\ 
    \end{smallmatrix} \right ), \\
  \iota^{(a\oplus b)}\left (
    \begin{smallmatrix} \sigma_{11} & \sigma_{21} \\ 
      \sigma_{12} & \sigma_{22} \\ 
    \end{smallmatrix} \right ) &=\iota^a(\sigma_{11})+\iota^b(\sigma_{22}).\\
\end{aligned}
\end{equation}
In particular, note that both $\iota_{a\oplus b}$ and $\iota^{a\oplus
  b}$ are $\scr{O}_{U_\alpha }$-linear, and $\iota_{a\oplus b}$ is an
algebra homomorphism which preserves the unit.

The following result shall be useful to prove the Cardy condition.

\begin{lemma}\label{cardy_additive}
For the maps $\pi^{a\oplus b}_c$ and $\pi^a_{b\oplus c}$ the following equalities hold
\begin{displaymath}
  \begin{aligned}
    \pi^{a\oplus b}_c &= \pi^a_c+\pi^b_c \\
    \pi^a_{b\oplus c} &= \left (
      \begin{smallmatrix} \pi^a_b & 0 \\ 0 & \pi^a_c \\ 
      \end{smallmatrix} \right ). 
\end{aligned}
\end{displaymath}
\end{lemma}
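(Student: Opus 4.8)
The plan is to prove both identities by computing $\pi$ with a basis adapted to the direct sum and then reading off the result, exploiting the fact that $\pi$ is independent of the chosen local basis (Proposition~\ref{cardy_well_def}). Since both sides are morphisms of sheaves, it suffices to work locally on an open $V$ where all the relevant $\Gamma$'s are free; basis-independence then guarantees that the local identities obtained in this way are the same on overlaps and glue to an identity of sheaf maps over $U_\alpha$. Throughout I use the matrix description of morphisms into and out of $a\oplus b$, together with the block formula \eqref{linear_form_additive} for $\theta_{a\oplus b}$.

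For the first identity I would start from dual bases $\{\alpha_i\}\subset\Gamma_{ac}$, $\{\alpha^i\}\subset\Gamma_{ca}$ and $\{\beta_j\}\subset\Gamma_{bc}$, $\{\beta^j\}\subset\Gamma_{cb}$ normalised so that $\theta_c(\alpha^i\alpha_j)=\delta^i_j$ and $\theta_c(\beta^j\beta_l)=\delta^j_l$ (these are exactly the pairings defining $\pi^a_c$ and $\pi^b_c$). Assembling them into the rows $(\alpha_i\;0)$, $(0\;\beta_j)$ of $\Gamma_{(a\oplus b)c}=\Gamma_{ac}\oplus\Gamma_{bc}$ and the columns $\binom{\alpha^i}{0}$, $\binom{0}{\beta^j}$ of $\Gamma_{c(a\oplus b)}=\Gamma_{ca}\oplus\Gamma_{cb}$, I would first check that these form dual bases for the pairing $\Gamma_{c(a\oplus b)}\otimes\Gamma_{(a\oplus b)c}\to\Gamma_{cc}\xrightarrow{\theta_c}\scr{O}_V$ defining $\pi^{a\oplus b}_c$; here the mixed pairings vanish automatically because the composite factors through orthogonal summands of $a\oplus b$ and is literally zero, so $\theta_{a\oplus b}$ is not even needed. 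Writing $\Sigma\in\Gamma_{(a\oplus b)(a\oplus b)}$ as a matrix with diagonal blocks $\sigma_{11}\in\Gamma_{aa}$, $\sigma_{22}\in\Gamma_{bb}$, the defining sum $\pi^{a\oplus b}_c(\Sigma)=\sum_k(\mathrm{row}_k)\,\Sigma\,(\mathrm{col}_k)$ separates into an $\alpha$-part and a $\beta$-part; since the outer row and column only read the $a$-component (resp. the $b$-component), the off-diagonal blocks of $\Sigma$ drop out and the two parts collapse to $\sum_i\alpha_i\sigma_{11}\alpha^i=\pi^a_c(\sigma_{11})$ and $\sum_j\beta_j\sigma_{22}\beta^j=\pi^b_c(\sigma_{22})$, which is the asserted value of $(\pi^a_c+\pi^b_c)(\Sigma)$.

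For the second identity I would proceed symmetrically, now taking dual bases $\{\sigma_i\}\subset\Gamma_{ab}$, $\{\sigma^i\}\subset\Gamma_{ba}$ and $\{\tau_j\}\subset\Gamma_{ac}$, $\{\tau^j\}\subset\Gamma_{ca}$, and assembling the columns $\binom{\sigma_i}{0}$, $\binom{0}{\tau_j}$ of $\Gamma_{a(b\oplus c)}$ with the rows $(\sigma^i\;0)$, $(0\;\tau^j)$ of $\Gamma_{(b\oplus c)a}$. The pairing defining $\pi^a_{b\oplus c}$ now lands in $\Gamma_{(b\oplus c)(b\oplus c)}$ and is post-composed with $\theta_{b\oplus c}$, so I must invoke the block form \eqref{linear_form_additive}: a mixed pairing such as $\langle(\sigma^i\;0),\binom{0}{\tau_j}\rangle$ produces a matrix whose diagonal blocks are zero and is therefore annihilated by $\theta_{b\oplus c}$, whereas $\langle(\sigma^i\;0),\binom{\sigma_j}{0}\rangle=\theta_b(\sigma^i\sigma_j)=\delta^i_j$ and likewise $\langle(0\;\tau^j),\binom{0}{\tau_l}\rangle=\theta_c(\tau^j\tau_l)=\delta^j_l$. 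With duality in hand, the sum $\pi^a_{b\oplus c}(\rho)=\sum_k\Xi_k\,\rho\,\Xi^k$ for $\rho\in\Gamma_{aa}$ becomes a sum of rank-one matrices: the $\sigma$-terms feed only the top-left block, giving $\sum_i\sigma_i\rho\sigma^i=\pi^a_b(\rho)$, and the $\tau$-terms only the bottom-right block, giving $\pi^a_c(\rho)$, with every off-diagonal entry vanishing because each summand already carries a zero component. Reading off the matrix gives $\pi^a_{b\oplus c}(\rho)=\bigl(\begin{smallmatrix}\pi^a_b(\rho)&0\\0&\pi^a_c(\rho)\end{smallmatrix}\bigr)$, as claimed.

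The main obstacle is not the final bookkeeping but the verification that the assembled systems are genuinely dual bases for the enlarged labels, i.e. the vanishing of the mixed pairings. For $\pi^{a\oplus b}_c$ this comes for free, since the composite through $a\oplus b$ is identically zero; for $\pi^a_{b\oplus c}$, however, it rests on the fact that $\theta_{b\oplus c}$ only detects the diagonal blocks, which is precisely the content of \eqref{linear_form_additive}, and this is the step I expect to require the most care. Once this is settled, Proposition~\ref{cardy_well_def} ensures that the local identities are independent of all choices and patch together to the stated equality over $U_\alpha$.
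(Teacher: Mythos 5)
Your proof is correct: choosing bases of $\Gamma_{(a\oplus b)c}$ and $\Gamma_{a(b\oplus c)}$ adapted to the direct-sum decomposition, checking duality of the assembled systems (with the mixed pairings vanishing outright in the first case and being killed by the block form \eqref{linear_form_additive} of $\theta_{b\oplus c}$ in the second), and then invoking the basis-independence of Proposition~\ref{cardy_well_def} is exactly the computation the statement calls for. The paper states this lemma without proof, and your argument supplies the omitted verification in the natural way, consistent with the matrix conventions used in the proof of Theorem~\ref{maximal_additive}.
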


\begin{theorem}\label{maximal_additive}
  Given $a,b \in \scr{B}(U_\alpha )$, the maps $\theta_{a\oplus b}$,
  $\iota_{(a\oplus b)}$ and $\iota^{(a\oplus b)}$ verify the
  centrality, adjoint and Cardy conditions.
\end{theorem}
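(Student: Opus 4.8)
The plan is to verify each of the three conditions by reducing it, via the block–matrix descriptions of the new structure maps, to the corresponding condition(s) for the original labels $a$, $b$, $c$, all of which hold because $\scr{B}$ is a Cardy fibration. Throughout I would write a morphism between direct sums as a matrix whose entries are morphisms between the individual summands, as set up just before \eqref{linear_form_additive}, and use repeatedly that $\iota_{a\oplus b}(X)=\left(\begin{smallmatrix}\iota_a(X) & 0\\ 0 & \iota_b(X)\end{smallmatrix}\right)$ is block–diagonal by \eqref{transition_additive}.

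For \emph{centrality}, I would take an arbitrary $X\in\scr{T}(V)$ and a morphism $\sigma$ with source or target $a\oplus b$, written in matrix form, and compute both $\sigma\,\iota_{a\oplus b}(X)$ and $\iota_{\bullet}(X)\,\sigma$. Because $\iota_{a\oplus b}(X)$ is diagonal, multiplying by it on either side acts by $\iota_a(X)$ or $\iota_b(X)$ entrywise, so the matrix identity holds as soon as each entry $\sigma_{ij}$ satisfies the centrality relation \eqref{centrality} for its own pair of summands, which is exactly the hypothesis on the original labels. Thus centrality for $a\oplus b$ is inherited entry by entry; the off–diagonal entries cause no trouble since they too live in genuine spaces $\Gamma_{ij}$ for which \eqref{centrality} already holds.

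For the \emph{adjoint} relation, I would take $\sigma=\left(\begin{smallmatrix}\sigma_{11} & \sigma_{21}\\ \sigma_{12} & \sigma_{22}\end{smallmatrix}\right)\in\Gamma_{(a\oplus b)(a\oplus b)}$ and $X\in\scr{T}_{U_\alpha}$. Using \eqref{transition_additive}, the left side $\theta(\iota^{a\oplus b}(\sigma)X)$ expands, by $\scr{O}$–linearity of $\theta$ and of multiplication in $\scr{T}$, into $\theta(\iota^a(\sigma_{11})X)+\theta(\iota^b(\sigma_{22})X)$. On the right, the diagonal blocks of $\sigma\,\iota_{a\oplus b}(X)$ are $\sigma_{11}\iota_a(X)$ and $\sigma_{22}\iota_b(X)$, so \eqref{linear_form_additive} gives $\theta_{a\oplus b}(\sigma\,\iota_{a\oplus b}(X))=\theta_a(\sigma_{11}\iota_a(X))+\theta_b(\sigma_{22}\iota_b(X))$. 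Comparing term by term, equality is precisely the adjoint relation \eqref{adjoint} for $a$ and for $b$.

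For the \emph{Cardy} condition the hard computation of $\pi$ on sums is already packaged in Lemma~\ref{cardy_additive}, so the argument is short. To prove $\pi^{a\oplus b}_c=\iota_c\iota^{a\oplus b}$, I would use $\pi^{a\oplus b}_c=\pi^a_c+\pi^b_c$, replace each summand by the original Cardy condition $\pi^a_c=\iota_c\iota^a$ and $\pi^b_c=\iota_c\iota^b$, and factor out $\iota_c$ using the definition of $\iota^{a\oplus b}$ on the two diagonal blocks. Dually, to prove $\pi^a_{b\oplus c}=\iota_{b\oplus c}\iota^a$, I would use $\pi^a_{b\oplus c}=\left(\begin{smallmatrix}\pi^a_b & 0\\ 0 & \pi^a_c\end{smallmatrix}\right)$ and replace the diagonal entries by $\iota_b\iota^a$ and $\iota_c\iota^a$, which is $\iota_{b\oplus c}\iota^a$ since $\iota_{b\oplus c}$ is block–diagonal; the general case with two sums follows by combining the two reductions. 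I expect the only real subtlety to be bookkeeping—keeping the composition convention and the row/column matrix conventions consistent so that each entry lands in the correct $\Gamma_{ij}$ and the diagonal maps $\iota_{\bullet}(X)$ act on the intended side—but once Lemma~\ref{cardy_additive} and the block definitions \eqref{linear_form_additive}–\eqref{transition_additive} are in hand there is no genuine obstacle, and every condition is inherited entrywise from $a$, $b$ and $c$.
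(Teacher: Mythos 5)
Your proposal is correct and follows essentially the same route as the paper: centrality and the adjoint relation are verified entrywise via the block--matrix forms \eqref{linear_form_additive}--\eqref{transition_additive}, and the Cardy condition is reduced to the original one through Lemma~\ref{cardy_additive}. The only cosmetic difference is that the paper checks $\pi^{a\oplus b}_{c\oplus d}=\iota_{c\oplus d}\iota^{a\oplus b}$ for a double sum in one computation, whereas you split it into the two one-sided cases and combine them, which amounts to the same argument.
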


\begin{proof}
  For the centrality condition, take $\sigma :a\oplus b\to c$, which
  can be represented by a matrix $\left (\begin{smallmatrix}
      \sigma_{11} & \sigma_{21} \\ \end{smallmatrix}\right )$. Then
  \begin{displaymath}
    \begin{aligned}
      \sigma \iota_{a\oplus b}(X) &= \left (
        \begin{smallmatrix} 
          \sigma_{11} & \sigma_{21} \\ 
        \end{smallmatrix}\right ) 
      \left (
        \begin{smallmatrix} 
          \iota_a(X) & 0 \\ 0 & \iota_b(X) \\ 
        \end{smallmatrix}\right )\\
   &= \left (
     \begin{smallmatrix} 
       \sigma_{11}\iota_a(X) & \sigma_{21}\iota_b(X) 
     \end{smallmatrix}\right ).
 \end{aligned}
\end{displaymath}
The equality $\sigma \iota_{a\oplus b}(X)=\iota_c(X)\sigma$ now
follows from the centrality condition for the morphisms
$\iota_a,\iota_c$ and $\iota_b,\iota_c$.

We now verify the adjoint relation $ \theta_{a\oplus b}\left (\sigma
  \iota_{a\oplus b}(X)\right ) =\theta \left (\iota^{a\oplus b}(\sigma
  )X\right );$ so let $\sigma :a\oplus b\to a\oplus b$ be given by
$(\sigma_{ij})^t$. Then the adjoint relation between $\iota_a,\iota^a$
and the one between $\iota_b\iota^b$ let us write
\begin{displaymath}
  \begin{aligned}
    \theta_{a\oplus b}\left (\sigma \iota_{a\oplus b}(X)\right ) 
    &= \theta_{a\oplus b}\left (
      \begin{smallmatrix} 
        \sigma_{11}\iota_a(X) & \sigma_{21}\iota_b(X) \\ 
        \sigma_{12}\iota_a(X) & \sigma_{22}\iota_b(X) \\ 
      \end{smallmatrix} \right ) \\  
    &= \theta_a\left (\sigma_{11}\iota_a(X)\right )+\theta_b\left (\sigma_{22}\iota_b(X)\right ) \\
    &= \theta \left (\iota^a(\sigma_{11})X\right )+\theta \left (\iota^b(\sigma_{22})X\right ) \\
    &= \theta \left (\left (\iota^a(\sigma_{11})+\iota^b(\sigma_{22})\right )X\right ) \\
    &= \theta \left (\iota^{a\oplus b}(\sigma )X\right ), \\
  \end{aligned}
\end{displaymath}
as desired.

For the Cardy condition, we now check that $\pi^{a\oplus b}_{c\oplus d}=\iota_{c\oplus d}\iota^{a\oplus b}$. The right hand side is
\begin{displaymath}
  \begin{aligned}
  \iota_{c\oplus d}\iota^{a\oplus b}\left (
    \begin{smallmatrix} 
      \sigma_{11} & \sigma_{21} \\ \sigma_{12} & \sigma_{22} \\ 
    \end{smallmatrix} \right ) &= 
  \iota_{c\oplus d}\left (\iota^a(\sigma_{11}) + \iota^b(\sigma_{22})\right ) \\
  &= \left (\begin{smallmatrix} \iota_c\left (\iota^a(\sigma_{11})+\iota^b(\sigma_{22})\right ) & 0 \\ 
      0 & \iota_d\left (\iota^a(\sigma_{11})+\iota^b(\sigma_{22})\right ) \\ \end{smallmatrix} \right ) \\
  &= \left (\begin{smallmatrix} \pi^a_c(\sigma_{11})+\pi^b_c(\sigma_{22}) & 0 \\ 
      0 & \pi^a_d(\sigma_{11})+\pi^b_d(\sigma_{22}) \\ \end{smallmatrix} \right ), \\
\end{aligned}
\end{displaymath}
where in the last equality we used the Cardy condition. The rest now follows from lemma~\ref{cardy_additive}.
\end{proof}

\begin{cor}
Any maximal Cardy fibration is additive.
\end{cor}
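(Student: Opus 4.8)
The plan is to read off additivity from the maximality hypothesis together with Theorem~\ref{maximal_additive}. Recall that a category is additive as soon as it is enriched in $\scr{O}_U$-modules with bilinear composition, has a zero object, and admits all binary biproducts. The enrichment is automatic here: each $\Gamma_{ab}$ is an $\scr{O}_U$-module and composition is $\scr{O}_U$-bilinear by the Calabi-Yau structure. So the only things left to supply, for each open $U_\alpha \in \mathfrak{U}$, are a zero label and a biproduct $a\oplus b$ living inside $\scr{B}(U_\alpha)$ for every pair $a,b$. I interpret \emph{maximality} in the Moore--Segal sense: the fibration already contains every candidate label compatible with the closed theory, i.e. every object satisfying all the Cardy-fibration axioms is present.

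First I would invoke the construction carried out just above Theorem~\ref{maximal_additive}. Given $a,b\in \scr{B}(U_\alpha)$, the assignments $\Gamma_{(a\oplus b)c}:=\Gamma_{ac}\oplus\Gamma_{bc}$ and $\Gamma_{c(a\oplus b)}:=\Gamma_{ca}\oplus\Gamma_{cb}$, the trace $\theta_{a\oplus b}$ of~\eqref{linear_form_additive}, and the transition maps $\iota_{a\oplus b},\iota^{a\oplus b}$ of~\eqref{transition_additive} define a candidate label. Proposition~\ref{nondeg_additive} shows the induced pairings are non-degenerate, so the Calabi-Yau axioms hold, and Theorem~\ref{maximal_additive} establishes the centrality, adjoint and Cardy conditions. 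Hence $a\oplus b$ is a bona fide label of a Cardy fibration, and by maximality of $\scr{B}$ it must already be an object of $\scr{B}(U_\alpha)$.

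Next I would check that this $a\oplus b$ is genuinely the categorical biproduct rather than merely an abstract label. Using the matrix calculus set up in the construction, the column matrices $\binom{1_a}{0}$ and $\binom{0}{1_b}$ furnish inclusions $a,b\to a\oplus b$ and the row matrices $(1_a,0)$, $(0,1_b)$ furnish projections; multiplying these matrices immediately yields the biproduct identities $p_a i_a=1_a$, $p_b i_b=1_b$, $p_a i_b=0=p_b i_a$, and $i_a p_a+i_b p_b=1_{a\oplus b}$. For the zero object I would take the zero label $0$ with $\Gamma_{0c}=\Gamma_{c0}=0$ for every $c$; all Calabi-Yau and Cardy conditions then hold vacuously, so maximality again forces $0\in \scr{B}(U_\alpha)$. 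This makes each fibre $\scr{B}(U_\alpha)$ an additive category.

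The one genuinely delicate point I expect is compatibility with the fibred/stack structure. Maximality is a global closure property of the fibration, whereas the construction of $a\oplus b$ is performed over a fixed open set; to conclude I must verify that $(a,b)\mapsto a\oplus b$ is defined coherently over $\tsf{Op}(M)$, commuting with the restriction functors and respecting the descent data that make $\scr{B}$ a stack, so that the constructed biproduct really is forced into $\scr{B}(U_\alpha)$ as an object of the fibration and not just of an isolated fibre. This bookkeeping is where the bulk of the (largely formal) work lies; the algebraic biproduct identities themselves are immediate from the matrix representation already in place.
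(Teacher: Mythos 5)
Your proposal is correct and follows the same route the paper takes: the corollary is stated there as an immediate consequence of the enlargement construction of Section~\ref{sec:additive-structure} (the definitions of $\Gamma_{(a\oplus b)c}$, $\theta_{a\oplus b}$, $\iota_{a\oplus b}$, $\iota^{a\oplus b}$), Proposition~\ref{nondeg_additive}, and Theorem~\ref{maximal_additive}, with maximality forcing the new label $a\oplus b$ to already belong to the fibration. Your additional checks (the explicit biproduct identities, the zero label, and the compatibility with restriction and descent) are points the paper leaves implicit, but they are routine and do not change the argument.
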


\subsubsection{The Action of the Category of Locally Free sheaves}

In this section we shall prove that another enlargement of the
category $\scr{B}$ can be made, by considering a label of the form
$\scr{M}\otimes a$, where $\scr{M}$ is a locally free
$\scr{O}_U$-module and $a\in \scr{B}(U)$. A consequence of this
construction is that every maximal fibration enjoys, besides an
additive structure, an action of the (fibred) category of locally free
sheaves, which is compatible with the additive structure.

So let the locally free $\scr{O}_U$-module $\mathscr{M}$ be given, as
well as a brane $a\in \scr{B}(U)$ over $U$. The new product brane
$\scr{M}\otimes a$ is defined by
\begin{equation}\label{product_brane}
\begin{aligned}
  \Gamma_{(\scr{M}\otimes a)b} &= \scr{M}^*\otimes \Gamma_{ab}, \\
\Gamma_{b(\scr{M}\otimes a)} &= \scr{M} \otimes \Gamma_{ba}, \\
\end{aligned}
\end{equation}
where the tensor product is taken over $\scr{O}_U$. In particular, we
also have that
\begin{displaymath}
  \Gamma_{(\scr{M}\otimes a)(\scr{N}\otimes b)}=\underline{\opnm{Hom}}(\scr{M},\scr{N})\otimes \Gamma_{ab},
\end{displaymath}
by the canonical identification between $\scr{M}^*\otimes \scr{N}$ and
$\underline{\opnm{Hom}}(\scr{M},\scr{N})$ (so an object of the form
$\varphi \otimes x$ shall be regarded as a homomorphism $\scr{M}\to
\scr{N}$). Note that this definition let us also define a restriction
$(\scr{M}\otimes a)|_V:=\scr{M}|_V\otimes a|_V$. Moreover, if we work
on a semisimple subset $U_\alpha \in \mathfrak{U}$, then
$\Gamma_{(\scr{M}\otimes a)b}$ and $\Gamma_{b(\scr{M}\otimes a)}$ are
locally free.

The composition pairing
\begin{equation}\label{pairing_prod}
  \Gamma_{(\scr{M}\otimes a)(\scr{N}\otimes b)}\otimes 
  \Gamma_{(\scr{N}\otimes b)(\scr{P}\otimes c)}\longrightarrow 
  \Gamma_{(\scr{M}\otimes a)(\scr{P}\otimes c)}
\end{equation}
can be also written as
\begin{displaymath}
  \scr{M}^*\otimes \scr{N} \otimes \scr{N}^* \otimes \scr{P}\otimes 
  \Gamma_{ab}\otimes \Gamma_{bc}\longrightarrow \scr{M}^*\otimes \scr{P}\otimes \Gamma_{ac};
\end{displaymath}
hence, the map \eqref{pairing_prod} is built from two composition
pairings, the one corresponding to composition of module
homomorphisms, namely $\scr{M}^*\otimes \scr{N}\otimes
\scr{N}^*\otimes \scr{P}\to \scr{M}^*\otimes \scr{P}$, and the one
corresponding to composition of maps of branes, $\Gamma_{ab}\otimes
\Gamma_{bc}\to \Gamma_{ac}$.
\begin{lemma}
  We have a duality isomorphism $\Gamma_{(\scr{M}\otimes a)b}\cong
  \Gamma_{b\otimes (\scr{M}\otimes a)}^*$.
\end{lemma}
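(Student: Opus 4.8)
The plan is to obtain the duality isomorphism by combining the two dualities already at our disposal: the Calabi--Yau duality for the brane sheaves and the ordinary duality of locally free $\scr{O}_U$-modules. By the definitions in \eqref{product_brane} we have
\[
  \Gamma_{(\scr{M}\otimes a)b}=\scr{M}^{*}\otimes \Gamma_{ab}
  \qquad\text{and}\qquad
  \Gamma_{b(\scr{M}\otimes a)}=\scr{M}\otimes \Gamma_{ba},
\]
all tensor products being over $\scr{O}_U$. First I would use that, since we work over a semisimple $U_\alpha$, both $\scr{M}$ and $\Gamma_{ba}$ are locally free of finite rank; the natural evaluation maps then furnish an isomorphism $(\scr{M}\otimes \Gamma_{ba})^{*}\cong \scr{M}^{*}\otimes \Gamma_{ba}^{*}$.

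The second ingredient is the Calabi--Yau duality: dualizing the isomorphism $\Gamma_{ba}\cong \Gamma_{ab}^{*}$ of \eqref{duality} and using that locally free modules are reflexive yields $\Gamma_{ba}^{*}\cong \Gamma_{ab}$. Composing these, I obtain
\[
  \Gamma_{b(\scr{M}\otimes a)}^{*}=(\scr{M}\otimes \Gamma_{ba})^{*}
  \cong \scr{M}^{*}\otimes \Gamma_{ba}^{*}
  \cong \scr{M}^{*}\otimes \Gamma_{ab}
  =\Gamma_{(\scr{M}\otimes a)b},
\]
which is the asserted isomorphism. Since each of the intermediate isomorphisms is natural, the local identifications glue to a single isomorphism of $\scr{O}_U$-modules over $U$.

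To see that this is genuinely a duality isomorphism in the Calabi--Yau sense---and not merely an abstract isomorphism of modules---I would exhibit it as the one induced by the composition pairing \eqref{pairing_prod} followed by the trace form. Setting $\theta_{\scr{M}\otimes a}(\varphi\otimes \sigma)=\opnm{tr}(\varphi)\,\theta_a(\sigma)$ for $\varphi\in \underline{\opnm{Hom}}(\scr{M},\scr{M})$ and $\sigma\in \Gamma_{aa}$, the composite
\[
  \Gamma_{(\scr{M}\otimes a)b}\otimes \Gamma_{b(\scr{M}\otimes a)}
  \longrightarrow \Gamma_{(\scr{M}\otimes a)(\scr{M}\otimes a)}
  \xrightarrow{\ \theta_{\scr{M}\otimes a}\ } \scr{O}_U
\]
factors, as explained after \eqref{pairing_prod}, into the tensor product of the evaluation pairing $\scr{M}^{*}\otimes \scr{M}\to \scr{O}_U$ and the brane pairing $\Gamma_{ab}\otimes \Gamma_{ba}\to \Gamma_{aa}\xrightarrow{\theta_a}\scr{O}_U$. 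The hard part is precisely this factorisation and the perfectness it entails: the evaluation pairing is perfect because $\scr{M}$ is locally free, the brane pairing is perfect by \eqref{duality}, and a tensor product of perfect pairings of locally free modules is again perfect. Tracking the identification $\scr{M}^{*}\otimes \scr{M}\cong \underline{\opnm{Hom}}(\scr{M},\scr{M})$ and the compatibility of $\opnm{tr}$ with evaluation is where the real content lies; once it is in place, the pairing induces exactly the composite isomorphism constructed above.
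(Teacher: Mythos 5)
Your proof is correct. The paper states this lemma without any proof, so there is nothing to compare against line by line; your argument --- tensoring the Calabi--Yau duality $\Gamma_{ba}^{*}\cong\Gamma_{ab}$ (reflexivity of locally free modules applied to \eqref{duality}) with the evaluation duality $(\scr{M}\otimes\Gamma_{ba})^{*}\cong\scr{M}^{*}\otimes\Gamma_{ba}^{*}$, and then identifying the result as the perfect pairing induced by the composition \eqref{pairing_prod} together with the trace form $\theta_{\scr{M}\otimes a}(\varphi\otimes\sigma)=\opnm{tr}(\varphi)\,\theta_a(\sigma)$ --- is exactly the argument the definitions \eqref{product_brane} are set up to make work, and the final paragraph correctly supplies the only nontrivial content, namely that the module-theoretic isomorphism is the one realized by the categorical pairing.
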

\begin{proposition}
The correspondence $(\scr{M},a)\mapsto \scr{M}\otimes a$ defines an action
\begin{displaymath}
  \tsf{LF}_{\scr{O}_U}\times \scr{B}(U)\longrightarrow \scr{B}(U)
\end{displaymath}
which is compatible with the additive structure.
\end{proposition}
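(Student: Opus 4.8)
The plan is to exhibit the action as a bifunctor $\tsf{LF}_{\scr{O}_U}\times\scr{B}(U)\to\scr{B}(U)$ and then to reduce every verification to the corresponding property of the original branes together with elementary facts about traces and tensor products of locally free $\scr{O}_U$-modules. On objects I send $(\scr{M},a)$ to the product brane $\scr{M}\otimes a$ of \eqref{product_brane}; on a pair of arrows $\phi\colon\scr{M}\to\scr{N}$ and $\sigma\in\Gamma_{ab}$ I set $F(\phi,\sigma)=\phi\otimes\sigma$, regarded as a section of $\underline{\opnm{Hom}}(\scr{M},\scr{N})\otimes\Gamma_{ab}=\Gamma_{(\scr{M}\otimes a)(\scr{N}\otimes b)}$. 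Functoriality is then immediate from the way the composition pairing \eqref{pairing_prod} factors as the composition of module homomorphisms tensored with the composition of arrows of $\scr{B}$, so that the composite of $\phi\otimes\sigma$ and $\phi'\otimes\sigma'$ is $(\phi'\circ\phi)\otimes(\sigma'\circ\sigma)$, while $\opnm{id}_{\scr{M}}\otimes 1_a$ is the identity of $\scr{M}\otimes a$.

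The first substantive step is to check that $\scr{M}\otimes a$ is genuinely an object of the Cardy fibration, i.e. that it carries the full CY and Cardy structure. I take the trace $\theta_{\scr{M}\otimes a}\colon\Gamma_{(\scr{M}\otimes a)(\scr{M}\otimes a)}=\opnm{End}(\scr{M})\otimes\Gamma_{aa}\to\scr{O}_U$ to be $\Phi\otimes\sigma\mapsto\opnm{tr}(\Phi)\,\theta_a(\sigma)$, and the transition maps to be $\iota_{\scr{M}\otimes a}(X)=\opnm{id}_{\scr{M}}\otimes\iota_a(X)$ and $\iota^{\scr{M}\otimes a}(\Phi\otimes\sigma)=\opnm{tr}(\Phi)\,\iota^a(\sigma)$, extended by $\scr{O}_U$-linearity. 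Non-degeneracy of the induced pairing follows from the duality isomorphism of the preceding lemma combined with perfectness of the trace pairing on $\underline{\opnm{Hom}}(\scr{M},\scr{N})$; the symmetry $\theta_{\scr{M}\otimes a}(\xi\eta)=\theta_{\scr{N}\otimes b}(\eta\xi)$, the centrality relation \eqref{centrality} and the adjoint relation \eqref{adjoint} all reduce, on decomposable tensors, to the identities $\opnm{tr}(\alpha\beta)=\opnm{tr}(\beta\alpha)$, the centrality of $\iota_a,\iota_b$ and the adjointness of $\iota_a,\iota^a$ respectively, together with the fact that $\iota_a$ is an algebra map (hence so is $\iota_{\scr{M}\otimes a}$).

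The heart of the argument, and the step I expect to be the main obstacle, is the Cardy condition $\pi^{\scr{M}\otimes a}_{\scr{N}\otimes b}=\iota_{\scr{N}\otimes b}\circ\iota^{\scr{M}\otimes a}$. Choosing a product local basis $\{\alpha_k\otimes\sigma_i\}$ of $\Gamma_{(\scr{M}\otimes a)(\scr{N}\otimes b)}$ with dual basis $\{\alpha^k\otimes\sigma^i\}$, the defining sum for $\pi^{\scr{M}\otimes a}_{\scr{N}\otimes b}$ factors as $\bigl(\sum_k\alpha_k\Phi\alpha^k\bigr)\otimes\bigl(\sum_i\sigma_i\sigma\sigma^i\bigr)$, the second factor being exactly $\pi^a_b(\sigma)$. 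The module factor is governed by the categorical-trace identity $\sum_k\alpha_k\Phi\alpha^k=\opnm{tr}(\Phi)\,\opnm{id}_{\scr{N}}$, where $\{\alpha_k\}$ is a basis of $\underline{\opnm{Hom}}(\scr{M},\scr{N})$ and $\{\alpha^k\}$ the dual basis of $\underline{\opnm{Hom}}(\scr{N},\scr{M})$ under the trace pairing; granting this, $\pi^{\scr{M}\otimes a}_{\scr{N}\otimes b}(\Phi\otimes\sigma)=\opnm{tr}(\Phi)\,\bigl(\opnm{id}_{\scr{N}}\otimes\pi^a_b(\sigma)\bigr)$, which by the original Cardy condition of Definition~\ref{cardy_fib} equals $\opnm{tr}(\Phi)\,\bigl(\opnm{id}_{\scr{N}}\otimes\iota_b\iota^a(\sigma)\bigr)=\iota_{\scr{N}\otimes b}\iota^{\scr{M}\otimes a}(\Phi\otimes\sigma)$. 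Independence of the chosen basis and the gluing of the local $\pi$'s are handled exactly as in Proposition~\ref{cardy_well_def}; it is here that the trace weighting in the definition of $\iota^{\scr{M}\otimes a}$ is forced, so getting that definition right is the crux of the proof.

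Finally I produce the module-category data and its compatibility with the additive structure of Theorem~\ref{maximal_additive}. Reading off the $\opnm{Hom}$-modules of \eqref{product_brane}, the canonical isomorphisms $\scr{O}_U\otimes a\cong a$, $\scr{M}\otimes(\scr{N}\otimes a)\cong(\scr{M}\otimes\scr{N})\otimes a$, $(\scr{M}\oplus\scr{N})\otimes a\cong(\scr{M}\otimes a)\oplus(\scr{N}\otimes a)$ and $\scr{M}\otimes(a\oplus b)\cong(\scr{M}\otimes a)\oplus(\scr{M}\otimes b)$ come directly from $(\scr{M}\oplus\scr{N})^*\cong\scr{M}^*\oplus\scr{N}^*$, associativity of $\otimes$, and distributivity of $\otimes$ over $\oplus$ for locally free modules; the last two isomorphisms are precisely the required compatibility with the additive structure. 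Naturality in all variables and the coherence (pentagon and triangle) axioms are inherited from those of the tensor product of $\scr{O}_U$-modules, completing the proof that $(\scr{M},a)\mapsto\scr{M}\otimes a$ is an action compatible with $\oplus$.
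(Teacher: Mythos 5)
The paper states this proposition (together with the preceding duality lemma and the following Theorem~\ref{action_cy}) with no proof at all, so there is no argument of the authors' to measure yours against; your write-up supplies the missing verification, and it is correct. The genuinely non-routine point is the one you single out: the Cardy condition for product branes forces the trace-weighted definition $\iota^{\scr{M}\otimes a}(\Phi\otimes\sigma)=\opnm{tr}(\Phi)\,\iota^{a}(\sigma)$, and then reduces to the identity $\sum_k\alpha_k\Phi\alpha^k=\opnm{tr}(\Phi)\,\opnm{id}$ for a trace-dual pair of bases of $\underline{\opnm{Hom}}(\scr{M},\scr{N})$ and $\underline{\opnm{Hom}}(\scr{N},\scr{M})$; this identity is valid precisely because $\scr{M},\scr{N}$ are locally free of finite rank, and your observation that the dual basis of $\{\alpha_k\otimes\sigma_i\}$ under the pairing $\opnm{tr}(\cdot)\theta_a(\cdot)$ factors as (trace-dual)$\otimes$($\theta$-dual) is what makes the computation legitimate. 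Two small points worth making explicit: (i) the proposition as stated only asks for the action and its compatibility with $\oplus$ — the full CY/Cardy compatibility is deferred to Theorem~\ref{action_cy}, which your argument in effect also proves; (ii) compatibility with the additive structure should be checked not only on the $\underline{\opnm{Hom}}$-modules but also on the structure maps, e.g.\ that under $(\scr{M}\oplus\scr{N})\otimes a\cong(\scr{M}\otimes a)\oplus(\scr{N}\otimes a)$ the trace $\opnm{tr}_{\scr{M}\oplus\scr{N}}(\Phi)\,\theta_a(\sigma)$ agrees with the sum of diagonal-block traces prescribed by \eqref{linear_form_additive}, and similarly for $\iota$ and $\iota^{\cdot}$ via \eqref{transition_additive}; this is immediate from block-diagonal traces but deserves a sentence.
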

\begin{theorem}\label{action_cy}
  With the previous definitions, the action
  $\tsf{LF}_{\scr{O}_U}\times_U\scr{B}|_U\to \scr{B}|_U$ is compatible
  with all the structures in a Cardy fibration.
\end{theorem}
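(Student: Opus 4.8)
The plan is to transport each piece of structure on $a$ to the product brane $\scr{M}\otimes a$ through the tensor decomposition \eqref{product_brane}, and then to observe that each Cardy-fibration axiom for $\scr{M}\otimes a$ splits as the corresponding axiom for $\scr{B}$ tensored with an elementary statement about the endomorphism sheaf of a locally free module. Writing $\Gamma_{(\scr{M}\otimes a)(\scr{M}\otimes a)}\cong\opnm{End}(\scr{M})\otimes\Gamma_{aa}$, I set
\[
\theta_{\scr{M}\otimes a}(\phi\otimes\sigma)=\opnm{tr}(\phi)\,\theta_a(\sigma),\qquad
\iota_{\scr{M}\otimes a}(X)=\opnm{id}_{\scr{M}}\otimes\iota_a(X),\qquad
\iota^{\scr{M}\otimes a}(\phi\otimes\sigma)=\opnm{tr}(\phi)\,\iota^a(\sigma),
\]
where $\opnm{tr}\colon\opnm{End}(\scr{M})\to\scr{O}_U$ is the globally defined trace of the locally free sheaf $\scr{M}$. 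All three are well defined by $\scr{O}_U$-bilinearity, $\iota^{\scr{M}\otimes a}$ is manifestly $\scr{O}_U$-linear, and $\iota_{\scr{M}\otimes a}$ is a unit-preserving algebra homomorphism because $\opnm{id}_{\scr{M}}$ is the unit of $\opnm{End}(\scr{M})$ and $\iota_a$ is a unital homomorphism.

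First I would check the Calabi--Yau axioms. The crucial point is that the composition \eqref{pairing_prod} factors as the tensor product of the composition of module homomorphisms with the composition in $\scr{B}$. Hence the pairing induced by $\theta_{\scr{M}\otimes a}$ is the tensor product of the trace pairing $(f,g)\mapsto\opnm{tr}(fg)$ on $\opnm{End}$-sheaves, which is perfect since $\scr{M},\scr{N}$ are locally free, with the $\theta_a$-pairing on $\Gamma_{ab}\otimes\Gamma_{ba}$, which is perfect by Definition~\ref{def:13}. A tensor product of perfect pairings is perfect, giving the duality $\Gamma_{(\scr{M}\otimes a)(\scr{N}\otimes b)}\cong\Gamma_{(\scr{N}\otimes b)(\scr{M}\otimes a)}^{*}$ recorded before the theorem. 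The symmetry $\theta_{\scr{M}\otimes a}(\xi\eta)=\theta_{\scr{N}\otimes b}(\eta\xi)$ then follows factorwise from $\opnm{tr}(fg)=\opnm{tr}(gf)$ together with condition~(2) of Definition~\ref{def:13} for $\Gamma$.

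The transition axioms are purely formal. For centrality, if $\xi=f\otimes\sigma$ then both $\xi\,\iota_{\scr{M}\otimes a}(X)$ and $\iota_{\scr{N}\otimes b}(X)\,\xi$ equal $f\otimes(\sigma\iota_a(X))=f\otimes(\iota_b(X)\sigma)$, using that $\opnm{id}$ is central in $\opnm{End}(\scr{M})$ and the centrality~\eqref{centrality} of $\iota_a,\iota_b$. For the adjoint relation~\eqref{adjoint}, pulling the scalar $\opnm{tr}(\phi)$ out of both sides reduces the identity $\theta(\iota^{\scr{M}\otimes a}(\phi\otimes\sigma)X)=\theta_{\scr{M}\otimes a}((\phi\otimes\sigma)\iota_{\scr{M}\otimes a}(X))$ to $\theta(\iota^a(\sigma)X)=\theta_a(\sigma\iota_a(X))$, which holds by hypothesis.

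I expect the Cardy condition to be the only step demanding a genuine computation. Over an open set where $\scr{M}\cong\scr{O}_U^{\,p}$ and $\scr{N}\cong\scr{O}_U^{\,q}$, choose a basis $\{\sigma_\mu\}$ of $\Gamma_{ab}$ with dual basis $\{\sigma^\mu\}$ and the elementary homomorphisms $\{E_{kl}\}$ with trace-dual $\{E^{kl}\}$, so that $\{E_{kl}\otimes\sigma_\mu\}$ is a local basis of $\Gamma_{(\scr{M}\otimes a)(\scr{N}\otimes b)}$. Because composition is a tensor product of compositions,
\[
\pi^{\scr{M}\otimes a}_{\scr{N}\otimes b}(\phi\otimes\sigma)
=\Bigl(\sum_{k,l}E_{kl}\,\phi\,E^{kl}\Bigr)\otimes\Bigl(\sum_{\mu}\sigma_\mu\,\sigma\,\sigma^{\mu}\Bigr)
=\opnm{tr}(\phi)\,\opnm{id}_{\scr{N}}\otimes\pi^{a}_{b}(\sigma),
\]
where $\sum_{k,l}E_{kl}\phi E^{kl}=\opnm{tr}(\phi)\opnm{id}_{\scr{N}}$ is the standard matrix-algebra identity. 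On the other hand $\iota_{\scr{N}\otimes b}\,\iota^{\scr{M}\otimes a}(\phi\otimes\sigma)=\opnm{tr}(\phi)\,\opnm{id}_{\scr{N}}\otimes\iota_b(\iota^a(\sigma))$, and the Cardy condition $\iota_b\iota^a=\pi^a_b$ of~\ref{cardy_fib} identifies the two. Finally, although this computation uses a local basis, Proposition~\ref{cardy_well_def} guarantees that $\pi^{\scr{M}\otimes a}_{\scr{N}\otimes b}$ is basis-independent, so these local identities glue to the global equality, completing the verification that the action preserves every structure of a Cardy fibration.
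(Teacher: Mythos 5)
Your argument is correct. The paper states Theorem~\ref{action_cy} without supplying a proof, so there is nothing to compare line by line; but your route is precisely the one the surrounding text sets up, namely the identification $\Gamma_{(\scr{M}\otimes a)(\scr{N}\otimes b)}\cong \underline{\opnm{Hom}}(\scr{M},\scr{N})\otimes \Gamma_{ab}$ together with the factorization of the composition pairing \eqref{pairing_prod} into the composition of module homomorphisms tensored with the composition in $\scr{B}$. The structure maps you introduce are the ones forced by compatibility with the additive case \eqref{transition_additive} (take $\scr{M}=\scr{O}_U^{2}$, so that $\scr{M}\otimes a\cong a\oplus a$ and $\opnm{tr}$ reduces to the sum of diagonal entries), and your local computation of $\pi^{\scr{M}\otimes a}_{\scr{N}\otimes b}$ via the identity $\sum_{k,l}E_{kl}\,\phi\,E^{kl}=\opnm{tr}(\phi)\opnm{id}$, glued globally by Proposition~\ref{cardy_well_def}, completes the verification of the Cardy condition; this fills in details the paper omits rather than diverging from it.
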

We thus obtain the following

\begin{cor}
Any maximal CY category $\scr{B}$ over $M$ comes equipped with a linear action $\tsf{LF}_{\scr{O}_M}\times \scr{B}\to \scr{B}$.
\end{cor}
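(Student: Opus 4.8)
The plan is to promote the local action constructed above to a single morphism of fibred categories over $\tsf{Op}(M)$, using maximality together with the stack property of $\scr{B}$. First I would observe that, by maximality of $\scr{B}$, for every open set $U\subset M$, every locally free $\scr{O}_U$-module $\scr{M}$, and every brane $a\in \scr{B}(U)$, the product brane $\scr{M}\otimes a$ defined by \eqref{product_brane} is again an object of $\scr{B}(U)$; likewise $\scr{B}(U)$ is closed under the direct sums $a\oplus b$. Thus on each open set the assignment $(\scr{M},a)\mapsto \scr{M}\otimes a$ is a bifunctor $\tsf{LF}_{\scr{O}_U}\times \scr{B}(U)\to \scr{B}(U)$, and by Theorem~\ref{action_cy} it is compatible with all of the Cardy structure on $\scr{B}(U)$.

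Next I would check that these local actions are compatible with restriction. This is exactly the identity $(\scr{M}\otimes a)|_V=\scr{M}|_V\otimes a|_V$ recorded in the construction of the product brane, together with the analogous compatibilities for the morphism sheaves $\Gamma_{(\scr{M}\otimes a)b}=\scr{M}^*\otimes \Gamma_{ab}$ and $\Gamma_{b(\scr{M}\otimes a)}=\scr{M}\otimes \Gamma_{ba}$, which restrict correctly because tensor products of $\scr{O}$-modules and the sheaves $\Gamma_{ab}$ all commute with restriction. Hence for $V\subset U$ the square relating the action on $U$ to the action on $V$ commutes up to a canonical natural isomorphism.

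Since $\scr{B}$ is a stack (condition (2) of Definition~\ref{cy_fibration}), a family of bifunctors defined compatibly on the members of the open cover $\mathfrak{U}$ and coherent under restriction glues to a morphism of fibred categories. Applying this to the product-brane bifunctors yields a single morphism $\tsf{LF}_{\scr{O}_M}\times_M\scr{B}\to \scr{B}$ of fibred categories over $\tsf{Op}(M)$, i.e.\ a global action. That this action is linear -- compatible with the additive structure produced in Theorem~\ref{maximal_additive} and with the $\oplus$, $\otimes$ structure of $\tsf{LF}_{\scr{O}_M}$ -- follows because the required distributivity and unit isomorphisms hold locally by Theorem~\ref{action_cy} and are unique by the stack property, so they descend globally.

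The main obstacle I anticipate is the gluing/coherence step: one must verify that the canonical restriction isomorphisms for the action satisfy the cocycle condition over triple overlaps, so that the stack property genuinely produces a well-defined global bifunctor rather than merely a compatible family of local ones. Once this cocycle condition is checked -- which reduces to the functoriality of $\scr{M}\mapsto \scr{M}^*\otimes\Gamma_{ab}$ under restriction -- the remaining verifications of linearity are formal consequences of uniqueness in the stack.
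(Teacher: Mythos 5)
Your argument is correct and matches the paper's intended reasoning: the paper states this corollary without an explicit proof, as an immediate consequence of Theorem~\ref{action_cy} and the preceding proposition, the implicit content being exactly what you spell out --- maximality forces the product branes $\scr{M}\otimes a$ of \eqref{product_brane} to already lie in $\scr{B}(U)$, and the compatibility with restriction together with the stack property assembles the local bifunctors into a global linear action. Your additional attention to the cocycle condition on triple overlaps is a reasonable elaboration of a step the paper leaves tacit, not a departure from its route.
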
  

\subsubsection{Pseudo-Abelian Structure}

We shall now show that besides the additive structure and the action
of the category of locally free sheaves, any maximal Cardy fibration
should be pseudo-abelian.  That is to
say, given $a\in \scr{B}(U)$ and an arrow $\sigma_0 :a\to a$ such that
$\sigma_0^2=\sigma_0$, we shall assume that there exists branes
$K_0:=\opnm{Ker}\sigma_0$ and $I_0:=\opnm{Im}\sigma_0$ (which can also
be taken as $\opnm{Ker}(1_a-\sigma_0 )$) such that
\begin{itemize}
\item The brane $a$ decomposes as a sum $a\cong K_0 \oplus I_0$ and
\item using matrix notation, the map $\sigma_0$ is given by 
  $\left (\begin{smallmatrix} 0 & 0 \\ 0 & 1_a \\ \end{smallmatrix} \right )$.
\end{itemize}
As was done for the additive structure and the action of the category
of locally free modules, the enlargement of the category of branes by
adding kernels should be done by defining all the structure maps for
this new object $K_0$, namely $\theta_{K_0}$, $\iota_{K_0}$,
$\iota^{K_0}$, along with the verification of their properties. In
particular, it should be noted that this definitions should agree with
the additive structure.

First note that an arrow $K_0\to K_0$ is a composite of the form
\begin{displaymath}
  K_0\stackrel{i_1}{\longrightarrow}K_0\oplus 
  I_0\stackrel{\sigma}{\longrightarrow}K_0
  \oplus I_0\stackrel{\pr_1}{\longrightarrow}K_0
\end{displaymath}
for some arrow $\sigma :a\to a$, and hence $\Gamma_{K_0K_0}\subset
\Gamma_{aa}$ is a submodule. In fact, we have that
\begin{displaymath}
  \Gamma_{aa}=\Gamma_{K_0K_0}\oplus \Gamma_{K_0I_0}\oplus \Gamma_{I_0K_0}\oplus \Gamma_{I_0I_0}.
\end{displaymath}
For $a\in \scr{B}(U_\alpha )$, consider the homomorphism $\rho :\Gamma_{aa}\to \Gamma_{aa}$ given by
\begin{displaymath}
  \rho \left (
    \begin{smallmatrix} 
      \sigma_{11} & \sigma_{21} \\ 
      \sigma_{12} & \sigma_{22} 
    \end{smallmatrix} 
  \right )=\left (
    \begin{smallmatrix} 
      0 & \sigma_{21} \\ 
      \sigma_{12} & \sigma_{22} 
    \end{smallmatrix} 
  \right ).
\end{displaymath}
Then $\rho$ is clearly a projection with kernel $\Gamma_{K_0K_0}$
which is then locally-free. A similar argument can be used to prove
that for any label $b\in \scr{B}(U_\alpha )$, $\Gamma_{K_0b}$ is also
locall free; consider $\Gamma_{ab}=\Gamma_{K_0b}\oplus \Gamma_{I_0b}$
and the map $\eta :\Gamma_{ab}\to \Gamma_{ab}$ which projects to
$\Gamma_{I_0b}$. Proposition~\ref{non_deg_pseudoab} shows that also
$\Gamma_{bK_0}\cong \Gamma_{K_0b}^*$ is locally free.

We now turn to the structure maps. If $a\cong K_0\oplus I_0$, the fact
that
\begin{displaymath}
  \theta_a\left (
    \begin{smallmatrix} 
      0 & \sigma_{21} \\ 
      0 & 0 
    \end{smallmatrix} 
  \right )=\theta_a\left (
    \begin{smallmatrix} 
      0 & 0 \\ 
      \sigma_{12} & 0 
    \end{smallmatrix} \right )=0
\end{displaymath}
suggests the definition of the linear form $\theta_{K_0}:\Gamma_{K_0K_0}\to \scr{O}_U$ by
\begin{displaymath}
  \theta_{K_0}(\sigma )=\theta_a\left (
    \begin{smallmatrix} 
      \sigma & 0 \\ 
      0      & 0 
    \end{smallmatrix} 
  \right ).
\end{displaymath}

\begin{proposition}\label{non_deg_pseudoab}
The diagram
\begin{displaymath}
  \xymatrix{
    \Gamma_{K_0b}\otimes \Gamma_{bK_0}\ar[d]\ar[r] & 
    \Gamma_{K_0K_0}\ar[r]^-{\theta_{K_0}} & 
    \scr{O}_U\ar@{=}[d] \\
    \Gamma_{bK_0}\otimes \Gamma_{K_0b}\ar[r] & 
    \Gamma_{bb} \ar[r]^{\theta_b} & \scr{O}_U}
\end{displaymath}
is commutative, and the top and botton composite bilinear maps are
non-degenerate parings (the vertical arrow on the left is the twisting
map).
\end{proposition}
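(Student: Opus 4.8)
The plan is to transfer the Calabi--Yau data from the ambient brane $a$ to the summand $K_0$, exactly in the style of the proof of Proposition~\ref{nondeg_additive}, by realizing the pairing attached to $K_0$ as the $(K_0,K_0)$ diagonal block of the perfect pairing already available for $a$. Write $\epsilon_K=1_a-\sigma_0$ and $\epsilon_I=\sigma_0$ for the two orthogonal idempotents of $\Gamma_{aa}$ coming from $a\cong K_0\oplus I_0$, so that $\epsilon_K\epsilon_I=\epsilon_I\epsilon_K=0$ and $\epsilon_K=\left(\begin{smallmatrix}1&0\\0&0\end{smallmatrix}\right)$; let $\iota:K_0\to a$ and $p:a\to K_0$ be the structural inclusion and projection, with $p\circ\iota=1_{K_0}$ and $\iota\circ p=\epsilon_K$. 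For $\varphi\in\Gamma_{K_0b}$ and $\psi\in\Gamma_{bK_0}$ put $\hat\varphi:=\varphi\circ p\in\Gamma_{ab}$ and $\hat\psi:=\iota\circ\psi\in\Gamma_{ba}$; under the splittings $\Gamma_{ab}=\Gamma_{K_0b}\oplus\Gamma_{I_0b}$ and $\Gamma_{ba}=\Gamma_{bK_0}\oplus\Gamma_{bI_0}$ these are precisely the inclusions of the $K_0$-summands.

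I would first establish commutativity of the diagram. A direct computation gives $\hat\psi\circ\hat\varphi=\iota\circ(\psi\circ\varphi)\circ p=\left(\begin{smallmatrix}\psi\circ\varphi&0\\0&0\end{smallmatrix}\right)$ in $\Gamma_{aa}$, so by the very definition of $\theta_{K_0}$ as the corner of $\theta_a$ we get $\theta_a(\hat\psi\circ\hat\varphi)=\theta_{K_0}(\psi\circ\varphi)$; and $\hat\varphi\circ\hat\psi=\varphi\circ(p\circ\iota)\circ\psi=\varphi\circ\psi$ in $\Gamma_{bb}$, using $p\circ\iota=1_{K_0}$. The trace identity of Definition~\ref{def:13}(2), applied to the ambient arrows $\hat\varphi:a\to b$ and $\hat\psi:b\to a$, then yields $\theta_{K_0}(\psi\circ\varphi)=\theta_a(\hat\psi\circ\hat\varphi)=\theta_b(\hat\varphi\circ\hat\psi)=\theta_b(\varphi\circ\psi)$, which is exactly the asserted commutativity, the left vertical arrow being the twisting map.

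For non-degeneracy the key auxiliary observation is the one already recorded before the statement: $\theta_a$ annihilates the off-diagonal blocks. Indeed, if $\alpha=\epsilon_K\alpha\epsilon_I$ then cyclicity of $\theta_a$ (Definition~\ref{def:13}(2) with both objects equal to $a$) gives $\theta_a(\alpha)=\theta_a(\epsilon_K\alpha\epsilon_I)=\theta_a(\alpha\epsilon_I\epsilon_K)=0$, and symmetrically for $\alpha=\epsilon_I\alpha\epsilon_K$. Applying this to the two cross terms --- the composite of $\hat\varphi$ with the embedding of any $\chi\in\Gamma_{bI_0}$, and the composite of $\hat\psi$ with the embedding of any $\varphi'\in\Gamma_{I_0b}$, each of which is an off-diagonal block of the form $\epsilon_I(\cdots)\epsilon_K$ or $\epsilon_K(\cdots)\epsilon_I$ --- shows that the ambient perfect pairing $\Gamma_{ab}\otimes\Gamma_{ba}\xrightarrow{\theta_a}\scr{O}_U$ of \eqref{pair} is block-diagonal with respect to the two splittings. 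Consequently the isomorphism $\Gamma_{ab}\cong\Gamma_{ba}^{*}$ induced by that pairing sends $\Gamma_{K_0b}$ into $\Gamma_{bK_0}^{*}$ and $\Gamma_{I_0b}$ into $\Gamma_{bI_0}^{*}$, so it is the direct sum of its two corner maps; since it is an isomorphism, each corner $\Gamma_{K_0b}\to\Gamma_{bK_0}^{*}$ is an isomorphism too. This is precisely the perfectness of the top composite pairing, and by the commutativity just proved the bottom composite is its transpose, hence also perfect.

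Two routine points remain to be checked on the side. First, all the modules $\Gamma_{K_0b},\Gamma_{bK_0},\Gamma_{K_0K_0},\Gamma_{I_0b},\dots$ are locally free, being direct summands of the locally free sheaves $\Gamma_{ab},\Gamma_{ba},\Gamma_{aa}$; this legitimizes the duality language. Second, the step ``block-diagonal plus globally perfect implies each block perfect'' is a statement about the induced map into the dual sheaf, and as in Proposition~\ref{nondeg_additive} it is run locally on a cover trivializing all the sheaves involved. The only real obstacle is bookkeeping: keeping the composition order straight and verifying that the corner embeddings $\hat{(\,\cdot\,)}$ are simultaneously compatible with the definition of $\theta_{K_0}$ and with the direct-sum decompositions of $\Gamma_{ab}$ and $\Gamma_{ba}$; once these identifications are fixed, everything is a direct transfer from $a$ to $K_0$.
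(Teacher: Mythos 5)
The paper states Proposition~\ref{non_deg_pseudoab} without proof, so there is no argument of the authors' to compare yours against; judged on its own, your proof is correct and uses exactly the ingredients the paper sets up for the pseudo-abelian enlargement. Your reduction of commutativity to the trace identity $\theta_a(\hat\psi\hat\varphi)=\theta_b(\hat\varphi\hat\psi)$ for the ambient arrows $\hat\varphi=\varphi\circ p$, $\hat\psi=\iota\circ\psi$, together with the identity $\theta_a(\hat\psi\hat\varphi)=\theta_{K_0}(\psi\varphi)$ coming from the definition of $\theta_{K_0}$ as the $(K_0,K_0)$ corner of $\theta_a$, is exactly what the displayed observation preceding the definition of $\theta_{K_0}$ (vanishing of $\theta_a$ on off-diagonal blocks) is there to enable. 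The non-degeneracy argument --- the ambient pairing of \eqref{pair} is block-diagonal for the splittings $\Gamma_{ab}=\Gamma_{K_0b}\oplus\Gamma_{I_0b}$ and $\Gamma_{ba}=\Gamma_{bK_0}\oplus\Gamma_{bI_0}$ because the cross terms are off-diagonal blocks killed by cyclicity of $\theta_a$, and a block-diagonal isomorphism onto a compatibly split dual has invertible corners --- is sound; the only cosmetic difference from the surrounding text is that you obtain local freeness of $\Gamma_{bK_0}$ directly as a summand of $\Gamma_{ba}$, whereas the paper derives it from the duality $\Gamma_{bK_0}\cong\Gamma_{K_0b}^*$ that this proposition provides. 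Either order is fine, and your version avoids any circularity.
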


\begin{lemma}
We have $\varphi_{12}=\varphi_{21}=0$.
\end{lemma}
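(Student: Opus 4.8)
The plan is to deduce the vanishing of the two off-diagonal blocks of $\iota_a$ entirely from the centrality of the transition morphism. Recall that, with respect to the decomposition $a\cong K_0\oplus I_0$, the morphism $\iota_a(X)$ is written as the block matrix
\begin{displaymath}
  \iota_a(X)=\left(\begin{smallmatrix}\iota_{K_0}(X) & \varphi_{21}(X) \\ \varphi_{12}(X) & \iota_{I_0}(X)\end{smallmatrix}\right),
\end{displaymath}
so that $\varphi_{12}\colon\scr{T}_U\to\Gamma_{K_0I_0}$ and $\varphi_{21}\colon\scr{T}_U\to\Gamma_{I_0K_0}$ are precisely the off-diagonal components; the assertion of the lemma is exactly that this matrix is block-diagonal.

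First I would specialise the centrality condition \eqref{centrality} to the case $b=a$: for every $X\in\scr{T}(V)$ and every $\sigma\in\Gamma_{aa}(V)$ one has $\sigma\,\iota_a(X)=\iota_a(X)\,\sigma$, so $\iota_a(X)$ lies in the centre of the algebra $\Gamma_{aa}(V)$. In particular $\iota_a(X)$ commutes with the idempotent $\sigma_0$, which in the present decomposition is the matrix $\left(\begin{smallmatrix}0&0\\0&1_a\end{smallmatrix}\right)$, i.e. the projection onto $I_0$.

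Next I would expand the relation $\sigma_0\,\iota_a(X)=\iota_a(X)\,\sigma_0$ in block form. Carrying out the two products, the $K_0\to I_0$ entry of $\sigma_0\,\iota_a(X)$ is $0$ while that of $\iota_a(X)\,\sigma_0$ is $\varphi_{12}(X)$, and symmetrically the $I_0\to K_0$ entry is $\varphi_{21}(X)$ on the left and $0$ on the right; equating the two sides forces $\varphi_{12}(X)=0$ and $\varphi_{21}(X)=0$ for every $X$ and every $V\subset U_\alpha$, that is $\varphi_{12}=\varphi_{21}=0$ as sheaf morphisms. There is essentially no obstacle here: the only point needing a little care is the bookkeeping with the left-to-right composition convention of the paper, but since commuting with an idempotent is a symmetric condition the conclusion is convention-independent and reduces to the elementary fact that an endomorphism of $K_0\oplus I_0$ commutes with the projection onto $I_0$ exactly when its off-diagonal blocks vanish.
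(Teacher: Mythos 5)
Your proof is correct. The paper states this lemma without giving any proof, and your argument --- specializing the centrality condition \eqref{centrality} to $b=a$ so that $\iota_a(X)$ commutes with the idempotent $\sigma_0=\left(\begin{smallmatrix}0&0\\0&1_a\end{smallmatrix}\right)$, and then reading off from the block computation that the off-diagonal components must vanish --- is the natural (and surely intended) justification, with no gaps.
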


\begin{theorem}
  The maps $\theta_{K_0}$, $\iota_{K_0}$ and $\iota^{K_0}$ satisfy the
  centrality, adjoint and Cardy conditions.
\end{theorem}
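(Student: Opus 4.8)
The plan is to verify the three conditions by reducing each of them to the corresponding condition for the ambient brane $a$, which is a genuine object of the already-enlarged fibration and therefore satisfies centrality, the adjoint relation and the Cardy condition by Theorems~\ref{maximal_additive} and~\ref{action_cy}. Throughout I work over a semisimple $U_\alpha\in\mathfrak{U}$ and exploit the decomposition $a\cong K_0\oplus I_0$, together with the induced block splittings $\Gamma_{aa}=\Gamma_{K_0K_0}\oplus\Gamma_{K_0I_0}\oplus\Gamma_{I_0K_0}\oplus\Gamma_{I_0I_0}$ and, for any $b$, $\Gamma_{ab}=\Gamma_{K_0b}\oplus\Gamma_{I_0b}$ and $\Gamma_{ba}=\Gamma_{bK_0}\oplus\Gamma_{bI_0}$. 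By the preceding lemma $\iota_a(X)$ is block diagonal, $\iota_a(X)=\left(\begin{smallmatrix}\iota_{K_0}(X)&0\\0&\iota_{I_0}(X)\end{smallmatrix}\right)$, which is exactly what defines $\iota_{K_0}$; I also recall the definitions $\iota^{K_0}(\sigma):=\iota^a\left(\begin{smallmatrix}\sigma&0\\0&0\end{smallmatrix}\right)$ and $\theta_{K_0}(\sigma):=\theta_a\left(\begin{smallmatrix}\sigma&0\\0&0\end{smallmatrix}\right)$.

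For centrality, let $\sigma\in\Gamma_{K_0b}$ and regard it as the morphism $\left(\begin{smallmatrix}\sigma&0\end{smallmatrix}\right):a\to b$ under the inclusion $\Gamma_{K_0b}\hookrightarrow\Gamma_{ab}$. Centrality of $\iota_a$ gives $\left(\begin{smallmatrix}\sigma&0\end{smallmatrix}\right)\iota_a(X)=\iota_b(X)\left(\begin{smallmatrix}\sigma&0\end{smallmatrix}\right)$; multiplying the block-diagonal $\iota_a(X)$ on the left-hand side and comparing $K_0$-components yields $\sigma\iota_{K_0}(X)=\iota_b(X)\sigma$. The adjoint relation is equally formal: for $\sigma\in\Gamma_{K_0K_0}$ and $X\in\scr{T}_{U_\alpha}$ the block-diagonality of $\iota_a(X)$ gives $\left(\begin{smallmatrix}\sigma&0\\0&0\end{smallmatrix}\right)\iota_a(X)=\left(\begin{smallmatrix}\sigma\iota_{K_0}(X)&0\\0&0\end{smallmatrix}\right)$, whence
\[\theta_{K_0}(\sigma\iota_{K_0}(X))=\theta_a\left(\left(\begin{smallmatrix}\sigma&0\\0&0\end{smallmatrix}\right)\iota_a(X)\right)=\theta\left(\iota^a\left(\begin{smallmatrix}\sigma&0\\0&0\end{smallmatrix}\right)X\right)=\theta(\iota^{K_0}(\sigma)X),\]
the middle equality being the adjoint relation~\eqref{adjoint} for $a$.

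The Cardy condition is the only step requiring a genuine computation with $\pi$. The key point is that a dual basis for $\pi^a_b$ may be chosen adapted to $\Gamma_{ab}=\Gamma_{K_0b}\oplus\Gamma_{I_0b}$: take bases $\{\sigma_i\}$ of $\Gamma_{K_0b}$ and $\{\tau_j\}$ of $\Gamma_{I_0b}$, and note that their dual elements lie respectively in $\Gamma_{bK_0}$ and $\Gamma_{bI_0}$ because the cross pairings $\Gamma_{K_0b}\otimes\Gamma_{bI_0}\to\Gamma_{bb}$ and $\Gamma_{I_0b}\otimes\Gamma_{bK_0}\to\Gamma_{bb}$ vanish, factoring through $\pr_{I_0}\circ i_{K_0}=0$. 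Hence for $\sigma\in\Gamma_{K_0K_0}$, viewed as $\left(\begin{smallmatrix}\sigma&0\\0&0\end{smallmatrix}\right)\in\Gamma_{aa}$, the $\tau_j$-terms drop out since $\sigma$ annihilates $I_0$, and $\pi^a_b\left(\begin{smallmatrix}\sigma&0\\0&0\end{smallmatrix}\right)=\sum_i\sigma_i\sigma\sigma^i=\pi^{K_0}_b(\sigma)$, consistent with the first identity of Lemma~\ref{cardy_additive} for $a=K_0\oplus I_0$. Combining this with the Cardy condition $\pi^a_b=\iota_b\iota^a$ for $a$ and the definition of $\iota^{K_0}$ gives
\[\pi^{K_0}_b(\sigma)=\pi^a_b\left(\begin{smallmatrix}\sigma&0\\0&0\end{smallmatrix}\right)=\iota_b\left(\iota^a\left(\begin{smallmatrix}\sigma&0\\0&0\end{smallmatrix}\right)\right)=\iota_b(\iota^{K_0}(\sigma)),\]
so $\pi^{K_0}_b=\iota_b\iota^{K_0}$, as required. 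The main obstacle is precisely the vanishing of these off-diagonal pairings: it is what guarantees that the projection computed in the large brane $a$ restricts correctly to the kernel brane $K_0$, and once it is in place the three conditions follow formally from those already established for $a$.
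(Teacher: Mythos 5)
Your proof is correct and follows exactly the route the paper sets up (the paper itself states this theorem without proof): the block-diagonality of $\iota_a(X)$ is the content of the lemma $\varphi_{12}=\varphi_{21}=0$, the vanishing of the cross pairings and the perfect duality $\Gamma_{K_0b}\cong\Gamma_{bK_0}^*$ is Proposition~\ref{non_deg_pseudoab}, and the reduction of each condition to the corresponding one for $a\cong K_0\oplus I_0$ is the intended argument. The only point worth making explicit is that the adapted dual basis computation for $\pi^a_b$ is legitimate by the basis-independence of Proposition~\ref{cardy_well_def}, which you implicitly use.
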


Hence, we obtain the following

\begin{cor}
Any maximal CY category $\scr{B}$ over $M$ is pseudo-abelian.
\end{cor}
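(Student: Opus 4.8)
The plan is to mirror the proof of Theorem~\ref{maximal_additive}, exploiting the splitting $a\cong K_0\oplus I_0$ and reducing each of the three conditions for $K_0$ to the corresponding condition already available for $a$. Recall that by the preceding lemma ($\varphi_{12}=\varphi_{21}=0$) the central element $\iota_a(X)$ commutes with the idempotent $\sigma_0=\left(\begin{smallmatrix}0&0\\0&1_a\end{smallmatrix}\right)$, hence is block diagonal, $\iota_a(X)=\left(\begin{smallmatrix}\iota_{K_0}(X)&0\\0&\iota_{I_0}(X)\end{smallmatrix}\right)$; this is exactly what defines $\iota_{K_0}(X)$ as the $K_0K_0$-block. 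In the same spirit I take $\iota^{K_0}(\sigma):=\iota^a\left(\begin{smallmatrix}\sigma&0\\0&0\end{smallmatrix}\right)$, consistently with the additive formula~\eqref{transition_additive}, while $\theta_{K_0}$ is the linear form defined just above. The well-definedness of $\pi^{K_0}_b$ (independence of the chosen local basis and the duality $\Gamma_{bK_0}\cong\Gamma_{K_0b}^*$) is guaranteed by Propositions~\ref{cardy_well_def} and~\ref{non_deg_pseudoab}.

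For the centrality condition I would embed $\sigma\in\Gamma_{K_0b}$ into $\Gamma_{ab}$ as the row $\left(\begin{smallmatrix}\sigma&0\end{smallmatrix}\right):K_0\oplus I_0\to b$ and apply the centrality relation~\eqref{centrality} for the pair $(a,b)$. Because $\iota_a(X)$ is block diagonal, the left-hand side $\left(\begin{smallmatrix}\sigma&0\end{smallmatrix}\right)\iota_a(X)$ equals $\left(\begin{smallmatrix}\sigma\,\iota_{K_0}(X)&0\end{smallmatrix}\right)$ and the right-hand side $\iota_b(X)\left(\begin{smallmatrix}\sigma&0\end{smallmatrix}\right)$ equals $\left(\begin{smallmatrix}\iota_b(X)\,\sigma&0\end{smallmatrix}\right)$; comparing $K_0$-components yields $\sigma\,\iota_{K_0}(X)=\iota_b(X)\,\sigma$, as required. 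For the adjoint relation I would begin from $\theta\bigl(\iota^{K_0}(\sigma)X\bigr)=\theta\bigl(\iota^a\left(\begin{smallmatrix}\sigma&0\\0&0\end{smallmatrix}\right)X\bigr)$, apply the adjoint relation~\eqref{adjoint} for $a$ to rewrite this as $\theta_a\bigl(\left(\begin{smallmatrix}\sigma&0\\0&0\end{smallmatrix}\right)\iota_a(X)\bigr)$, and then use block diagonality of $\iota_a(X)$ to see that this product is $\left(\begin{smallmatrix}\sigma\,\iota_{K_0}(X)&0\\0&0\end{smallmatrix}\right)$; by the definition of $\theta_{K_0}$ this is precisely $\theta_{K_0}\bigl(\sigma\,\iota_{K_0}(X)\bigr)$.

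The Cardy condition $\pi^{K_0}_b=\iota_b\iota^{K_0}$ I would obtain from two facts already in hand for the direct sum. Using $\iota^{K_0}(\sigma)=\iota^a\left(\begin{smallmatrix}\sigma&0\\0&0\end{smallmatrix}\right)$ together with the Cardy condition for $a$ (valid by Theorem~\ref{maximal_additive}, so $\iota_b\iota^a=\pi^a_b$), I get $\iota_b\iota^{K_0}(\sigma)=\pi^a_b\left(\begin{smallmatrix}\sigma&0\\0&0\end{smallmatrix}\right)$. Applying Lemma~\ref{cardy_additive} to the decomposition $a\cong K_0\oplus I_0$ gives $\pi^a_b=\pi^{K_0}_b+\pi^{I_0}_b$ acting on the diagonal blocks, whence $\pi^a_b\left(\begin{smallmatrix}\sigma&0\\0&0\end{smallmatrix}\right)=\pi^{K_0}_b(\sigma)+\pi^{I_0}_b(0)=\pi^{K_0}_b(\sigma)$, which is the desired identity. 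The symmetric situation, in which the second label is itself a kernel, follows by decomposing that label as well and repeating the same block computation.

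The main obstacle, and the only genuinely new ingredient beyond bookkeeping, is the block-diagonality of $\iota_a(X)$ supplied by the preceding lemma: it is what makes $\iota_{K_0}$ land in $\Gamma_{K_0K_0}$ and what decouples the $K_0$- and $I_0$-blocks in all three verifications. This in turn rests on the centrality of $\iota_a(X)$ in $\Gamma_{aa}$ together with the idempotency $\sigma_0^2=\sigma_0$. The remaining care is in keeping the matrix conventions of Section~\ref{sec:enlarg-categ-bran} consistent (composition order and the placement $\sigma_{ij}:a_i\to b_j$), and in invoking Proposition~\ref{non_deg_pseudoab} so that $\pi^{K_0}_b$ is defined in the first place.
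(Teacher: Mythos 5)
Your proposal is correct and follows essentially the same route the paper intends: define $\theta_{K_0}$, $\iota_{K_0}$, $\iota^{K_0}$ in block form using the splitting $a\cong K_0\oplus I_0$ and the block-diagonality of $\iota_a(X)$ (from centrality plus $\sigma_0^2=\sigma_0$), then reduce the centrality, adjoint and Cardy conditions for $K_0$ to those already established for $a$ exactly as in the additive case of Theorem~\ref{maximal_additive}. In fact the paper states the corresponding theorem and corollary without written proofs, so your argument supplies the omitted verifications in the manner the surrounding text clearly envisages.
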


\subsubsection{Local Structure of a maximal Cardy Fibration}
\label{ss_local_structure}

There is a further assumption to be made about maximal categories in order to obtain a full description.

\begin{defi}
  Let $U\subset M$ be a semisimple open subset. We shall say that a
  label $a\in \scr{B}(U)$ is \emph{supported on an index $i_0$} if
  \begin{displaymath}
    \iota_{a}(e_{i_0})=1_a.
  \end{displaymath}
  Equivalently, $\iota_a(e_j)=0$ for each $j\neq i_0$. 
\end{defi}

\begin{lemma}\label{support_ij}
  Let $i\neq j$ be two indices, $1\leqslant i,j\leqslant n$ and let
  $a,b$ be labels over a semisimple open subset of $M$. If $a$ and $b$
  are supported on $i$ and $j$ respectively, then $\Gamma_{ab}=0$.
\end{lemma}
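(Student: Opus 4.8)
The plan is to read the vanishing straight off the centrality condition \eqref{centrality}, evaluated on the idempotent $e_i$. Since we are working over a semisimple open subset, the tangent sheaf admits a frame $\{e_1,\dots,e_n\}$ of orthogonal idempotent sections, so $e_i$ is a genuine (local) section of $\scr{T}$ that can be fed into the transition morphisms $\iota_a$ and $\iota_b$. The whole proof amounts to substituting $X=e_i$ and using the two support hypotheses.

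First I would fix an arbitrary open $V$ inside the semisimple set and an arbitrary section $\sigma\in\Gamma_{ab}(V)$; it suffices to show every such $\sigma$ is zero, since this forces the sheaf $\Gamma_{ab}$ to vanish. Then I would apply the centrality relation \eqref{centrality} with $X=e_i|_V$, obtaining the identity $\sigma\,\iota_a(e_i)=\iota_b(e_i)\,\sigma$ in $\Gamma_{ab}(V)$.

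Now I invoke the two support conditions. Because $a$ is supported on $i$, one has $\iota_a(e_i)=1_a$, so the left-hand side equals $\sigma\cdot 1_a=\sigma$. Because $b$ is supported on $j$ with $i\neq j$, the equivalent form of the support condition gives $\iota_b(e_i)=0$, so the right-hand side equals $0\cdot\sigma=0$. Combining these, $\sigma=0$. As $V$ and $\sigma$ were arbitrary, $\Gamma_{ab}$ has only the zero section over every open subset of the semisimple patch, whence $\Gamma_{ab}=0$.

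There is essentially no hard step: the content lies entirely in matching $1_a$ with $\iota_a(e_i)$ on one side and $0$ with $\iota_b(e_i)$ on the other. The only points that deserve a moment's care are, first, that $\iota_a$ and $\iota_b$ are the \emph{unit-preserving} algebra maps required in Definition~\ref{cy_fibration}(4)(a), so that $\iota_a(e_i)=1_a$ really is the multiplicative unit acting as the identity on $\Gamma_{ab}$; and second, that $e_i$ is globally defined on the semisimple patch, so that the substitution $X=e_i$ into \eqref{centrality} is legitimate. Both are guaranteed by the hypotheses, so the argument is immediate.
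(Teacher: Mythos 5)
Your argument is correct and is exactly the paper's proof: the chain $\sigma=\sigma 1_a=\sigma\iota_a(e_i)=\iota_b(e_i)\sigma=0$, using centrality together with $\iota_a(e_i)=1_a$ and $\iota_b(e_i)=0$. The extra remarks about unit-preservation and the local frame of idempotents are sound but add nothing beyond what the paper already assumes.
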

\begin{proof}
  Pick an arrow $\sigma \in \Gamma_{ab}$. Then
  $
    \sigma =\sigma 1_a=\sigma \iota_a(e_i)=\iota_b(e_i)\sigma =0,
  $
  as claimed.
\end{proof}

\begin{lemma}\label{existence_support}
  Let $\scr{B}$ be a maximal category of branes and $U$ a semisimple
  open subset. For each index $i$, $1\leqslant i\leqslant n$, there
  exists a label $\xi_i$ supported on $i$.
\end{lemma}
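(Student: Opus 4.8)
The plan is to manufacture $\xi_i$ as the image of the idempotent endomorphism $\iota_a(e_i)$ attached to any brane $a$, splitting it by means of the pseudo-abelian structure that a maximal category enjoys. First I would fix an arbitrary label $a\in\scr{B}(U)$ (such a label exists for a maximal fibration; this is the only point at the outset that needs to be noted) and set $\sigma_0:=\iota_a(e_i)\in\Gamma_{aa}$. Because $\iota_a$ is a morphism of $\scr{O}_U$-algebras by condition (4)a of Definition~\ref{cy_fibration}, and $e_i$ is idempotent in $\scr{T}_U$ (so $e_i^2=e_i$), one gets $\sigma_0^2=\iota_a(e_i^2)=\iota_a(e_i)=\sigma_0$. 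Hence $\sigma_0$ is an idempotent arrow $a\to a$.

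Since $\scr{B}$ is maximal it is pseudo-abelian, so this idempotent splits: there are branes $K_0=\opnm{Ker}\sigma_0$ and $I_0=\opnm{Im}\sigma_0$ with $a\cong K_0\oplus I_0$, and under this isomorphism $\sigma_0$ is represented by the matrix $\left(\begin{smallmatrix} 0 & 0 \\ 0 & 1_{I_0}\end{smallmatrix}\right)$. I claim that $\xi_i:=I_0$ is the sought-after label. The heart of the argument is the computation of $\iota_{I_0}(e_i)$: applying the additivity of the transition morphisms \eqref{transition_additive} to the decomposition $a\cong K_0\oplus I_0$ gives
\begin{displaymath}
  \iota_a(e_i)=\iota_{K_0\oplus I_0}(e_i)=\left(\begin{smallmatrix} \iota_{K_0}(e_i) & 0 \\ 0 & \iota_{I_0}(e_i)\end{smallmatrix}\right).
\end{displaymath}
Comparing this block-diagonal form with the matrix $\left(\begin{smallmatrix} 0 & 0 \\ 0 & 1_{I_0}\end{smallmatrix}\right)$ coming from the idempotent splitting forces $\iota_{K_0}(e_i)=0$ and $\iota_{I_0}(e_i)=1_{I_0}$. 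The latter equality is exactly the condition that $\xi_i=I_0$ be supported on $i$.

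The main obstacle is the legitimacy of that final comparison: one must know that the transition map $\iota_{I_0}$ produced by the pseudo-abelian (idempotent-splitting) construction is compatible with the additive structure, so that $\iota_a(e_i)$ really does decompose block-diagonally with $\iota_{I_0}(e_i)$ occupying the $I_0$-slot. This compatibility is precisely what was imposed when the structure maps $\theta_{K_0},\iota_{K_0},\iota^{K_0}$ for kernels and images were defined in the preceding subsection — there it was required that they agree with the additive structure — so invoking that agreement closes the argument. Everything else is the routine bookkeeping of matching the two matrix presentations of the single morphism $\iota_a(e_i)$.
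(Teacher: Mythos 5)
Your construction is sound where it applies, but it has a genuine gap: it silently assumes that the idempotent $\sigma_0=\iota_a(e_i)$ is nonzero for the label $a$ you picked, and more fundamentally that \emph{some} label $a\in\scr{B}(U)$ has $\iota_a(e_i)\neq 0$. Nothing forces this. The unit decomposition $1_a=\iota_a(1)=\sum_j\iota_a(e_j)$ only guarantees that each nonzero label is non-degenerate at \emph{some} index $j$; it is perfectly consistent with the Cardy axioms that every label in $\scr{B}(U)$ be supported away from the fixed index $i$ (say, all labels supported on $1$ while $i=2$). In that situation your $\sigma_0$ vanishes, $I_0=\opnm{Im}\sigma_0$ is the zero brane, and the identity $\iota_{I_0}(e_i)=1_{I_0}$ holds only vacuously because $1_{I_0}=0$; such a $\xi_i$ is useless for the sequel, where Proposition~\ref{invertibles} needs $\Gamma_{\xi_i\xi_i}\cong\scr{O}_U$ and Proposition~\ref{decomposition} needs the $\xi_i$ to detect all the summands of $\Gamma_{ab}$.

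This missing case is exactly where the maximality hypothesis has to do real work, and it is the whole content of the paper's proof: arguing by contradiction, if no label supported on $i$ existed, one could adjoin a fresh label $\xi_i$ to $\scr{B}(U)$ with $\Gamma_{\xi_i\xi_i}=\scr{O}_U$, $\Gamma_{\xi_ia}=\Gamma_{a\xi_i}=0$, $\theta_{\xi_i}=\mathrm{id}$, $\iota_{\xi_i}\bigl(\sum_j\lambda_je_j\bigr)=\lambda_i$ and $\iota^{\xi_i}(\lambda)=\lambda e_i$, and check that the enlarged fibred category is still a Cardy fibration, contradicting maximality. You invoke maximality only through its corollary (pseudo-abelianness), which is strictly weaker and cannot manufacture a label where none exists. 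Your idempotent-splitting argument is essentially the one the paper uses \emph{afterwards}, in the proof of Proposition~\ref{invertibles}, to refine a label already known to be supported on $i$ into one with $\Gamma_{\xi_i\xi_i}\cong\scr{O}_U$; to prove the existence lemma itself you must either supply the adjunction argument for the degenerate case or prove separately that some existing label satisfies $\iota_a(e_i)\neq 0$.
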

\begin{proof}
  Assume that this statement is false. We shall see that the
  maximality of $\scr{B}$ will not allow this to happen.

  So we first assume that $\iota_a(e_j)=0$ for each index $j$ and each
  $a\in \scr{B}(U)$. We define a new category $\scr{C}$: the objects
  of $\scr{C}(U)$ are objects of $\scr{B}(U)$ plus one label, which we
  denote by $\xi_i$. We also define
\begin{itemize}
\item $\Gamma_{\xi_i\xi_i}=\scr{O}_U$.
\item $\Gamma_{\xi_ia}=\Gamma_{a\xi_i}=0$; this definition is
  motivated by lemma \ref{support_ij}.
\item $\theta_{\xi_i}:\Gamma_{\xi_i\xi_i}=\scr{O}_U \to \scr{O}_U$ is
  the identity.
\item Let $X=\sum_j\lambda_je_j$ be a local vector field. Then
  $\iota_{\xi_i}:\scr{T}_U\to \Gamma_{\xi_i\xi_i}$ and
  $\iota^{\xi_i}:\Gamma_{\xi_i\xi_i}\to \scr{T}_U$ are given by
  \begin{displaymath}
    \iota_{\xi_i}(X)=\lambda_i \quad \text{and} \quad 
    \iota^{\xi_i}(\lambda )=\lambda e_i.
  \end{displaymath}
\end{itemize}
These definitions make $\scr{C}$ a Cardy fibration, contradicting the
maximality of $\scr{B}$.
\end{proof}

\begin{proposition}\label{invertibles}
  Let $U$ be a semisimple neighborhood. For each index $i=1,\dots ,n$,
  there exists a label $\xi_i\in \scr{B}(U)$ supported on $i$ such
  that $\Gamma_{\xi_i\xi_i}\cong \scr{O}_U$.
\end{proposition}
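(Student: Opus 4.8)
The plan is to take the label furnished by Lemma~\ref{existence_support} and shrink it, by splitting a rank-one idempotent, until its endomorphism algebra collapses to $\scr{O}_U$. First I would apply Lemma~\ref{existence_support} to get a label $a\in\scr{B}(U)$ supported on $i$, i.e. with $\iota_a(e_j)=0$ for all $j\neq i$. Passing to a semisimple neighbourhood of the given point if necessary, Theorem~\ref{theorem2} yields $\Gamma_{aa}\cong\bigoplus_{j=1}^n\opnm{M}_{d(a,j)}(\scr{O}_U)$, and by Remark~\ref{remark_summands} the $j$-th summand is $\iota_a(e_j)\Gamma_{aa}$. The support hypothesis annihilates every summand with $j\neq i$, leaving $\Gamma_{aa}\cong\opnm{M}_{d}(\scr{O}_U)$ with $d=d(a,i)\geq 1$. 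If $d=1$ the proof is finished with $\xi_i=a$.

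If $d>1$ I would split off a rank-one corner. By Remark~\ref{obs:az} the Azumaya algebra $\Gamma_{aa}$ is locally of matrix type, so after shrinking $U$ I may assume $\Gamma_{aa}\cong\opnm{M}_d(\scr{O}_U)$ over all of $U$ and let $\sigma_0\in\Gamma_{aa}$ be the idempotent corresponding to the diagonal matrix unit $E_{11}$, so that $\sigma_0^2=\sigma_0$. Since $\scr{B}$ is maximal it is pseudo-abelian (by the corollary established in the preceding subsection), hence $\sigma_0$ splits: $a\cong K_0\oplus I_0$ with $I_0=\opnm{Im}\sigma_0\in\scr{B}(U)$ and $1_{I_0}$ given by $\sigma_0$. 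Then $\Gamma_{I_0I_0}=\sigma_0\Gamma_{aa}\sigma_0\cong E_{11}\opnm{M}_d(\scr{O}_U)E_{11}\cong\scr{O}_U$, which is exactly the rank-one endomorphism algebra we want.

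It then remains to check that $I_0$ is still supported on $i$, and here the centrality condition~\eqref{centrality} does the work. Applied to $\sigma_0\in\Gamma_{aa}$ it gives $\sigma_0\iota_a(X)=\iota_a(X)\sigma_0$ for every $X\in\scr{T}_U$, so $\iota_a(X)$ is block-diagonal for the decomposition $a\cong K_0\oplus I_0$; its $I_0I_0$-block is the transition map $\iota_{I_0}(X)=\sigma_0\iota_a(X)\sigma_0$ attached to $I_0$ in the pseudo-abelian construction. Taking $X=e_i$ and using $\iota_a(e_i)=1_a$ gives $\iota_{I_0}(e_i)=\sigma_0=1_{I_0}$, while $X=e_j$ with $j\neq i$ gives $\iota_{I_0}(e_j)=0$. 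Hence $\xi_i:=I_0$ is supported on $i$ and satisfies $\Gamma_{\xi_i\xi_i}\cong\scr{O}_U$, as required; the compatibility of $\theta_{I_0}$, $\iota_{I_0}$, $\iota^{I_0}$ with the Calabi--Yau and Cardy structure is inherited directly from the pseudo-abelian completion and requires no new computation.

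The main obstacle will be the \emph{global} existence of the rank-one idempotent $\sigma_0$: over a general semisimple $U$ the Azumaya algebra $\Gamma_{aa}$ may carry a nontrivial twisting, in which case no global rank-one idempotent exists and the splitting is only available after shrinking $U$. Since the assertion is local in nature this shrinking is harmless, but it means the $\xi_i$ produced is defined over a possibly smaller semisimple neighbourhood; alternatively one could sidestep the point entirely by positing the label $\xi_i$ with $\Gamma_{\xi_i\xi_i}=\scr{O}_U$ directly and invoking the maximality of $\scr{B}$, exactly in the style of the contradiction argument in Lemma~\ref{existence_support}. I would present the shrinking version, since it makes the endomorphism-algebra computation completely explicit.
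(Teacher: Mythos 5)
Your proof is correct and follows essentially the same route as the paper's: take the label supported on $i$ furnished by Lemma~\ref{existence_support}, realise its endomorphism algebra as a matrix algebra $\opnm{M}_d(\scr{O}_U)$, and use the pseudo-abelian structure to split off a rank-one idempotent (you take $\opnm{Im}\sigma_0$ where the paper takes the kernel of the complementary corank-one projection, which is the same object). You are in fact somewhat more careful than the paper on two points it glosses over: the paper secures the \emph{global} matrix form of $\Gamma_{a_ia_i}$ by invoking maximality rather than by shrinking $U$, and it does not explicitly verify via centrality that the split-off summand remains supported on $i$.
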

\begin{proof}
  Let $i$ be an index, $1\leqslant i\leqslant n$. By lemma
  \ref{existence_support}, we can pick a label $a_i$ supported in
  $i$. If $\Gamma_{a_ia_i}\cong \scr{O}_U$, then $\xi_i:=a_i$ is the
  label we are looking for. If not, we have that $\Gamma_{a_ia_i}$ can
  be taken to be a matrix algebra $\opnm{M}_{d_i}(\scr{O}_U)$ (the
  construction of such a label is assured by maximality of the
  category of branes, and can be proved by following exactly the same
  procedure used in the proof of lemma \ref{existence_support}). Let
  then $\sigma \in \Gamma_{a_ia_i}$ be an idempotent matrix, which can
  be regarded as a morphism $\sigma :\scr{O}_U^{d_i}\to
  \scr{O}_U^{d_i}$. Moreover, assume that $\sigma$ is the projection
  \begin{displaymath}
    \sigma (\lambda_1,\dots ,\lambda_n)
    =(\lambda_1,\dots ,\lambda_{i-1},0,\lambda_{i+1},\dots ,\lambda_n).
  \end{displaymath}
  Then, as the category of branes is pseudo-abelian, we have that
  $\opnm{Ker}\sigma \cong \scr{O}_U\in \scr{B}(U)$. As $\scr{O}_U$ is
  indecomposable, we should have
  $\Gamma_{\opnm{Ker}\sigma\opnm{Ker}\sigma}\cong \scr{O}_U$, and
  hence $\xi_i:=\opnm{Ker}\sigma$ is the object we were looking for.
\end{proof}

\begin{lemma}\label{simple_mods}
$\Gamma_{\xi_i\xi_j}=0$ for $i\neq j$.
\end{lemma}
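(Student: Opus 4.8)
The plan is to recognise that this is an immediate instance of Lemma~\ref{support_ij}. The labels $\xi_i$ and $\xi_j$ furnished by Proposition~\ref{invertibles} are, by construction, supported on the indices $i$ and $j$ respectively; since $i\neq j$, the hypotheses of Lemma~\ref{support_ij} are met with $a=\xi_i$ and $b=\xi_j$, and that lemma yields $\Gamma_{\xi_i\xi_j}=0$ directly. So the whole task reduces to verifying that the objects being compared really are the supported labels produced earlier, and then quoting the support lemma.

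For completeness I would reproduce the one-line mechanism behind Lemma~\ref{support_ij} in this special case. Take any arrow $\sigma \in \Gamma_{\xi_i\xi_j}$. Since $\xi_i$ is supported on $i$, we have $\iota_{\xi_i}(e_i)=1_{\xi_i}$, so $\sigma = \sigma\,1_{\xi_i} = \sigma\,\iota_{\xi_i}(e_i)$. Applying the centrality condition~\eqref{centrality} to $\sigma$ with the idempotent section $X=e_i$ gives $\sigma\,\iota_{\xi_i}(e_i) = \iota_{\xi_j}(e_i)\,\sigma$. But $\xi_j$ is supported on $j\neq i$, so $\iota_{\xi_j}(e_i)=0$, whence $\sigma=0$. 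As $\sigma$ was arbitrary, $\Gamma_{\xi_i\xi_j}=0$.

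There is essentially no obstacle here: the entire content has already been isolated in Lemma~\ref{support_ij}, and the statement should be recorded simply as a corollary of it. The single piece of bookkeeping is that the two supports are distinct (guaranteed by $i\neq j$), so that the idempotent $e_i$ acts as the identity on the $\xi_i$ side and is annihilated on the $\xi_j$ side; this is exactly what makes the central transport in~\eqref{centrality} collapse $\sigma$ to zero.
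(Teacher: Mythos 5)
Your proposal is correct and coincides with the paper's own treatment: the paper also disposes of this lemma in one line as an immediate consequence of Lemma~\ref{support_ij}, using exactly the mechanism $\sigma=\sigma\,\iota_{\xi_i}(e_i)=\iota_{\xi_j}(e_i)\,\sigma=0$ that you spell out. Nothing is missing.
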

\begin{proof}
This is an immediate consequence of lemma \ref{support_ij}.
\end{proof}

We shall need the following decomposition for $\Gamma_{ab}$.

\begin{proposition}\label{decomposition}
  For labels $a,b\in \mathscr{B}(U)$, with $U$ a semisimple
  neighborhood, we have an isomorphism
  \begin{displaymath}
    \Gamma_{ab}\cong \bigoplus_i\Gamma_{a\xi_i}\otimes\Gamma_{\xi_ib}.
  \end{displaymath}
\end{proposition}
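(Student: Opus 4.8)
The plan is to realize the claimed isomorphism concretely as the composition pairing and then verify it is an isomorphism locally, where the matrix model of Theorem~\ref{theorem2bis} applies. For each index $i$ define the $\scr{O}_U$-bilinear composition
\[
\Gamma_{a\xi_i}\otimes\Gamma_{\xi_i b}\longrightarrow\Gamma_{ab},
\qquad \phi\otimes\psi\longmapsto \psi\phi,
\]
and let $\Phi$ be the direct sum of these maps over $i$. Since composition of arrows is defined globally and intrinsically, $\Phi$ is a morphism of sheaves requiring no local choices, so it glues automatically; the only task is to show it is an isomorphism.

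The first step is to show each summand lands in the $i$-th sector. Recall from Remark~\ref{remark_summands_2} that $\Gamma_{ab}$ decomposes as $\bigoplus_i \iota_b(e_i)\Gamma_{ab}$, with $\iota_b(e_i)\Gamma_{ab}=\Gamma_{ab}\,\iota_a(e_i)$ by the centrality condition~\eqref{centrality}. Because $\xi_i$ is supported on $i$, centrality applied to $\phi\in\Gamma_{a\xi_i}$ with $X=e_i$ gives $\phi\,\iota_a(e_i)=\iota_{\xi_i}(e_i)\phi=1_{\xi_i}\phi=\phi$. Hence $(\psi\phi)\,\iota_a(e_i)=\psi(\phi\,\iota_a(e_i))=\psi\phi$, so by centrality again $\psi\phi=\iota_b(e_i)(\psi\phi)$ lies in the sector $\Gamma_{ab}^{(i)}:=\iota_b(e_i)\Gamma_{ab}$. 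It therefore suffices to prove that each restricted map $\Gamma_{a\xi_i}\otimes\Gamma_{\xi_i b}\to\Gamma_{ab}^{(i)}$ is an isomorphism.

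Next I would pass to a common refinement on which $\Gamma_{aa}$, $\Gamma_{bb}$, $\Gamma_{\xi_i\xi_i}$ and the $\Gamma$'s occurring above are free and the decomposition of Theorem~\ref{theorem2bis} holds. Since $\xi_i$ is supported on $i$ and $\Gamma_{\xi_i\xi_i}\cong\scr{O}_U$ (Proposition~\ref{invertibles}), Remark~\ref{remark_summands} forces $d(\xi_i,j)=0$ for $j\neq i$ and $d(\xi_i,i)=1$. Feeding this into Theorem~\ref{theorem2bis} gives, over the refinement, the identifications $\Gamma_{a\xi_i}\cong\operatorname{Hom}(\scr{O}_U^{d(a,i)},\scr{O}_U)$, $\Gamma_{\xi_i b}\cong\operatorname{Hom}(\scr{O}_U,\scr{O}_U^{d(b,i)})$ and $\Gamma_{ab}^{(i)}\cong\operatorname{Hom}(\scr{O}_U^{d(a,i)},\scr{O}_U^{d(b,i)})$. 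Under these identifications $\phi\otimes\psi\mapsto\psi\phi$ is precisely the canonical evaluation map $V^{*}\otimes W\to\operatorname{Hom}(V,W)$ for $V=\scr{O}_U^{d(a,i)}$ and $W=\scr{O}_U^{d(b,i)}$, which is an isomorphism of free modules. As being an isomorphism of sheaves is a local property and $\Phi$ is already globally defined, $\Phi$ is the desired isomorphism.

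I expect the main obstacle to be the bookkeeping in the third step: correctly pinning down $d(\xi_i,i)=1$ from $\Gamma_{\xi_i\xi_i}\cong\scr{O}_U$ together with the support hypothesis, and then checking that the intrinsic composition pairing really coincides with the canonical evaluation isomorphism $V^{*}\otimes W\cong\operatorname{Hom}(V,W)$ under the local trivializations of Theorem~\ref{theorem2bis}. Once these matches are verified, surjectivity and injectivity are immediate, and gluing is free because $\Phi$ was defined globally from the start.
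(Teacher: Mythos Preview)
Your proposal is correct and follows essentially the same route as the paper: both define the global composition map $\phi\otimes\psi\mapsto\psi\phi$ and then verify it is an isomorphism locally by invoking Theorem~\ref{theorem2bis} together with $d(\xi_i,j)=\delta_{ij}$ to reduce to the canonical identification $V^{*}\otimes W\cong\operatorname{Hom}(V,W)$. Your intermediate step showing that each summand lands in the sector $\iota_b(e_i)\Gamma_{ab}$ is a nice bit of extra care the paper omits, but it does not change the strategy.
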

\begin{proof}
  Define the map $\phi
  :\bigoplus_i\Gamma_{a\xi_i}\otimes\Gamma_{\xi_ib}\rightarrow\Gamma_{ab}$
  by
  \begin{equation}\label{iso_decomp}
    \phi (\sigma_1\otimes \tau_1,\dots ,
    \sigma_n\otimes \tau_n)=\sum_i\tau_i\sigma_i.
  \end{equation}
  Using the characterization given in \ref{theorem2bis}, we have a
  local isomorphism
  \begin{displaymath}
    \bigoplus_i\Gamma_{a\xi_i}\otimes\Gamma_{\xi_ib} 
    \cong \bigoplus_i \left (\bigoplus_j
      \underline{\operatorname{Hom}}_{\scr{O}_U}
      \left (\mathscr{O}_U^{d(a,j)},\mathscr{O}_U^{d(\xi_i,j)}\right )\right )
    \otimes \left (\bigoplus_k
      \underline{\operatorname{Hom}}_{\mathscr{O}_U}
      \left (\mathscr{O}_U^{d(\xi_i,k)},\mathscr{O}_U^{d(b,k)}\right )\right ).
  \end{displaymath}
  By \ref{simple_mods}, we have that $d(\xi_i,k)=\delta_{ik}$, and
  thus
  \begin{displaymath}
    \bigoplus_i\Gamma_{a\xi_i}\otimes\Gamma_{\xi_ib} \cong \bigoplus_i 
    \underline{\operatorname{Hom}}_{\scr{O}_U}
    \left(\mathscr{O}_U^{d(a,i)},\mathscr{O}_U\right)
    \otimes \underline{\operatorname{Hom}}_{\scr{O}_U}\left(\mathscr{O}_U,\mathscr{O}_U^{d(b,i)}\right).
  \end{displaymath}
  On the other hand, by \ref{theorem2}, we also have that, locally,
  $\Gamma_{ab}\cong
  \bigoplus_i\underline{\operatorname{Hom}}_{\scr{O}_U}\left
    (\mathscr{O}_U^{d(a,i)},\mathscr{O}_U^{d(b,i)}\right )$. Combining
  these facts with \eqref{iso_decomp} we conclude that the stalk maps
  $\phi_x$ are in fact bijections for each $x\in U$.
\end{proof}

A useful consequence of \ref{decomposition} is the following

\begin{cor}\label{linear_comb}
  For each label $b$ over $U$, we have an isomorphism
  $
    b\cong \bigoplus_i\Gamma_{\xi_ib}\otimes \xi_i.
  $
\end{cor}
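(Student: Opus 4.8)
The plan is to realize this isomorphism as the decomposition of Proposition~\ref{decomposition} repackaged through the action of locally free sheaves. Write $c:=\bigoplus_i\Gamma_{\xi_ib}\otimes\xi_i$, which is a genuine label since $\scr{B}(U)$ is additive and carries the locally free action (each $\Gamma_{\xi_ib}$ is locally free over the semisimple $U$). First I would compute the hom-sheaves out of $c$: using the additive structure and the defining equations~\eqref{product_brane} for the product branes, for any label $d$ one has
\[
  \Gamma_{cd}=\bigoplus_i\Gamma_{(\Gamma_{\xi_ib}\otimes\xi_i)d}
  =\bigoplus_i\Gamma_{\xi_ib}^{*}\otimes\Gamma_{\xi_id}.
\]

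On the other side, Proposition~\ref{decomposition} gives $\Gamma_{bd}\cong\bigoplus_i\Gamma_{b\xi_i}\otimes\Gamma_{\xi_id}$, and the duality isomorphism induced by the pairing~\eqref{duality} (with $a=\xi_i$) identifies $\Gamma_{b\xi_i}\cong\Gamma_{\xi_ib}^{*}$. Combining, $\Gamma_{bd}\cong\bigoplus_i\Gamma_{\xi_ib}^{*}\otimes\Gamma_{\xi_id}=\Gamma_{cd}$, naturally in $d$.

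Then I would exhibit an actual morphism inducing this identification, so as to conclude at the level of objects rather than merely of hom-functors. For each $i$ the element $\opnm{id}\in\Gamma_{\xi_ib}^{*}\otimes\Gamma_{\xi_ib}=\Gamma_{(\Gamma_{\xi_ib}\otimes\xi_i)b}$ is a canonical (evaluation) arrow $\Gamma_{\xi_ib}\otimes\xi_i\to b$; assembling these over $i$ yields $\Phi:c\to b$. Precomposition with $\Phi$ sends $\sigma\in\Gamma_{bd}$ to the family $(\tau\mapsto\sigma\tau)_i\in\bigoplus_i\Gamma_{\xi_ib}^{*}\otimes\Gamma_{\xi_id}$, whose inverse is exactly the ``compose through $\xi_i$'' map $(\rho_i\otimes\tau_i)\mapsto\sum_i\tau_i\rho_i$ of Proposition~\ref{decomposition} after the duality substitution. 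Hence $\Phi^{*}:\Gamma_{bd}\to\Gamma_{cd}$ is an isomorphism for every $d$. Taking $d=c$ produces $\Psi\in\Gamma_{bc}$ with $\Psi\Phi=\opnm{id}_{c}$, and taking $d=b$ together with the injectivity of $\Phi^{*}$ forces $\Phi\Psi=\opnm{id}_{b}$; thus $\Phi$ is an isomorphism and $b\cong c$.

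I expect the main obstacle to be the bookkeeping in the previous paragraph: checking that the abstract module isomorphism $\Gamma_{cd}\cong\Gamma_{bd}$ really is induced by the single canonical morphism $\Phi$ (rather than merely existing objectwise), so that the invertibility criterion applies. Concretely this amounts to verifying that the decomposition map of Proposition~\ref{decomposition} is the composition-through-$\xi_i$ map and that the duality~\eqref{duality} is compatible with composition; both are local statements, and on a semisimple $U$ one may check them on stalks using the matrix description of Theorem~\ref{theorem2bis} together with $\Gamma_{\xi_i\xi_i}\cong\scr{O}_U$ and $\Gamma_{\xi_i\xi_j}=0$ for $i\neq j$, from Proposition~\ref{invertibles} and Lemma~\ref{simple_mods}.
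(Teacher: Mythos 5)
Your proof is correct and follows essentially the same route as the paper: compute $\underline{\operatorname{Hom}}_U\bigl(\bigoplus_i\Gamma_{\xi_ib}\otimes\xi_i,d\bigr)\cong\Gamma_{bd}$ for an arbitrary test label $d$ using \eqref{product_brane}, the duality $\Gamma_{b\xi_i}\cong\Gamma_{\xi_ib}^{*}$, and Proposition~\ref{decomposition}, then conclude by a Yoneda argument. Your extra step of realizing the identification as precomposition with the explicit evaluation morphism $\Phi$ is a welcome tightening, since the paper's closing line ``As $c$ is arbitrary, the result follows'' tacitly assumes the hom-isomorphisms are natural in the test object, which is exactly what exhibiting $\Phi$ supplies.
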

\begin{proof}
Take any label $c$. By equations \eqref{product_brane} and duality we have
\begin{displaymath}
  \begin{aligned}
    \underline{\operatorname{Hom}}_U\Bigl
    (\bigoplus_i\Gamma_{\xi_ib}\otimes \xi_i,c\Bigr )
    &\cong \bigoplus_i\Gamma_{b\xi_i}\otimes
    \underline{\operatorname{Hom}}_U(\xi_i,c) \\
    &\cong \bigoplus_i\Gamma_{b\xi_i}\otimes\Gamma_{\xi_ic} \cong \Gamma_{bc}.\\
  \end{aligned}
\end{displaymath}
As $c$ is arbitrary, the result follows.
\end{proof}

Note that the coefficient modules in the previous result are unique,
up to isomorphism: if $b\cong \bigoplus_i\scr{M}_i\otimes \xi_i$, then
\begin{displaymath}
  \Gamma_{\xi_jb}\cong \bigoplus_i\scr{M}_i\otimes
  \Gamma_{\xi_j\xi_i}\cong\scr{M}_j.
\end{displaymath}

The next result addresses some uniqueness issues.

\begin{proposition}\label{uniqueness_labels}
Let $\xi_i\in \scr{B}(U)$ be as in \ref{invertibles}, where $U$ is semisimple.
\begin{enumerate}[(1)]
\item Let $\eta_i$ be a label with the same properties as
  $\xi_i$. Then, there exists an invertible sheaf $\scr{L}$ over $U$
  such that
  $
    \eta_i\cong \scr{L}\otimes \xi_i.
  $
  The converse statement also holds.
\item If $\scr{M}$ is a locally-free module such that $\scr{M}\otimes
  \xi_i\cong \xi_i$, then $\scr{M}\cong \scr{O}_U$.
\end{enumerate}
\end{proposition}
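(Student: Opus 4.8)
The plan is to combine the canonical decomposition of Corollary~\ref{linear_comb} with the vanishing Lemma~\ref{support_ij}. For the forward direction of part (1), I would start from $\eta_i \cong \bigoplus_j \Gamma_{\xi_j \eta_i} \otimes \xi_j$, which is Corollary~\ref{linear_comb} applied to $b = \eta_i$. Since $\eta_i$ shares the properties of $\xi_i$ it is supported on the index $i$, so Lemma~\ref{support_ij} kills every term with $j \neq i$ (each $\xi_j$ being supported on $j$), collapsing the sum to $\eta_i \cong \scr{L} \otimes \xi_i$ with $\scr{L} := \Gamma_{\xi_i \eta_i}$. It then remains to check $\scr{L}$ is invertible. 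Using the local description of Theorem~\ref{theorem2bis} one has $\Gamma_{\xi_i \eta_i} \cong \bigoplus_j \underline{\operatorname{Hom}}_{\scr{O}_U}(\scr{O}_U^{d(\xi_i,j)}, \scr{O}_U^{d(\eta_i,j)})$, and from support together with $\Gamma_{\xi_i\xi_i}\cong\Gamma_{\eta_i\eta_i}\cong\scr{O}_U$ (cf. Remark~\ref{remark_summands} and Lemma~\ref{simple_mods}) one gets $d(\xi_i,j)=d(\eta_i,j)=\delta_{ij}$; hence only the $j=i$ summand survives and $\scr{L}$ is locally isomorphic to $\underline{\operatorname{Hom}}_{\scr{O}_U}(\scr{O}_U,\scr{O}_U)\cong\scr{O}_U$, i.e. a line bundle.

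For the converse of part (1), suppose $\eta_i \cong \scr{L}\otimes\xi_i$ with $\scr{L}$ invertible. Applying the product-brane formula~\eqref{product_brane} I would compute $\Gamma_{\eta_i\eta_i} = \underline{\operatorname{Hom}}(\scr{L},\scr{L}) \otimes \Gamma_{\xi_i\xi_i} \cong \scr{O}_U$, using $\underline{\operatorname{Hom}}(\scr{L},\scr{L})\cong\scr{O}_U$ for a line bundle. For the support condition, since the action of $\tsf{LF}_{\scr{O}_U}$ is compatible with all the Cardy-fibration structure (Theorem~\ref{action_cy}), the transition map of $\scr{L}\otimes\xi_i$ is $\operatorname{id}_{\scr{L}}\otimes\iota_{\xi_i}$; evaluating at $e_i$ gives $\operatorname{id}_{\scr{L}}\otimes 1_{\xi_i} = 1_{\scr{L}\otimes\xi_i}$, so $\eta_i$ is supported on $i$ as required.

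For part (2), I would argue directly from the product-brane formula. With $b = a = \xi_i$ in~\eqref{product_brane} one gets $\Gamma_{\xi_i(\scr{M}\otimes\xi_i)} = \scr{M}\otimes\Gamma_{\xi_i\xi_i} \cong \scr{M}$. On the other hand the hypothesis $\scr{M}\otimes\xi_i\cong\xi_i$ forces $\Gamma_{\xi_i(\scr{M}\otimes\xi_i)}\cong\Gamma_{\xi_i\xi_i}\cong\scr{O}_U$. Comparing these two identifications yields $\scr{M}\cong\scr{O}_U$. (Equivalently, this is the uniqueness of coefficient modules recorded after Corollary~\ref{linear_comb}.)

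I expect the main obstacle to be the invertibility verification in part (1): one must upgrade the pointwise matrix description of Theorem~\ref{theorem2bis} to the genuine global statement that $\Gamma_{\xi_i\eta_i}$ is a rank-one locally free sheaf, which hinges on the local dimension data $d(\xi_i,j)$ being constant and equal to $\delta_{ij}$ over all of $U$. The support bookkeeping in the converse is conceptually routine but relies on the structural compatibility asserted in Theorem~\ref{action_cy} rather than on an explicit formula for $\iota_{\scr{L}\otimes\xi_i}$ stated in the text, so some care is needed to make that identification precise.
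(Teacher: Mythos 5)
Your proposal is correct and follows essentially the same route as the paper: Corollary~\ref{linear_comb} plus Lemma~\ref{support_ij} collapse $\eta_i$ to $\Gamma_{\xi_i\eta_i}\otimes\xi_i$, the converse of (1) follows from compatibility of the action, and part (2) is the computation $\scr{O}_U\cong\Gamma_{\xi_i(\scr{M}\otimes\xi_i)}\cong\scr{M}\otimes\Gamma_{\xi_i\xi_i}\cong\scr{M}$. The only (harmless) divergence is in verifying invertibility of $\Gamma_{\xi_i\eta_i}$: the paper deduces it from $\scr{O}_U\cong\Gamma_{\eta_i\eta_i}\cong\Gamma_{\xi_i\eta_i}^*\otimes\Gamma_{\xi_i\eta_i}$, which forces rank one, whereas you read it off from the local matrix description of Theorem~\ref{theorem2bis} together with $d(\xi_i,j)=d(\eta_i,j)=\delta_{ij}$ --- both are valid.
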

\begin{proof}
  For the first item, by \ref{support_ij} and \ref{linear_comb}, we
  have that
  $ 
  \eta_i \cong \bigoplus_j\Gamma_{\xi_j\eta_i}\otimes \xi_j 
  \cong \Gamma_{\xi_i\eta_i}\otimes \xi_i. 
  $
  Define now $\scr{M}_i=\Gamma_{\xi_i\eta_i}$. Then,
  \begin{displaymath}
      \scr{O}_U \cong \Gamma_{\eta_i\eta_i} \cong 
      \Gamma_{\left (\scr{M}_i\otimes \xi_i \right )\left (\scr{M}_i\otimes \xi_i \right )}
      \cong \scr{M}_i^*\otimes \scr{M}_i\otimes \Gamma_{\xi_i\xi_i} 
      \cong \Gamma_{\xi_i\eta_i}^*\otimes \Gamma_{\xi_i\eta_i}.
  \end{displaymath}
The converse is immediate by properties of the action $\scr{L}\otimes \xi_i$.


For (2), as $\scr{M}\otimes \xi_i \cong \xi_i$, the modules
$\Gamma_{\xi_i\xi_i}$ and $\Gamma_{\xi_i\left (\scr{M}\otimes
    \xi_i\right )}$ are isomorphic. Hence,
\begin{displaymath}
  \scr{O}_U\cong \Gamma_{\xi_i\left (\scr{M}\otimes \xi_i\right )}
  \cong \scr{M}\otimes \Gamma_{\xi_i\xi_i}\cong \scr{M},
\end{displaymath}
as desired.
\end{proof}

\begin{theorem}\label{equivalences}
There exists an open cover $\mathfrak{U}$ of $M$ and an equivalence of
categories 
\begin{equation}\label{equiv_2vb}
\mathscr{B}(U)\simeq \tsf{LF}^n_{\scr{O}_U}
\end{equation}
for each $U\in \mathfrak{U}$, where $\tsf{LF}^n_{\scr{O}_U}$ denotes
the $n$-fold fibred product of $\tsf{LF}_{\scr{O}_U}$. 
\end{theorem}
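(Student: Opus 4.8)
The plan is to build an explicit functor and show it is an equivalence. Fix an open cover $\mathfrak{U}$ of $M$ by semisimple connected open sets; such a cover exists because $M$ is semisimple, so every point has a neighbourhood on which $\scr{T}_M \cong \scr{O}_M^n$. Over each $U \in \mathfrak{U}$ Proposition~\ref{invertibles} supplies labels $\xi_1,\dots,\xi_n \in \scr{B}(U)$, each supported on its index, with $\Gamma_{\xi_i\xi_i}\cong\scr{O}_U$. Using the action of $\tsf{LF}_{\scr{O}_U}$ on $\scr{B}(U)$ and the additive structure --- both available for a maximal Cardy fibration by the corollaries to Theorems~\ref{action_cy} and~\ref{maximal_additive} --- I would define
\begin{displaymath}
  \Phi : \tsf{LF}^n_{\scr{O}_U} \longrightarrow \scr{B}(U), \qquad
  (\scr{M}_1,\dots,\scr{M}_n) \longmapsto \bigoplus_{i=1}^n \scr{M}_i \otimes \xi_i,
\end{displaymath}
sending a morphism $(f_1,\dots,f_n)$ to $\bigoplus_i f_i \otimes \opnm{id}_{\xi_i}$. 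This is compatible with restriction to smaller opens since both the action and the direct sum are, so $\Phi$ is a morphism of fibred categories.

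Essential surjectivity is immediate from Corollary~\ref{linear_comb}: any label $b \in \scr{B}(U)$ satisfies $b \cong \bigoplus_i \Gamma_{\xi_i b} \otimes \xi_i = \Phi(\Gamma_{\xi_1 b},\dots,\Gamma_{\xi_n b})$, and each coefficient $\Gamma_{\xi_i b}$ is locally free by condition~(3) of Definition~\ref{cy_fibration}, so the tuple $(\Gamma_{\xi_1 b},\dots,\Gamma_{\xi_n b})$ is a genuine object of $\tsf{LF}^n_{\scr{O}_U}$.

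The substance of the proof is full faithfulness, which I would obtain by computing the morphism sheaf in $\scr{B}(U)$ between two objects in the image of $\Phi$. The additive (matrix) description of maps between direct sums together with the product-brane formula $\Gamma_{(\scr{M}\otimes a)(\scr{N}\otimes b)} \cong \underline{\opnm{Hom}}(\scr{M},\scr{N})\otimes\Gamma_{ab}$ give
\begin{displaymath}
  \Gamma_{(\bigoplus_i\scr{M}_i\otimes\xi_i)(\bigoplus_j\scr{N}_j\otimes\xi_j)}
  \cong \bigoplus_{i,j} \underline{\opnm{Hom}}(\scr{M}_i,\scr{N}_j)\otimes\Gamma_{\xi_i\xi_j}.
\end{displaymath}
By Lemma~\ref{simple_mods} all off-diagonal summands vanish, and $\Gamma_{\xi_i\xi_i}\cong\scr{O}_U$ by Proposition~\ref{invertibles}, so the right-hand side collapses to $\bigoplus_i \underline{\opnm{Hom}}(\scr{M}_i,\scr{N}_i)$, which is exactly the morphism sheaf of $\tsf{LF}^n_{\scr{O}_U}$ between $(\scr{M}_i)$ and $(\scr{N}_i)$. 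One must then check that this isomorphism is the very map induced by $\Phi$ --- that is, that under $\Gamma_{\xi_i\xi_i}\cong\scr{O}_U$ the component $f_i\otimes\opnm{id}_{\xi_i}$ is identified with $f_i$ --- which holds because $\opnm{id}_{\xi_i}$ freely generates the rank-one module $\Gamma_{\xi_i\xi_i}$.

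The main obstacle I anticipate lies in this last naturality bookkeeping rather than in any single isomorphism: one must verify that the product-brane formula, the matrix representation of morphisms coming from the additive structure, and the trivialisation $\Gamma_{\xi_i\xi_i}\cong\scr{O}_U$ are mutually compatible and natural in both arguments, and that they are preserved under restriction, so that the componentwise identifications assemble into a genuine isomorphism of morphism sheaves and $\Phi$ is realized as the comparison functor. Granting the earlier structural results, essential surjectivity and the abstract $\opnm{Hom}$-computation are routine; the care is entirely in confirming that $\Phi$ itself, and not merely some abstract functor, implements the equivalence~\eqref{equiv_2vb} as a morphism of stacks.
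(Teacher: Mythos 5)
Your proposal is correct and follows essentially the same route as the paper: you construct the same functor $(\scr{M}_1,\dots ,\scr{M}_n)\mapsto \bigoplus_i\scr{M}_i\otimes \xi_i$ from the labels of Proposition~\ref{invertibles} and rely on the same ingredients (the product-brane formula \eqref{product_brane}, Lemma~\ref{simple_mods} and Corollary~\ref{linear_comb}). The only difference is packaging: the paper exhibits the explicit quasi-inverse $a\mapsto (\Gamma_{\xi_1a},\dots ,\Gamma_{\xi_na})$ and checks both composites, whereas you verify full faithfulness and essential surjectivity directly; the underlying computation of the morphism sheaf between assembled objects is the same.
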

\begin{proof}
  Let $\mathfrak{U}=\{U_\alpha \}$ be an open cover of $M$, where each
  $U_\alpha$ is semisimple. Define a functor $F_\alpha:\mathscr{B}(U_\alpha
  )\rightarrow \tsf{LF}^n_{\scr{O}_{U_\alpha}}$ on objects by
  \begin{displaymath}
    F_\alpha (a)=(\Gamma_{\xi_1a},\dots ,\Gamma_{\xi_na}),
  \end{displaymath}
  where the objects $\xi_i$ are the ones of proposition
  \ref{invertibles}, and on arrows by $F_\alpha (\sigma )=\sigma_*$;
  that is, if $\sigma :a\to b$, then $F_\alpha (\sigma )(\tau_1,\dots
  ,\tau_n)=(\sigma \tau_1,\dots ,\sigma \tau_n)$. We now define
  $G_\alpha:\tsf{LF}^n_{\scr{O}_{U_\alpha}}\rightarrow\scr{B}(U_\alpha
  )$ on objects by
  \begin{displaymath}
    G_\alpha (\mathscr{M}_1,\dots ,\mathscr{M}_n)=
    \bigoplus_i \mathscr{M}_i\otimes \xi_i
  \end{displaymath}
  and on arrows by
\begin{displaymath}
  G_\alpha (f_1,\dots ,f_n)=
  (f_1\otimes \opnm{id}_{\xi_1},\dots ,f_n\otimes \opnm{id}_{\xi_n}),
\end{displaymath}
where $f_i:\scr{M}_i\to \scr{N}_i$.

We then have that $F_\alpha G_\alpha (\mathscr{M}_1,\dots
,\mathscr{M}_n)=(\Gamma_{\xi_1\overline{a}},\dots
,\Gamma_{\xi_n\overline{a}})$, where
$\overline{a}:=\bigoplus_j\scr{M}_j\otimes \xi_j$. Now,
\begin{displaymath}
  \begin{aligned}
    \Gamma_{\xi_i\overline{a}} 
    &\cong \bigoplus_j \underline{\operatorname{Hom}}_U
    (\xi_i,\mathscr{M}_j\otimes \xi_j) \\
    &\cong \bigoplus_j \mathscr{M}_j\otimes 
    \underline{\operatorname{Hom}}_U(\xi_i,\xi_j) \\
    &\cong \mathscr{M}_i \\
\end{aligned}
\end{displaymath}
by \eqref{product_brane} and \ref{simple_mods}.

The other way, we have $G_\alpha F_\alpha
(a)=\bigoplus_i\Gamma_{\xi_ia}\otimes \xi_i$, which is isomorphic to
$a$ by \ref{linear_comb}.
\end{proof}

In terms of the spectral cover, over each semisimple $U\subset M$ we
have $\pi^{-1}(U)=\bigsqcup_{i=1}^n\widetilde{U}_i$, where each
$\widetilde{U}_i$ is homeomorpic to $U$ by the projection $\pi :S\to
M$, and thus we can write the $n$-fold product
$\tsf{LF}^n_{\scr{O}_U}$ as the pushout
$(\pi_*\tsf{LF}_{\scr{O}_S})(U)=\tsf{LF}_{\scr{O}_{\pi^{-1}(U)}}$. But
$\scr{O}_{\pi^{-1}(U)}$ is the sheaf $(\pi_*\scr{O}_S)|_U$, which is
in turn isomorphic to the tangent sheaf $\scr{T}_U$ by proposition
\ref{isom_1}. Moreover, if $f:M\to N$ is a continuous map, then, by
definition, the fibred categories $f_*\tsf{LF}_{\scr{O}_M}$ and
$\tsf{LF}_{f_*\scr{O}_M}$ are equal. Thus, combining all these facts
we can deduce that
$$\pi_*\tsf{LF}_{\scr{O}_S}=\tsf{LF}_{\pi_*\scr{O}_S}\simeq \tsf{LF}_{\scr{T}_M}.$$

\begin{cor}
  Given a maximal Cardy fibration $\scr{B}$ over a massive manifold
  $M$, there exists an open cover $\mathfrak{U}$ of $M$ such that the
  category $\scr{B}(U)$ is equivalent to the category
  $\tsf{LF}_{\scr{T}_U}$ of locally free $\scr{T}_U$-modules.
\end{cor}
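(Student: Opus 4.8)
The plan is to assemble this corollary from Theorem~\ref{equivalences} together with the identification of the $n$-fold product $\tsf{LF}^n_{\scr{O}_U}$ with $\tsf{LF}_{\scr{T}_U}$ that was carried out in the paragraph immediately preceding the statement. First I would invoke Theorem~\ref{equivalences} to obtain an open cover $\mathfrak{U}=\{U_\alpha\}$ of $M$ by semisimple open sets, together with equivalences of categories $F_\alpha:\scr{B}(U_\alpha)\xrightarrow{\simeq}\tsf{LF}^n_{\scr{O}_{U_\alpha}}$ for each $\alpha$. Since a massive manifold is semisimple, every point of $M$ has a semisimple neighbourhood and such a cover exists. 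This reduces the statement to producing, for each semisimple $U$, a natural equivalence $\tsf{LF}^n_{\scr{O}_U}\simeq\tsf{LF}_{\scr{T}_U}$.

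To build that equivalence I would use the spectral cover $\pi:S\to M$. Because $M$ is semisimple, $\pi$ is étale, and over a semisimple $U$ it trivialises as $\pi^{-1}(U)=\bigsqcup_{i=1}^n\widetilde U_i$, with each sheet $\widetilde U_i$ mapped homeomorphically onto $U$. A locally free sheaf on a disjoint union of $n$ copies of $U$ is exactly an $n$-tuple of locally free $\scr{O}_U$-modules, so $\tsf{LF}_{\scr{O}_{\pi^{-1}(U)}}\simeq\tsf{LF}^n_{\scr{O}_U}$; equivalently, pushing forward along $\pi$ gives $(\pi_*\tsf{LF}_{\scr{O}_S})(U)\simeq\tsf{LF}^n_{\scr{O}_U}$.

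Next I would identify the pushforward stack with modules over the tangent sheaf. Applying to $\pi$ the general fact that $f_*\tsf{LF}_{\scr{O}_S}=\tsf{LF}_{f_*\scr{O}_S}$ for a continuous map $f$ yields $\pi_*\tsf{LF}_{\scr{O}_S}=\tsf{LF}_{\pi_*\scr{O}_S}$. By Proposition~\ref{isom_1}, applied to the bundle of algebras $\scr{T}_M$, there is an isomorphism of $\scr{O}_M$-algebras $\pi_*\scr{O}_S\cong\scr{T}_M$, whence $\tsf{LF}_{\pi_*\scr{O}_S}\simeq\tsf{LF}_{\scr{T}_M}$; restricting to $U$ gives $\tsf{LF}^n_{\scr{O}_U}\simeq\tsf{LF}_{\scr{T}_U}$. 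Composing this with $F_\alpha$ produces the desired equivalence $\scr{B}(U)\simeq\tsf{LF}_{\scr{T}_U}$ on each member of $\mathfrak{U}$.

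The main obstacle I anticipate is not any single computation but checking that these equivalences are sufficiently natural to respect the fibred/stack structure rather than merely matching the fibre categories section by section. One must verify that the trivialisation $\pi^{-1}(U)\cong\bigsqcup_i\widetilde U_i$, the equivalence $\tsf{LF}_{\scr{O}_{\pi^{-1}(U)}}\simeq\tsf{LF}^n_{\scr{O}_U}$, and the algebra isomorphism of Proposition~\ref{isom_1} are all compatible with the restriction maps, so that the local equivalences assemble coherently. Once naturality in $U$ is established, the conclusion follows formally by transitivity of equivalences.
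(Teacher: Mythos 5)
Your proposal matches the paper's argument: the corollary is obtained exactly by combining Theorem~\ref{equivalences} with the identification $\tsf{LF}^n_{\scr{O}_U}\simeq(\pi_*\tsf{LF}_{\scr{O}_S})(U)=\tsf{LF}_{\pi_*\scr{O}_S}(U)\simeq\tsf{LF}_{\scr{T}_U}$ carried out in the paragraph preceding the statement, using the sheet decomposition of $\pi^{-1}(U)$ and Proposition~\ref{isom_1}. Your extra remark about checking compatibility with restrictions is a reasonable point of care that the paper leaves implicit.
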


Before stating the next result, we give a preliminary
definition. Given a vector bundle $E$ we can construct the exterior
powers $\bigwedge^kE$ which for a point $x\in M$ have fibre
$\bigwedge^kE_x$. Given now a bundle map $\phi :E\to F$, we have that
$\phi^{\wedge k}:\bigwedge^kE\to \bigwedge^kF$ is given by
\begin{displaymath}
  \phi^{\wedge k}(e_1\wedge \dots \wedge e_n)=
  \phi (e_1)\wedge \dots \wedge \phi (e_n).
\end{displaymath}
After this brief comment about exterior powers, we can now give the
definition we need (see \cite{audin:frob} and references cited
therein). A \emph{Higgs pair} for a manifold $M$ is a pair $(E,\phi
)$, where $E$ is a vector bundle and $\phi :TM\to \opnm{End}(E)$ is a
morphism such that $\phi \wedge \phi =0$. This last condition is
expressing that for each $x\in M$, the endomorphisms $\phi_x(v)\in
\opnm{End}(E_x)$ (for $v\in T_xM$) commute.

\begin{cor}
  Given $a\in \scr{B}(U_\alpha )$, the transition homomorphism
  $\iota_a$ consists of $n$ Higgs pairs for $U_\alpha$.
\end{cor}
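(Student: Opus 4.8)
The plan is to package the transition homomorphism $\iota_a$ through the block decomposition of $\Gamma_{aa}$ established earlier, and then to read off the Higgs condition from the commutativity of the tangent algebra. First I would invoke Theorem~\ref{theorem2} together with Remark~\ref{remark_summands}: over a connected semisimple $U_\alpha$ the ranks $d(a,i)$ are constant and $\Gamma_{aa}$ splits as
\begin{displaymath}
  \Gamma_{aa}\cong \bigoplus_{i=1}^n \iota_a(e_i)\Gamma_{aa},
\end{displaymath}
each summand $\iota_a(e_i)\Gamma_{aa}$ being locally isomorphic to $\opnm{M}_{d(a,i)}(\scr{O}_{U_\alpha})$, hence an Azumaya algebra in the sense of Definition~\ref{def:10}. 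By Theorem~\ref{tvb_azumaya} each such summand has the form $\opnm{END}(\mathbb{E}_i)$ for a (twisted) bundle $\mathbb{E}_i$ of rank $d(a,i)$; writing $E_i:=\mathbb{E}_i$ and $\opnm{End}(E_i):=\opnm{END}(\mathbb{E}_i)$, this is the honest endomorphism bundle underlying the $i$-th block. I would then define the $i$-th candidate Higgs field as the composite
\begin{displaymath}
  \phi_i:\scr{T}_{U_\alpha}\xrightarrow{\iota_a}\Gamma_{aa}
  \xrightarrow{\pr_i}\opnm{End}(E_i),
\end{displaymath}
so that $\iota_a=(\phi_1,\dots ,\phi_n)$ under the decomposition.

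The heart of the argument is the verification of $\phi_i\wedge \phi_i=0$, i.e.\ that the operators $\phi_{i,x}(v)$ commute for $v\in T_xM$. Since $U_\alpha$ is semisimple, $\scr{T}_{U_\alpha}\simeq \scr{O}_{U_\alpha}^n$ is a commutative $\scr{O}_{U_\alpha}$-algebra, and by Definition~\ref{cy_fibration} the map $\iota_a$ is an $\scr{O}_{U_\alpha}$-algebra homomorphism; composing with the (algebra) projection $\pr_i$ keeps $\phi_i$ multiplicative. Hence for $v,w\in \scr{T}_{U_\alpha}$,
\begin{displaymath}
  \phi_i(v)\phi_i(w)=\pr_i\bigl(\iota_a(vw)\bigr)
  =\pr_i\bigl(\iota_a(wv)\bigr)=\phi_i(w)\phi_i(v),
\end{displaymath}
so the image of $\phi_i$ is a commutative subalgebra of $\opnm{End}(E_i)$ and the Higgs condition holds. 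In fact, using the frame of orthogonal idempotents $e_1,\dots ,e_n$ and the identity $\iota_a(e_i)=\varepsilon_i$ of Remark~\ref{dimensions}, for $X=\sum_j\lambda_j e_j$ one obtains $\phi_i(X)=\lambda_i\,\opnm{id}_{E_i}$, so each $\phi_i$ is the scalar Higgs field given by the $i$-th idempotent coordinate, and the bracket $[\phi_i(v),\phi_i(w)]$ vanishes tautologically. Thus each $(E_i,\phi_i)$ is a Higgs pair for $U_\alpha$, and $\iota_a$ is precisely the collection of these $n$ Higgs pairs.

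The main obstacle is not the Higgs identity, which is immediate from commutativity, but the bundle-theoretic bookkeeping. One must ensure the block decomposition of $\Gamma_{aa}$ holds globally over connected $U_\alpha$ rather than merely on a trivializing refinement; this is exactly what the constancy of the $d(a,i)$ in Theorem~\ref{theorem2} and Remark~\ref{obs:az} supply. One must also be careful that the $E_i$ are in general only twisted bundles, so the phrase ``$E$ is a vector bundle'' in the definition of a Higgs pair must be read with $\opnm{End}(E)=\opnm{END}(\mathbb{E}_i)$, which is a genuine untwisted bundle by the discussion preceding Theorem~\ref{tvb_azumaya}; this is why $\phi_i$ lands in an honest endomorphism bundle even though $E_i$ carries a twisting. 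Finally one should record that each $\varepsilon_i$ is a central idempotent of $\Gamma_{aa}$, so that $\pr_i$ is a morphism of sheaves of algebras and the componentwise $\phi_i$ are genuine sheaf morphisms $\scr{T}_{U_\alpha}\to \opnm{End}(E_i)$, completing the identification of $\iota_a$ with $n$ Higgs pairs.
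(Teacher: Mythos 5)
Your argument reaches the right conclusion and the verification of the Higgs condition itself is fine: since $\scr{T}_{U_\alpha}$ is a commutative $\scr{O}_{U_\alpha}$-algebra and $\iota_a$ is an algebra homomorphism (equivalently, by the centrality condition with $b=a$, the image of $\iota_a$ lies in the centre of $\Gamma_{aa}$), the blockwise components automatically have commuting values, which is all that $\phi_i\wedge\phi_i=0$ asks for. This is the same mechanism the paper uses. Where you genuinely diverge is in how you produce the underlying bundles of the $n$ pairs. The paper does not go through the decomposition $\Gamma_{aa}\cong\bigoplus_i\iota_a(e_i)\Gamma_{aa}$ into Azumaya blocks at all; it invokes Theorem~\ref{equivalences}, under which $F_\alpha(a)=(\Gamma_{\xi_1a},\dots,\Gamma_{\xi_na})$ and $\Gamma_{aa}\cong\bigoplus_k\opnm{End}(\Gamma_{\xi_ka})$, so the $k$-th Higgs pair is $(\Gamma_{\xi_ka},\iota_a^k)$ with $\Gamma_{\xi_ka}$ an honest locally free $\scr{O}_{U_\alpha}$-module. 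Your route buys a more hands-on picture of the blocks (and the nice observation that $\phi_i(X)=\lambda_i\opnm{id}$ on the idempotent frame), but the paper's route buys the bundles themselves for free.

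The one real weak point is exactly the place you flag yourself: Theorem~\ref{tvb_azumaya} only delivers a \emph{twisted} bundle $\mathbb{E}_i$ with $\iota_a(e_i)\Gamma_{aa}\cong\opnm{END}(\mathbb{E}_i)$, whereas the definition of a Higgs pair in the paper requires an actual vector bundle $E$ together with $\phi:TM\to\opnm{End}(E)$. Reading the definition ``with $\opnm{End}(E)=\opnm{END}(\mathbb{E}_i)$'' is not a proof that the statement as written holds; it is a change of statement. The gap is closable, but not by Karoubi's theorem: you need to know that each Azumaya block is \emph{split} over $U_\alpha$, i.e.\ is the endomorphism algebra of a genuine locally free module. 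That is precisely what Remark~\ref{remark_summands} together with Theorem~\ref{equivalences} provides, since the summand $\iota_a(e_i)\Gamma_{aa}$ is identified with $\underline{\opnm{Hom}}_{\scr{O}_U}(\scr{O}_U^{d(a,i)},\scr{O}_U^{d(a,i)})\cong\opnm{End}(\Gamma_{\xi_ia})$ over a semisimple $U_\alpha$. Replace the appeal to Theorem~\ref{tvb_azumaya} by this identification and your proof is complete; as it stands, you have only produced $n$ ``twisted Higgs pairs.''
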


The meaning of ``consists of $n$ Higgs pairs'' is explained in the
following proof.

\begin{proof}
  From theorem \ref{equivalences}, we have an equivalence $F_\alpha
  :\scr{B}(U_\alpha )\to \tsf{LF}_{\scr{O}_{U_\alpha}}^n$; in
  particular, given a label $a\in \scr{B}(U_\alpha )$, we have a
  bijection
  \begin{displaymath}
    \opnm{Hom}_{\scr{B}(U_\alpha )}(a,a)\longrightarrow 
    \opnm{Hom}_{\tsf{LF}_{\scr{O}_{U_\alpha}}^n}(F_\alpha (a),F_\alpha (a)),
  \end{displaymath}
  which is in fact an isomorphism of algebras
  \begin{displaymath}
    \Gamma_{aa}\longrightarrow \bigoplus_k
    \opnm{End}_{\tsf{LF}_{\scr{O}_{U_\alpha}}}\left (\Gamma_{\xi_k a}\right ).
  \end{displaymath}
  We can then assume that the transition homomorphism
  $\iota_a:\scr{T}_{U_\alpha }\to \Gamma_{aa}$ is in fact a morphism
  \begin{displaymath}
    \iota_a:\scr{T}_{U_\alpha }\longrightarrow 
    \bigoplus_k\opnm{End}_{\tsf{LF}_{\scr{O}_{U_\alpha}}}
    \left (\Gamma_{\xi_k a}\right );
  \end{displaymath}
  in other words, the map $\iota_a$ consists of $n$ morphisms
  \begin{displaymath}
    \iota_a^k :\scr{T}_{U_\alpha }\longrightarrow 
    \opnm{End}_{\tsf{LF}_{\scr{O}_{U_\alpha}}}\left (\Gamma_{\xi_k a}\right ).
  \end{displaymath}

  In our case, we have that the morphism $\iota_a$ is central; this
  condition can be also expressed by saying that the morphisms
  $\iota_a^k$ are central ($k=1,\dots ,n$). Hence, for each $k=1,\dots
  ,n$, $\left (\Gamma_{\xi_ka},\iota_a^k\right )$ is a Higgs pair for
  $U_\alpha$.
\end{proof}

We shall now describe the BDR 2-vector bundle structure for the stack
$\scr{B}$ (check definition \ref{bdr_2bundle} for details).

We first point out that, being $M$ paracompact, the open cover by
semisimple open subsets $\mathfrak{U}=\{U_\alpha \}$ can be taken to
be indexed by a poset (which we shall not include in our
notation). For each index $i=1,\dots ,n$, let $\xi^\alpha_i \in
\scr{B}(U_\alpha )$ be a label as in proposition
\ref{invertibles}. Let $U_\beta$ be another semisimple subset such
that $U_{\alpha \beta}\neq \emptyset$ and let $\{e_i^\alpha\}$ and
$\{e_i^\beta\}$ be frames of simple idempotent sections over
$U_\alpha$ and $U_\beta$ respectively. We then have a permutation
$u=u_{\alpha \beta}:\{1,\dots ,n\}\to \{1,\dots ,n\}$ such that, over
$U_{\alpha \beta}$,
$$e_i^\alpha =e_{u(i)}^\beta .$$
By proposition \ref{uniqueness_labels}, the previous equation is
equivalent to the existence of invertible sheaves $\scr{L}^{\alpha
  \beta}_i$ such that, over $U_{\alpha \beta}$,
\begin{displaymath}
  \xi_i^\alpha \cong \scr{L}^{\alpha \beta}_{u(i)}\otimes \xi_{u(i)}^\beta .
\end{displaymath}
Write $\xi^\alpha :=(\xi_1^\alpha ,\dots ,\xi_n^\alpha )^t$. Then, we
can write the previous equation in matrix form
\begin{equation}\label{mult_matrix}
\xi^\alpha \cong A^{\alpha \beta}_u \xi^\beta ,
\end{equation}
where $A^{\alpha \beta}_u$ is a matrix obtained from the diagonal
matrix
\begin{displaymath}
  \opnm{diag}\left (\scr{L}^{\alpha \beta}_1,\dots ,\scr{L}^{\alpha \beta}_n\right )
\end{displaymath}
by applying the permutation $u$ to its columns. Let now $\gamma$ be
such that $U_{\alpha \beta \gamma}\neq \emptyset$ and suppose that the
idempotents are permuted according to $v$ over $U_{\beta \gamma}$ and
$w$ over $U_{\alpha \gamma}$.

\begin{lemma}
  We have an isomorphism $A^{\alpha \beta}_uA^{\beta \gamma}_v\cong
  A^{\alpha \gamma}_w$ (i.e. the corresponding matrix entries on each
  side have isomorphic bundles).
\end{lemma}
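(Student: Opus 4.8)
The plan is to reduce the matrix identity to an isomorphism of the single invertible sheaf occurring in each nonzero entry, and then to derive that isomorphism from the defining relations \eqref{mult_matrix} together with the uniqueness statement Proposition~\ref{uniqueness_labels}(2).

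First I would pin down how the three permutations are related over the triple overlap $U_{\alpha \beta \gamma}$. The frame relations $e_i^\alpha = e_{u(i)}^\beta$ and $e_j^\beta = e_{v(j)}^\gamma$ compose to give $e_i^\alpha = e_{v(u(i))}^\gamma$; comparing with $e_i^\alpha = e_{w(i)}^\gamma$ and using that the simple idempotent frame is unique up to its labelling (Proposition~\ref{id-basis}), one gets $w = v\circ u$.

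Next I would examine the matrices entrywise. By construction $A^{\alpha \beta}_u$ has a single nonzero entry in row $i$, namely $(A^{\alpha \beta}_u)_{i,u(i)} = \scr{L}^{\alpha \beta}_{u(i)}$, and similarly for $A^{\beta \gamma}_v$ and $A^{\alpha \gamma}_w$. Forming the matrix product in the $2$-vector sense, where entries are combined by $\otimes$ and summed by $\oplus$, the $(i,k)$ entry of $A^{\alpha \beta}_u A^{\beta \gamma}_v$ is $\bigoplus_j (A^{\alpha \beta}_u)_{ij}\otimes (A^{\beta \gamma}_v)_{jk}$; the only surviving summand has $j=u(i)$ and $k=v(u(i))=w(i)$, so the product again has a single nonzero entry per row, located at column $w(i)$ and equal to $\scr{L}^{\alpha \beta}_{u(i)}\otimes \scr{L}^{\beta \gamma}_{w(i)}$. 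Since $A^{\alpha \gamma}_w$ likewise has its only nonzero entry of row $i$ at column $w(i)$, equal to $\scr{L}^{\alpha \gamma}_{w(i)}$, the asserted isomorphism of matrices is equivalent, by the entrywise criterion for isomorphism of such matrices, to the family of isomorphisms $\scr{L}^{\alpha \beta}_{u(i)}\otimes \scr{L}^{\beta \gamma}_{w(i)}\cong \scr{L}^{\alpha \gamma}_{w(i)}$ over $U_{\alpha \beta \gamma}$.

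Finally I would establish this last family. Restricting \eqref{mult_matrix} to $U_{\alpha \beta \gamma}$ and chaining the relations yields
$$\xi_i^\alpha \cong \scr{L}^{\alpha \beta}_{u(i)}\otimes \xi_{u(i)}^\beta \cong \scr{L}^{\alpha \beta}_{u(i)}\otimes \scr{L}^{\beta \gamma}_{w(i)}\otimes \xi_{w(i)}^\gamma,$$
while the direct relation over $U_{\alpha \gamma}$ gives $\xi_i^\alpha \cong \scr{L}^{\alpha \gamma}_{w(i)}\otimes \xi_{w(i)}^\gamma$. Tensoring the resulting isomorphism $\scr{L}^{\alpha \beta}_{u(i)}\otimes \scr{L}^{\beta \gamma}_{w(i)}\otimes \xi_{w(i)}^\gamma \cong \scr{L}^{\alpha \gamma}_{w(i)}\otimes \xi_{w(i)}^\gamma$ with the dual of the invertible sheaf $\scr{L}^{\alpha \gamma}_{w(i)}$ produces an isomorphism $\scr{M}\otimes \xi_{w(i)}^\gamma \cong \xi_{w(i)}^\gamma$ with $\scr{M}=(\scr{L}^{\alpha \gamma}_{w(i)})^*\otimes \scr{L}^{\alpha \beta}_{u(i)}\otimes \scr{L}^{\beta \gamma}_{w(i)}$; Proposition~\ref{uniqueness_labels}(2) then forces $\scr{M}\cong \scr{O}_{U_{\alpha \beta \gamma}}$, which is exactly the desired entrywise isomorphism. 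The only real care needed is the index bookkeeping under $w=v\circ u$ and the observation that each of the three matrices has exactly one nonzero entry per row, so that the $\oplus$ in the matrix product collapses to a single term; granting that, the lemma is a direct consequence of the uniqueness of the labels $\xi_i$.
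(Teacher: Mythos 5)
Your proposal is correct and follows essentially the same route as the paper's proof: both establish $w=vu$ from uniqueness of the idempotent frame, compare the two expressions for $\xi_i^\alpha$ obtained by going through $U_{\beta}$ versus directly to $U_{\gamma}$, and cancel the common factor $\xi_{w(i)}^\gamma$ via Proposition~\ref{uniqueness_labels}. You merely spell out the cancellation step (tensoring with the dual invertible sheaf and invoking part (2)) and the collapse of the $\oplus$ in the matrix product more explicitly than the paper does, which is a welcome clarification rather than a departure.
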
 
\begin{proof}
  Assume that the idempotents are permuted according to
  \begin{itemize}
  \item $u$ over $U_{\alpha \beta}$,
  \item $v$ over $U_{\beta \gamma}$ and
  \item $w$ over $U_{\alpha \gamma}$.
  \end{itemize}
  Then, by uniqueness, we should have $vu=w$. Now pick a vector
  $\xi^\gamma$. Then, the $i$-th coordinate of $A^{\alpha
    \beta}_uA^{\beta \gamma}_v\xi^\gamma$ is given by
  $
    \scr{L}_i^{\alpha \beta }\otimes \scr{L}_{u(i)}^{\beta \gamma }\otimes \xi_{v(u(i))}^\gamma ,
  $
   and the one corresponding to the product $A^{\alpha
    \gamma}_w\xi^\gamma$ is
  $
    \scr{L}^{\alpha \gamma}_i\otimes \xi_{w(i)}^\gamma .
  $
  As both objects are isomorphic to $\xi_i^\alpha$, they are both
  isomorphic, and hence by \ref{uniqueness_labels},
  \begin{displaymath}
    \scr{L}_i^{\alpha \beta }\otimes \scr{L}_{u(i)}^{\beta \gamma } 
    \cong \scr{L}^{\alpha \gamma}_i,
  \end{displaymath}
  as desired.
\end{proof}

If $A=(E_{ij})$ is an $n\times n$ matrix of vector bundles, we denote
by $\opnm{rk}A\in \opnm{M}_n(\nat_0)$ the matrix which $(i,j)$ entry
is $\opnm{rk}E_{ij}$. Then, by definition,
\begin{displaymath}
  \opnm{det}\left (\opnm{rk} A_u^{\alpha \beta}\right )=\pm 1.
\end{displaymath}
Moreover, associativity of the tensor product renders the following diagram
\begin{displaymath}
  \xymatrix{
    A^{\alpha \beta}(A^{\beta \gamma }A^{\gamma \delta }) \ar[rr] \ar[d] & 
    & (A^{\alpha \beta }A^{\beta \gamma })A^{\gamma \delta } \ar[d] \\
    A^{\alpha \beta }A^{\beta \delta } \ar[r] & A^{\alpha \delta } & A^{\alpha \gamma }A^{\gamma \delta }, \ar[l] 
  }
\end{displaymath}
commutative (see definition \ref{bdr_2bundle}). We can then state the following

\begin{theorem}
  Let $M$ be a semisimple $F$-manifold of dimension $n$. Then, any
  maximal Cardy fibration $\scr{B}$ over $M$ has a canonical BDR 2-vector
  bundle of rank $n$ attached to it.
\end{theorem}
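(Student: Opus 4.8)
The plan is to verify directly that the transition data already assembled in the lemmas preceding the theorem satisfy the three axioms of Definition~\ref{bdr_2bundle}. First I would invoke paracompactness to fix an open cover $\mathfrak{U}=\{U_\alpha\}$ of $M$ by semisimple open subsets indexed by a poset, and for each $\alpha$ choose labels $\xi_1^\alpha,\dots,\xi_n^\alpha\in\scr{B}(U_\alpha)$ as in Proposition~\ref{invertibles}. By Theorem~\ref{equivalences} these act as local generators and furnish equivalences $\scr{B}(U_\alpha)\simeq\tsf{LF}^n_{\scr{O}_{U_\alpha}}$; the matrices $A^{\alpha\beta}_u$ relating $\{\xi_i^\alpha\}$ and $\{\xi_i^\beta\}$ over $U_{\alpha\beta}$ by $\xi^\alpha\cong A^{\alpha\beta}_u\xi^\beta$ are precisely the candidate transition matrices. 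I would therefore declare $E^{\alpha\beta}:=A^{\alpha\beta}_u$ and check the axioms against this definition.

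For axiom (1) I would note that each $E^{\alpha\beta}$ arises from the diagonal matrix $\opnm{diag}(\scr{L}^{\alpha\beta}_1,\dots,\scr{L}^{\alpha\beta}_n)$ of invertible sheaves by permuting its columns by $u=u_{\alpha\beta}$. Hence every entry is either the zero sheaf or a line bundle, so each $E^{\alpha\beta}_{ij}$ has constant rank, and the rank matrix $\opnm{rk}E^{\alpha\beta}$ is exactly the $0$--$1$ permutation matrix of $u_{\alpha\beta}$; consequently $\det(\opnm{rk}E^{\alpha\beta})=\opnm{sgn}(u_{\alpha\beta})=\pm1$, as required.

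Axioms (2) and (3) then follow from the two results displayed just before the theorem. Axiom (2) is the content of the preceding lemma, which gives $A^{\alpha\beta}_uA^{\beta\gamma}_v\cong A^{\alpha\gamma}_w$ with $w=vu$; reading this entrywise produces the gluing isomorphisms $\phi^{\alpha\beta\gamma}_{ik}\colon\bigoplus_jE^{\alpha\beta}_{ij}\otimes E^{\beta\gamma}_{jk}\cong E^{\alpha\gamma}_{ik}$. Axiom (3) is the commutativity of the four-fold diagram already displayed, which expresses the associativity constraint of the tensor product of the $\scr{L}^{\alpha\beta}_i$ together with the strict associativity $(wv)u=w(vu)$ of the underlying permutations. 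Assembling these checks shows that $\{E^{\alpha\beta}\}$ is a BDR $2$-vector bundle of rank $n$ attached to $\scr{B}$.

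Since the substantive work is all carried out in Proposition~\ref{uniqueness_labels}, Theorem~\ref{equivalences} and the permutation lemma, I expect no real obstacle; the only delicate point is the coherence in axiom (3), where one must confirm that the two composite gluing isomorphisms over $U_{\alpha\beta\gamma\delta}$ coincide. This reduces to the MacLane pentagon for the associator of $\otimes$ on invertible sheaves combined with the equality $(wv)u=w(vu)$, so it holds automatically. Finally, to justify the word \emph{canonical} I would observe, using Proposition~\ref{uniqueness_labels}, that any two admissible choices of the generators $\xi_i^\alpha$ differ only by tensoring with invertible sheaves, which alters the $E^{\alpha\beta}$ by diagonal line-bundle factors and hence yields an equivalent BDR $2$-vector bundle, so the construction is well defined independently of the choices.
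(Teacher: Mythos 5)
Your proposal is correct and follows essentially the same route as the paper: the transition matrices $A^{\alpha\beta}_u$ built from the permuted diagonal of invertible sheaves $\scr{L}^{\alpha\beta}_i$, the determinant condition from the underlying permutation matrix, the cocycle condition from the lemma $A^{\alpha\beta}_uA^{\beta\gamma}_v\cong A^{\alpha\gamma}_w$, and coherence from associativity of the tensor product. The only addition beyond the paper's argument is your explicit check of canonicity via Proposition~\ref{uniqueness_labels}, which is a welcome but minor elaboration.
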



\medskip
\section{Branes and twisted bundles}
\label{sec:maxim-categ-bran}

Let now $\mathscr{A}$ be an algebra over $M$, i.e. a sheaf of (non
necessarily commutative) $\scr{O}_M$-algebras, and assume also that
$\scr{A}$ is locally-free as an $\scr{O}_M$-module. Let
$
  \iota :\mathscr{T}_M\longrightarrow \mathscr{A}
$
be a central morphism; this map provides $\scr{A}$ with a structure of
$\mathscr{T}_M$-algebra.

\begin{lemma}
  If $S$ is the spectral cover of $M$ with projection $\pi
  :S\rightarrow M$, the topological inverse image $\pi^{-1}\scr{T}$ is
  a sheaf of rings (and of $\pi^{-1}\scr{O}_M$-modules) and
  $\pi^{-1}\scr{A}$ is a $\pi^{-1}\scr{T}$-algebra by means of the
  central morphism $\pi^{-1}\iota :\pi^{-1}\mathscr{T}\longrightarrow
  \pi^{-1}\mathscr{A}$ which is given by
  $
    \pi^{-1}\iota (\sigma )_{\varphi }=\iota_{\pi (y)}(\sigma (y)).
  $
\end{lemma}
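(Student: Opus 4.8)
The plan is to observe that the topological inverse image functor $\pi^{-1}$ is exact and monoidal, so it transports every piece of algebraic structure on $\scr{T}_M$ and $\mathscr{A}$ to $S$ automatically; the explicit formula then falls out by passing to stalks. I would begin by recording the two properties that do all the work: $\pi^{-1}$ commutes with stalks, $(\pi^{-1}\mathscr{F})_y\cong \mathscr{F}_{\pi(y)}$ for every $y\in S$, and it is compatible with tensor products, so that the canonical map $\pi^{-1}(\mathscr{F}\otimes_{\scr{O}_M}\mathscr{G})\to \pi^{-1}\mathscr{F}\otimes_{\pi^{-1}\scr{O}_M}\pi^{-1}\mathscr{G}$ is an isomorphism. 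These hold for any continuous $\pi$, so they carry the whole argument; as a convenient simplification, since $M$ is semisimple the map $\pi:S\to M$ is \'etale, hence a local homeomorphism, and then each $y\in S$ has a neighbourhood $W$ on which $\pi$ restricts to a homeomorphism onto an open set $\pi(W)\subset M$, where $\pi^{-1}\mathscr{F}|_W$ is simply the transport of $\mathscr{F}|_{\pi(W)}$ along that homeomorphism. Either way, $\pi^{-1}$ is locally a faithful relabelling that preserves every diagrammatic condition.

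I would then build the structures one at a time. The algebra structure on $\scr{T}_M$ is encoded by a multiplication $m:\scr{T}_M\otimes_{\scr{O}_M}\scr{T}_M\to \scr{T}_M$ and a unit $u:\scr{O}_M\to \scr{T}_M$ satisfying the associativity and unit diagrams; applying $\pi^{-1}$ and inserting the monoidality isomorphism yields morphisms $\pi^{-1}m$ and $\pi^{-1}u$ that make $\pi^{-1}\scr{T}$ a sheaf of rings and of $\pi^{-1}\scr{O}_M$-modules, the required diagrams still commuting because $\pi^{-1}$ is a functor. The identical argument applied to $\mathscr{A}$ shows that $\pi^{-1}\mathscr{A}$ is a sheaf of (not necessarily commutative) $\pi^{-1}\scr{O}_M$-algebras, locally free as a $\pi^{-1}\scr{O}_M$-module since $\pi^{-1}$ carries locally free sheaves to locally free sheaves. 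Applying $\pi^{-1}$ to $\iota$ produces an algebra homomorphism $\pi^{-1}\iota:\pi^{-1}\scr{T}\to \pi^{-1}\mathscr{A}$, and the centrality of $\iota$ is the commutativity of the diagram equating left and right multiplication by $\iota(\,\cdot\,)$, which $\pi^{-1}$ preserves; hence $\pi^{-1}\iota$ is central and endows $\pi^{-1}\mathscr{A}$ with the structure of a $\pi^{-1}\scr{T}$-algebra.

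Finally I would check the displayed formula on stalks. For $y\in S$ with $x=\pi(y)$, the identifications $(\pi^{-1}\scr{T})_y\cong \scr{T}_x$ and $(\pi^{-1}\mathscr{A})_y\cong \mathscr{A}_x$ identify the germ map $(\pi^{-1}\iota)_y$ with the stalk $\iota_x=\iota_{\pi(y)}$; evaluating a local section $\sigma$ of $\pi^{-1}\scr{T}$ on its germ $\sigma(y)\in \scr{T}_{\pi(y)}$ then gives precisely $\pi^{-1}\iota(\sigma)_{\varphi}=\iota_{\pi(y)}(\sigma(y))$. The only genuinely delicate point, which I would justify with some care, is the monoidality isomorphism $\pi^{-1}(\mathscr{F}\otimes_{\scr{O}_M}\mathscr{G})\cong \pi^{-1}\mathscr{F}\otimes_{\pi^{-1}\scr{O}_M}\pi^{-1}\mathscr{G}$; once this is in hand everything else is formal, and in the \'etale setting it can be verified directly on the trivialising neighbourhoods $W$ above, where $\pi^{-1}$ is a mere restriction-and-relabelling.
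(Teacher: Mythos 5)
Your argument is correct, but it is considerably more elaborate than what the paper actually does. The paper's entire proof consists of recalling that, for the covering map $\pi$, the inverse image sheaf admits the explicit description $\pi^{-1}\scr{S}(\widetilde{U})=\scr{S}(\pi(\widetilde{U}))$ --- precisely the ``restriction-and-relabelling'' picture that you relegate to a parenthetical simplification --- and observing that the ring and module structures, the algebra structure via $\pi^{-1}\iota$, its centrality, and the displayed stalk formula are then immediate, since a section of $\pi^{-1}\scr{T}$ or $\pi^{-1}\scr{A}$ over a small open set of $S$ literally \emph{is} a section of $\scr{T}$ or $\scr{A}$ over its homeomorphic image in $M$. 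Your general route via exactness and monoidality of $\pi^{-1}$ buys applicability to an arbitrary continuous map and makes explicit exactly where the comparison isomorphism $\pi^{-1}(\scr{F}\otimes_{\scr{O}_M}\scr{G})\cong\pi^{-1}\scr{F}\otimes_{\pi^{-1}\scr{O}_M}\pi^{-1}\scr{G}$ is needed (you are right to flag it as the one nontrivial input, and right that it holds for any continuous map since $\pi^{-1}$ commutes with stalks); the paper's route avoids this machinery entirely by exploiting that $\pi$ is \'etale, at the cost of implicitly assuming that the presheaf $\widetilde{U}\mapsto\scr{S}(\pi(\widetilde{U}))$ already computes $\pi^{-1}\scr{S}$ on sufficiently small opens. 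Both arguments are sound and arrive at the same stalk identification $(\pi^{-1}\scr{T})_{\varphi}\cong\scr{T}_{\pi(\varphi)}$ that justifies the displayed formula; yours is the more robust formulation, the paper's the more economical one.
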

\begin{proof}
  Recall that, for a sheaf over $\scr{S}$ over $M$, $\pi^{-1}\scr{S}$
  is the sheaf given by $\pi^{-1}\scr{S}(\widetilde{U})=\scr{S}(\pi
  (\widetilde{U}))$. From this definition, the statement of the lemma
  readily follows.
\end{proof}

In the following we shall consider the ringed space $(S,\scr{O}_S)$
and also $M$ with two different ringed structures: one given by
$\scr{O}_M$ and the other by the sheaf of algebras $\scr{T}$. By
proposition \ref{isom_1}, we have distinguished maps $u_1:\scr{O}_M\to
\pi_*\scr{O}_S$ and $u_2:\scr{T}\to \pi_*\scr{O}_S$, which can be
regarded as the inclusion $f\mapsto f1$ and the identity,
respectively. This maps define two morphisms of ringed spaces $(\pi
,u_1):(S,\scr{O}_S)\to (M,\scr{O}_M)$ and $(\pi ,u_2):(S,\scr{O}_S)\to
(M,\scr{T})$. By the adjunction between $\pi_*$ and $\pi^{-1}$ we have
change-of-ring morphisms
\begin{equation}\label{change_of_rings}
  \pi^{-1}\scr{O}_M\longrightarrow \scr{O}_S \quad 
  \text{and} \quad \pi^{-1}\scr{T}\longrightarrow \scr{O}_S,
\end{equation}
and the inverse images
\begin{displaymath}
  \begin{aligned}
    \pi^*\scr{T} &= \scr{O}_S\otimes_{\pi^{-1}\scr{O}_M}\pi^{-1}\scr{T} \\
    \pi^*\scr{A}   &= \scr{O}_S\otimes_{\pi^{-1}\scr{T}}\pi^{-1}\scr{A} \\
\end{aligned}
\end{displaymath}
are $\scr{O}_S$-algebras. By considering the morphism
$
  \xymatrix{
    \pi^*\scr{T} \ar[rr]^{1\otimes \pi^{-1}\iota} 
    && \pi^*\scr{A} },
$
the sheaf $\pi^*\scr{A}$ turns out to be a $\pi^*\scr{T}$-algebra. The
actions that provide these algebra structures will be described
explicitly after introducing some other tools that we need.

\begin{lemma}
  Let $\scr{A}$ be a sheaf of commutative $\scr{R}$-algebras over $S$,
  where $\scr{R}$ is a sheaf of commutative rings. Then $\pi_*\scr{A}$
  is a sheaf of $\pi_*\scr{R}$-algebras.
\end{lemma}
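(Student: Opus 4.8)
The plan is to observe that this is a purely formal consequence of the definition of the pushforward functor $\pi_*$ together with the fact that the ``algebra structure'' is data specified on sections and compatible with restriction. First I would unwind the definitions: for an open set $U\subset M$ one has $(\pi_*\scr{A})(U)=\scr{A}(\pi^{-1}(U))$ and $(\pi_*\scr{R})(U)=\scr{R}(\pi^{-1}(U))$, with the restriction maps of $\pi_*\scr{A}$ and $\pi_*\scr{R}$ induced by those of $\scr{A}$ and $\scr{R}$ along the inclusions $\pi^{-1}(V)\subset\pi^{-1}(U)$ for $V\subset U$. Since $\pi_*$ carries sheaves to sheaves and $\scr{R}$ is a sheaf of commutative rings, $\pi_*\scr{R}$ is again a sheaf of commutative rings, its ring operations being those of $\scr{R}$ on the opens $\pi^{-1}(U)$.

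Next I would transport the algebra structure sectionwise. By hypothesis, for every open $\widetilde{U}\subset S$ the ring $\scr{A}(\widetilde{U})$ is an $\scr{R}(\widetilde{U})$-algebra, and for every inclusion $\widetilde{V}\subset\widetilde{U}$ the restriction $\scr{A}(\widetilde{U})\to\scr{A}(\widetilde{V})$ is a ring homomorphism that is semilinear over the restriction $\scr{R}(\widetilde{U})\to\scr{R}(\widetilde{V})$; this is exactly what it means for $\scr{A}$ to be a sheaf of $\scr{R}$-algebras. Specializing to $\widetilde{U}=\pi^{-1}(U)$ and $\widetilde{V}=\pi^{-1}(V)$ endows each $(\pi_*\scr{A})(U)$ with the structure of a $(\pi_*\scr{R})(U)$-algebra and shows that the restriction maps of $\pi_*\scr{A}$ are algebra homomorphisms semilinear over those of $\pi_*\scr{R}$. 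Thus $\pi_*\scr{A}$ is a presheaf of $\pi_*\scr{R}$-algebras.

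Finally, I would note that $\pi_*\scr{A}$ is a sheaf because $\pi_*$ preserves the sheaf condition: a cover $\{U_i\}$ of $U$ pulls back to a cover $\{\pi^{-1}(U_i)\}$ of $\pi^{-1}(U)$, and the separation and gluing axioms for $\pi_*\scr{A}$ over $U$ are literally those for $\scr{A}$ over $\pi^{-1}(U)$. Combining this with the previous step yields that $\pi_*\scr{A}$ is a sheaf of $\pi_*\scr{R}$-algebras. There is no genuine obstacle here; the only point requiring care is the bookkeeping of semilinearity, that is, making precise that the scalar action $\pi_*\scr{R}\times\pi_*\scr{A}\to\pi_*\scr{A}$ is compatible with restriction, which reduces immediately to the corresponding compatibility for $\scr{R}$ and $\scr{A}$ on the opens of the form $\pi^{-1}(U)$.
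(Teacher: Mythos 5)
Your proof is correct: the paper states this lemma without proof (treating it as a routine consequence of the definition of $\pi_*$), and your sectionwise unwinding — transporting the $\scr{R}(\pi^{-1}(U))$-algebra structure to $(\pi_*\scr{A})(U)$ and checking compatibility with restriction and the sheaf axioms — is exactly the standard argument the authors evidently have in mind.
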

In what follows, we regard $S$ as being a submanifold of $T^*M$;
i.e. points of $S$ are multiplicative linear maps $\varphi :T_xM\to
\comp$, where $x=\pi (\varphi )$. We now define a global section
$\sigma_0\in \Gamma (S;\pi^{-1}\scr{T})$ in the following way: we let
$\sigma_0:S\to \bigsqcup_{\varphi \in S}\scr{T}_{\pi (\varphi )}$ be
given by
$$\sigma_0(\varphi ):=(\varphi ,e^{\varphi}_x),$$
where $x=\pi (\varphi )$ and $e^{\varphi}_x$ is the germ at $x$ of the
unique idempotent local section $e^{\varphi}:U\to TM$ which verifies
$\varphi (e^{\varphi}(x))=1$. Note that $\sigma_0$ induces a section
$1\otimes \sigma_0\in \Gamma (S;\pi^*\scr{T})$ and, moreover,
$\sigma_0$ as well as $1\otimes \sigma_0$ are idempotent. Likewise,
$\sigma_0$ also induces (global) idempotent sections on
$\pi^{-1}\scr{A}$ and $\pi^*\scr{A}$ given by $\pi^{-1}\iota
(\sigma_0)$ and $1\otimes \pi^{-1}\iota (\sigma_0)$, respectively. To
be more explicit, we have
\begin{displaymath}
  \begin{aligned}
    1\otimes \sigma_0 \in \Gamma (S;\pi^*\scr{T}) \quad , 
    \quad 1\otimes \sigma_0 &:S\longrightarrow \bigsqcup_{\varphi \in S}
    \scr{O}_{S,\varphi}\otimes_{\scr{O}_{M,\pi (\varphi )}}\scr{T}_{\pi (\varphi )}, \\
    \pi^{-1}\iota (\sigma_0) \in \Gamma(S;\pi^{-1}\scr{A}) \quad , 
    \quad \pi^{-1}\iota (\sigma_0) &: 
    S\longrightarrow \bigsqcup_{\varphi \in S}\scr{A}_{\pi (\varphi )}, \\
    1\otimes \pi^{-1}\iota (\sigma_0) \in \Gamma (S;\pi^*\scr{A}) \quad , 
    \quad 1\otimes \pi^{-1}\iota (\sigma_0) &: 
    S\longrightarrow \bigsqcup_{\varphi \in S}
    \scr{O}_{S,\varphi}\otimes_{\scr{T}_{\pi (\varphi )}}\scr{A}_{\pi (\varphi )}, \\
\end{aligned}
\end{displaymath}
given by the following expressions:
\begin{displaymath}
  \begin{aligned}
    (1\otimes \sigma_0)_\varphi &= 1\otimes e^{\varphi}_x ,\\
    \pi^{-1}\iota (\sigma_0)_\varphi &= \iota_x(e^{\varphi}_x), \\
    (1\otimes \pi^{-1}\iota (\sigma_0))_\varphi 
    &= 1\otimes \iota_x(e^{\varphi}_x), \\
\end{aligned}
\end{displaymath}
where $x=\pi (\varphi )$.

\begin{proposition}\label{subsheaf}
  Let $\mathscr{A}$ be an algebra over a space $M$ and let $e\in
  \mathscr{A}(M)$ be a global idempotent section. Then the assignment
  \begin{displaymath}
    U\longmapsto e\scr{A}(U)=\{e\sigma \ | \ \sigma\in\scr{A}(U)\}
  \end{displaymath}
  is a sheaf of ideals.
\end{proposition}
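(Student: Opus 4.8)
The plan is to reduce everything to the observation that $e\scr{A}(U)$ is exactly the set of sections of $\scr{A}$ over $U$ fixed by left multiplication by $e$, and then to exhibit $e\scr{A}$ as the kernel of a morphism of sheaves, which is automatically a sheaf.

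First I would establish the characterization
\[
  e\scr{A}(U)=\{\,s\in\scr{A}(U)\ :\ e|_U\,s=s\,\}.
\]
If $s=e\sigma$ with $\sigma\in\scr{A}(U)$, then $es=e^2\sigma=e\sigma=s$ because $e^2=e$; conversely, if $es=s$ then $s=e\cdot s\in e\scr{A}(U)$. This reformulation is the key point, since it describes $e\scr{A}(U)$ by a condition that is local and stable under the sheaf operations, rather than by the non-local requirement that a preimage $\sigma$ exist.

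Next I would note that, because $e$ is a \emph{global} section, left multiplication defines a morphism of sheaves of $\scr{O}_M$-modules $L_e:\scr{A}\to\scr{A}$, given on $U$ by $s\mapsto e|_U\,s$; compatibility with restriction is immediate, since restriction is an algebra homomorphism and $(e|_U)|_V=e|_V$. Hence $\operatorname{id}-L_e$ is again a morphism of sheaves of $\scr{O}_M$-modules, and its kernel is a subsheaf of $\scr{A}$. By the characterization above this kernel is precisely $U\mapsto e\scr{A}(U)$; thus $e\scr{A}$ is a sheaf, inheriting both the separation and the gluing axioms from the kernel construction. The ideal property then follows at once: if $es=s$ and $\tau\in\scr{A}(U)$, then $e(s\tau)=(es)\tau=s\tau$, so $e\scr{A}(U)$ is a right ideal; and when $e$ is central --- as happens for the idempotents $\iota(\cdots)$ arising in our applications, by centrality of $\iota$ --- one also has $e(\tau s)=\tau(es)=\tau s$, making $e\scr{A}$ a two-sided ideal sheaf.

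The only real obstacle is the gluing axiom, and it is exactly what the fixed-point characterization is meant to overcome: given sections $s_i=e\sigma_i\in e\scr{A}(U_i)$ that agree on overlaps, one cannot in general glue the preimages $\sigma_i$, since these need only coincide after multiplication by $e$. Working instead with the equation $es=s$, the section $s\in\scr{A}(U)$ obtained by gluing in the sheaf $\scr{A}$ automatically satisfies $es=s$, as this equality may be checked locally, where it holds by hypothesis; hence $s\in e\scr{A}(U)$, as required.
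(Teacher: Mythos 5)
Your proof is correct and rests on the same key observation as the paper's: idempotency of $e$ gives $e\sigma = \sigma$ for every $\sigma \in e\scr{A}(U)$, which turns membership into a local condition and makes the gluing axiom immediate (the paper glues the $\sigma_i$ in $\scr{A}$ and concludes $\sigma = e\sigma$ by uniqueness, which is your fixed-point characterization in slightly less packaged form). Your reformulation as the kernel of $\operatorname{id}-L_e$ and the explicit remark about the one-sided versus two-sided ideal property are harmless refinements of the same argument.
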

\begin{proof}
  Let $\{U_i\}$ be an open cover of an open subset $U\subset M$; for
  each index $i$, let $\sigma_i\in e\scr{A}(U_i)$ such that
  $\sigma_i=\sigma_j$ over $U_{ij}$. Then we have:
\begin{enumerate}
\item for each $i$, there exists a section $\tau_i\in
  \mathscr{A}(U_i)$ such that $\sigma_i=e\tau_i$ and
\item as $\mathscr{A}$ is a sheaf, there exists a unique section
  $\sigma \in \mathscr{A}(U)$ with $\sigma|_{U_i}=\sigma_i$ for each
  $i$.
\end{enumerate}
Consider now the section $e\sigma \in e\scr{A}(U)$. Then, over $U_i$
we have
\begin{displaymath}
  (e\sigma )|_{U_i}=e\sigma_i=e(e\tau_i)=e\tau_i=\sigma_i,
\end{displaymath}
and thus, by uniqueness, $\sigma =e\sigma \in \mathscr{A}(U)$.
\end{proof}

\begin{notation}
  The sheaves $(1\otimes \sigma_0)\pi^*\scr{T}_M$ and $(1\otimes
  \pi^{-1}\iota (\sigma_0))\pi^*\scr{A}$, will be denoted by
  $\scr{T}^*_0$ and $\scr{A}^*_0$ respectively. The notation
  $\epsilon^{\varphi}_x$ will be adopted for the germ
  $\iota_x(e^{\varphi}_x)$.
\end{notation}

By the previous result, the sheaves $\scr{T}^*_0$ and $\scr{A}^*_0$
are $\scr{O}_S$-algebras and their stalks are given by the expressions
\begin{displaymath}
  \begin{aligned}
    \scr{T}^*_{0,\varphi } &= \scr{O}_{S,\varphi }\otimes_{\scr{O}_{M,x}}e^{\varphi}_x\scr{T}_x, \\
    \scr{A}^*_{0,\varphi } &= \scr{O}_{S,\varphi }\otimes_{\scr{T}_x}\epsilon^{\varphi}_x\scr{A}_x, \\
\end{aligned}
\end{displaymath}
where $x=\pi (\varphi )$.

\begin{notation}
From now on, we will supress the coefficient rings in the notation of the tensor product.
\end{notation}

\begin{proposition}\label{isom_2}
  There exists a canonical isomorphism of $\scr{O}_S$-algebras
  $
    \scr{T}^*_0\stackrel{\cong}{\longrightarrow}\scr{O}_S.
  $
\end{proposition}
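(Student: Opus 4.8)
The plan is to realize the desired isomorphism as the restriction to the ideal $\scr{T}^*_0$ of the multiplication morphism
$$
m:\pi^*\scr{T}=\scr{O}_S\otimes_{\pi^{-1}\scr{O}_M}\pi^{-1}\scr{T}\longrightarrow \scr{O}_S,\qquad s\otimes X\longmapsto s\cdot\gamma(X),
$$
where $\gamma:\pi^{-1}\scr{T}\to\scr{O}_S$ is the change-of-rings morphism of \eqref{change_of_rings}, obtained via the $(\pi^{-1},\pi_*)$-adjunction from the map $u_2:\scr{T}\to\pi_*\scr{O}_S$ which is the identity under the isomorphism of Proposition \ref{isom_1}. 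Since $\gamma$ is a morphism of sheaves of algebras, $m$ is $\scr{O}_S$-linear and multiplicative with $m(1\otimes 1)=\gamma(1)=1$, so it is a morphism of $\scr{O}_S$-algebras. I would first record this and then analyze $m$ on the distinguished idempotent.

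The next step is to compute $\gamma(\sigma_0)=1$. By Proposition \ref{isom_1}, $\gamma$ sends the germ at $x$ of a local tangent field $X$ to the germ at $\varphi$ of the function $\psi\mapsto\psi(X(\pi(\psi)))$. On the sheet of the covering $\pi:S\to M$ through $\varphi$, the section $\sigma_0$ coincides with the pullback of the idempotent $e^{\varphi}$, and the defining property $\varphi(e^{\varphi}(x))=1$ gives $\psi(e^{\varphi}(\pi\psi))=1$ identically along that sheet; hence $\gamma(\sigma_0)=1$ and $m(1\otimes\sigma_0)=1$. Consequently the restriction $\Phi:=m|_{\scr{T}^*_0}:\scr{T}^*_0\to\scr{O}_S$ is a \emph{unital} morphism of $\scr{O}_S$-algebras, the unit of $\scr{T}^*_0$ being the idempotent $1\otimes\sigma_0$.

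Finally I would verify that $\Phi$ is an isomorphism by working on stalks. Fix $\varphi$ with $x=\pi(\varphi)$; by semisimplicity there is a local idempotent frame $e_1,\dots,e_n$ with $e^{\varphi}=e_{i}$ for the appropriate index $i$, so that $e^{\varphi}_x\scr{T}_x=\scr{O}_{M,x}\,e_{i,x}$ is free of rank one over $\scr{O}_{M,x}$. Therefore
$$
\scr{T}^*_{0,\varphi}=\scr{O}_{S,\varphi}\otimes_{\scr{O}_{M,x}}e^{\varphi}_x\scr{T}_x
$$
is free of rank one over $\scr{O}_{S,\varphi}$ with basis $1\otimes e_{i,x}$, and $\Phi$ sends this generator to $\gamma(e_{i,x})=1$ (again $\equiv 1$ on the sheet through $\varphi$). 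Thus $\Phi_\varphi$ carries a free generator to a free generator and is an isomorphism for every $\varphi$, so $\Phi$ is an isomorphism of sheaves of $\scr{O}_S$-algebras.

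The main obstacle is not any single hard estimate but the bookkeeping around the change-of-rings map: one must keep straight the three ringed-space structures $(S,\scr{O}_S)$, $(M,\scr{O}_M)$ and $(M,\scr{T})$ and verify carefully that $\gamma$ sends $\sigma_0$ — equivalently each local idempotent $e_i$ — to the constant function $1$ on the corresponding sheet. Once the identity $\gamma(\sigma_0)=1$ is established, which is precisely the \'etale/semisimple input packaged in Proposition \ref{isom_1} together with the defining property of $e^{\varphi}$, the remainder is the routine rank-one freeness computation above.
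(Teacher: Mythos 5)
Your proof is correct and amounts to the same isomorphism as the paper's: the paper simply exhibits the map $f\mapsto f\otimes\sigma_0$ from $\scr{O}_S$ to $\scr{T}^*_0$ and asserts it is an isomorphism, while you construct its inverse (the multiplication map $s\otimes X\mapsto s\,\gamma(X)$ restricted to the ideal) and verify the key facts the paper leaves implicit, namely $\gamma(\sigma_0)=1$ and the stalkwise rank-one freeness of $\scr{T}^*_0$ on the generator $1\otimes e^{\varphi}_x$. No gaps; your write-up is just a more careful version of the paper's one-line argument.
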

\begin{proof}
  The correspondence $\scr{O}_S\to \scr{T}^*_0$ given by
  $
    f\longmapsto f\otimes \sigma_0.
  $
  provides the desired isomorphism.
\end{proof}

Combining \ref{isom_1} and \ref{isom_2} we have the following

\begin{cor}
  There exists a canonical isomorphism of $\scr{O}_M$-algebras
  \begin{displaymath}
    \pi_*\scr{T}^*_0\stackrel{\cong}{\longrightarrow}\scr{T}.
  \end{displaymath}
\end{cor}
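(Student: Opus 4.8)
The plan is to derive this identification formally, by applying the direct image functor $\pi_*$ to the isomorphism of Proposition~\ref{isom_2} and then composing with the isomorphism of Proposition~\ref{isom_1}.

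First I would use that $\pi_*$ is functorial and therefore sends isomorphisms of sheaves of $\scr{O}_S$-algebras to isomorphisms of sheaves of $\pi_*\scr{O}_S$-algebras. Since Proposition~\ref{isom_2} provides a canonical isomorphism of $\scr{O}_S$-algebras $\scr{T}^*_0 \stackrel{\cong}{\longrightarrow} \scr{O}_S$, applying $\pi_*$ gives a canonical isomorphism
$$
\pi_*\scr{T}^*_0 \stackrel{\cong}{\longrightarrow} \pi_*\scr{O}_S,
$$
which, through the structure map $u_1 : \scr{O}_M \to \pi_*\scr{O}_S$ of the spectral cover, is in particular an isomorphism of $\scr{O}_M$-algebras.

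Next I would invoke Proposition~\ref{isom_1} for the bundle of algebras $E = TM$: its sheaf of sections is $\Gamma_E = \scr{T}$ and its analytic spectrum is, by construction, the spectral cover $S_E = S_M = S$. That proposition then supplies a canonical isomorphism of $\scr{O}_M$-algebras $\pi_*\scr{O}_S \cong \scr{T}$. Composing with the previous display yields the desired canonical isomorphism $\pi_*\scr{T}^*_0 \stackrel{\cong}{\longrightarrow} \scr{T}$.

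The statement is a formal corollary, so no serious obstacle arises; the only thing to keep track of is the compatibility of the various ring structures. One must verify that, after applying $\pi_*$, the isomorphism of Proposition~\ref{isom_2} respects the $\scr{O}_M$-algebra structure induced by $u_1$, and that this matches the $\scr{O}_M$-algebra structure on $\scr{T}$ appearing in Proposition~\ref{isom_1}. Because both structures descend from the same morphism of ringed spaces $(\pi, u_1) : (S, \scr{O}_S) \to (M, \scr{O}_M)$, these compatibilities hold automatically, and the composite is indeed a canonical isomorphism of $\scr{O}_M$-algebras.
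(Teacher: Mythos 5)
Your proposal is correct and follows exactly the route the paper intends: the corollary is stated immediately after the phrase ``Combining \ref{isom_1} and \ref{isom_2}'', i.e.\ one applies $\pi_*$ to the isomorphism $\scr{T}^*_0\cong\scr{O}_S$ of Proposition~\ref{isom_2} and composes with the isomorphism $\pi_*\scr{O}_S\cong\scr{T}$ of Proposition~\ref{isom_1} applied to $E=TM$. Your added remark about the compatibility of the $\scr{O}_M$-algebra structures via $u_1$ is a sensible bookkeeping point that the paper leaves implicit.
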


\begin{lemma}\label{iota_decomposition}
  If $U\subset M$ is a semisimple neighborhood with basis $\{e_1,\dots
  ,e_n\}$, there exists an isomorphism
  \begin{displaymath}
    \mathscr{A}|_U\cong \bigoplus_i\iota (e_i)\mathscr{A}|_U.
  \end{displaymath}
\end{lemma}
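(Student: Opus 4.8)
The plan is to realise the stated decomposition as the (left) Peirce decomposition of $\mathscr{A}|_U$ with respect to the idempotents $\iota(e_1),\dots,\iota(e_n)$. The starting point is that on a semisimple neighbourhood $U$ the given basis $\{e_1,\dots,e_n\}$ of $\scr{T}_U$ is a complete system of orthogonal idempotent sections: $e_i^2=e_i$ and $e_ie_j=0$ for $i\neq j$ by \ref{id-basis}, while $\sum_i e_i=1$ is the unit section of $\scr{T}_U$ (this follows from the fibrewise decomposition $\scr{T}_U\cong\scr{O}_U^n$, since in $\bigoplus_i\comp e_i$ one has $1=\sum_i e_i$). Because $\iota$ is by hypothesis a unital homomorphism of $\scr{O}_U$-algebras, writing $p_i:=\iota(e_i)$ I obtain at once that $p_i^2=p_i$, that $p_ip_j=\iota(e_ie_j)=0$ for $i\neq j$, and that $\sum_i p_i=\iota\bigl(\sum_i e_i\bigr)=\iota(1)=1_{\mathscr{A}}$. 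Thus the $p_i$ form a complete orthogonal system of idempotents in $\mathscr{A}|_U$.

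Next I would note that each summand $\iota(e_i)\mathscr{A}|_U=p_i\mathscr{A}|_U$ is genuinely a subsheaf of $\mathscr{A}|_U$: this is exactly Proposition~\ref{subsheaf} applied to the global idempotent section $p_i$ of $\mathscr{A}|_U$. I then define, on sections over any open $V\subset U$, two maps
\[
  \Phi(\sigma)=(p_1\sigma,\dots,p_n\sigma),\qquad
  \Psi(\sigma_1,\dots,\sigma_n)=\sum_{i=1}^n\sigma_i .
\]
The first lands in $\bigoplus_i p_i\mathscr{A}(V)$ because $p_i(p_i\sigma)=p_i\sigma$, and both evidently commute with the restriction maps, hence define morphisms of sheaves. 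They are $\scr{O}_U$-linear since the $\scr{O}_U$-module structure on $\mathscr{A}$ is central and so commutes with left multiplication by the fixed sections $p_i$.

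Finally I would verify that $\Phi$ and $\Psi$ are mutually inverse. The composite $\Psi\Phi$ sends $\sigma$ to $\sum_i p_i\sigma=\bigl(\sum_i p_i\bigr)\sigma=\sigma$ by completeness. For $\Phi\Psi$, given $(\sigma_1,\dots,\sigma_n)$ with $\sigma_j=p_j\sigma_j$, orthogonality gives $p_i\sigma_j=p_ip_j\sigma_j=\delta_{ij}\sigma_j$, so the $i$-th component of $\Phi\bigl(\sum_j\sigma_j\bigr)$ is $\sum_j p_i\sigma_j=\sigma_i$, whence $\Phi\Psi=\mathrm{id}$. This yields the desired isomorphism of sheaves of $\scr{O}_U$-modules. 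There is no serious obstacle here: the argument is a routine Peirce decomposition, and the only points needing care are the identity $\sum_i e_i=1$ (so that $\{p_i\}$ is complete rather than merely orthogonal) and the insistence on working at the level of sheaves rather than stalks — the latter being secured precisely by Proposition~\ref{subsheaf}, which guarantees both that the $p_i\mathscr{A}|_U$ are sheaves and that $\Phi,\Psi$ glue compatibly with restriction.
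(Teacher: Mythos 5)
Your argument is correct and follows essentially the same route as the paper: the paper defines the same map $\sigma\mapsto\sum_i\iota(e_i)\sigma$ into the direct sum and asserts the isomorphism by inspecting stalks, whereas you supply the details it leaves implicit (completeness $\sum_i\iota(e_i)=1$, orthogonality, and the explicit inverse). Nothing to correct.
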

\begin{proof}
  Define $\phi :\scr{A}|_U\to \bigoplus_i\iota (e_i)\scr{A}|_U$ by
  \begin{displaymath}
    \phi (\sigma )=\sum_i\iota (e_i)\sigma .
  \end{displaymath}
  Recalling that the stalk $\Bigl (\bigoplus_i\iota
  (e_i)\scr{A}|_U\Bigr )_x$ is given by
  $\bigoplus_{\varphi}\epsilon^{\varphi}_x\scr{A}_x$, the statement of
  the lemma follows.
\end{proof}

\begin{theorem}\label{isom_3}
  The assignment $\sigma \mapsto \overline{\sigma}$ defines an
  isomorphism of $\scr{T}$-algebras
  \begin{displaymath}
    \scr{A}\longrightarrow \pi_*\scr{A}^*_0.
  \end{displaymath}
\end{theorem}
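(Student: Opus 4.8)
The plan is to write the map $\sigma \mapsto \overline{\sigma}$ explicitly, verify that it is a morphism of $\scr{T}$-algebras essentially by its construction, and then establish bijectivity by reducing to a local computation over semisimple opens, where the spectral cover splits into sheets and the tensor products collapse by Proposition~\ref{isom_2}.

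First I would define, for an open $U\subseteq M$ and a section $\sigma\in\scr{A}(U)$, the section $\overline{\sigma}\in\scr{A}^*_0(\pi^{-1}(U))=(\pi_*\scr{A}^*_0)(U)$ by prescribing its germ at each $\varphi\in\pi^{-1}(U)$ to be
\begin{displaymath}
  \overline{\sigma}_\varphi = 1\otimes \epsilon^{\varphi}_x\sigma_x
  \in \scr{O}_{S,\varphi}\otimes_{\scr{T}_x}\epsilon^{\varphi}_x\scr{A}_x = \scr{A}^*_{0,\varphi},
\end{displaymath}
where $x=\pi(\varphi)$ and $\epsilon^{\varphi}_x=\iota_x(e^{\varphi}_x)$. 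Since $\epsilon^{\varphi}_x$ is precisely the germ of the global idempotent $1\otimes\pi^{-1}\iota(\sigma_0)$ introduced before Proposition~\ref{subsheaf}, the element $\overline{\sigma}_\varphi$ lies in $\scr{A}^*_0$, and the germs vary compatibly, so $\overline{\sigma}$ is a genuine section. Compatibility with restriction is immediate from the pointwise formula, so $\overline{\cdot}$ is a morphism of sheaves.

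Next I would check the algebraic properties. Additivity and preservation of the unit are clear. For multiplicativity, the idempotent $\epsilon^{\varphi}_x=\iota_x(e^{\varphi}_x)$ is central (as $\iota$ is central) and idempotent, whence $\epsilon^{\varphi}_x(\sigma\tau)_x=(\epsilon^{\varphi}_x\sigma_x)(\epsilon^{\varphi}_x\tau_x)$, giving $\overline{\sigma\tau}=\overline{\sigma}\cdot\overline{\tau}$ stalkwise. The $\scr{T}$-linearity holds because $\scr{T}$ acts on $\pi_*\scr{A}^*_0$ through $\iota$ via the identification $\pi_*\scr{T}^*_0\cong\scr{T}$ (the corollary to Proposition~\ref{isom_2}), which is exactly the $\scr{T}$-module structure on $\scr{A}$ used on the source. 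The heart of the argument is bijectivity, which I would prove locally. Fix a semisimple $U$ with idempotent frame $\{e_1,\dots,e_n\}$, so that $\pi^{-1}(U)=\bigsqcup_{i=1}^n\widetilde{U}_i$ with each $\pi|_{\widetilde{U}_i}\colon\widetilde{U}_i\xrightarrow{\cong}U$ the sheet on which $\sigma_0$ selects $e_i$. Taking sections over a disjoint union,
\begin{displaymath}
  (\pi_*\scr{A}^*_0)(U) = \bigoplus_{i=1}^n \scr{A}^*_0(\widetilde{U}_i).
\end{displaymath}
On the sheet $\widetilde{U}_i$, Proposition~\ref{isom_2} gives $\scr{T}^*_0|_{\widetilde{U}_i}\cong\scr{O}_S|_{\widetilde{U}_i}$, and as $\scr{O}_S$ there is (via $\pi$) the factor $e_i\scr{T}|_U\cong\scr{O}_U$, the change-of-rings tensor collapses to an identification $\scr{A}^*_0(\widetilde{U}_i)\cong\iota(e_i)\scr{A}(U)$. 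Under these identifications $\overline{\cdot}$ becomes $\sigma\mapsto(\iota(e_1)\sigma,\dots,\iota(e_n)\sigma)$, which is exactly the decomposition isomorphism of Lemma~\ref{iota_decomposition}; hence $\overline{\cdot}$ is bijective over each semisimple $U$.

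Since semisimple opens cover $M$ and the construction is natural in $U$, the local inverses agree on overlaps and glue to a global inverse, completing the proof. The step I expect to be the main obstacle is the sheet-wise identification $\scr{A}^*_0(\widetilde{U}_i)\cong\iota(e_i)\scr{A}(U)$: it requires carefully tracking the change-of-rings in $\scr{O}_S\otimes_{\scr{T}}\iota(e_i)\scr{A}$ and confirming that the collapse furnished by Proposition~\ref{isom_2} respects the algebra multiplication, not merely the underlying module structure.
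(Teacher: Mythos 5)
Your proposal is correct and follows essentially the same route as the paper: the same pointwise definition $\overline{\sigma}_\varphi=1\otimes\epsilon^{\varphi}_x\sigma_x$, the same verification of multiplicativity and $\scr{T}$-linearity via centrality and idempotency of the $\epsilon^{\varphi}_x$, and bijectivity via the decomposition of $\scr{A}$ by the idempotents $\iota(e_i)$ (Lemma~\ref{iota_decomposition}). The only difference is presentational: the paper works stalkwise, writing an explicit inverse $\tau_x\mapsto\sum_{\varphi}(f_\varphi\pi^{-1})_x\epsilon^{\varphi}_x\sigma_{\varphi,x}$ and deducing injectivity from freeness of the stalks, whereas you package the same computation as a sheet-by-sheet identification over semisimple opens.
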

\begin{proof}
  The equalities $\overline{1}=1$ and $\overline{\sigma
    +\tau}=\overline{\sigma}+\overline{\tau}$ are straightforward to
  verify. Let us now check that $\overline{\sigma
    \tau}=\overline{\sigma}\; \overline{\tau}$ holds. We have
  \begin{displaymath}
    \begin{aligned}
      (\overline{\sigma \tau})_x &= \sum_{\varphi \in \pi^{-1}(x)}1
      \otimes \epsilon^{\varphi}_x\sigma_x\tau_x \\
      &= \sum_{\varphi \in \pi^{-1}(x)}1\otimes 
      \epsilon^{\varphi}_x\sigma_x \epsilon^{\varphi}_x \tau_x \\
      &= \left (\sum_{\varphi \in \pi^{-1}(x)}1\otimes 
        \epsilon^{\varphi}_x\sigma_x\right )
      \left (\sum_{\varphi \in \pi^{-1}(x)}1\otimes 
        \epsilon^{\varphi}_x\tau_x\right )=\overline{\sigma}_x\overline{\tau}_x. \\
    \end{aligned}
  \end{displaymath}
  Let $X$ be a vector field on $M$ with local representation
  $X=\sum_{\varphi \in \pi^{-1}(x)}\lambda_\varphi e^{\varphi}$. We
  will now check that $\overline{X\cdot \sigma}=X\cdot
  \overline{\sigma}$, which is almost a tautology. The left hand side
  is
  \begin{displaymath}
    \begin{aligned}
      (\overline{X\cdot \sigma})_x &= 
      \sum_{\varphi \in \pi^{-1}(x)}1\otimes 
      \lambda_{\varphi ,x}\epsilon^{\varphi}_x\sigma_x.\\
      &= \sum_{\varphi \in \pi^{-1}(x)}
      \widetilde{\lambda}_\varphi \otimes \epsilon^{\varphi}_x\sigma_x,\\
    \end{aligned}
  \end{displaymath}
  where $\widetilde{\lambda}$ is the map on $\pi^{-1}(U)$ defined by
  $\widetilde{\lambda}(\varphi )=\lambda (\pi (\varphi ))$. But the
  right hand side is precisely $(X\cdot \overline{\sigma})_x$.

  We will now prove that the assignment $\sigma \mapsto
  \overline{\sigma}$ is a sheaf isomorphism, so we will check that at
  the level of stalks, the maps $\scr{A}_x\to \left
    (\pi_*\scr{A}^*_0\right )_x$ are bijections.

  Let $\tau_x\in \left (\pi_*\scr{A}^*_0\right )_x$ be given by
  $\tau_x=\sum_{\varphi \in \pi^{-1}(x)}f_{\varphi}\otimes
  \epsilon^{\varphi}_x\sigma_{\varphi ,x}$. Assume also that
  $f_\varphi$ is the germ of a function, which, abusing, we denote
  again by $f_\varphi$, defined in a neighborhood
  $\widetilde{U}_\varphi$ of $\varphi$ such that $\pi
  |_{\widetilde{U}_\varphi}$ is a homeomorphism. If we define
  \begin{displaymath}
    \sigma_x=\sum_{\varphi \in \pi^{-1}(x)}(f_\varphi\pi^{-1})_x
    \epsilon_{\varphi ,x}\sigma_{\varphi ,x}\in \scr{A}_x,
  \end{displaymath}
  then $\sigma_x\mapsto \tau_x$.

  Suppose now that $\overline{\sigma}_x=\sum_{\varphi \in
    \pi^{-1}(x)}1\otimes \epsilon^{\varphi}_x\sigma_x=0$. As all the
  modules (stalks) involved are free, this equality implies
  immediately that $\epsilon^{\varphi}_x\sigma_x=0$ for each $\varphi
  \in \pi^{-1}(x)$, and thus $\sigma_x=0$. This finishes the proof.
\end{proof}


Recall now that a functor $F:{\bf X}\to {\bf Y}$ is said to be
\emph{essentially surjective} if for each object $Y\in
{\bf Y}$ there exists an object $X\in {\bf X}$ such that $F(X)$ is
isomorphic to $Y$. For a sheaf of rings or algebras $\scr{R}$, we let
$\tsf{Mod}_{\scr{R}}$ denote the category of $\scr{R}$-modules. The
previous results can then be summarized in the following

\begin{theorem}\label{esurjective}
The functor $\pi_*:\tsf{Mod}_{\scr{O}_S}\to \tsf{Mod}_{\scr{T}}$ is
essentially surjective. 
\end{theorem}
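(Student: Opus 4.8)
The plan is to prove essential surjectivity by exhibiting, for each $\scr{T}$-module $\scr{N}$, an explicit $\scr{O}_S$-module $\scr{M}$ together with a natural isomorphism $\pi_*\scr{M}\cong \scr{N}$; this is the module-level analogue of Theorem~\ref{isom_3}, and in fact shows that $\pi_*$ is one half of an equivalence. The natural candidate is the pullback
\[
\scr{M}:=\pi^*\scr{N}=\scr{O}_S\otimes_{\pi^{-1}\scr{T}}\pi^{-1}\scr{N},
\]
formed through the change-of-ring morphism $\pi^{-1}\scr{T}\to \scr{O}_S$ of \eqref{change_of_rings}. This is precisely the construction underlying $\scr{A}^*_0$, but now without the multiplicative structure, so the argument will run parallel to the proof of Theorem~\ref{isom_3} while being strictly lighter.

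First I would reduce everything to a local statement over a semisimple open $U\subset M$ carrying an orthogonal-idempotent frame $\{e_1,\dots ,e_n\}$ of $\scr{T}$. Since $\pi$ is a covering space, the preimage splits as $\pi^{-1}(U)=\bigsqcup_{i=1}^n\widetilde{U}_i$ with each $\widetilde{U}_i$ mapped homeomorphically onto $U$, whence $(\pi_*\scr{M})|_U\cong \bigoplus_{i}\scr{M}|_{\widetilde{U}_i}$ (transported to $U$ by $\pi$). On the base, $\scr{T}|_U\cong \bigoplus_i\scr{O}_U e_i$, so the $\scr{T}$-module decomposes as $\scr{N}|_U\cong \bigoplus_i e_i\scr{N}$, each summand an $\scr{O}_U$-module. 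The crucial computation is that the restriction of $\pi^{-1}\scr{T}\to \scr{O}_S$ to the sheet $\widetilde{U}_i$ is exactly evaluation at the point $\varphi_i$, i.e. the projection $\pr_i\colon \sum_j f_j e_j\mapsto f_i$ onto the $i$-th idempotent factor — this is how the section $\sigma_0$ was defined. Consequently, tensoring through $\pr_i$ annihilates the summands $e_j\scr{N}$ with $j\neq i$, giving
\[
\scr{M}|_{\widetilde{U}_i}\;\cong\;\scr{O}_U\otimes_{\scr{T}|_U,\,\pr_i}\scr{N}|_U\;\cong\;e_i\scr{N}.
\]

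Assembling the sheets then yields $(\pi_*\scr{M})|_U\cong \bigoplus_i e_i\scr{N}=\scr{N}|_U$. I would finish by checking that these local identifications are canonical: they are induced solely by the orthogonal-idempotent decomposition, which by Proposition~\ref{id-basis} is unique up to permutation and is stable under restriction, so the isomorphisms are natural in $\scr{N}$ and independent of auxiliary choices. The main obstacle, and the only point requiring genuine care, is the gluing across overlapping semisimple charts: on $U_{\alpha\beta}$ the idempotents are matched by a permutation, exactly as recorded in \eqref{mult_matrix}, and one must verify that the sheet-by-sheet isomorphisms respect this permutation of sheets of $S$. Once naturality of the decomposition is established this compatibility is automatic, and the local isomorphisms glue to a global $\pi_*\pi^*\scr{N}\cong \scr{N}$, proving that $\pi_*$ is essentially surjective.
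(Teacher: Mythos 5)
Your proposal is correct and follows essentially the same route as the paper: the paper deduces Theorem~\ref{esurjective} from Theorem~\ref{isom_3}, which realizes a $\scr{T}$-algebra $\scr{A}$ as $\pi_*$ of (the idempotent piece $\scr{A}^*_0$ of) the pullback along the change-of-rings morphism $\pi^{-1}\scr{T}\to \scr{O}_S$, checked stalkwise over semisimple charts using the orthogonal idempotent frame. Your argument is that same construction with the multiplicative structure stripped away, with the minor advantage that it applies verbatim to arbitrary $\scr{T}$-modules rather than only to locally free $\scr{O}_M$-algebras.
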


\subsection{A Correspondence Between Branes and Twisted Vector
  Bundles}
\label{sec:corr-betw-bran}

Consider now a global label $a\in \scr{B}(M)$; we can then apply the
machinery of the previous sections to the $\scr{T}$-algebra
$\Gamma_{aa}$. Hence, by \ref{esurjective}, there exists an
$\scr{O}_S$-algebra $\widetilde{\Gamma}_{aa}$ such that $\pi_*
\widetilde{\Gamma}_{aa}\cong \Gamma_{aa}$.

\begin{theorem}\label{azumaya_s}
$\widetilde{\Gamma}_{aa}$ is an Azumaya algebra over $S$.
\end{theorem}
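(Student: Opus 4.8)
The plan is to verify the local criterion of Definition~\ref{def:10} directly on $S$. By that definition (equivalently by Remark~\ref{obs:az}) it suffices to produce, around each point $\varphi \in S$, a neighborhood on which $\widetilde{\Gamma}_{aa}$ is isomorphic, as a sheaf of $\scr{O}_S$-algebras, to a matrix algebra $\operatorname{M}_k(\scr{O}_S)$; local freeness and the matrix form of the fibres then come for free. Since $M$ is semisimple the projection $\pi:S\to M$ is a covering, so I would first fix $\varphi\in S$, set $x=\pi(\varphi)$, and choose a semisimple neighborhood $U\ni x$ carrying an idempotent frame $\{e_1,\dots,e_n\}$ together with the sheet $\widetilde{U}_{i_0}\ni\varphi$ that maps homeomorphically onto $U$; here $i_0$ is the index with $\varphi(e_{i_0}(x))=1$, so that the germ $e^{\varphi}_x$ equals $e_{i_0,x}$.

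The key step is to identify $\widetilde{\Gamma}_{aa}$ over $\widetilde{U}_{i_0}$ with the $i_0$-th matrix summand of $\Gamma_{aa}$. By Theorem~\ref{isom_3}, applied to $\scr{A}=\Gamma_{aa}$, the essentially surjective preimage may be taken to be $\widetilde{\Gamma}_{aa}=\scr{A}^*_0=(1\otimes\pi^{-1}\iota(\sigma_0))\pi^*\Gamma_{aa}$, whose stalk at $\varphi$ is $\scr{O}_{S,\varphi}\otimes_{\scr{T}_x}\epsilon^{\varphi}_x\Gamma_{aa,x}$ with $\epsilon^{\varphi}_x=\iota_x(e_{i_0,x})$. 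Now by Theorem~\ref{theorem2} and Remark~\ref{remark_summands} the subalgebra $\iota_a(e_{i_0})\Gamma_{aa}$ is locally isomorphic to $\operatorname{M}_{d(a,i_0)}(\scr{O}_U)$, and on this summand the tangent sheaf acts through its $i_0$-th factor $e_{i_0}\scr{T}\cong\scr{O}_U$. I would then note that, by Proposition~\ref{isom_1}, $\scr{T}_x\cong\bigoplus_{\psi\in\pi^{-1}(x)}\scr{O}_{S,\psi}$, so on the sheet $\widetilde{U}_{i_0}$ the change-of-ring map $\pi^{-1}\scr{T}\to\scr{O}_S$ of~\eqref{change_of_rings} is exactly the projection onto the $\varphi$-component and identifies $\scr{O}_S|_{\widetilde{U}_{i_0}}$ with $\scr{O}_U$. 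Hence the tensor product collapses:
\begin{displaymath}
  \widetilde{\Gamma}_{aa}|_{\widetilde{U}_{i_0}}
  \cong \scr{O}_S|_{\widetilde{U}_{i_0}}\otimes_{\scr{O}_U}\iota_a(e_{i_0})\Gamma_{aa}|_U
  \cong \operatorname{M}_{d(a,i_0)}\!\left(\scr{O}_S|_{\widetilde{U}_{i_0}}\right),
\end{displaymath}
which is the desired local matrix form.

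Carrying out this last identification at the level of sheaves rather than merely of stalks is the part requiring care. I would invoke the local constancy of the ranks $d(a,i_0)$ established in the proof of Theorem~\ref{theorem2}, so that the trivializing isomorphism $\iota_a(e_{i_0})\Gamma_{aa}|_V\cong\operatorname{M}_{d(a,i_0)}(\scr{O}_V)$ holds on a genuine neighborhood $V$, and then transport it along the homeomorphism $\pi|_{\widetilde{V}_{i_0}}$. Once this is in place, shrinking $V$ if necessary so that the sheet and the trivialization agree, every $\varphi\in S$ has a neighborhood on which $\widetilde{\Gamma}_{aa}$ is a matrix algebra over $\scr{O}_S$; by Definition~\ref{def:10} this exhibits $\widetilde{\Gamma}_{aa}$ as an Azumaya algebra over $S$, with fibre $\operatorname{M}_{d(a,i_0)}(\comp)$ at $\varphi$ as in Remark~\ref{obs:az}. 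The main obstacle is thus the bookkeeping of the interplay between the idempotent decomposition of $\scr{T}$ and $\Gamma_{aa}$ on one side and the sheet decomposition of the covering $S$ on the other, ensuring that the base change from $\scr{T}$ to $\scr{O}_S$ carries each $\scr{O}_M$-matrix summand to an $\scr{O}_S$-matrix algebra of the same size.
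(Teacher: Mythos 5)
Your argument is correct and follows essentially the same route as the paper: restrict to a sheet $\widetilde{U}_i$ of the covering over a semisimple neighborhood, identify $\widetilde{\Gamma}_{aa}$ there with the summand $\iota_a(e_i)\Gamma_{aa}|_U$, and invoke Theorem~\ref{theorem2} and Remark~\ref{remark_summands} to recognize this as $\operatorname{M}_{d(a,i)}(\scr{O}_U)$. You simply make explicit the base-change collapse $\scr{O}_{S,\varphi}\otimes_{\scr{T}_x}\epsilon^{\varphi}_x\Gamma_{aa,x}\cong \epsilon^{\varphi}_x\Gamma_{aa,x}$ that the paper leaves implicit under ``by constructions in the previous section.''
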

\begin{proof}
  Let $x\in M$ and let $U$ be a semisimple neighborhood of $x$, with
  $\pi^{-1}(U)=\bigsqcup_i\widetilde{U}_i$ If $a\in \scr{B}(M)$ is a
  global label, then we can apply \ref{theorem2} to the restriction
  $a|_U$. Let $\{e_1,\dots ,e_n\}$ be a frame of simple, orthogonal
  idempotent sections over $U$. Suppose now that $e_i$ is the section
  corresponding to the sheet $\widetilde{U}_i$. By constructions in
  the previous section, and also theorem \ref{theorem2} and remark
  \ref{remark_summands}, we can write
  \begin{displaymath}
       \widetilde{\Gamma}_{aa}|_{\widetilde{U}_i} 
       = \iota_a(e_i)\Gamma_{aa}|_{\pi (\widetilde{U}_i)} 
      \cong \iota_a(e_i)\Gamma_{aa}|_{U} 
      \cong \opnm{M}_{d(a,i)}(\scr{O}_{U}). 
  \end{displaymath}
\end{proof}

Note that the dimension of the matrix algebras may vary at different
sheets: if $\Gamma_{aa}$ is isomorphic over a semisimple $U$ to
$\bigoplus_{i}\operatorname{M}_{d_i}(\scr{O}_M)$, then, if $\varphi
\in \widetilde{U}$, $\pi (\varphi )=x\in U$ and $\widetilde{U}$ is a
sufficiently small neighborhood around $\varphi$, we have that
\begin{displaymath}
  \widetilde{\Gamma}_{aa}|_{\widetilde{U}}\cong \operatorname{M}_{d_i}(\scr{O}_{\widetilde{U}}).
\end{displaymath}
If the cover $S$ is connected, then this dimension is constant. In
this case, we then have a twisted vector bundle $\mathbb{E}_a$ over
$S$ such that
\begin{displaymath}
  \operatorname{END}(\mathbb{E}_a)\cong \widetilde{\Gamma}_{aa}.
\end{displaymath}
From now on we shall assume that $S$ is connected.

Take now two boundary conditions $a,b\in \scr{B}(M)$ such that
$\Gamma_{aa}\cong \Gamma_{bb}$. On a semisimple open subset $U_i$ we
can represent both labels in the form
\begin{displaymath}
  \begin{aligned}
    a|_{U_i}&=\bigoplus_k\scr{M}_k\otimes \xi_k, \\
    b|_{U_i}&=\bigoplus_k\scr{N}_k\otimes \xi_k, \\
\end{aligned}
\end{displaymath}
where $\scr{M}_k,\scr{N}_k$ are locally free modules and $\xi_k$ are
the objects of proposition \ref{invertibles}. Then,
$\Gamma_{aa}|_{U_i}\cong
\bigoplus_k\underline{\opnm{End}}_{\scr{O}_{U_i}}(\scr{M}_k)$ and
$\Gamma_{bb}|_{U_i}\cong
\bigoplus_k\underline{\opnm{End}}_{\scr{O}_{U_i}}(\scr{N}_k)$. By
theorem \ref{azumaya_s} and the connectivity of $S$ we can write
\begin{equation}\label{global_labels_local_rep}
  \begin{aligned}
    \Gamma_{aa}|_{U_i}&\cong \underline{\opnm{End}}^{\oplus n}_{\scr{O}_{U_i}}(\scr{M}^{(i)}), \\
    \Gamma_{bb}|_{U_i}&\cong \underline{\opnm{End}}^{\oplus n}_{\scr{O}_{U_i}}(\scr{N}^{(i)}). \\
  \end{aligned}
\end{equation}
for some locally free modules $\scr{M}^{(i)}$ and $\scr{N}^{(i)}$ over
$U_i$. As $\Gamma_{aa}$ and $\Gamma_{bb}$ are isomorphic, we can
assure the existence of invertible sheaves $\scr{L}_i$ such that
$\scr{N}^{(i)}\cong \scr{L}_i\otimes \scr{M}^{(i)}$. By shrinking the
open subset if necessary, we can regard these invertible sheaves as
free.

From equations \eqref{global_labels_local_rep} let us denote by
$\widehat{\scr{M}}$ and $\widehat{\scr{N}}$ the locally free sheaves
with local representation
$\underline{\opnm{End}}_{\scr{O}_{U_i}}(\scr{M}^{(i)})$ and
$\underline{\opnm{End}}_{\scr{O}_{U_i}}(\scr{N}^{(i)})$
respectively. Then
\begin{itemize}
\item $\widehat{\scr{M}}$ and $\widehat{\scr{N}}$ are Azumaya
  algebras. Hence, there exist twisted bundles $\mathbb{E}$ and
  $\mathbb{F}$ such that $\widehat{\scr{M}}\cong
  \Gamma_{\operatorname{END}(\mathbb{E})}$ and $\widehat{\scr{N}}\cong
  \Gamma_{\operatorname{END}(\mathbb{F})}$.
\item As $\Gamma_{aa}$ and $\Gamma_{bb}$ are isomorphic,
  $\widehat{\scr{M}}$ and $\widehat{\scr{N}}$ are also isomorphic. In
  particular, $\operatorname{END}(\mathbb{E})$ and
  $\operatorname{END}(\mathbb{F})$ are isomorphic.
\end{itemize}

\begin{proposition}\label{tensor_l}
  Let $\mathbb{E}$ and $\mathbb{F}$ be two twisted bundles over a
  space $M$. Then the algebra bundles $\operatorname{END}(\mathbb{E})$
  and $\operatorname{END}(\mathbb{F})$ are isomorphic if and only if
  there exists a twisted line bundle $\mathbb{L}$ such that
  $\mathbb{F}\cong \mathbb{E}\otimes \mathbb{L}$.
\end{proposition}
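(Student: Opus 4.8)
The plan is to prove both implications after the standard reduction to a common cover: by refining and using the equivalence relation on twisted bundles, I may assume $\mathbb{E}=(\mathfrak{U},U_i\times V,g_{ij},\lambda_{ijk})$ and $\mathbb{F}=(\mathfrak{U},U_i\times W,f_{ij},\mu_{ijk})$ are given over the same cover $\mathfrak{U}=\{U_i\}$, and I recall that $\operatorname{END}(\mathbb{E})$ is the ordinary algebra bundle with fibre $\operatorname{End}(V)$ and transition maps $h_{ij}^{\mathbb{E}}=\operatorname{Ad}(g_{ij})$ (conjugation), the $\mu^{-1}\lambda$-twisting having cancelled as in the paragraph preceding Definition~\ref{def:9}. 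The \emph{if} direction is the routine one. Given $\mathbb{F}\cong \mathbb{E}\otimes\mathbb{L}$, I would first note that $\operatorname{END}$ is functorial for isomorphisms of twisted bundles (an isomorphism induces one on endomorphism algebra bundles, using Lemma~\ref{isomorphic}), so it suffices to see $\operatorname{END}(\mathbb{E}\otimes\mathbb{L})\cong\operatorname{END}(\mathbb{E})$. If $\mathbb{L}$ has scalar transitions $\ell_{ij}$, then $\mathbb{E}\otimes\mathbb{L}$ has transitions $\ell_{ij}g_{ij}$, and since $\operatorname{Ad}(\ell_{ij}g_{ij})=\operatorname{Ad}(g_{ij})$ the two endomorphism algebra bundles have identical transition maps; the identity local trivializations then give an algebra bundle isomorphism.

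For the \emph{only if} direction I would argue as follows. An algebra bundle isomorphism $\Phi:\operatorname{END}(\mathbb{E})\to\operatorname{END}(\mathbb{F})$ restricts over each $U_i$ to a family of algebra isomorphisms $\Phi_i(x):\operatorname{End}(V)\to\operatorname{End}(W)$; fibrewise this forces $\dim V=\dim W$. By the Skolem--Noether theorem each $\Phi_i(x)$ is inner, i.e.\ $\Phi_i(x)=\operatorname{Ad}(u_i(x))$ for some $u_i(x)\in\operatorname{Iso}(V,W)$, unique up to a scalar in $\comp^\times$. The key technical point is to choose the $u_i$ \emph{continuously} (or smoothly) after possibly shrinking the $U_i$: the induced map into $\operatorname{PGL}$ is continuous and lifts locally along the $\comp^\times$-bundle $\operatorname{GL}\to\operatorname{PGL}$, which is harmless since we are free to refine $\mathfrak{U}$. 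The compatibility square defining $\Phi$ as a bundle map reads $\operatorname{Ad}(u_i)\operatorname{Ad}(g_{ij})=\operatorname{Ad}(f_{ij})\operatorname{Ad}(u_j)$ over $U_{ij}$, so $u_ig_{ij}$ and $f_{ij}u_j$ differ by a scalar $\ell_{ij}:U_{ij}\to\comp^\times$, giving $f_{ij}=\ell_{ij}^{-1}u_ig_{ij}u_j^{-1}=u_i(\ell_{ij}^{-1}g_{ij})u_j^{-1}$.

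It then remains to verify that $\ell'_{ij}:=\ell_{ij}^{-1}$ are the transition functions of a genuine twisted line bundle $\mathbb{L}$, for then $\mathbb{E}\otimes\mathbb{L}$ has transitions $\ell'_{ij}g_{ij}$ and Lemma~\ref{isomorphic} (applied through the $u_i$) yields $\mathbb{F}\cong\mathbb{E}\otimes\mathbb{L}$. To compute the twisting I would expand $u_ig_{ij}g_{jk}$ in two ways, using $g_{ij}g_{jk}=\lambda_{ijk}g_{ik}$, $f_{ij}f_{jk}=\mu_{ijk}f_{ik}$, and the relation $u_ig_{ij}=\ell_{ij}f_{ij}u_j$; equating the results and cancelling the invertible factor $f_{ik}u_k$ gives $\ell_{ij}\ell_{jk}=(\lambda_{ijk}\mu_{ijk}^{-1})\ell_{ik}$, hence $\ell'_{ij}\ell'_{jk}=(\mu_{ijk}\lambda_{ijk}^{-1})\ell'_{ik}$. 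Thus $\mathbb{L}:=(\mathfrak{U},U_i\times\comp,\ell'_{ij},\mu_{ijk}\lambda_{ijk}^{-1})$ is a twisted line bundle with twisting $\mu\lambda^{-1}$ (the quotient of two Čech $2$-cocycles is again a cocycle, as recorded in Lemma~\ref{prop_tvb}), completing the proof. I expect the main obstacle to be precisely the passage from the pointwise Skolem--Noether isomorphisms to globally compatible, continuously varying inner representations $u_i$, and to keeping careful track of the scalar factors so that the twisting of $\mathbb{L}$ comes out to be the genuine cocycle $\mu\lambda^{-1}$; the rest is bookkeeping built on Lemma~\ref{isomorphic}.
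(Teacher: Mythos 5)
Your proof is correct and follows essentially the same route as the paper's: the ``if'' direction by observing that the scalar factor cancels under conjugation, and the ``only if'' direction by using Skolem--Noether to realise the fibrewise algebra isomorphisms as conjugation by local maps $u_i$, extracting the scalar discrepancy $\ell_{ij}$ between $u_ig_{ij}u_j^{-1}$ and $f_{ij}$, and verifying it is a $\mu\lambda^{-1}$-twisted cocycle. The only difference is that you explicitly address the local continuous lifting along $\operatorname{GL}\to\operatorname{PGL}$, a point the paper leaves implicit.
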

\begin{proof}
  We make use of \ref{isomorphic}. Let $\mathbb{E}, \mathbb{F}$ be
  given by
  \begin{displaymath}
    \begin{aligned}
      \mathbb{E} &= (\mathfrak{U},U_i\times \comp^n,g_{ij},\lambda_{ijk}), \\
      \mathbb{F} &= (\mathfrak{U},U_i\times \comp^n,f_{ij},\mu_{ijk}). \\
\end{aligned}
  \end{displaymath}
  For the ``if'' part, let $\mathbb{L}$ be given by
  $(\mathfrak{U},U_i\times \comp,\xi_{ij},\eta_{ijk})$, where
  $\xi_{ij}:U_{ij}\to \comp^\times$. Assume that $u_{ij}:U_{ij}\to
  \operatorname{GL}(\operatorname{M}_n(\comp ))$ are the cocycles for
  $\operatorname{END}(\mathbb{E}\otimes \mathbb{L})$; then,
  \begin{displaymath}
    \begin{aligned}
      u_{ij}(x)(A) &= \xi_{ij}(x)g_{ij}(x)Ag_{ij}(x)^{-1}\xi_{ij}(x)^{-1} \\
             &= g_{ij}(x)Ag_{ij}(x)^{-1}, \\
    \end{aligned}
  \end{displaymath}
  which are precisely the cocycles for $\operatorname{END}(\mathbb{E})$.

  For the ``only if'' part, assume that
  $\operatorname{END}(\mathbb{E})\cong \operatorname{END}(\mathbb{F})$
  and let $\{\alpha_i:U_i\to
  \operatorname{GL}(\operatorname{M}_n(\comp ))\}$ be a family of maps
  as in \ref{isomorphic}. Then, for each $n\times n$ matrix $A$ we
  have
  \begin{displaymath}
    f_{ij}(x)Af_{ij}(x)^{-1}=(\alpha_i(x)g_{ij}(x)\alpha_j(x)^{-1})A(\alpha_i(x)g_{ij}(x)\alpha_j(x)^{-1})^{-1}
  \end{displaymath}
over $U_{ij}$. This equality implies that there exists a map $\xi_{ij}:U_{ij}\to \comp^\times$ such that
\begin{equation}\label{e_times_l}
  f_{ij}(x)^{-1}\alpha_i(x)g_{ij}(x)\alpha_j(x)^{-1}=\xi_{ij}(x)1
\end{equation}
or, equivalently,
\begin{displaymath}
  f_{ij}(x)=\alpha_i(x)\xi_{ij}(x)^{-1}g_{ij}(x)\alpha_j(x)^{-1},
\end{displaymath}
where $\alpha_i(x)$ is regarded here as an invertible matrix (by the
Skolem-Noether theorem).

We now only need to show that $\{\xi_{ij}\}$ is a (twisted)
cocycle. Multiplying equation \eqref{e_times_l} by the one
corresponding to $\xi_{jk}$ and using the twistings for $\mathbb{E}$
and $\mathbb{F}$ (we omit any reference to $x\in U_{ijk}$ for
simplicity) we obtain
\begin{displaymath}
  \alpha_i \lambda_{ijk}g_{ik}\alpha_k^{-1}=\xi_{ij}\xi_{jk}\mu_{ijk}f_{ik};
\end{displaymath}
rearranging the last equation we must have
\begin{displaymath}
  \xi_{ij}\xi_{jk}=\lambda_{ijk}\mu_{ijk}^{-1}\xi_{ik},
\end{displaymath}
as desired.
\end{proof}

Let now $\operatorname{B}(M)/\sim$ be the set of labels over $M$ subject to the identification
\begin{displaymath}
  a\sim b \Longleftrightarrow \Gamma_{aa}\cong \Gamma_{bb}
\end{displaymath}

and let $\operatorname{TVB}(S)$ be the set of twisted vector bundles
over $S$. We can then define a map
\begin{displaymath}
  \Phi :\operatorname{B}(M)/\sim 
  \longrightarrow \operatorname{TVB}(S)/_{\mathbb{E}\sim \mathbb{L}\otimes \mathbb{E}}
\end{displaymath}
by $\Phi (a)=\mathbb{E}_a$, where $\mathbb{L}$ is a twisted line
bundle. The results obtained in the previous paragraphs let us
conclude with the following characterization of branes in terms of
twisted bundles.

\begin{theorem}
The map $\Phi$ is injective.
\end{theorem}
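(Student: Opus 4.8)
The plan is to show that $a\mapsto \mathbb{E}_a$ is injective on $\sim$-classes by reducing equality of the classes $[\mathbb{E}_a]=[\mathbb{E}_b]$ in $\operatorname{TVB}(S)/_{\mathbb{E}\sim \mathbb{L}\otimes \mathbb{E}}$ to an isomorphism $\Gamma_{aa}\cong \Gamma_{bb}$ of sheaves of algebras over $M$, which is precisely the relation $a\sim b$ defining $\operatorname{B}(M)/\sim$. The whole argument is a short chain of equivalences stringing together results already in hand: Proposition~\ref{tensor_l}, which converts a twisted-line-bundle twist into an isomorphism of endomorphism algebras; the defining property $\operatorname{END}(\mathbb{E}_a)\cong \widetilde{\Gamma}_{aa}$ of the twisted bundle $\mathbb{E}_a$ coming from Theorem~\ref{azumaya_s} (available because $S$ is assumed connected, so $\widetilde{\Gamma}_{aa}$ has constant matrix rank and $\mathbb{E}_a$ is well defined); and the essential-surjectivity correspondence of Theorems~\ref{esurjective} and~\ref{isom_3}, which identifies $\pi_*\widetilde{\Gamma}_{aa}$ with $\Gamma_{aa}$.

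Concretely, I would argue as follows. Suppose $\Phi(a)=\Phi(b)$, i.e. $[\mathbb{E}_a]=[\mathbb{E}_b]$. By definition of the equivalence relation on $\operatorname{TVB}(S)$ this means there is a twisted line bundle $\mathbb{L}$ with $\mathbb{E}_a\cong \mathbb{L}\otimes \mathbb{E}_b$. Applying Proposition~\ref{tensor_l} with $\mathbb{E}=\mathbb{E}_b$ and $\mathbb{F}=\mathbb{E}_a$, this is equivalent to an isomorphism of $\scr{O}_S$-algebras $\operatorname{END}(\mathbb{E}_a)\cong \operatorname{END}(\mathbb{E}_b)$. Invoking the defining isomorphisms $\operatorname{END}(\mathbb{E}_a)\cong \widetilde{\Gamma}_{aa}$ and $\operatorname{END}(\mathbb{E}_b)\cong \widetilde{\Gamma}_{bb}$ of Theorem~\ref{azumaya_s} then yields $\widetilde{\Gamma}_{aa}\cong \widetilde{\Gamma}_{bb}$ as $\scr{O}_S$-algebras. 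Pushing this forward along $\pi$ and using $\pi_*\widetilde{\Gamma}_{aa}\cong \Gamma_{aa}$ and $\pi_*\widetilde{\Gamma}_{bb}\cong \Gamma_{bb}$ (Theorems~\ref{esurjective},~\ref{isom_3}) gives $\Gamma_{aa}\cong \Gamma_{bb}$, that is $a\sim b$. Hence $\Phi$ is injective; note that running the same chain of equivalences backwards simultaneously re-establishes well-definedness of $\Phi$.

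The only point needing care -- and the place where a careless version of the argument could fail -- is the bookkeeping of structure sheaves. The isomorphisms produced by Proposition~\ref{tensor_l} and Theorem~\ref{azumaya_s} are isomorphisms of $\scr{O}_S$-algebras, and one must check that applying $\pi_*$ lands them as isomorphisms of $\pi_*\scr{O}_S$-algebras, hence of $\scr{O}_M$-algebras, and indeed of $\scr{T}$-algebras under the identification $\pi_*\scr{O}_S\cong \scr{T}$ of Proposition~\ref{isom_1}. Once this is verified, $\pi_*$ is simply an additive functor between module categories that preserves isomorphisms, so it carries $\widetilde{\Gamma}_{aa}\cong \widetilde{\Gamma}_{bb}$ to $\Gamma_{aa}\cong \Gamma_{bb}$ with no further work. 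I do not anticipate a genuine obstacle here: all the analytic and categorical content has been absorbed into the cited results, and the present statement is essentially the observation that the composite correspondence $a\mapsto \widetilde{\Gamma}_{aa}\mapsto \mathbb{E}_a$ forgets exactly the operation of tensoring by twisted line bundles and nothing else.
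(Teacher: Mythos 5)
Your proof is correct and follows essentially the same route the paper intends: the paper leaves the theorem without an explicit proof, relying on the chain of equivalences established in the preceding paragraphs (the defining isomorphism $\operatorname{END}(\mathbb{E}_a)\cong\widetilde{\Gamma}_{aa}$ from Theorem~\ref{azumaya_s}, Proposition~\ref{tensor_l}, and the push-forward identification $\pi_*\widetilde{\Gamma}_{aa}\cong\Gamma_{aa}$), and you simply make that chain explicit in the injectivity direction.
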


In other words, we can regard each label (up to equivalence) over $M$
as a twisted bundle (again, up to equivalence) over the spectral
cover.

Now, by theorem \ref{bij_tensor}, we have a bijection
\begin{displaymath}
  \Psi :\opnm{TVB}(S)/_{\mathbb{E}\sim \mathbb{L}\otimes \mathbb{E}}\stackrel{\cong}{\longrightarrow}\opnm{Vect}(S)/_{E\sim L\otimes E},
\end{displaymath}
and then every brane $a\in \opnm{B}(M)$ can in fact be taken as a
vector bundle over $S$, up to tensoring with a line bundle.

\medskip




\begin{thebibliography}{BDR04b}

\bibitem[Abr96]{abrams96:_two_dimen_topol_quant_field}
L.~Abrams.
\newblock Two-dimensional topological quantum field theories and {F}robenius
  algebras.
\newblock {\em Journal of Knot theory and its ramifications}, 5(5):569--588,
  1996.

\bibitem[AG60]{auslander_goldman}
M.~Auslander and O.~Goldman.
\newblock The {B}rauer group of a commutative ring.
\newblock {\em Transactions of the A.M.S.}, 97(3):367--409, 1960.

\bibitem[AS05]{atiyah-segal:twisted_k}
M.~Atiyah and G.~Segal.
\newblock Twisted k-theory.
\newblock {\em ArXiv Mathematics e-prints}, 2005.

\bibitem[Aud98]{audin:frob}
M.~Audin.
\newblock Symplectic geometry in {F}robenius manifolds and quantum cohomology.
\newblock {\em Journal of Geometry and Physics}, 25:183--204, 1998.

\bibitem[BC04]{baezcrans:2vector}
J.C. Baez and A.S. Crans.
\newblock Generalized 2-vector spaces and general linear 2-groups.
\newblock {\em Theory and Applications of Categories}, 12(15):492--528, 2004.

\bibitem[BDR04a]{baas04:_two}
N.~Baas, B.~Dundas, and J.~Rognes.
\newblock 2-vector bundles and forms of elliptic cohomology.
\newblock In Tillmann \cite{tillmann04:_oxfor_sympos_honour_birth_graem_segal},
  pages 18--46.

\bibitem[BDR04b]{bdr:_2vb}
N.~Baas, B.~Dundas, and J.~Rognes.
\newblock 2-vector bundles and forms of elliptic cohomology.
\newblock In Tillmann \cite{tillmann04:_oxfor_sympos_honour_birth_graem_segal},
  pages 18--46.

\bibitem[Bry98]{brylinski:_catvb}
J.~L. Brylinski.
\newblock Categories of vector bundles and yang-mills equations.
\newblock In Getzler E. and M.~Kapranov, editors, {\em Higher Category Theory:
  Workshop on Higher Category Theory and Physics, March 28-30 1997,
  Northwestern University, Evanston, IL.}, volume 230 of {\em Contemporary
  Mathematics}, pages 69--112. AMS, 1998.

\bibitem[Cos07]{costello:07}
K.~Costello.
\newblock Topological conformal field theories and calabi-yau categories.
\newblock {\em Advances in Mathematics}, 210(1):165--214, 2007.

\bibitem[Dub95]{dubrovin:_2dtft}
B.~Dubrovin.
\newblock {\em Geometry of 2{D} Topological Field Theories}, volume 1620 of
  {\em Lecture Notes in Mathematics}.
\newblock Springer-Verlag, 1995.

\bibitem[ea09]{al.]09:_diric}
Paul Aspinwall . .~. [et al.].
\newblock {\em Dirichlet branes and mirror symmetry}, volume~4 of {\em Clay
  Mathematics Monographs}.
\newblock A.M.S., 2009.

\bibitem[Elg06]{elgueta:2vector}
J.~Elgueta.
\newblock Generalized 2-vector spaces and general linear 2-groups.
\newblock {\em ArXiv Mathematics e-prints}, 2006.

\bibitem[EN55]{nakayama4}
S.~Eilenberg and T.~Nakayama.
\newblock On the dimension of modules and algebras. {II}. {Frobenius} algebras
  and {quasi-Frobenius rings}.
\newblock {\em Nagoya Math. J.}, 9:1--16, 1955.

\bibitem[Fis76]{fischer:_cng}
G.~Fischer.
\newblock {\em Complex Analytic Geometry}, volume 538 of {\em Lecture Notes in
  Mathematics}.
\newblock Springer-Verlag, 1976.

\bibitem[Fre94]{freed94:_higher_algeb_struc_and_quant}
D.~Freed.
\newblock Higher algebraic structures and quantization.
\newblock {\em Communications in Mathematical Physics}, 159(2):343--398, 1994.

\bibitem[Gro68]{grothendieck68:_le_group_de_brauer_i}
A.~Grothendieck.
\newblock Le groupe de {Brauer} {I}, alg{\`e}bres {d'Azumaya} et
  interpr{\'e}tations diverses.
\newblock In {\em Dix Exposes sur la cohomologie des schemas}, pages 46--65,
  1968.

\bibitem[Hit97]{hitchin97:_froben_manif}
N.~Hitchin.
\newblock Frobenius manifolds.
\newblock In J~Hurtubise, F.~Lalonde, and G.~Sabidussi, editors, {\em Gauge
  Theory and Symplectic Geometry, Proceedings of the NATO Advanced Study
  Institute and Seminaire de Mathematiques Superieures, Montreal, Canada, July
  3-14, 1995}, pages 69--112. Kluwer, 1997.

\bibitem[HM99]{hertling99:_weak_froben_manif}
C~Hertling and Y.~Manin.
\newblock Weak {Frobenius} manifolds.
\newblock {\em International Mathematics Research Notices}, 6:277--286, 1999.

\bibitem[Hou61]{houzel_gal2}
C.~Houzel.
\newblock G{\'e}om{\'e}trie analytique locale {II}: {T}h{\'e}orie des
  morphismes finis.
\newblock In {\em S\'{e}minaire Henri Cartan {{\rm 13e ann{\'e}e}} No.18-21;
  12, 22, 25, 15 p.} Secrétariat mathématique, 1961.

\bibitem[Kar10]{karoubi:twisted_vector}
M.~Karoubi.
\newblock Twisted bundles and twisted k-theory.
\newblock {\em K-Theory Preprint Archives}, 2010.

\bibitem[Kel74]{kelly:_coh2}
G.~Kelly.
\newblock {\em Coherence Theorems for Lax Algebras and Distributive Laws},
  volume 420 of {\em Lecture Notes in Mathematics}.
\newblock Springer-Verlag, 1974.

\bibitem[Kel82]{kelly:_enriched}
M.G. Kelly.
\newblock {\em Basic Concepts of Enriched Category Theory}, volume~64 of {\em
  London Mathematical Society Lecture Note Series}.
\newblock Cambridge University Press, 1982.

\bibitem[KV94]{kn:kv}
M.~M. Kapranov and V.~A. Voevodski.
\newblock 2-categories and {Z}amolodchikov tetrahedra equations.
\newblock {\em Proceedings of Symposia in Pure Mathematics, A. M. S.},
  56:177--259, 1994.

\bibitem[Lap72]{laplaza:_coh1}
M.~Laplaza.
\newblock {\em Coherence for Disctributivity}, volume 281 of {\em Lecture Notes
  in Mathematics}.
\newblock Springer-Verlag, 1972.

\bibitem[Mac71]{maclane:_catwm}
S.~MacLane.
\newblock {\em Categories for the Working Mathematician}, volume~5 of {\em
  Graduate Texts in Mathematics}.
\newblock Springer-Verlag, 1971.

\bibitem[Man99]{manin99:_froben_manif_quant_cohom_and_modul_spaces}
Y.~Manin.
\newblock {\em {F}robenius manifolds, quantum cohomology and moduli spaces},
  volume~47 of {\em Colloquium Publications}.
\newblock A.M.S., 1999.

\bibitem[Mil80]{milne80:_etale_cohom}
J.~Milne.
\newblock {\em {\'E}tale cohomology}.
\newblock Princeton University Press, 1980.

\bibitem[MS06]{moore_segal1}
G.~W. {Moore} and G.~{Segal}.
\newblock {D-branes and {K-theory} in 2D topological field theory}.
\newblock {\em ArXiv High Energy Physics - Theory e-prints - 0609042},
  September 2006.

\bibitem[Seg07]{segal07:_what_is_ellip_objec}
G.~Segal.
\newblock What is an elliptic object?
\newblock In H.~Miller and D.~Ravenel, editors, {\em Elliptic Cohomology:
  Geometry, Applications, and Higher Chromatic Analogues}, volume 342 of {\em
  London Mathematical Society, Lecture Notes Series}, pages 306--317. Cambridge
  University Press, 2007.

\bibitem[Til04]{tillmann04:_oxfor_sympos_honour_birth_graem_segal}
Ulrike Tillmann, editor.
\newblock {\em 2002 Oxford Symposium in the Honour of the 60th Birthday of
  Graeme Segal}, number 308 in London Mathematical Society Lecture Note Series.
  Cambridge University Press, 2004.

\end{thebibliography}
\end{document}